\numberwithin{equation}{section} 
\theoremstyle{plain}
\newtheorem{theorem}{Theorem}[section]
\newtheorem{proposition}[theorem]{Proposition}
\newtheorem{lemma}[theorem]{Lemma}
\newtheorem{corollary}[theorem]{Corollary}
\newtheorem{definition}[theorem]{Definition}
\newtheorem{remark}[theorem]{Remark}
\newtheorem*{notation}{Notation}
\newtheorem{Thm*}{Theorem*}
\newtheorem*{Def*}{Definition*}
\theoremstyle{definition}
\newtheorem{example}[theorem]{Example}
\newcommand{\mmod}{\ensuremath{\mathrm{mod}}}
\newcommand{\diag}{\ensuremath{\mathrm{diag}}}
\newcommand{\Res}{\ensuremath{\mathrm{Res}}}
\newcommand{\Tr}{\ensuremath{\mathrm{Tr}}}
\newcommand{\Com}{\ensuremath{\mathrm{Com\,}}}
\newcommand{\rank}{\ensuremath{\mathrm{rank}}}
\newcommand{\supp}{\ensuremath{\mathrm{supp\,}}}
\newcommand{\lec}[1][]{\ensuremath{\; \dot{\le}_{#1} \;}}
\newcommand{\gec}[1][]{\ensuremath{\; \dot{\ge}_{#1} \;}}
\begin{document}


\title{Traces of compact operators and the noncommutative residue}

\author{Nigel Kalton\thanks{Nigel Kalton (1946-2010).  The author passed away during production of this paper.} \and 
Steven Lord\thanks{[Corresponding Author] School of Mathematical Sciences, University of Adelaide, Adelaide, 5005, Australia. \texttt{steven.lord@adelaide.edu.au}} \and
Denis Potapov\thanks{School of Mathematics and Statistics, University of New South Wales, Sydney, 2052, Australia. \texttt{d.potapov@unsw.edu.au}} \and
Fedor Sukochev\thanks{School of Mathematics and Statistics, University of New South Wales, Sydney, 2052, Australia. \texttt{f.sukochev@unsw.edu.au}}}

\maketitle

\begin{abstract} 

We extend the noncommutative residue of M.~Wodzicki on
compactly supported classical pseudo-differential operators of order $-d$
and generalise A.~Connes' trace theorem, which states that the residue can be calculated
using a singular trace on compact operators.
Contrary to the role of the noncommutative residue for the classical pseudo-differential operators, a corollary is that the pseudo-differential operators of order $-d$ do not have a `unique' trace;
pseudo-differential operators can be non-measurable in Connes' sense.
Other corollaries are given clarifying the role of Dixmier traces in noncommutative geometry \`{a} la Connes, including the definitive statement of Connes' original theorem. \\

\medskip \noindent {\it Keywords:}
noncommutative residue,
Connes' trace theorem,
Lidskii theorem,
noncommutative geometry,
spectral theory,
singular trace. \\

\medskip \noindent {\it 2000 MSC:}
47B10, 58B34, 58J42, 47G10
\end{abstract}

\section{Introduction}

A.~Connes proved, \cite[Theorem 1]{C3}, that
$$
\mathrm{Tr}_\omega(P) = \frac{1}{d(2\pi)^d} \Res_W(P)
$$
where $P$ is a classical pseudo-differential operator of order $-d$ on a $d$-dimensional closed Riemannian manifold,
$\mathrm{Tr}_\omega$ is a Dixmier trace (a trace on the compact operators with singular values $O(n^{-1})$ which is not an extension of the canonical trace), \cite{Dix},
and $\Res_W$ is the noncommutative residue of M.~Wodzicki, \cite{Wod}.

Connes' trace theorem, as it is known, has become the cornerstone of noncommutative integration \`{a} la Connes.
Applications of Dixmier traces as the substitute
noncommutative residue and integral in non-classical spaces
range from fractals, \cite{Lap1997}, \cite{GI2003}, to foliations \cite{FB}, to spaces of noncommuting co-ordinates, \cite{Connes_GSPV}, \cite{Moyal2004}, and applications in string theory and Yang-Mills, \cite{1126-6708-1998-02-003}, \cite{1126-6708-1998-02-008}, \cite{seiberg-1999-9909}, \cite{C3}, Einstein-Hilbert actions and particle physics' standard model, \cite{C4}, \cite{CC}, \cite{Kastler1995}.  

Connes' trace theorem, though, is not complete.  There are other traces, besides Dixmier traces, on the compact operators whose singular values are $O(n^{-1})$.  Wodzicki
showed the noncommutative residue is essentially the unique trace on classical pseudo-differential operators of order $-d$, so it should be expected that
every suitably normalised trace computes the noncommutative residue.
Also, all pseudo-differential operators have a notion of principal symbol
and Connes' trace theorem opens the question of whether the principal symbol of non-classical operators can be used to compute their Dixmier trace.

We generalise Connes' trace theorem.  There is an extension
of the noncommutative residue that relies only on the principal symbol of a pseudo-differential operator, and that extension calculates the Dixmier trace of the operator.
The following definition and theorem applies to
a much wider class of Hilbert-Schmidt operators, called
Laplacian modulated operators, that we develop in the text.  Here, in the introduction,
we mention only pseudo-differential operators.

A pseudo-differential operator $P : C^\infty(\mathbb R^d) \to C^\infty(\mathbb R^d)$
is compactly based if $Pu$ has compact support for all $u \in C^\infty(\mathbb R^d)$.  Equivalently the (total) symbol of $P$ has compact support in the first variable.

\begin{Def*}[Extension of the noncommutative residue]
Let $P : C_c^\infty(\mathbb R^d) \to C_c^\infty(\mathbb R^d)$ be a compactly based
pseudo-differential operator of order $-d$ with symbol $p$.
The linear map
$$
P \mapsto \Res(P):= \left[ \left\{ \frac{d}{\log(1+n)} \int_{\mathbb R^d} \int_{|\xi|\le n^{1/d}} p(x,\xi) d\xi\,dx \right\}_{n=1}^\infty \right]
$$
we call the \emph{residue} of $P$, where $[ \cdot ]$ denotes the equivalence class in $\ell_\infty / c_0$.
\end{Def*}
Alternatively, any sequence $\Res_n(P)$, $n \in \mathbb N$, such that
\begin{equation*}
\int_{\mathbb R^d} \int_{|\xi|\le n^{1/d}} p(x,\xi) d\xi\,dx = \frac{1}{d} \mathrm{Res}_n(P)  \log n + o(\log n)
\end{equation*}
defines the residue $\Res(P) = [\mathrm{Res}_n(P)] \in \ell_\infty / c_0$.
Note that a dilation invariant state $\omega \in \ell_\infty^*$ vanishes
on $c_0$.  Hence
$$
\omega([c_n]) := \omega( \{ c_n \}_{n=1}^\infty ) , \qquad \{ c_n \}_{n=1}^\infty  \in \ell_\infty
$$
is well-defined as a linear functional on $\ell_\infty / c_0$.  

\begin{Thm*}[Trace theorem]
Let $P : C_c^\infty(\mathbb R^d) \to C_c^\infty(\mathbb R^d)$ be a compactly based
pseudo-differential operator of order $-d$ with residue $\Res(P)$.
Then (the extension) $P: L_2(\mathbb R^d) \to L_2(\mathbb R^d)$ is a compact operator with singular values $O(n^{-1})$ and:
\begin{enumerate}[(i)]
\item
$$
\mathrm{Tr}_\omega(P) = \frac{1}{d(2\pi)^d} \omega(\mathrm{Res}(P));
$$
\item
$$
\mathrm{Tr}_\omega(P) = \frac{1}{d(2\pi)^d} \mathrm{Res}(P)
$$
for every Dixmier trace $\Tr_\omega$ iff $\Res(P)$ is scalar (equivalently
\begin{equation*}
\int_{\mathbb R^d} \int_{|\xi|\le n^{1/d}} p(x,\xi) d\xi\,dx = \frac1d \Res(P) \, \log n + o(\log n)
\end{equation*}
for a scalar $\Res(P)$);
\item
\begin{equation*}
\tau(P)= \frac{\tau \circ \diag \left( \{ \frac1n \}_{n=1}^\infty \right)}{d(2\pi)^d} \Res(P) 
\end{equation*}
for every trace $\tau$ on the compact operators with singular values $O(n^{-1})$
iff
\begin{equation*}
\int_{\mathbb R^d} \int_{|\xi|\le n^{1/d}} p(x,\xi) d\xi\,dx = \frac1d \Res(P) \, \log n + O(1)
\end{equation*}
for a scalar $\Res(P)$.
\end{enumerate}
\end{Thm*}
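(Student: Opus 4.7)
My plan is to reduce the theorem to the general trace formula for Laplacian modulated operators developed earlier in the paper, specialised to compactly based pseudo-differential operators. The first task is to verify that a compactly based $P$ of order $-d$ with symbol $p$ belongs to the Laplacian modulated class: compact support in $x$ and $O(|\xi|^{-d})$ decay in $\xi$ yield, via Plancherel, that $P(1-\Delta)^{d/2}$ has a square-integrable kernel, giving Hilbert--Schmidt membership and, via the Cwikel-type estimates established in the preceding sections, singular values in $O(n^{-1})$.

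The Weyl-type asymptotic that the general theorem supplies for any Laplacian modulated operator $T$ with symbol $p$ is
\begin{equation*}
\sum_{k=1}^n \lambda_k(T) \;=\; \frac{1}{(2\pi)^d}\int_{\mathbb R^d}\int_{|\xi|\le n^{1/d}} p(x,\xi)\,d\xi\,dx + o(\log n),
\end{equation*}
where $(\lambda_k)$ is the eigenvalue sequence in Lidskii order. Granting this, part~(i) is immediate: by the Lidskii-type theorem for Dixmier traces proved earlier in the paper, $\Tr_\omega(P)=\omega\bigl(\bigl\{\tfrac{1}{\log(1+n)}\sum_{k\le n}\lambda_k(P)\bigr\}\bigr)$, and since $\omega$ vanishes on $c_0$ the $o(\log n)$ remainder drops out, leaving $\omega(\Res(P))/d(2\pi)^d$ after collecting the factor of $d$ from the definition of $\Res(P)$.

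For part~(ii), the ``if'' direction is contained in (i); for the converse I would use a standard fact about $\ell_\infty/c_0$: a class $[c]$ is sent to the same scalar $R$ by every dilation-invariant state iff $c_n\to R$, so the requirement that $\Tr_\omega(P)=R/d(2\pi)^d$ for every Dixmier trace forces the bracketed sequence defining $\Res(P)$ to converge, which is precisely the scalar $o(\log n)$ asymptotic. Part~(iii) replaces this by the commutator subspace characterisation (established in the preceding sections) of the ideal of operators with singular values $O(n^{-1})$: $\tau$ vanishes for every trace $\tau$ on a positive element of this ideal iff the partial sums of its eigenvalues are $O(1)$. Applied to $P-\tfrac{\Res(P)}{(2\pi)^d}\diag\bigl(\{1/k\}\bigr)$ together with the step-two asymptotic, this yields precisely the $O(1)$ characterisation.

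The main obstacle is the Weyl-type asymptotic itself: since $p$ is not assumed classical there is no homogeneous expansion, no parametrix, and no pointwise $\xi$-regularity to exploit. The argument must decompose $\xi$-space dyadically, replace each piece of $P$ by a convolution-type operator whose trace is computable via Plancherel (this produces the $(2\pi)^{-d}$ normalisation together with the leading symbol ball integral), and absorb the cross terms into the $o(\log n)$ remainder using Hilbert--Schmidt bounds supplied by the Laplacian modulation. Controlling these cross terms uniformly enough that the $o(\log n)$ estimate survives evaluation at an arbitrary dilation-invariant state $\omega$, and then strengthening them to $O(1)$ for part~(iii), is the most delicate point of the proof.
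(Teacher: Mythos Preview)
Your outline matches the paper's architecture: reduce to the Laplacian modulated class, invoke the eigenvalue--symbol asymptotic (Theorem~\ref{eigen} in the text, stated as Theorem*~2 in the introduction), then read off the three parts via the Lidskii formula for Dixmier traces and the commutator-subspace characterisation of $\Com\mathcal L_{1,\infty}$. Two points, however, are misstated and would cause trouble if you tried to execute the proof as written.

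First, the Weyl-type asymptotic the paper proves is
\[
\sum_{k=1}^n \lambda_k(T) - \frac{1}{(2\pi)^d}\int_{\mathbb R^d}\int_{|\xi|\le n^{1/d}} p_T(x,\xi)\,d\xi\,dx = O(1),
\]
not $o(\log n)$; and it is obtained \emph{once}, with the $O(1)$ bound, rather than as an $o(\log n)$ estimate later strengthened for part~(iii). Your plan to ``absorb cross terms into $o(\log n)$'' and then separately upgrade them is not how the argument runs, and it is unclear that a genuine two-stage approach would work: the $O(1)$ bound comes from a single careful computation (Lemmas~\ref{lem:basis0}--\ref{eigen second part}) comparing $\sum (Tv_j,v_j)$ against the symbol integral via an explicit kernel $G_n(x,\xi)$, combined with Theorem~\ref{connection} for $V$-modulated operators. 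There is no intermediate $o(\log n)$ step.

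Second, the ``standard fact about $\ell_\infty/c_0$'' you invoke for part~(ii)---that all dilation-invariant states agree on $[c]$ iff $c_n$ converges---is false for arbitrary bounded sequences. The paper does not use such a statement; it cites a specific result (\cite[Theorem~20]{SS2012}) asserting that for $T\in\mathcal L_{1,\infty}$, $\Tr_\omega(T)$ is independent of $\omega$ iff $\lim_n \frac{1}{\log(1+n)}\sum_{j\le n}\lambda_j(T)$ exists. This relies on the extra structure that the sequence in question is of the form $\frac{1}{\log(1+n)}\sum_{j\le n}\lambda_j$ with $\lambda_j=O(j^{-1})$, and is not a general property of dilation-invariant states. (Minor additional point: the verification that $P\in\mathcal L_{1,\infty}$ does not go through ``$P(1-\Delta)^{d/2}$ Hilbert--Schmidt''---that operator is order $0$ and merely bounded. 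The paper checks the symbol condition~\eqref{LM1} directly and then invokes Theorem~\ref{eigen} for compactly supported $P$, handling the compactly based case by a trace-class correction in Corollary~\ref{cor:genpsdo}.)
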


This theorem is Theorem~\ref{CTTv1} in Section~\ref{sec:modtraces} of the text. 
There is a version for closed manifolds, Theorem~\ref{connectioncorman} in Section~\ref{sec:tracethmman}.
Our proof of the trace theorem uses commutator subspaces and it is very different to
the original proof of Connes' theorem.
Let us put the rationale of the proof in its plainest form.

Wodzicki initiated the study of the noncommutative residue in \cite{Wod}.  The noncommutative residue $\Res_W(P)$ vanishes if and only if a classical
pseudo-differential operator $P$
is a finite sum of commutators.  This result paired with the study of commutator subspaces of ideals, \cite{Pearcy1971}, and resulted in an extensive work, \cite{DykemaFigielWeissWodzicki2004}, categorising
commutator spaces for arbitrary two sided ideals of compact operators, see the survey, \cite{Weiss2004}.  Our colleague Nigel Kalton, whose sudden passing was a tremendous loss to ourselves personally and to mathematics in general, contributed fundamentally to this area, through, of course, \cite{Kalton1989}, and \cite{Kalton1998}, \cite{DykemaKalton1998}, \cite{FigielKalton2002}, \cite{KS2008_Creolle}.

The commutator subspace, put simply, is the kernel of all traces
on a two-sided ideal of compact linear operators of a Hilbert space $\mathcal H$ to itself. If one could show a compact operator 
$T$ belongs to the ideal $\mathcal L_{1,\infty}$ (operators whose singular values are $O(n^{-1})$) and that it
satisfies
\begin{equation} \label{intro:commdiff}
T - c \; \diag \left\{ \frac{1}{k} \right\}_{k=1}^\infty \in \Com \mathcal L_{1,\infty}
\end{equation}
for a constant $c$ (here $\Com \mathcal L_{1,\infty}$ denotes the commutator subspace, i.e.~the linear span of elements $AB-BA$, $A \in \mathcal L_{1,\infty}$, $B$ is a bounded linear operator of $\mathcal H$ to itself, and $\diag$ is the diagonal operator in some chosen basis),
then
$$
\tau(T) = c 
$$
for a constant $c$ for every trace $\tau$ with $\tau( \diag \{ k^{-1} \}_{k=1}^\infty )=1$.  This is the type of formula Connes' original theorem suggests.
Our first result, Theorem~\ref{K}, concerns differences
in the commutator subspace, i.e.~\eqref{intro:commdiff}, and it states that
$$
T - S \in \Com \mathcal L_{1,\infty} \Leftrightarrow
\sum_{j=1}^n \lambda_j(T) - \sum_{j=1}^n \lambda_j(S) = O(1)
$$ 
by using the fundamental results of \cite{DykemaFigielWeissWodzicki2004}, \cite{Kalton1989}, and \cite{DykemaKalton1998}. Here $\{\lambda_j(T)\}_{j=1}^\infty$
are the eigenvalues of $T$, with multiplicity, in any order so that $|\lambda_j(T)|$ is decreasing, with the same for $S$.   Actually, all our initial results involve general ideals but, to stay on message, we specialise to $\mathcal L_{1,\infty}$ in the introduction.  Then our goal,~\eqref{intro:commdiff}, has the explicit spectral form
\begin{equation} \label{intro:key}
\sum_{j=1}^n \lambda_j(T) - c \, \log n = O(1) .
\end{equation}
Equation~\eqref{intro:key} indicates that the log divergent
behaviour of partial sums of eigenvalues is the key to the trace theorem.

The crucial step therefore is the following theorem
on sums of eigenvalues of pseudo-differential operators.  As far as we know the theorem is new.
Results about eigenvalues are known, of course, for positive elliptic operators on closed manifolds.  The following result is for all operators of order $-d$.
\begin{Thm*}
Let $P : C_c^\infty(\mathbb R^d) \to C_c^\infty(\mathbb R^d)$ be a compactly based pseudo-differential operator of order $-d$ and with symbol $p$.
Then
$$
\sum_{j=1}^n \lambda_j(P) - \frac{1}{(2\pi)^d} \int_{\mathbb R^d} \int_{|\xi| \leq n^{1/d}} p(x,\xi) dx d\xi = O(1)
$$
where $\{\lambda_j(P)\}_{j=1}^\infty$ are the eigenvalues of $P$, with multiplicity, in any order so that $|\lambda_j(P)|$ is decreasing.
\end{Thm*}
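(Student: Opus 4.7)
The plan is to leverage Theorem~\ref{K} by constructing a reference operator $S \in \mathcal{L}_{1,\infty}$ whose eigenvalue partial sums coincide, up to $O(1)$, with the stated symbol integral, and then showing that $P - S \in \mathrm{Com}\,\mathcal{L}_{1,\infty}$. Once these two ingredients are in place, Theorem~\ref{K} immediately gives
$$\sum_{j=1}^n \lambda_j(P) - \sum_{j=1}^n \lambda_j(S) = O(1),$$
and the right-hand side is, by construction, the claimed integral plus $O(1)$.

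First I would verify that $P \in \mathcal{L}_{1,\infty}$. Because $P$ is compactly based of order $-d$, writing $P = M_\phi P$ for $\phi \in C_c^\infty(\mathbb{R}^d)$ equal to $1$ on the support of the range of $P$, and factoring through $(1-\Delta)^{-d/2}$, gives a Cwikel-type estimate $\mu_n(P) = O(n^{-1})$. This simultaneously places $P$ inside the class of Laplacian modulated operators developed earlier in the paper, so the full symbolic machinery applies.

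The reference operator $S$ should be diagonal with respect to a resolution of the identity $\{E_n\}_{n \geq 0}$ associated with the Laplacian, where $E_n$ projects onto frequencies $|\xi| \leq n^{1/d}$ (cut off against a fixed compact $x$-set containing the base of $P$). I would set $S = \sum_n \mu_n (E_n - E_{n-1})$ with $\mu_n$ chosen as the average of $\mathrm{Tr}(P(E_n - E_{n-1}))$ over the rank of $E_n - E_{n-1}$, so that
$$\sum_{j=1}^n \lambda_j(S) = \mathrm{Tr}(P E_n) + O(1) = \frac{1}{(2\pi)^d}\int_{\mathbb{R}^d}\int_{|\xi|\leq n^{1/d}} p(x,\xi)\,dx\,d\xi + O(1),$$
the last identity being the Plancherel-type formula for the kernel of $P$ written through its symbol; the $O(1)$ error absorbs the boundary discrepancy between the sharp spectral cutoff on the range side of $E_n$ and the sharp frequency cutoff in the integral.

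The heart of the argument, and the main obstacle, is proving $P - S \in \mathrm{Com}\,\mathcal{L}_{1,\infty}$. The strategy is to split $P - S = (P - \sum_n E_n P E_n \text{-diagonal}) + (\text{diagonalization} - S)$. The first piece consists of off-diagonal blocks $(E_n - E_{n-1}) P (E_m - E_{m-1})$ for $n \neq m$; these can be written as commutators with the bounded multiplier $\sum_n n (E_n - E_{n-1})^{-1}$-style operators, using the Dykema--Kalton machinery and the quasi-nilpotent decomposition of elements of $\mathcal{L}_{1,\infty}$ cited in the introduction (\cite{DykemaFigielWeissWodzicki2004}, \cite{DykemaKalton1998}). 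The second piece replaces each diagonal block by its scalar average on that block, which, thanks to the trace being invariant under unitary conjugation within a block, differs from the original by a commutator. The delicate point is that both reductions must be performed uniformly so that the total number of commutators and their norms stay controlled in $\mathcal{L}_{1,\infty}$; this is where the Laplacian modulated structure, giving uniform decay estimates on the block norms $\|(E_n - E_{n-1}) P (E_m - E_{m-1})\|$, is essential.
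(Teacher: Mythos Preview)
Your high-level strategy---construct a reference operator, show the difference lies in $\Com\mathcal L_{1,\infty}$, then invoke Theorem~\ref{K}---is exactly the shape of the paper's argument. But there is a genuine gap in your execution, and it is the central difficulty that the paper's proof is built to overcome.

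The operator $(1-\Delta)^{-d/2}$ on $L_2(\mathbb R^d)$ is \emph{not} compact: its spectrum is continuous, and the Fourier projections $E_n$ onto $\{|\xi|\le n^{1/d}\}$ have infinite rank. Consequently your diagonal operator $S=\sum_n\mu_n(E_n-E_{n-1})$ does not belong to $\mathcal L_{1,\infty}$, the trace $\Tr(PE_n)$ is not defined, and the ``Plancherel-type formula'' you invoke has no meaning as written. Your parenthetical remark about cutting off against a compact $x$-set acknowledges the problem but does not solve it: a spatial cutoff composed with a Fourier projection is no longer a projection, and making that idea precise is exactly where the work lies. The paper resolves this by abandoning spectral projections entirely and instead building an explicit orthonormal system $u_m(x)=(2\pi)^{-d/2}\phi(x)e^{i\langle x,m\rangle}$, $m\in\mathbb Z^d$, from a carefully chosen compactly supported $\phi$ (Lemma~\ref{lem:basis0}). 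The restriction of $T$ to the span $\mathcal H_l$ of these vectors is then shown to be $V_l$-modulated for a genuinely compact $V_l\in\mathcal L_{1,\infty}$ (Lemma~\ref{compact set modulated}), which permits Theorem~\ref{connection} to deliver $\sum\lambda_j(T)-\sum(Tu_{m_j},u_{m_j})=O(1)$. A separate computation (Lemma~\ref{eigen second part}) then matches $\sum(Tu_{m_j},u_{m_j})$ to the symbol integral.

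Your second step, the ``off-diagonal blocks are commutators'' reduction, is also too vague to stand: the multiplier you describe is not bounded, and the uniform $\mathcal L_{1,\infty}$ control you need does not follow from block-norm decay alone. The paper bypasses this by factoring $T=AS$ with $A\in\mathcal L_{p,\infty}=\Com\mathcal L_{p,\infty}$ and $S\in\mathcal L_{q,\infty}$ Hermitian (Theorem~\ref{main}, Lemma~\ref{maincrux}), so that $T-\diag\{(Te_n,e_n)\}$ lands in $\Com\mathcal L_{1,\infty}$ directly, with no block decomposition required.
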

The theorem is Theorem~\ref{eigen} in the text, which is shown for the so-called Laplacian modulated operators, and we have stated here the special
case for compactly supported pseudo-differential operators.  Given Theorem* 2 the proof
of Theorem* 1 follows, as indicated in Section~\ref{sec:modtraces}.

The following corollaries to Theorem* 1 are also proven in the text.

\medskip The result
\begin{equation} \label{intro:key2}
\int_{\mathbb R^d} \int_{|\xi| \leq n^{1/d}} p(x,\xi) dx d\xi = 
\frac{1}{d} \Res_W(P) \log n + O(1)
\end{equation}
for a classical pseudo-differential operator $P$ demonstrates that
the residue in Definition*
is an extension of the noncommutative residue and, from Theorem* 1(iii) we obtain:
\begin{Thm*}[Connes' trace theorem]
Let $P : C_c^\infty(\mathbb R^d) \to C_c^\infty(\mathbb R^d)$ be a classical compactly based pseudo-differential operator of order $-d$
with noncommutative residue $\Res_W(P)$. Then (the extension) $P \in \mathcal L_{1,\infty}$ and
$$
\tau(P) = \frac{1}{d(2\pi)^d} \Res_W(P)
$$
for every trace $\tau$ on $\mathcal L_{1,\infty}$ with $\tau( \diag \{ k^{-1} \}_{k=1}^\infty )=1$.
\end{Thm*}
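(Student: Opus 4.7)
The plan is to deduce the statement from Theorem* 1(iii) by showing that, for a classical compactly based pseudo-differential operator $P$ of order $-d$, the residue $\Res(P)$ in the sense of Definition* is scalar, equals the Wodzicki residue $\Res_W(P)$, and is attained with the stronger remainder $O(1)$ required by part (iii). Once this identification is in place, Theorem* 1(iii) immediately yields $\tau(P) = \frac{1}{d(2\pi)^d}\Res_W(P)$ for every trace $\tau$ on $\mathcal L_{1,\infty}$ with $\tau(\diag\{k^{-1}\}_{k=1}^\infty) = 1$, and the membership $P \in \mathcal L_{1,\infty}$ is already part of Theorem* 1.

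The central step is to establish the $O(1)$ asymptotic
$$\int_{\mathbb R^d} \int_{|\xi|\le n^{1/d}} p(x,\xi)\, d\xi\, dx = \frac{1}{d}\Res_W(P)\,\log n + O(1).$$
I would expand the (total) symbol as $p \sim \sum_{k\ge 0} p_{-d-k}$, with $p_{-d-k}(x,\xi)$ positively homogeneous of degree $-d-k$ in $\xi$ for $|\xi|\ge 1$, all compactly supported in $x$. Homogeneity gives, for $R \ge 1$,
$$\int_{1\le |\xi|\le R} p_{-d}(x,\xi)\, d\xi = \log R \cdot \int_{S^{d-1}} p_{-d}(x,\omega)\, d\sigma(\omega),$$
whose $x$-integral is exactly $\frac{1}{d}\Res_W(P)\log n$ with $R = n^{1/d}$. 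The part over $|\xi|\le 1$ contributes a bounded quantity because $x$ ranges over a compact set. Each lower-order homogeneous component $p_{-d-k}$, $k\ge 1$, has an integrable tail at infinity, so $\int_{|\xi|\le R} p_{-d-k}\, d\xi$ converges as $R \to \infty$ and contributes $O(1)$. Finally, by truncating the asymptotic expansion at sufficiently large $k$, the smoothing remainder lies in a symbol class of order strictly less than $-d$ and is therefore absolutely integrable in $\xi$, again contributing $O(1)$ after integration over the compact $x$-support.

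With this estimate established, Definition* gives $\Res(P) = \Res_W(P)$ as a scalar class in $\ell_\infty/c_0$, and the hypothesis of Theorem* 1(iii) is satisfied, so the theorem's conclusion is exactly the desired formula. The principal technical obstacle is precisely the $O(1)$ control of the symbol integral, rather than merely $o(\log n)$: this requires genuine use of the classical asymptotic expansion of $p$, not just its principal symbol, together with the standard symbol seminorm estimates and the hypothesis that $P$ is compactly based. The rest of the argument is bookkeeping: identifying the coefficient of $\log n$ as the Wodzicki residue via the standard description $\Res_W(P) = \int \int_{S^{d-1}} p_{-d}(x,\omega)\, d\sigma(\omega)\, dx$, and then feeding the result into the already-proven Theorem* 1(iii).
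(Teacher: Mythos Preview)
Your proposal is correct and follows essentially the same route as the paper. The paper proves this as Corollary~\ref{cor:CTTRd}: it invokes Proposition~\ref{prop:wodres} (whose proof carries out exactly the computation you outline---reducing to the principal symbol via the $O(1)$ contribution of lower-order terms, then integrating $p_{-d}$ in polar coordinates to extract $\frac{1}{d}\Res_W(P)\log n + O(1)$), and then applies Theorem~\ref{CTTv1}(iii), which is Theorem*~1(iii). The only minor difference is that the paper handles all sub-principal terms in one stroke by observing $p - p_{-d} \in S^{-d-1}_{\mathrm{base}}$ already gives an integrable tail (Lemma~\ref{lemma:princemap} via \eqref{compactbase4}), so your separate treatment of each $p_{-d-k}$ before truncating is harmless but unnecessary.
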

This result is Corollary~\ref{cor:CTTRd} in the text.
We show the same result for manifolds, Corollary~\ref{CTToriginalmanifold}.

\medskip In the text we construct a pseudo-differential operator $Q$ whose residue
$\Res(Q)$ is not scalar.  Using Theorem* 1(ii) we obtain:
\begin{Thm*}[Pseudo-differential operators do not have unique trace]
There exists a compactly based pseudo-differential operator $Q : C_c^\infty(\mathbb R^d) \to C_c^\infty(\mathbb R^d)$ of order $-d$ such that the value
$\mathrm{Tr}_\omega(Q)$ depends on the Dixmier trace $\mathrm{Tr}_\omega$.
\end{Thm*}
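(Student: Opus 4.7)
By Theorem* 1(ii), it suffices to exhibit a compactly based pseudo-differential operator $Q$ of order $-d$ whose residue $\Res(Q) \in \ell_\infty/c_0$ fails to be scalar; equivalently, for which the sequence
$$c_n := \frac{d}{\log(1+n)} \int_{\mathbb R^d} \int_{|\xi| \leq n^{1/d}} q(x,\xi)\,d\xi\,dx$$
does not converge in $\mathbb C$. Once such a $Q$ is in hand, any two dilation-invariant states $\omega_1,\omega_2$ taking distinct values on $[c_n]$ give, via Theorem* 1(i), Dixmier traces with $\Tr_{\omega_1}(Q) \neq \Tr_{\omega_2}(Q)$.

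My approach is to separate variables and hide the obstruction in the radial part of the symbol. Fix $\phi \in C_c^\infty(\mathbb R^d)$ with $\int \phi\,dx = 1$, fix a smooth radial cutoff $\chi$ with $\chi(\xi) = 0$ for $|\xi| \leq 1/2$ and $\chi(\xi) = 1$ for $|\xi| \geq 1$, and set
$$q(x,\xi) := \phi(x)\,\chi(\xi)\,|\xi|^{-d}\,b(\log|\xi|),$$
where $b \in C^\infty(\mathbb R)$ is bounded with bounded derivatives of all orders. Each $\xi$-derivative falling on $|\xi|^{-d}$ or on $b(\log|\xi|)$ contributes at worst a factor $|\xi|^{-1}$, so $q$ lies in the H\"ormander class $S^{-d}$ and $Q := q(x,D)$ is a compactly based pseudo-differential operator of order $-d$. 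Passing to polar coordinates and substituting $t := \log r$ yields, for $n$ large,
$$\int_{\mathbb R^d} \int_{|\xi| \leq n^{1/d}} q(x,\xi)\,d\xi\,dx = |\mathbb S^{d-1}| \int_0^{(\log n)/d} b(t)\,dt + O(1),$$
so, writing $T := (\log n)/d$,
$$c_n = \frac{|\mathbb S^{d-1}|}{T}\int_0^T b(t)\,dt + o(1).$$

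The crux — and the only real obstacle — is to choose $b$ whose Ces\`aro averages do not converge, since the obvious bounded candidates (periodic, almost periodic, or monotone $b$) all produce a limit. Oscillation on a logarithmic scale does the trick: take
$$b(t) := \sin\bigl(\log(1+|t|)\bigr).$$
Every derivative of $b$ is a product of $\sin$ or $\cos$ of $\log(1+|t|)$ with polynomials in $1/(1+|t|)$, hence uniformly bounded. Using $\int e^u \sin u\,du = \tfrac12 e^u(\sin u - \cos u)$ after the substitution $u = \log(1+t)$,
$$\frac{1}{T}\int_0^T \sin(\log(1+t))\,dt = \tfrac12\bigl(\sin\log T - \cos\log T\bigr) + O(1/T),$$
which tends to $-1/2$ along $T = e^{2\pi k}$ and to $+1/2$ along $T = e^{2\pi k + \pi/2}$. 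Consequently $c_n$ has distinct subsequential limits, $\Res(Q)$ is not scalar, and Theorem* 1(ii) supplies Dixmier traces disagreeing on $Q$.
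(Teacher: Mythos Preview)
Your approach is essentially the paper's own: both constructions take a symbol of the form $\phi(x)\,|\xi|^{-d}$ times a bounded factor oscillating like $\sin(\log\log|\xi|)$ (the paper uses $\sin\log\log|\xi|+\cos\log\log|\xi|$ so that the antiderivative is exactly $(\log r)\sin\log\log r$), verify membership in $S^{-d}$, and conclude via Theorem*~1(ii) that a non-scalar residue forces distinct Dixmier traces. One small slip to fix: your $b(t)=\sin(\log(1+|t|))$ fails to be $C^1$ at $t=0$, and since $\chi(\xi)=1$ already at $|\xi|=1$ the point $\log|\xi|=0$ lies in the support of your symbol; simply shift the cutoff so that $\chi$ vanishes for $|\xi|\le e$ (as the paper does), or drop the absolute value and use $b(t)=\sin(\log(1+t))$, and the rest of your computation goes through verbatim.
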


The operator $Q$ is nothing extravagant, one needs only to interrupt the homogeneity of the principal symbol, see Corollary~\ref{cor:nonmeas} in the text.
There is a similar example on closed manifolds, Corollary~\ref{cor:nonmeasX}.
In summary the pseudo-differential operators of order $-d$ form quite good examples for the theory of singular traces.  Some operators, including classical ones, have the
same value for every trace.  Others have distinct trace even for the smaller set of Dixmier traces.  Theorem* 4 shows that the qualifier classical cannot be omitted from the statement of Theorem* 3.

\medskip The Laplacian modulated operators we introduce
are a wide enough class to admit the operators
$M_f (1-\Delta)^{-d/2}$ where $f \in L_{2}(\mathbb R^d)$ (almost) has compact support, $M_f u(x) = f(x)u(x)$, $u \in C_c^\infty(\mathbb R^d)$, and $\Delta$ is the Laplacian on $\mathbb R^d$.  Using Theorem~\ref{CTTv1}(iii) (the version of Theorem* 1(iii) for Laplacian modulated operators) we prove Corollary~\ref{L2trace} in the text:
\begin{Thm*}[Integration of square integrable functions]
If $f \in L_{2}(\mathbb R^d)$ has compact support then $M_f (1-\Delta)^{-d/2} \in \mathcal L_{1,\infty}$ such that
$$
\tau(M_f (1-\Delta)^{-d/2})= \frac{\mathrm{Vol} \mathbb S^{d-1}}{d(2\pi)^d} \int_{\mathbb R^d} f(x)dx
$$
for every trace $\tau$ on $\mathcal L_{1,\infty}$ with $\tau( \diag \{ k^{-1} \}_{k=1}^\infty )=1$.
\end{Thm*}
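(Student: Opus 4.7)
The plan is to apply Theorem~\ref{CTTv1}(iii) (the Laplacian-modulated version of Theorem* 1(iii)) to the operator $P:=M_f(1-\Delta)^{-d/2}$, whose symbol is the function $p(x,\xi)=f(x)(1+|\xi|^2)^{-d/2}$. Since the conclusion asks for $\tau(P)$ to be the same scalar for \emph{every} normalised trace $\tau$, and Theorem~\ref{CTTv1}(iii) characterises exactly this situation by the $O(1)$ refinement
$$
\int_{\mathbb R^d}\!\int_{|\xi|\le n^{1/d}} p(x,\xi)\,d\xi\,dx = \tfrac{1}{d}\,\Res(P)\,\log n + O(1),
$$
the whole proof reduces to (a) checking that $P$ belongs to the class of Laplacian modulated operators treated in the text and (b) establishing the above asymptotic with a scalar residue.

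For (a), $M_f$ is bounded on $L_2(\mathbb R^d)$ (since $f$ has compact support, $M_f$ even maps $L_2$ to $L_1\cap L_2$), and the convolution kernel of $(1-\Delta)^{-d/2}$, having Fourier transform $(1+|\xi|^2)^{-d/2}$, lies in $L_2(\mathbb R^d)$ because $\int(1+|\xi|^2)^{-d}d\xi<\infty$ for every $d\ge 1$; hence $P$ is Hilbert--Schmidt and compactly based. I would then invoke the definition of Laplacian modulated operator developed earlier in the paper to verify that $p(x,\xi)=f(x)(1+|\xi|^2)^{-d/2}$ satisfies the required estimate on differences $p(x,\xi)-p(x,\eta)$ after convolution with the heat-type factor coming from $(1-\Delta)$; the point is that the $\xi$-dependence is identical to the classical symbol of $(1-\Delta)^{-d/2}$, and the $x$-dependence only enters through the bounded $L_2$ multiplier $f$, which is allowed because the modulated class does not require smoothness in $x$.

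For (b) the computation is direct. Since $f$ has compact support, Fubini gives
$$
\int_{\mathbb R^d}\!\int_{|\xi|\le n^{1/d}} f(x)(1+|\xi|^2)^{-d/2}\,d\xi\,dx
= \Bigl(\int_{\mathbb R^d} f(x)\,dx\Bigr)\cdot \mathrm{Vol}\,\mathbb S^{d-1}\!\int_0^{n^{1/d}}\! \frac{r^{d-1}}{(1+r^2)^{d/2}}\,dr.
$$
The radial integrand equals $r^{-1}(1+O(r^{-2}))$ as $r\to\infty$, so the radial integral is $\log(n^{1/d})+O(1)=\tfrac{1}{d}\log n+O(1)$. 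Therefore the left hand side equals $\tfrac{1}{d}\bigl(\mathrm{Vol}\,\mathbb S^{d-1}\int f\bigr)\log n+O(1)$, giving the scalar residue $\Res(P)=\mathrm{Vol}\,\mathbb S^{d-1}\int_{\mathbb R^d} f(x)\,dx$. Feeding this into Theorem~\ref{CTTv1}(iii) yields exactly the claimed formula and simultaneously the membership $P\in\mathcal L_{1,\infty}$.

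The main obstacle is step (a): $f\in L_2$ is genuinely nonsmooth, so $P$ is not a pseudo-differential operator in any classical sense, and one must appeal to the structural flexibility of the Laplacian modulated class introduced in the text to accommodate a symbol that is merely square integrable (and compactly supported) in the spatial variable. Once this admissibility is justified, the residue computation is a one-line estimate and the theorem follows immediately from Theorem~\ref{CTTv1}(iii).
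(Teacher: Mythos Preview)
Your overall strategy---verify that $P=M_f(1-\Delta)^{-d/2}$ is compactly based and Laplacian modulated, compute the scalar residue via the radial integral, and invoke Theorem~\ref{CTTv1}(iii)---is exactly the paper's route, and your computation in (b) is essentially the paper's Proposition~\ref{MultSqFnPropInt}. However, there are genuine gaps.

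First, two correctable confusions in step (a). The operator $M_f$ is \emph{not} bounded on $L_2(\mathbb R^d)$ for a general compactly supported $f\in L_2$ (take $f(x)=|x|^{-d/3}$ near the origin). The paper instead factors $P$ as $L_2\xrightarrow{(1-\Delta)^{-d/2}} L_\infty \xrightarrow{M_f} L_2$, using that $\langle\xi\rangle^{-d}\in L_2$; see Proposition~\ref{MultSqFnProp}. Also, your description of the Laplacian modulated condition (``differences $p(x,\xi)-p(x,\eta)$'', ``heat-type factor'') does not match the paper's definition: by Proposition~\ref{Laplacemodulated} the condition is the $L_2$ tail estimate $\int_{\mathbb R^d}\int_{|\xi|\ge t}|p(x,\xi)|^2\,d\xi\,dx=O(t^{-d})$, which for $p(x,\xi)=f(x)\langle\xi\rangle^{-d}$ is immediate.

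The more serious gap is your last sentence: Theorem~\ref{CTTv1} does \emph{not} yield the membership $P\in\mathcal L_{1,\infty}$; it assumes it as a hypothesis. Theorem~\ref{eigen} does give $\mathcal L_{1,\infty}$ membership, but only for compactly \emph{supported} Laplacian modulated operators, and $P$ is only compactly \emph{based} (the kernel of $(1-\Delta)^{-d/2}$ is not compactly supported)---see the remark immediately following Theorem~\ref{eigen}. The paper closes this gap by citing an external result of Birman--Karadzhov--Solomyak for the inclusion $M_f(1-\Delta)^{-d/2}\in\mathcal L_{1,\infty}$ when $f\in L_2$ has compact support. You need either that citation or an independent argument for this membership before Theorem~\ref{CTTv1}(iii) can be applied.
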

The same statement can be made for closed manifolds, omitting of course the requirement for compact support of $f$, and with the Laplace-Beltrami operator in place of the ordinary Laplacian, Corollary~\ref{cor:LDS}.

\medskip Finally, through results on modulated operators, specifically Theorem~\ref{connection},
we obtain the following spectral formula for the noncommutative residue
on a closed manifold, Corollary~\ref{cor:wodresspectral}.  The eigenvalue part of this formula was observed by T.~Fack,~\cite{Fackcomm2004}, and proven in \cite[Corollary~2.14]{AS2005} (i.e.~the log divergence of the series of eigenvalues listed with multiplicity and ordered so that their absolute value is decreasing is equal to the noncommutative residue). 
\begin{Thm*}[Spectral formula of the noncommutative residue]
Let $P$ be a classical pseudo-differential operator of order $-d$ on a closed $d$-dimensional manifold $(X,g)$.  Then
$$
d^{-1}(2\pi)^{-d}   \; \Res_W(P) = \lim_n \frac{1}{\log n} \sum_{j=1}^n (Pe_j,e_j)
= \lim_n \frac{1}{\log n} \sum_{j=1}^n \lambda_j(P)
$$
where $\{\lambda_j(P)\}_{j=1}^\infty$
are the eigenvalues of $P$ with multiplicity in any order so that $|\lambda_j(P)|$ is decreasing,
$(\cdot, \cdot)$ is the inner product on $L^2(X,g)$, and $(e_j)_{j=1}^\infty$ is an orthonormal basis of eigenvectors of the Hodge-Laplacian
$-\Delta_g$ (the negative of the Laplace-Beltrami operator) such that $-\Delta_g e_j = \lambda_j e_j$, $\lambda_1 \leq \lambda_2 \leq \ldots$
are increasing.
\end{Thm*}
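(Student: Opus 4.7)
The plan is to derive both limits from the eigenvalue asymptotic of Theorem~\ref{eigen} combined with the classical log-divergence of the symbol integral. Since $P$ is a classical pseudo-differential operator of order $-d$, Theorem~\ref{connection} places $P$ among the Laplacian modulated operators on $(X,g)$, so the modulated-operator machinery of the paper applies.

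For the eigenvalue sum, I would apply the manifold version of Theorem~\ref{eigen} to obtain
$$
\sum_{j=1}^n \lambda_j(P) = \frac{1}{(2\pi)^d}\int_X\int_{|\xi|\le n^{1/d}} p(x,\xi)\,d\xi\,d\mu_g(x) + O(1),
$$
where $p$ is the local symbol of $P$. Because $P$ is classical, the homogeneity of its principal symbol, together with equation~\eqref{intro:key2} adapted to the manifold setting, rewrites the symbol integral as $d^{-1}\Res_W(P)\log n + O(1)$. Dividing by $\log n$ and passing to the limit yields the second equality of the theorem, recovering the result of Fack and \cite{AS2005}.

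For the diagonal sum $\sum_{j=1}^n (Pe_j,e_j) = \Tr(PE_n)$, where $E_n$ is the spectral projection of $-\Delta_g$ onto $\mathrm{span}(e_1,\ldots,e_n)$, I would appeal to the same Laplacian modulated framework. Theorem~\ref{connection} and the internal content of Theorem~\ref{eigen} should provide the companion asymptotic
$$
\sum_{j=1}^n (Pe_j,e_j) = \frac{1}{(2\pi)^d}\int_X\int_{|\xi|\le n^{1/d}} p(x,\xi)\,d\xi\,d\mu_g(x) + O(1),
$$
which, via the same classical symbol identity, equals $d^{-1}(2\pi)^{-d}\Res_W(P)\log n + O(1)$, delivering the first equality after division by $\log n$.

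The main obstacle is the $O(1)$ comparison between $\Tr(PE_n)$ and $\sum_{j\le n}\lambda_j(P)$. By Theorem~\ref{K} this reduces to showing that the difference between $P$ and its $(e_j)$-diagonal part lies in $\Com\,\mathcal{L}_{1,\infty}$, i.e.\ that the partial sums of eigenvalues and of diagonal entries agree to order $O(1)$. This is precisely where the Laplacian modulated hypothesis is decisive: the fine control of $P$ against spectral projections of $-\Delta_g$ (the structure exploited in Theorem~\ref{connection}) should be strong enough to force the off-diagonal blocks, suitably rearranged, into $\Com\,\mathcal{L}_{1,\infty}$. Everything else is a bookkeeping consequence of Theorem~\ref{eigen} and the classical identity~\eqref{intro:key2}; the genuine (rather than merely Banach) convergence of both limits is ensured by the strengthening from $o(\log n)$ to $O(1)$ in these asymptotics.
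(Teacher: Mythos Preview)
Your approach is essentially the paper's own, and the ingredients you identify are the right ones. Two small corrections will tighten it.

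First, the fact that a classical pseudo-differential operator $P$ of order $-d$ is Hodge-Laplacian modulated is not part of Theorem~\ref{connection}; it is Proposition~\ref{ConnesPSDO} (write $P=(P(1-\Delta_g)^{d/2})(1-\Delta_g)^{-d/2}$ and use Lemma~\ref{lem:VinV} together with the bimodule property Lemma~\ref{lem:bimoduleman}). Theorem~\ref{connection} is the abstract statement about $V$-modulated operators that you then apply with $V=(1-\Delta_g)^{-d/2}$.

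Second, you overcomplicate what you call the ``main obstacle''. The $O(1)$ comparison
\[
\sum_{j=1}^n \lambda_j(P) - \sum_{j=1}^n (Pe_j,e_j) = O(1)
\]
is exactly the content of Theorem~\ref{connection}(iii); you do not need to invoke Theorem~\ref{K} separately or argue about off-diagonal blocks. Once you have this, the proof of Proposition~\ref{prop:wodres2} (which combines Theorem~\ref{globalres}, the manifold analogue of Theorem~\ref{eigen}, with the local classical computation of Proposition~\ref{prop:wodres}) gives
\[
\sum_{j=1}^n (Pe_j,e_j) = \frac{1}{d(2\pi)^d}\Res_W(P)\log n + O(1),
\]
and both limits follow immediately on dividing by $\log n$. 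This is precisely how the paper structures Corollary~\ref{cor:wodresspectral}, whose proof is omitted there as immediate.
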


Theorems* 1-6 are the main results of the text.

\section{Preliminaries}

Let $\mathcal H$ be a separable Hilbert space with inner product complex linear in the first variable and let $\mathcal B(\mathcal H)$ (respectively, $\mathcal K(\mathcal H)$) denote the bounded (respectively, compact) linear operators on $\mathcal H$.
If $(e_n)_{n=1}^{\infty}$ is a fixed orthonormal basis of $\mathcal H$
and $\{ a_n \}_{n=1}^\infty$ is a sequence of complex numbers define the operator 
$$
\diag\{a_n\}_{n=1}^{\infty} := \sum_{n=1}^\infty a_ne_ne_n^*,
$$ 
where $e_n^*(h) := (h,e_n)$, $h \in \mathcal H$, and $( \cdot , \cdot)$ denotes the inner product.
The Calkin space $\diag(\mathcal I)$ associated to a two-sided ideal $\mathcal I$ of compact operators is the sequence space
$$
\diag ( \mathcal I) := \{ \{a_n\}_{n=1}^{\infty} | \diag\{a_n\}_{n=1}^{\infty}\in \mathcal I \}.
$$
The Calkin space is independent of the choice of orthonormal basis and an operator $T\in\mathcal I$ if and only if the sequence $\{s_n(T)\}_{n=1}^{\infty}$ of its singular values belongs to $\diag(\mathcal I)$, \cite{Calkin1941}, \cite[\S 2]{S}.

The non-zero eigenvalues of a compact operator $T$ form either a sequence converging to $0$ or a finite set.  In the former case we define an {\it eigenvalue sequence} for $T$ as the sequence of eigenvalues $\{ \lambda_n(T) \}_{n=1}^{\infty}$, each repeated according to algebraic multiplicity, and arranged in an order (not necessarily unique) such that $\{ |\lambda_n(T)| \}_{n=1}^{\infty}$ is decreasing (see, \cite[p.~7]{S}).  In the latter case we construct a similar finite sequence $\{ \lambda_n(T) \}_{n=1}^N$ of the nonzero eigenvalues and then set $\lambda_n(T)=0$ for $n>N.$   The appearance of eigenvalues will always imply they are ordered as to form an eigenvalue sequence.  For a normal compact operator $T$,
$|\lambda_n(T)| = \lambda_n(|T|) = s_n(T)$, $n \in \mathbb N$, for any eigenvalue sequence
$\{ \lambda_n(T) \}_{n=1}^\infty$.  This implies that $\{ \lambda_n(T) \}_{n=1}^\infty \in \diag( \mathcal I)$ for a normal operator $T \in \mathcal I$.
The following well-known lemma will be useful so we provide the proof for completeness.

\begin{lemma} \label{essprop}
Suppose $\diag (\mathcal I)$ is a Calkin space and $\nu \in \diag (\mathcal I)$ is a positive sequence.  If $a:=\{ a_n \}_{n=1}^\infty$ is a complex-valued sequence such that $|a_n| \leq \nu_n$ for all $n \in \mathbb N$, then $a \in \diag (\mathcal I)$.
\end{lemma}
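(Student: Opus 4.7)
The plan is to exhibit $\diag\{a_n\}_{n=1}^\infty$ as a product of a bounded operator with $\diag\{\nu_n\}_{n=1}^\infty$ and then invoke the two-sided ideal property of $\mathcal{I}$.

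First I would observe that since $\nu$ is a positive sequence and $|a_n|\le \nu_n$, one has $a_n=0$ whenever $\nu_n=0$. This lets me legitimately define a bounded quotient sequence
\[
b_n := \begin{cases} a_n/\nu_n, & \nu_n>0, \\ 0, & \nu_n=0, \end{cases}
\]
which satisfies $|b_n|\le 1$ for all $n$. Consequently the diagonal operator $B:=\diag\{b_n\}_{n=1}^\infty$ is bounded on $\mathcal{H}$ with $\|B\|\le 1$, and by construction $a_n = b_n\nu_n$ for every $n$, so
\[
\diag\{a_n\}_{n=1}^\infty \;=\; B\cdot \diag\{\nu_n\}_{n=1}^\infty.
\]

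Next I would use the hypothesis $\nu\in\diag(\mathcal{I})$, which means $\diag\{\nu_n\}_{n=1}^\infty\in\mathcal{I}$. Since $\mathcal{I}$ is a two-sided ideal of $\mathcal{B}(\mathcal{H})$ and $B\in\mathcal{B}(\mathcal{H})$, the product $B\cdot\diag\{\nu_n\}_{n=1}^\infty$ again lies in $\mathcal{I}$. Therefore $\diag\{a_n\}_{n=1}^\infty\in\mathcal{I}$, which by definition of the Calkin space says $a\in\diag(\mathcal{I})$.

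There is no real obstacle here: the only mild subtlety is handling indices where $\nu_n$ vanishes (where the naive quotient $a_n/\nu_n$ is undefined), but the domination hypothesis forces $a_n=0$ at those indices, so setting $b_n:=0$ there preserves boundedness and the factorisation. Everything else is just the defining property of a two-sided ideal.
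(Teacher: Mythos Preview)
Your proof is correct and follows essentially the same approach as the paper: define the bounded quotient sequence $b_n$ (handling $\nu_n=0$ by setting $b_n=0$), observe $\diag\,a = \diag\,b \cdot \diag\,\nu$, and invoke the ideal property.
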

\begin{proof}
Set, for $n \in \mathbb N$,
$b_n := \frac{a_n}{\nu_n}$ if $\nu_n \not=0$
and $b_n := 0$ if $\nu_n = 0$.
Then $b := \{ b_n \}_{n=1}^\infty \in \ell_\infty$.
Hence $\diag \, b \in \mathcal B(\mathcal H)$ and $\diag \, a = \diag (b \cdot \nu) = \diag \, b \cdot \diag \, \nu \in \mathcal I$ since $\mathcal I$ is an ideal.
 \end{proof}

\begin{corollary} \label{esscor}
Suppose $T \in \mathcal I$ is normal.  Then $\{ \lambda_n(T) \}_{n=1}^\infty \in \diag (\mathcal I)$ where $\{ \lambda_n(T) \}_{n=1}^\infty$ is an eigenvalue sequence of $T$.
\end{corollary}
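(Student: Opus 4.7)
The proof will be essentially a one-line application of Lemma~\ref{essprop}, using the identity between moduli of eigenvalues and singular values for normal compact operators that is already recorded in the preamble just before Lemma~\ref{essprop}.

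My plan is as follows. Since $T \in \mathcal I$, the singular value sequence $\nu := \{s_n(T)\}_{n=1}^\infty$ lies in $\diag(\mathcal I)$ by the characterisation of Calkin spaces recalled from \cite{Calkin1941} and \cite[\S 2]{S}. This $\nu$ is a positive sequence. For $T$ normal compact, one has $|\lambda_n(T)| = \lambda_n(|T|) = s_n(T) = \nu_n$ for any eigenvalue sequence, as noted in the text. Set $a_n := \lambda_n(T)$; then $|a_n| \le \nu_n$ trivially. Lemma~\ref{essprop} now immediately gives $\{\lambda_n(T)\}_{n=1}^\infty \in \diag(\mathcal I)$.

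There is essentially no obstacle: the corollary is a direct instantiation of Lemma~\ref{essprop}, and the only thing being invoked beyond the lemma is normality, which is exactly what guarantees the pointwise domination $|\lambda_n(T)| = s_n(T)$. In particular one does not need to verify independence of the choice of orthonormal basis used to define $\diag$, nor to worry about the ambiguity in the ordering of the eigenvalue sequence, because Lemma~\ref{essprop} is a pointwise statement about the moduli of the entries.
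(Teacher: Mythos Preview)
Your proof is correct and essentially identical to the paper's own proof: set $\nu_n := s_n(T) \in \diag(\mathcal I)$, use $|\lambda_n(T)| = s_n(T)$ for normal $T$, and apply Lemma~\ref{essprop}.
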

\begin{proof}
Using the spectral theorem for normal operators,
$|\lambda_n(T)| = s_n(T)$, $n \in \mathbb N$.
Hence $|\lambda_n(T)| \leq \nu_n$ where $\nu_n := s_n(T) \in \diag( \mathcal I)$
is positive.  By Lemma~\ref{essprop} $\{ \lambda_n(T) \}_{n=1}^\infty \in \diag (\mathcal I)$.
 \end{proof}

The statement that $\{\lambda_n(T)\}_{n=1}^{\infty} \in \diag(\mathcal I)$
for every $T\in\mathcal I$ is false in general.
Geometrically stable ideals were introduced by Kalton, \cite{Kalton1998}.  A two-sided ideal $\mathcal I$ is called {\it geometrically stable} if given any decreasing nonnegative sequence $\{s_n\}_{n=1}^{\infty}\in\diag(\mathcal I)$ we have $\{(s_1s_2\ldots s_n)^{1/n}\}_{n=1}^{\infty}\in\diag(\mathcal I)$.   It is a theorem of Kalton and Dykema that $\mathcal I$ is geometrically stable if and only if $\{\lambda_n(T)\}_{n=1}^{\infty} \in \diag(\mathcal I)$ for all $T \in \mathcal I$, \cite[Theorem 1.3]{DykemaKalton1998}.  An ideal $\mathcal I$ is called Banach (respectively, quasi-Banach) if there is a norm (respectively, quasi-norm) $\|\cdot\|_{\mathcal I}$ on $\mathcal I$ such that $(\mathcal I,\|\cdot\|_{\mathcal I})$ is complete and we have
$ \|ATB\|_{\mathcal I}\le \|A\|_{\mathcal B(\mathcal H)}\|T\|_{\mathcal I}\|B\|_{\mathcal B(\mathcal H)}$, $A,B\in\mathcal B(\mathcal H)$, $T\in\mathcal I.$   Equivalently, $\diag (\mathcal I )$ is a Banach (respectively, quasi-Banach) symmetric sequence space, see e.g.~\cite{S}, \cite{LindenstraussTzafriri1977}, \cite{KS2008_Creolle}.
Every quasi-Banach ideal is geometrically stable, \cite{Kalton1998}.  An example of a non-geometrically stable ideal is given
in~\cite{DykemaKalton1998}.

If $\mathcal I_1$ and $\mathcal I_2$ are ideals we denote by $\mathcal I_1\mathcal I_2$ the ideal generated by all products $AB,BA$ for $A\in\mathcal I_1$ and $B\in\mathcal I_2.$
If $A,B\in\mathcal B(\mathcal H)$ we let $[A,B]=AB-BA$.  We define $[\mathcal I_1, \mathcal I_2]$ to be the linear span of all $[A,B]$ for $A\in\mathcal I_1$ and $B\in\mathcal I_2.$  
It is a theorem that $[\mathcal I_1, \mathcal I_2] = [\mathcal I_1 \mathcal I_2, \mathcal B (\mathcal H)]$, \cite[Theorem 5.10]{DykemaFigielWeissWodzicki2004}. The space $\Com \mathcal I:=[\mathcal I,\mathcal B(\mathcal H)] \subset \mathcal I$ is called the commutator subspace of an ideal $\mathcal{I}$.

\section{A theorem on the commutator subspace}

There is a fundamental description of the normal operators $T\in \Com \mathcal I$ given by Dykema, Figiel, Weiss and Wodzicki, \cite{DykemaFigielWeissWodzicki2004}, see also \cite[Theorem 3.1]{Kalton1998}.

\begin{theorem} \label{DFWW}  Suppose $\mathcal I$ is a two-sided ideal in $\mathcal K(\mathcal H)$
and $T\in\mathcal I$ is normal.  Then the following statements are equivalent:
\begin{enumerate}[(i)]
\item $T\in\Com\mathcal I$;
\item for any eigenvalue sequence $\{ \lambda_n(T) \}_{n=1}^\infty$,
\begin{equation}\label{eq:DFWW}
\left\{ \frac1n \sum_{j=1}^n \lambda_j(T) \right \}_{n=1}^{\infty}\in \diag(\mathcal I);
\end{equation}
\item for any eigenvalue sequence $\{ \lambda_n(T) \}_{n=1}^\infty$,
\begin{equation}\label{eq:DFWW2}
\frac1n \left| \sum_{j=1}^n \lambda_j(T) \right| \leq \mu_n
\end{equation}
for a positive decreasing sequence $\mu = \{ \mu_n \}_{n=1}^\infty \in \diag (\mathcal I)$.
\end{enumerate}
\end{theorem}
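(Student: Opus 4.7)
The plan is to reduce (i) $\Leftrightarrow$ (ii) to the known Dykema--Figiel--Weiss--Wodzicki description of the commutator subspace for normal operators, and to dispatch (ii) $\Leftrightarrow$ (iii) by an elementary monotonicity observation. The equivalence (i) $\Leftrightarrow$ (ii) is simply the normal--operator case of the DFWW theorem \cite[Theorem~5.10]{DykemaFigielWeissWodzicki2004} (equivalently \cite[Theorem~3.1]{Kalton1998}), which I would invoke directly. The implication (iii) $\Rightarrow$ (ii) is immediate from Lemma~\ref{essprop}, which devours the hypothesised positive dominating sequence in $\diag(\mathcal I)$ and returns membership of the dominated sequence.

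The only step that requires real work is (ii) $\Rightarrow$ (iii). Write $a_n := \frac{1}{n}\sum_{j=1}^n \lambda_j(T)$. The identity $(n+1)a_{n+1} = n a_n + \lambda_{n+1}$, coupled with $|\lambda_{n+1}| = s_{n+1}(T)$ (which is precisely where normality enters, via Corollary~\ref{esscor}), yields the convex-combination bound
$$
|a_{n+1}| \;\le\; \frac{n\,|a_n| + s_{n+1}(T)}{n+1} \;\le\; \max\bigl(|a_n|,\; s_{n+1}(T)\bigr).
$$
Iterating this inequality and using that $\{s_n(T)\}$ is decreasing produces $|a_k| \le \max(|a_n|, s_{n+1}(T))$ for every $k \ge n$. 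Consequently the decreasing envelope
$$
\mu_n \;:=\; \sup_{k \ge n} |a_k|
$$
is well defined, positive, decreasing, dominates $|a_n|$, and obeys $\mu_n \le |a_n| + s_{n+1}(T)$. Since $\{|a_n|\} \in \diag(\mathcal I)$ by (ii) and $\{s_n(T)\} \in \diag(\mathcal I)$ because $T \in \mathcal I$, Lemma~\ref{essprop} places $\mu \in \diag(\mathcal I)$, which is (iii).

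The one non-routine point is spotting the pointwise recursion $|a_{n+1}| \le \max(|a_n|, s_{n+1}(T))$; after that, the construction of $\mu$ and the closing appeal to Lemma~\ref{essprop} are bookkeeping. A small sanity check --- that (ii) and (iii) do not depend on the specific eigenvalue sequence chosen --- is handled the same way, since any two eigenvalue sequences differ by permutations inside equi-absolute-value blocks and therefore their Cesàro averages differ by at most $s_n(T)$ at index $n$, which Lemma~\ref{essprop} absorbs.
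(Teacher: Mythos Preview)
Your proposal is correct. Note that the paper does not supply its own proof of this theorem: it is stated as a known result, attributed to Dykema--Figiel--Weiss--Wodzicki \cite{DykemaFigielWeissWodzicki2004} and Kalton \cite[Theorem~3.1]{Kalton1998}, and then used as a black box. Your treatment of (i) $\Leftrightarrow$ (ii) matches this exactly --- you invoke the same citations.

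Where you go beyond the paper is in writing out (ii) $\Leftrightarrow$ (iii) explicitly. Your argument for (ii) $\Rightarrow$ (iii) via the recursion $|a_{n+1}| \le \max(|a_n|, s_{n+1}(T))$ and the decreasing envelope $\mu_n = \sup_{k\ge n}|a_k|$ is clean and correct; the bound $\mu_n \le |a_n| + s_{n+1}(T)$ together with Lemma~\ref{essprop} does exactly what is needed. The paper presumably regards this equivalence as folklore (it is implicit in the cited works, since the Calkin-space characterisations there are stated in terms of decreasing rearrangements), but your explicit verification is a welcome addition and is not in conflict with anything in the text.
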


We would like to observe the following refinement of Theorem~\ref{DFWW}.
\begin{theorem}\label{DFWW2} Suppose $\mathcal I$ is a two-sided ideal in $\mathcal K(\mathcal H)$
and $T,S\in\mathcal I$ are normal. Then the following statements are equivalent:
\begin{enumerate}[(i)]
\item $T-S\in\Com \mathcal I$;
\item for any eigenvalue sequences $\{ \lambda_j(T) \}_{j=1}^\infty$ of $T$ and
$\{ \lambda_j(S) \}_{j=1}^\infty$ of $S$,
\begin{equation}\label{three} \left\{\frac1n \left(\sum_{j=1}^n\lambda_j(T)-\sum_{j=1}^n\lambda_j(S)\right)\right\}_{n=1}^{\infty} \in \diag(\mathcal I),\end{equation}
\item for any eigenvalue sequences $\{ \lambda_j(T) \}_{j=1}^\infty$ of $T$ and
$\{ \lambda_j(S) \}_{j=1}^\infty$ of $S$,
\begin{equation}\label{three2} 
 \frac1n \left|\sum_{j=1}^n \lambda_j(T) - \sum_{j=1}^n\lambda_j(S) \right| \leq \mu_n
\end{equation}
for a positive decreasing sequence $\mu = \{ \mu_n \}_{n=1}^\infty \in \diag (\mathcal I)$.
\end{enumerate}
\end{theorem}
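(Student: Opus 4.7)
The plan is to reduce Theorem~\ref{DFWW2} to Theorem~\ref{DFWW} by forming the normal operator $N := T \oplus (-S)$ on the doubled Hilbert space $\mathcal{H} \oplus \mathcal{H}$, and working inside the natural two-sided ideal $\mathcal{I}_2 \subset \mathcal{B}(\mathcal{H} \oplus \mathcal{H})$, whose Calkin space coincides with $\diag(\mathcal{I})$.

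The first key step is to establish the equivalence
\begin{equation*}
T - S \in \Com \mathcal{I} \Leftrightarrow N \in \Com \mathcal{I}_2.
\end{equation*}
Writing $N = (T-S) \oplus 0 + S \oplus (-S)$, and observing that for the swap unitary $U$ on $\mathcal{H} \oplus \mathcal{H}$ one has $S \oplus (-S) = (S \oplus 0) - U(S \oplus 0)U^{-1} = [(S \oplus 0)U^{-1}, U] \in \Com \mathcal{I}_2$, this reduces to showing $A \oplus 0 \in \Com \mathcal{I}_2 \Leftrightarrow A \in \Com \mathcal{I}$ for an arbitrary $A \in \mathcal{I}$. The non-trivial direction is handled by expanding a finite representation $A \oplus 0 = \sum_k [X^k, Y^k]$ into $2 \times 2$ blocks: the $(1,1)$-entry gives $A = \sum_k [X^k_{11}, Y^k_{11}] + \sum_k(X^k_{12}Y^k_{21} - Y^k_{12}X^k_{21})$, and the vanishing $(2,2)$-entry supplies the relation $\sum_k(Y^k_{21}X^k_{12} - X^k_{21}Y^k_{12}) = \sum_k [X^k_{22}, Y^k_{22}] \in \Com \mathcal{I}$, which allows the cross-term sum to be rewritten, up to elements of $\Com \mathcal{I}$, as a sum of honest commutators $[X^k_{12}, Y^k_{21}]$ and $[X^k_{21}, Y^k_{12}]$, both of which lie in $\Com \mathcal{I}$ since $X^k_{12}, X^k_{21} \in \mathcal{I}$.

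The second key step is to translate eigenvalue sums. An eigenvalue sequence $\{\lambda_j(N)\}_{j=1}^\infty$ arranges the multiset $\{\lambda_j(T)\}_j \cup \{-\lambda_j(S)\}_j$ in order of decreasing absolute value, so $\sigma_{2n} := \sum_{j=1}^{2n} \lambda_j(N) = \sum_{j=1}^{n_1}\lambda_j(T) - \sum_{j=1}^{n_2}\lambda_j(S)$ with $n_1 + n_2 = 2n$. Setting $\alpha_n := \sum_{j=1}^n \lambda_j(T) - \sum_{j=1}^n \lambda_j(S)$ and $d := |n_1 - n| \leq n$, at most $d$ ``excess'' $T$-eigenvalues beyond index $n$ and $d$ ``missing'' $S$-eigenvalues before index $n$ contribute to $\sigma_{2n} - \alpha_n$, each controlled by $s_n(T)$ or $s_{2n+1}(N)$, yielding
\begin{equation*}
\left| \frac{\sigma_{2n}}{n} - \frac{\alpha_n}{n} \right| \leq s_n(T) + s_{2n+1}(N) \in \diag(\mathcal{I}),
\end{equation*}
and odd partial sums differ from even ones by $\lambda_{2n+1}(N)$, again a sequence lying in $\diag(\mathcal{I})$ after normalisation. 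Applying Theorem~\ref{DFWW} to the normal $N$ then converts the conditions (ii) and (iii) on $\{\sigma_n/n\} \in \diag(\mathcal{I}_2) = \diag(\mathcal{I})$ into the corresponding conditions on $\{\alpha_n/n\}$; for (iii) we use that sums and shifts of positive decreasing sequences in $\diag(\mathcal{I})$ remain positive, decreasing and in $\diag(\mathcal{I})$.

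The main obstacle is the block-algebra reassembly of Step 1: the naive corner projection of a commutator is not itself a commutator, and turning the cross products $X^k_{12}Y^k_{21} - Y^k_{12}X^k_{21}$ into elements of $\Com \mathcal{I}$ requires the coupled $(1,1)$- and $(2,2)$-entry equations. A secondary delicate point in Step 2 is that the imbalance $d$ between the numbers of $T$- and $-S$-eigenvalues selected among the top $2n$ entries of $N$ can be as large as $n$, but this is compensated precisely by the decay of the tail singular values $s_n(T)$ and $s_{2n+1}(N)$.
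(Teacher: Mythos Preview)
Your proposal is correct and takes essentially the same approach as the paper: both reduce to Theorem~\ref{DFWW} via the normal operator $N = T \oplus (-S)$ on $\mathcal H\oplus\mathcal H$ and the decomposition $N = (T-S)\oplus 0 + S\oplus(-S)$. The paper's execution is slightly more economical --- it compares $\sum_{j=1}^n \lambda_j(N)$ directly with $\alpha_n$ at the \emph{same} index $n$, obtaining the single bound $\bigl|\sum_{j=1}^n\lambda_j(N) - \alpha_n\bigr| \le n|\lambda_n(N)|$ and thereby avoiding your even/odd index split and the dilation/interleaving technicalities it entails; and it dispatches $S\oplus(-S)\in\Com\mathcal I$ by checking its eigenvalue partial sums against Theorem~\ref{DFWW} rather than via your swap-unitary commutator identity (which is nonetheless a clean alternative).
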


\begin{proof} Observe that the normal operator
$$ V=\left(\begin{matrix} T&0\\0&-S\end{matrix}\right)=\left(\begin{matrix} T-S &0\\ 0&0\end{matrix}\right)+ \left(\begin{matrix} S &0\\ 0&-S\end{matrix}\right)$$ belongs to $\Com\mathcal I$ if and only if $T-S\in\Com \mathcal I.$
Indeed, it is straightforward to see that the eigenvector sequence of the operator
$\bigl( \begin{smallmatrix}
S&0\\ 0&-S
\end{smallmatrix} \bigr)$ satisfies \eqref{eq:DFWW}
and, since $S \in \mathcal I$, we have $\bigl( \begin{smallmatrix}
S&0\\ 0&-S
\end{smallmatrix} \bigr) \in \Com \mathcal I$ by Theorem~\ref{DFWW}.

(iii) $\Rightarrow$ (ii)  Let $a_n := n^{-1} \sum_{j=1}^n( \lambda_j(T) - \lambda_j(S))$.
By Lemma~\ref{essprop} $\{ a_n \}_{n=1}^\infty \in \diag(\mathcal I)$.

(ii) $\Rightarrow$ (i)
We have
$$\sum_{j=1}^n\lambda_j(V)=\sum_{j=1}^r\lambda_j(T)-\sum_{j=1}^s\lambda_j(S)$$ where $r+s=n$ and $|\lambda_{r+1}(T)|,|\lambda_{s+1}(S)|\le |\lambda_{n+1}(V)|.$   Hence
\begin{equation} \label{maindeal}
 \left|\sum_{j=1}^n\lambda_j(V)-\sum_{j=1}^n\lambda_j(T)+\sum_{j=1}^n\lambda_j(S)\right|\le n|\lambda_n(V)|.
\end{equation}
Since $\nu := \{ |\lambda_n(V)| \}_{n=1}^\infty \in \diag ( \mathcal I )$ is positive and decreasing
$$
\frac{1}{n} \sum_{j=1}^n \lambda_j(V) -  \frac{1}{n} \left(  \sum_{j=1}^n\lambda_j(T)-\sum_{j=1}^n\lambda_j(S) \right) \in \diag ( \mathcal I )
$$
by Lemma~\ref{essprop}.  Hence $\frac{1}{n} \sum_{j=1}^n \lambda_j(V) \in \diag ( \mathcal I )$
if  $\frac{1}{n} (\sum_{j=1}^n\lambda_j(T)-\sum_{j=1}^n\lambda_j(S)) \in \diag ( \mathcal I )$.  It follows from Theorem~\ref{DFWW} that $V \in \Com \mathcal I$.

(i) $\Rightarrow$ (iii)
We note from equation~\eqref{maindeal} that
$$
\frac{1}{n} \left|\sum_{j=1}^n\lambda_j(T)-\sum_{j=1}^n\lambda_j(S)\right|\le |\lambda_n(V)| + \frac{1}{n} \left| \sum_{j=1}^n\lambda_j(V) \right|.
$$ 
The sequence
$\nu := \{ |\lambda_n(V)| \}_{n=1}^\infty \in \diag ( \mathcal I )$ is positive and decreasing
 and, since $V \in \Com \mathcal I$, there exists a decreasing sequence $\nu'$
 such that $\frac{1}{n} \left| \sum_{j=1}^n\lambda_j(V) \right| \leq \nu'_n$.
 Now (iii) follows by setting $\mu = \nu + \nu'$.
 \end{proof}

In \cite{Kalton1998}, which used results in \cite{DykemaFigielWeissWodzicki2004} although it appeared chronologically earlier, it was shown that Theorem~\ref{DFWW} can be extended to non-normal operators under the hypothesis that $\mathcal I$ is geometrically stable.  Theorem~\ref{DFWW2} can be extended similarly.

\begin{theorem}\label{K} Suppose $\mathcal I$ is a geometrically stable ideal in $\mathcal K(\mathcal H)$ and $T,S\in\mathcal I$.  Then the following statements are equivalent: 
\begin{enumerate}[(i)]
\item $T-S\in\Com \mathcal I$;
\item for any eigenvalue sequences $\{ \lambda_j(T) \}_{j=1}^\infty$ of $T$ and
$\{ \lambda_j(S) \}_{j=1}^\infty$ of $S$,
\begin{equation}\label{three+} \left\{\frac1n \left(\sum_{j=1}^n\lambda_j(T)-\sum_{j=1}^n\lambda_j(S)\right)\right\}_{n=1}^{\infty} \in \diag(\mathcal I),\end{equation}
\item for any eigenvalue sequences $\{ \lambda_j(T) \}_{j=1}^\infty$ of $T$ and
$\{ \lambda_j(S) \}_{j=1}^\infty$ of $S$,
\begin{equation}\label{three+2} 
 \frac1n \left|\sum_{j=1}^n \lambda_j(T) - \sum_{j=1}^n\lambda_j(S) \right| \leq \mu_n
\end{equation}
for a positive decreasing sequence $\mu = \{ \mu_n \}_{n=1}^\infty \in \diag (\mathcal I)$.
\end{enumerate}
\end{theorem}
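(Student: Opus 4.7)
The plan is to show that the proof of Theorem~\ref{DFWW2} carries over essentially verbatim once the invocation of Theorem~\ref{DFWW} is replaced by the non-normal extension established by Kalton in \cite{Kalton1998}, which the authors have just cited and which is available precisely because $\mathcal I$ is geometrically stable. That extension asserts: for geometrically stable $\mathcal I$ and any (not necessarily normal) $A\in\mathcal I$, one has $A\in\Com\mathcal I$ if and only if $\{n^{-1}\sum_{j=1}^n\lambda_j(A)\}_{n=1}^\infty\in\diag(\mathcal I)$ for any eigenvalue sequence of $A$. Note also the Kalton--Dykema theorem recalled in Section 2: geometric stability guarantees $\{\lambda_n(A)\}_{n=1}^\infty\in\diag(\mathcal I)$ for every $A\in\mathcal I$, so any eigenvalue sequence automatically lives in the Calkin space.

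First I would form the block operator $V=\bigl(\begin{smallmatrix}T&0\\0&-S\end{smallmatrix}\bigr)$ on $\mathcal H\oplus\mathcal H$ (identified with $\mathcal H$), and observe that the reduction $V\in\Com\mathcal I\Leftrightarrow T-S\in\Com\mathcal I$ used in the proof of Theorem~\ref{DFWW2} does not require $T$ or $S$ to be normal. Indeed, the decomposition
\[
V=\begin{pmatrix} T-S & 0\\ 0 & 0\end{pmatrix}+\begin{pmatrix} S & 0\\ 0 & -S\end{pmatrix}
\]
together with the explicit commutator identity
\[
\begin{pmatrix} S & 0\\ 0 & -S\end{pmatrix}=\left[\begin{pmatrix} 0 & I\\ 0 & 0\end{pmatrix},\begin{pmatrix} 0 & 0\\ S & 0\end{pmatrix}\right]
\]
shows the second summand lies in $\Com\mathcal I$ for any $S\in\mathcal I$, normal or not.

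Second, I would apply the Kalton extension to $V\in\mathcal I$ to get $V\in\Com\mathcal I$ iff $\{n^{-1}\sum_{j=1}^n\lambda_j(V)\}_{n=1}^\infty\in\diag(\mathcal I)$. The combinatorial bookkeeping of \eqref{maindeal} is completely independent of normality: if the eigenvalue sequences of $T$ and $S$ are ordered so that their moduli decrease, then any eigenvalue sequence of $V$ is obtained by merging those of $T$ and $-S$ in decreasing order of modulus, yielding
\[
\left|\sum_{j=1}^n\lambda_j(V)-\sum_{j=1}^n\lambda_j(T)+\sum_{j=1}^n\lambda_j(S)\right|\le n|\lambda_n(V)|.
\]
Here $\{|\lambda_n(V)|\}_{n=1}^\infty\in\diag(\mathcal I)$ is positive and decreasing by the Kalton--Dykema theorem, so Lemma~\ref{essprop} shows that the Cesàro sums of $V$ and of $T-S$ (in the sense of (ii)) differ by an element of $\diag(\mathcal I)$. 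Combining with the previous step gives (i)$\Leftrightarrow$(ii), while (iii) $\Rightarrow$ (ii) is immediate from Lemma~\ref{essprop} and (i) $\Rightarrow$ (iii) follows by setting $\mu=\nu+\nu'$ where $\nu_n=|\lambda_n(V)|$ and $\nu'$ is the decreasing majorant from Theorem~\ref{DFWW} (as extended by Kalton) applied to $V\in\Com\mathcal I$.

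The only genuine subtlety, and hence the main thing to be careful about, is the reliance on the Kalton non-normal extension of Theorem~\ref{DFWW}: both the forward and reverse implications in the proof of Theorem~\ref{DFWW2} appeal to Theorem~\ref{DFWW} applied to the block operator $V$, which is never normal even when $T$ and $S$ are. Everything else — the block-matrix commutator identity, the triangle inequality \eqref{maindeal}, and the Lemma~\ref{essprop}/Kalton--Dykema absorption of the remainder — is insensitive to normality, so once the non-normal version is in hand the argument is a direct transcription.
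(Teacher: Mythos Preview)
Your approach is correct and genuinely different from the paper's. The paper does not re-run the block-matrix argument; instead it invokes the Dykema--Kalton decomposition \cite[Corollary~2.5]{DykemaKalton1998}: each $T\in\mathcal I$ (for $\mathcal I$ geometrically stable) splits as $T=N_T+Q_T$ with $N_T$ normal having the same eigenvalue sequence as $T$ and $Q_T$ quasinilpotent, and then uses \cite[Theorem~3.3]{Kalton1998} to conclude $Q_T\in\Com\mathcal I$. This reduces $T-S\in\Com\mathcal I$ to $N_T-N_S\in\Com\mathcal I$, whence Theorem~\ref{DFWW2} (the normal case, already proved) applies directly. Your route instead leaves $T$ and $S$ alone, forms $V$, and appeals to the full non-normal extension of Theorem~\ref{DFWW} as a black box. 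Both arguments ultimately rest on the same machinery from \cite{Kalton1998,DykemaKalton1998}, but the paper's version is more modular (three lines, reusing Theorem~\ref{DFWW2} verbatim), while yours is more self-contained at the cost of repeating the combinatorics of \eqref{maindeal}. Your explicit commutator identity for $\bigl(\begin{smallmatrix}S&0\\0&-S\end{smallmatrix}\bigr)$ is a nice touch, since it avoids any appeal to eigenvalues at that step.

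One small slip: your parenthetical claim that $V$ ``is never normal even when $T$ and $S$ are'' is false---a block-diagonal operator with normal blocks is normal. This does not affect your argument for Theorem~\ref{K}, where $T,S$ are not assumed normal, but it misdescribes what happens in the proof of Theorem~\ref{DFWW2}; there $V$ \emph{is} normal and the appeal to Theorem~\ref{DFWW} is legitimate as stated.
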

\begin{proof}
Let $T \in \mathcal I$.  From \cite[Corollary 2.5]{DykemaKalton1998} $T=N+Q$ where $Q \in \mathcal I$ is quasinilpotent
and $N \in \mathcal{I}$ is normal with eigenvalues and multiplicities the same as $T$.
From \cite[Theorem 3.3]{Kalton1998} we know $Q \in \Com \mathcal I$.
Hence $T=N_T+Q_T$ and $S=N_S+Q_S$ where $Q_S,Q_T \in \Com \mathcal{I}$ are quasinilpotent and $N_T,N_S$ are normal with eigenvalues and multiplicities the same as $T$ and $S$, respectively. Since $T-S\in\Com \mathcal I$ if and only if $N_T-N_S\in\Com \mathcal I$ the results follow from Theorem~\ref{DFWW2}.
 \end{proof}

We recall that $\diag\{\lambda_n(T)\}_{n=1}^{\infty}\in \mathcal I$ when $\mathcal I$ is geometrically stable.
Theorem~\ref{K} therefore has the following immediate corollary.

\begin{corollary}\label{K2} Let $\mathcal I$ be a geometrically stable ideal in $\mathcal K(\mathcal H)$
and $T \in \mathcal I$.  Then $T-\diag\{\lambda_n(T)\}_{n=1}^{\infty}\in \Com\mathcal I$.
\end{corollary}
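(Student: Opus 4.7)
The plan is to apply Theorem~\ref{K} with $S := \diag\{\lambda_n(T)\}_{n=1}^\infty$, choosing the eigenvalue sequences of $T$ and $S$ to coincide.

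First I would verify that $S$ actually lies in $\mathcal I$, so that Theorem~\ref{K} can be invoked. This is the content of the remark preceding the corollary: since $\mathcal I$ is geometrically stable, the Kalton--Dykema characterization (\cite[Theorem 1.3]{DykemaKalton1998}) says precisely that $\{\lambda_n(T)\}_{n=1}^\infty \in \diag(\mathcal I)$ for every $T\in\mathcal I$, and hence $S = \diag\{\lambda_n(T)\}_{n=1}^\infty \in \mathcal I$.

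Next I would pin down an eigenvalue sequence for $S$. The operator $S$ is normal (in fact diagonal in the chosen basis), its nonzero eigenvalues are exactly the nonzero entries $\lambda_n(T)$, and by construction $\{|\lambda_n(T)|\}_{n=1}^\infty$ is already decreasing. Therefore $\{\lambda_n(T)\}_{n=1}^\infty$ itself is a valid eigenvalue sequence for $S$ in the sense of the preliminaries. Taking this as the eigenvalue sequence for $S$, and the same sequence $\{\lambda_n(T)\}_{n=1}^\infty$ as the eigenvalue sequence for $T$, we obtain
\[
\sum_{j=1}^n \lambda_j(T) - \sum_{j=1}^n \lambda_j(S) = 0, \qquad n \in \mathbb N.
\]
Hence condition (ii) of Theorem~\ref{K} is trivially satisfied, as the zero sequence belongs to $\diag(\mathcal I)$. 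Conclusion (i) of Theorem~\ref{K} then yields $T - S \in \Com \mathcal I$, which is the claim.

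There is no real obstacle: the only subtle point is that Theorem~\ref{K} allows us to fix \emph{any} eigenvalue sequences of $T$ and $S$, so we are free to use the same sequence for both, which makes the difference in (ii) vanish identically. The geometric stability hypothesis enters only to ensure $S \in \mathcal I$ and to permit application of Theorem~\ref{K} in the non-normal case.
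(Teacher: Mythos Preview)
Your proof is correct and follows exactly the same approach as the paper: set $S = \diag\{\lambda_n(T)\}_{n=1}^\infty$, observe that $\{\lambda_n(T)\}_{n=1}^\infty$ serves as an eigenvalue sequence for both $T$ and $S$, and apply Theorem~\ref{K}. You are simply more explicit about why $S \in \mathcal I$ and why the chosen sequence is a valid eigenvalue sequence for $S$, which the paper leaves to the reader.
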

\begin{proof}
Set $S:=\diag\{\lambda_n(T)\}_{n=1}^{\infty} \in \mathcal I$.  Then
$\lambda_n(S) = \lambda_n(T)$, $n \in \mathbb N$, and $T-S \in \Com \mathcal I$
by Theorem~\ref{K}. 
 \end{proof}

\section{Applications to traces} \label{sec:traces_app}

Suppose $\mathcal I$ is a two-sided ideal of compact operators.
A trace $\tau:\mathcal I\to\mathbb C$ is a linear functional that vanishes on the commutator subspace, i.e.~it satisfies the condition
$$ \tau([A,B])=0 \, , \qquad A\in\mathcal I,\ B\in\mathcal B(\mathcal H).$$
Note that we make no assumptions about continuity or positivity of the linear functional. The value
$$
\tau(\diag\{a_n\}_{n=1}^{\infty}) \ , \qquad \{ a_n \}_{n=1}^\infty \in \diag( \mathcal I)
$$
is independent of the choice of orthonormal basis.  Therefore any trace $\tau : \mathcal I \to \mathbb{C}$ induces a linear functional $\tau \circ \diag$ (defined by the above value) on the Calkin space $\diag ( \mathcal I )$.

\begin{corollary} \label{DFWWCor}
There are non-trivial traces on $\mathcal I$ if and only if $\Com\mathcal I \not= \mathcal I$,
which occurs if and only if $\{\frac1n \sum_{j=1}^n s_j \}_{n=1}^{\infty} \not\in \diag(\mathcal I)$ for some
positive sequence $\{ s_n \}_{n=1}^\infty \in \diag(\mathcal I)$.
\end{corollary}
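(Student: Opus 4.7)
The statement combines two equivalences, which I would prove in turn.

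The first equivalence, that non-trivial traces on $\mathcal I$ exist if and only if $\Com \mathcal I \neq \mathcal I$, is essentially formal. Every trace annihilates $\Com \mathcal I$ by definition, so $\Com \mathcal I = \mathcal I$ forces the zero trace. Conversely, when $\mathcal I / \Com \mathcal I$ is a nonzero vector space, a Hamel basis extension produces a nonzero linear functional on the quotient, which lifts to a non-trivial trace on $\mathcal I$.

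For the second equivalence, the easier direction goes as follows. Given a positive $\{s_n\}_{n=1}^\infty \in \diag(\mathcal I)$ whose Cesaro averages $\frac{1}{n} \sum_{j=1}^n s_j$ lie outside $\diag(\mathcal I)$, pass to the decreasing rearrangement $s^{\ast}$, which still belongs to the symmetric Calkin space $\diag(\mathcal I)$. Since $\sum_{j=1}^n s^{\ast}_j \geq \sum_{j=1}^n s_j \geq 0$ for every $n$, Lemma~\ref{essprop} forces the Cesaro averages of $s^{\ast}$ to lie outside $\diag(\mathcal I)$ as well. The positive diagonal operator $T := \diag\{s^{\ast}_n\}_{n=1}^\infty \in \mathcal I$ has eigenvalue sequence $s^{\ast}$, so Theorem~\ref{DFWW} gives $T \notin \Com \mathcal I$, whence $\Com \mathcal I \neq \mathcal I$.

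The forward direction is the main step and requires reducing an arbitrary non-commutator element to a positive one so that Theorem~\ref{DFWW} can be applied. Given $T \in \mathcal I \setminus \Com \mathcal I$, use self-adjointness of ideals of compact operators (a consequence of $s_n(T^{\ast}) = s_n(T)$ and the symmetry of the Calkin space) to decompose $T = A + iB$ with self-adjoint $A, B \in \mathcal I$; at least one summand, say $A$, must lie outside $\Com \mathcal I$. Writing $A = A_+ - A_-$ with $A_{\pm} \geq 0$ both in $\mathcal I$ (since $|A| \in \mathcal I$), at least one of $A_+, A_-$ lies outside $\Com \mathcal I$. Applying Theorem~\ref{DFWW} to the resulting positive operator, whose eigenvalue sequence coincides with its singular value sequence, exhibits the required positive sequence in $\diag(\mathcal I)$ with Cesaro averages outside $\diag(\mathcal I)$. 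The only place demanding care is this successive reduction to a positive operator; once that is in hand, Theorem~\ref{DFWW} does all the substantive work.
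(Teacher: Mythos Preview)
Your proposal is correct and follows exactly the approach the paper indicates: the paper omits the proof entirely, remarking only that it ``is evident by considering the quotient vector space $\mathcal I / \Com \mathcal I$ and applying Theorem~\ref{DFWW}.'' Your argument is a faithful and careful expansion of that hint, including the reduction to a positive operator via real/imaginary and positive/negative parts, which is the natural way to make Theorem~\ref{DFWW} applicable.
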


The proof is evident by considering the quotient vector space $\mathcal I / \Com \mathcal I$
and applying Theorem~\ref{DFWW}, so we omit it. 
The condition in Corollary~\ref{DFWWCor} implies that traces on two-sided ideals other than the ideal of nuclear operators exist (e.g.~the quasi-Banach ideal $\mathcal L_{1,\infty}$
such that $\diag (\mathcal L_{1,\infty} ) = \ell_{1,\infty}$), \cite{KaftalWeiss2022},
see also \cite[\S 5]{DykemaFigielWeissWodzicki2004} for other examples of ideals that do and do not support non-trivial traces.  In~\cite{DykemaKalton1998} it was shown that every trace on
a geometrically stable ideal is determined by its associated functional applied to an eigenvalue sequence,
which is an extension of Lidskii's theorem. 

\begin{corollary}[Lidskii Theorem] \label{KCor} Let $\mathcal I$ be a geometrically stable ideal in $\mathcal K(\mathcal H).$  Suppose $T\in\mathcal I.$  Then
\begin{equation} \label{eq:KCor}
\tau(T)=\tau \circ \diag (\{\lambda_n(T)\}_{n=1}^{\infty})
\end{equation}
for every trace $\tau: \mathcal I \to \mathbb C$ and any eigenvalue sequence
$\{ \lambda_n(T) \}_{n=1}^\infty$ of $T$.
\end{corollary}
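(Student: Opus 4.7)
The plan is to derive this directly from Corollary~\ref{K2}, which we have just established. The essential observation is that a trace, by definition, is a linear functional vanishing on $\Com \mathcal I$, so any identity of the form $T - S \in \Com \mathcal I$ immediately yields $\tau(T) = \tau(S)$. Our task therefore reduces to producing a diagonal operator $S$ whose difference with $T$ lies in the commutator subspace, and then verifying that the right-hand side of \eqref{eq:KCor} is even well-defined.

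First I would check that $\diag\{\lambda_n(T)\}_{n=1}^\infty$ actually belongs to $\mathcal I$. This is precisely where geometric stability is used: the Dykema--Kalton characterisation recalled just before Theorem~\ref{K} guarantees that $\{\lambda_n(T)\}_{n=1}^\infty \in \diag(\mathcal I)$ whenever $T \in \mathcal I$ and $\mathcal I$ is geometrically stable. Consequently $S := \diag\{\lambda_n(T)\}_{n=1}^\infty \in \mathcal I$, and the expression $\tau \circ \diag(\{\lambda_n(T)\}_{n=1}^\infty)$ on the right-hand side makes sense; moreover, the remark preceding Corollary~\ref{DFWWCor} guarantees it does not depend on the choice of orthonormal basis used to form $\diag$.

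Next I would invoke Corollary~\ref{K2} with this choice of $S$, which asserts exactly that $T - \diag\{\lambda_n(T)\}_{n=1}^\infty \in \Com \mathcal I$. Applying the trace $\tau$ and using linearity together with the vanishing of $\tau$ on $\Com \mathcal I$, one obtains
\[
\tau(T) - \tau(\diag\{\lambda_n(T)\}_{n=1}^\infty) = \tau\bigl( T - \diag\{\lambda_n(T)\}_{n=1}^\infty \bigr) = 0,
\]
which is \eqref{eq:KCor}. Independence of the result from the particular eigenvalue sequence chosen follows automatically: if $\{\lambda'_n(T)\}_{n=1}^\infty$ is another such sequence, then both $\diag\{\lambda_n(T)\}$ and $\diag\{\lambda'_n(T)\}$ differ from $T$ by an element of $\Com \mathcal I$, so their difference also lies in $\Com \mathcal I$ and $\tau$ assigns them the same value.

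There is no real obstacle here, since all the substantive work has already been carried out in Theorem~\ref{K} and Corollary~\ref{K2}; the only point requiring care is the verification that the diagonal operator lies in $\mathcal I$, which is the precise content of geometric stability. This is why the hypothesis cannot be relaxed: without it, the right-hand side of \eqref{eq:KCor} need not even be defined, and the Dykema--Kalton example of a non-geometrically-stable ideal mentioned earlier shows that the conclusion genuinely fails in that generality.
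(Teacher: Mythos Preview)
Your proof is correct and follows exactly the approach the paper intends: the paper itself omits the proof, remarking only that it is trivial given Corollary~\ref{K2}, and your argument spells out precisely that trivial deduction. The additional care you take in verifying that $\diag\{\lambda_n(T)\}_{n=1}^\infty \in \mathcal I$ (via geometric stability and the Dykema--Kalton result) and in noting independence from the choice of eigenvalue sequence is entirely appropriate and matches the paper's surrounding remarks.
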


The proof, given Corollary~\ref{K2}, is trivial and therefore omitted.
For evident reasons (for some $T \in \mathcal I$, $\{ \lambda_n(T) \}_{n=1}^\infty \not\in \diag ( \mathcal I )$) the Lidskii formulation
can only apply to geometrically stable ideals.
A general characterisation of traces on non-geometrically stable ideals requires an explicit formula for products $T=AS \in \mathcal I$ where $A$
and $S$ do not commute.  Such a formula is also of interest when studying linear functionals on the bounded operators of the form
$A \mapsto \tau(AS)$, $A \in \mathcal B ( \mathcal H )$, where $S \in \mathcal I$ is Hermitian and $\tau : \mathcal I \to \mathbb{C}$ is a trace (e.g.~in A.~Connes' noncommutative geometry, \cite[\S 4]{C}). We now characterise traces of products.

We introduce some terminology for systems of eigenvectors that are ordered to correspond with eigenvalue sequences.  If $T$ is a compact operator of infinite rank, we define an orthonormal sequence $(e_n)_{n=1}^{\infty}$ to be an {\it eigenvector sequence} for $T$ if $Te_n=\lambda_n(T)e_n$ for all $n \in \mathbb{N}$ where $\{ \lambda_n \}_{n=1}^\infty$ is an eigenvalue sequence.  If $T$ is Hermitian, an eigenvector sequence exists
and there is an eigenvector sequence which forms a complete orthonormal system.

We will also need the following lemmas (see for example \cite{DykemaFigielWeissWodzicki2004}).

\begin{lemma} \label{dec} Let $\mathcal I$ be a two-sided ideal in $\mathcal K(\mathcal H)$.  Suppose $D=\diag\{\alpha_n\}_{n=1}^{\infty}$ where
$\{ \alpha_n \}_{n=1}^\infty$ is a sequence of complex numbers such that $|\alpha_n|\le \mu_n$ where $\{\mu_n \}_{n=1}^{\infty}\in\diag(\mathcal I)$ is decreasing.  Then
$$ \left|\sum_{j=1}^n\alpha_j-\sum_{j=1}^n\lambda_j(D)\right| \le 2n\mu_{n}.$$\end{lemma}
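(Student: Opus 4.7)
The plan is to exploit that $D$ is diagonal, so its eigenvalues with multiplicity are exactly the entries $\{\alpha_n\}$, and an eigenvalue sequence of $D$ is a rearrangement $\lambda_n(D) = \alpha_{\sigma(n)}$ for some permutation $\sigma$ of $\mathbb{N}$ such that $|\alpha_{\sigma(n)}|$ is decreasing. In particular, the singular value $s_n(D) = |\lambda_n(D)|$ equals the decreasing rearrangement of $\{|\alpha_n|\}$. The argument then reduces to comparing $\sum_{j=1}^n \alpha_j$ with the sum over the rearranged first-$n$ indices $\sigma(\{1,\dots,n\})$, which is a purely combinatorial symmetric-difference estimate.

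First I would establish the key bound $|\lambda_n(D)| \le \mu_n$. This follows from the standard observation that if $|\alpha_j| > \mu_n$ then $\mu_j \ge |\alpha_j| > \mu_n$, so $j < n$ by monotonicity of $\{\mu_n\}$; hence at most $n-1$ indices $j$ satisfy $|\alpha_j| > \mu_n$, and therefore the $n$-th largest value $|\lambda_n(D)|$ is $\le \mu_n$. This is the only quantitative input and is the main (mild) obstacle.

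Next I would decompose the difference via index sets. Set $I_n := \{1,\dots,n\}$ and $J_n := \sigma(\{1,\dots,n\})$. Since $|I_n| = |J_n| = n$,
\begin{equation*}
\sum_{j=1}^n \alpha_j - \sum_{j=1}^n \lambda_j(D) = \sum_{j \in I_n} \alpha_j - \sum_{j \in J_n} \alpha_j = \sum_{j \in I_n \setminus J_n} \alpha_j - \sum_{j \in J_n \setminus I_n} \alpha_j,
\end{equation*}
and $|I_n \setminus J_n| = |J_n \setminus I_n| =: k \le n$.

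Finally I would bound each of the two residual sums by $n\mu_n$. For $j \in I_n \setminus J_n$, the value $\alpha_j$ is not among the top-$n$ in absolute value, so $|\alpha_j| \le |\lambda_n(D)| \le \mu_n$ by Step~1. For $j \in J_n \setminus I_n$, we have $j > n$, so by the hypothesis and monotonicity of $\mu$, $|\alpha_j| \le \mu_j \le \mu_n$. Each sum is therefore bounded in modulus by $k\mu_n \le n\mu_n$, and combining yields $\bigl|\sum_{j=1}^n \alpha_j - \sum_{j=1}^n \lambda_j(D)\bigr| \le 2n\mu_n$, as claimed. The argument is essentially elementary once Step~1 is in place; no use of ideal structure beyond the hypothesis $|\alpha_n| \le \mu_n$ is needed in the estimate itself.
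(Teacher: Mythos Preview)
Your proof is correct and essentially identical to the paper's: both write $\lambda_j(D)=\alpha_{m_j}$ for distinct indices, split the difference over the symmetric difference of $\{1,\dots,n\}$ and $\{m_1,\dots,m_n\}$, and bound each half by $n\mu_n$ using $|\lambda_n(D)|\le\mu_n$ (which you prove explicitly and the paper simply asserts). One minor wording point: your $\sigma$ need only be an injection rather than a permutation of $\mathbb{N}$---if infinitely many $\alpha_j$ are nonzero while some vanish, the eigenvalue sequence omits the zero entries---but since your argument uses only that $|J_n|=n$, this does not affect correctness.
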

\begin{proof}
We have $\lambda_j(D)=\alpha_{m_j}$ where $m_1,m_2,\ldots$ are distinct.  Thus
$$ \sum_{j=1}^n\lambda_j(D)=\sum_{j\in\mathbb A}\alpha_j$$ where
$\mathbb A=\{m_1,\ldots,m_n\}.$ If $k\in\mathbb A\setminus
\{1,2,\ldots,n\}$ we have $|\alpha_k|\leq \mu_k \le \mu_n$.  On the
other hand, since $$ \mathbb A = \left\{m \in \mathbb N:\ \ \alpha_m =
  \lambda_{j}(D), \ \ \text{for some~$j \leq n$}\right\}, $$ if $k\in
\{1,2,\ldots,n\}\setminus \mathbb A$ we have $|\alpha_k|\le
|\lambda_n(D)| \le \mu_{n}.$ Hence
$$ \left|\sum_{j=1}^n\lambda_j(D)-\sum_{j=1}^n\alpha_j\right|\le 2n\mu_{n}.$$
 \end{proof}

\begin{lemma}\label{main0} Suppose $S\in\mathcal K(\mathcal H)$ is
  Hermitian and $(e_j)_{j=1}^{\infty}$ is an eigenvector sequence
  for $S.$ Suppose $A$ is Hermitian and $H:=\frac12(AS+SA)$.  Then we have
  \begin{equation}\label{estimate0}
    \left|\sum_{j=1}^n\lambda_j(H)-\sum_{j=1}^n
      (ASe_j,e_j)\right|\le ns_{n+1}(H)+ns_{n+1}(S)\left( \frac1n
      \sum_{j=1}^ns_j(A) \right)
  \end{equation}
  if $A \in \mathcal K (\mathcal H)$ is compact, and
  \begin{equation}\label{estimate0b}
    \left|\sum_{j=1}^n\lambda_j(H)-\sum_{j=1}^n (ASe_j,e_j)\right|\le ns_{n+1}(H)+ns_{n+1}(S)\| A \| 
  \end{equation}
  if $A \in \mathcal B(\mathcal H)$ is bounded but not compact.
\end{lemma}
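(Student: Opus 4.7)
The key reduction uses that $S$ is Hermitian with $Se_j=\lambda_j(S)e_j$, so $(He_j,e_j)=\tfrac12((ASe_j,e_j)+(SAe_j,e_j))=\lambda_j(S)(Ae_j,e_j)=(ASe_j,e_j)$; hence $\sum_{j=1}^n(ASe_j,e_j)=\mathrm{tr}(HP_n)$ where $P_n$ is the orthogonal projection onto $\mathrm{span}(e_1,\ldots,e_n)$. Letting $F_n$ denote the spectral projection of $H$ onto its top $n$ modulus-ordered eigenspaces, $\sum_{j=1}^n\lambda_j(H)=\mathrm{tr}(HF_n)$, so the quantity to bound is $|\mathrm{tr}(H(F_n-P_n))|$.

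Next I would introduce a finite-rank truncation tailored to $(e_j)$: set $Q_n:=I-P_n$, $S_n:=SP_n=P_nS$, $R_n:=SQ_n=Q_nS$ (so $\|R_n\|_\infty=s_{n+1}(S)$), and define $H_n:=\tfrac12(AS_n+S_nA)$, $K_n:=H-H_n=\tfrac12(AR_n+R_nA)$. A direct calculation gives $(H_ne_j,e_j)=(He_j,e_j)$ for $j\le n$ and $(H_ne_j,e_j)=0$ for $j>n$, so $\mathrm{tr}(H_n)=\sum_{j=1}^n(ASe_j,e_j)$; moreover $H_n$ has rank at most $2n$, and the Horn--Ky~Fan sum inequality applied to $K_n$ gives $\sum_{j=1}^ns_j(K_n)\le s_{n+1}(S)\sum_{j=1}^ns_j(A)$ in the compact case and $\|K_n\|_\infty\le\|A\|s_{n+1}(S)$ in the bounded case. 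Decomposing $\mathrm{tr}(HF_n)-\mathrm{tr}(H_n)=\mathrm{tr}(K_nF_n)-\mathrm{tr}(H_n(I-F_n))$, the first piece is controlled by $\|K_nF_n\|_1\le\sum_{j=1}^ns_j(K_n)$, which yields precisely the $ns_{n+1}(S)\|A\|$ (resp.\ $s_{n+1}(S)\sum_{j=1}^ns_j(A)$) contribution to the stated bound.

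For the second piece, I would use that $H_n$ has rank $\le 2n$, so $\mathrm{tr}(H_n)=\sum_{j=1}^{2n}\lambda_j(H_n)$, and restate the remaining task as bounding $|\sum_{j=1}^n\lambda_j(H)-\sum_{j=1}^{2n}\lambda_j(H_n)|$ by a Ky~Fan--type perturbation estimate for the Hermitian pair $(H,H_n)$. The main obstacle is securing the sharp constant $1$ on $ns_{n+1}(H)$: the straightforward bound via $\|(I-F_n)H_n(I-F_n)\|_\infty\le s_{n+1}(H)+\|K_n\|$ combined with the rank-$\le 2n$ of $H_n$ produces an extra factor of $2$. The sharp bound is obtained by instead isolating the "spurious" tail via $|\sum_{j=n+1}^{2n}\lambda_j(H_n)|\le ns_{n+1}(H_n)$, then using $s_{n+1}(H_n)\le s_{n+1}(H)+\|K_n\|$ and absorbing the $\|K_n\|$ correction into the already-established $\sum_{j=1}^n s_j(K_n)$ bound from the first piece.
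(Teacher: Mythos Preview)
Your opening reduction is correct and matches the paper: both sides are $\mathrm{Tr}(HP_n)$ and $\mathrm{Tr}(HF_n)$, so the task is to bound $|\mathrm{Tr}(H(F_n-P_n))|$. Your decomposition via $H_n=\tfrac12(AS_n+S_nA)$ and the bound $|\mathrm{Tr}(K_nF_n)|\le\sum_{j=1}^n s_j(K_n)$ are also correct and deliver exactly the $s_{n+1}(S)$--contribution.

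The gap is in your ``second piece''. The proposed fix---isolating $|\sum_{j=n+1}^{2n}\lambda_j(H_n)|\le n\,s_{n+1}(H_n)\le n(s_{n+1}(H)+\|K_n\|)$---does not close the argument. First, it leaves the term $|\sum_{j=1}^n\lambda_j(H)-\sum_{j=1}^n\lambda_j(H_n)|$ unaccounted for, and there is no Ky~Fan--type inequality for partial sums in the \emph{absolute-value} ordering that would turn this into $\sum_{j=1}^n s_j(K_n)$; indeed one can build $3\times3$ Hermitian examples where $|\sum_{j=1}^2\lambda_j(A)-\sum_{j=1}^2\lambda_j(B)|>\sum_{j=1}^2 s_j(A-B)$. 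Second, the tail correction $n\|K_n\|\le n\,s_{n+1}(S)\|A\|$ cannot be ``absorbed'' into $\sum_{j=1}^n s_j(K_n)\le s_{n+1}(S)\sum_{j=1}^n s_j(A)$ in the compact case, since $n\|A\|\ge\sum_{j=1}^n s_j(A)$ (typically strictly). So your route, as written, yields at best a bound of the shape $Cn\,s_{n+1}(H)+Cn\,s_{n+1}(S)\|A\|$---the bounded-case estimate with a worse constant---but not the compact-case estimate~\eqref{estimate0} with $\beta_n=\tfrac1n\sum_{j=1}^n s_j(A)$.

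The paper avoids all of this with one device you did not use: take an eigenvector sequence $(f_j)$ for $H$, let $Q_n$ project onto $[f_1,\dots,f_n]$, and let $R_n$ project onto the combined span $[e_1,\dots,e_n,f_1,\dots,f_n]$. Then $R_n-P_n$ has rank $\le n$ and range orthogonal to $[e_1,\dots,e_n]$, so $\|S(R_n-P_n)\|\le s_{n+1}(S)$ and hence $|\mathrm{Tr}(H(R_n-P_n))|\le n\,s_{n+1}(S)\beta_n$ directly from the factorisation $H=\tfrac12(AS+SA)$; symmetrically $R_n-Q_n$ has rank $\le n$ and range orthogonal to $[f_1,\dots,f_n]$, giving $|\mathrm{Tr}(H(R_n-Q_n))|\le n\,s_{n+1}(H)$. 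Adding these two bounds yields the lemma with the stated constants in one stroke. This ``common roof'' projection $R_n$ is the missing idea.
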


\begin{proof}  Let $(f_n)_{n=1}^{\infty}$ be an eigenvector sequence for $H$.  Let $P_n$ and $Q_n$ be the orthogonal projections of $\mathcal H$ on $[e_1,\ldots,e_n]$ and $[f_1,\ldots,f_n]$ respectively and let $R_n$ be the orthogonal projection on the linear span $[e_1,\ldots,e_n,f_1,\ldots, f_n].$

If $A$ is compact define $\beta_n := \frac1n \sum_{j=1}^ns_j(A)$.  Otherwise set $\beta_n := \| A \|$.
Since $\rank(R_n-P_n)\le n$ we have $$ |\Tr (AS(R_n-P_n))| \le ns_{n+1}(S)\beta_n .$$
Similarly
$$|\Tr (SA(R_n-P_n))|=|\Tr (A(R_n-P_n)S)| \le n  s_{n+1}(S)\beta_n,$$
and hence
$$
| \Tr(H(R_n-P_n)) | \le n  s_{n+1}(S)\beta_n .
$$
Similarly
$$ |\Tr (H(R_n-Q_n))|\le n s_{n+1}(H).$$
Hence
\begin{align*}
 |\Tr (H(P_n-Q_n))| & \le |\Tr (H(P_n-R_n))|+ |\Tr (H(R_n-Q_n))| \\
 & \le ns_{n+1}(H)+n  s_{n+1}(S)\beta_n.
 \end{align*}
 \end{proof}

\begin{theorem}\label{main}  Let $\mathcal I_1$ be a two-sided ideal in $\mathcal B(\mathcal H)$
such that $\mathcal I_1=\Com\mathcal I_1$
and $\mathcal I_2$ be a two-sided ideal in $\mathcal K(\mathcal H)$.
Let $\mathcal I=\mathcal I_1\mathcal I_2$.
Suppose $S\in\mathcal I_2$ is Hermitian and
  that $(e_n)_{n=1}^{\infty}$ is an eigenvector sequence for $S$.
  Suppose $A\in\mathcal I_1$ is such that, for some decreasing
  positive sequence $\{\mu_n\}_{n=1}^{\infty}\in \diag(\mathcal I)$,
  we have $|(ASe_n,e_n)|\le \mu_n$, $n \in \mathbb N$. Then
  $AS, \diag\{(ASe_n,e_n)\}_{n=1}^{\infty}\in \mathcal I$,
$$AS-\diag\{(ASe_n,e_n)\}_{n=1}^{\infty}\in \Com \mathcal I$$ and, hence,
for every trace $\tau: \mathcal I \to \mathbb C$ we have
$$ \tau(AS)=\tau \circ \diag(\{(ASe_n,e_n)\}_{n=1}^{\infty}) = \tau\circ \diag(\{(Ae_n,e_n)\lambda_n(S)\}_{n=1}^{\infty}).$$
\end{theorem}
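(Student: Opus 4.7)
The plan is to reduce the claim modulo $\Com \mathcal I$ to a Hermitian problem in the eigenvector basis $(e_n)$ of $S$ to which Theorem~\ref{DFWW2} applies. First I would decompose $A = A_1 + iA_2$ with $A_1, A_2 \in \mathcal I_1$ Hermitian, and for each $k \in \{1,2\}$ write
$$
A_k S = H_k + \tfrac{1}{2}[A_k, S], \qquad H_k := \tfrac{1}{2}(A_k S + S A_k).
$$
By the identity $[\mathcal I_1, \mathcal I_2] = [\mathcal I_1 \mathcal I_2, \mathcal B(\mathcal H)] = \Com \mathcal I$ recorded at the end of Section~2, each commutator $[A_k, S]$ lies in $\Com \mathcal I$, so $A_k S \equiv H_k \pmod{\Com \mathcal I}$ and it suffices to prove $H_k - D_k \in \Com \mathcal I$, where $D_k := \diag\{(A_k S e_n, e_n)\}_{n=1}^\infty$. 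Since $S e_n = \lambda_n(S) e_n$ and both $A_k, S$ are Hermitian, $(A_k S e_n, e_n) = \lambda_n(S)(A_k e_n, e_n) \in \mathbb R$, so $H_k$ and $D_k$ are both Hermitian; $D_k \in \mathcal I$ follows from Lemma~\ref{essprop} applied to the bound $|(A_k S e_n, e_n)| \le |(A S e_n, e_n)| \le \mu_n$.

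Next I would apply the direction (ii)$\Rightarrow$(i) of Theorem~\ref{DFWW2} to the Hermitian pair $H_k, D_k$. Combining Lemma~\ref{main0} (taken with the eigenvector sequence $(e_n)$ of $S$) and Lemma~\ref{dec} (taken with $\alpha_n = (A_k S e_n, e_n)$ and decreasing majorant $\mu$) via the triangle inequality gives
$$
\tfrac{1}{n}\Bigl|\sum_{j=1}^n \lambda_j(H_k) - \sum_{j=1}^n \lambda_j(D_k)\Bigr| \le s_{n+1}(H_k) + s_{n+1}(S)\beta_n + 2\mu_n,
$$
where $\beta_n = \tfrac{1}{n}\sum_{j=1}^n s_j(A_k)$ if $A_k$ is compact and $\beta_n = \|A_k\|$ otherwise. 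The terms $s_{n+1}(H_k)$ and $\mu_n$ already belong to $\diag(\mathcal I)$ by construction.

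The main obstacle is controlling the cross-term $s_{n+1}(S)\beta_n$. The pivotal observation is that by Corollary~\ref{DFWWCor} the hypothesis $\mathcal I_1 = \Com \mathcal I_1$ is precisely equivalent to $\diag(\mathcal I_1)$ being closed under Cesaro averaging of positive sequences. When $A_k$ is compact, $\{s_j(A_k)\}_{j=1}^\infty \in \diag(\mathcal I_1)$ and Cesaro stability places $\{\beta_n\}_{n=1}^\infty$ in $\diag(\mathcal I_1)$; pairing with $\{s_{n+1}(S)\}_{n=1}^\infty \in \diag(\mathcal I_2)$ yields $\{s_{n+1}(S)\beta_n\} \in \diag(\mathcal I_1 \mathcal I_2) = \diag(\mathcal I)$. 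When $A_k$ is non-compact, the classification of two-sided ideals of $\mathcal B(\mathcal H)$ forces $\mathcal I_1 = \mathcal B(\mathcal H)$ (any rearrangement-invariant Calkin space containing a sequence bounded away from $0$ contains $\ell_\infty$), hence $\mathcal I = \mathcal I_2$ and $\|A_k\| s_{n+1}(S) \in \diag(\mathcal I_2) = \diag(\mathcal I)$ directly. Lemma~\ref{essprop} then places the left-hand side of the displayed inequality in $\diag(\mathcal I)$, and Theorem~\ref{DFWW2} delivers $H_k - D_k \in \Com \mathcal I$.

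Combining the $k = 1, 2$ contributions linearly, $AS = A_1 S + iA_2 S \equiv D_1 + iD_2 = \diag\{(ASe_n, e_n)\}_{n=1}^\infty \pmod{\Com \mathcal I}$. Any trace $\tau : \mathcal I \to \mathbb C$ annihilates $\Com \mathcal I$, so $\tau(AS) = \tau \circ \diag(\{(ASe_n, e_n)\}_{n=1}^\infty)$, and the final equality of the theorem is immediate from the identity $(ASe_n, e_n) = \lambda_n(S)(Ae_n, e_n)$ noted above.
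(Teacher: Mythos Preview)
Your proof is correct and follows essentially the same approach as the paper's: reduce to the Hermitian case, pass from $AS$ to the symmetrized $H$ via the commutator identity $[\mathcal I_1,\mathcal I_2]=\Com\mathcal I$, combine Lemmas~\ref{dec} and~\ref{main0} to bound $\tfrac1n\bigl|\sum_{j\le n}\lambda_j(H)-\sum_{j\le n}\lambda_j(D)\bigr|$, use $\mathcal I_1=\Com\mathcal I_1$ to place the Ces\`aro term in $\diag(\mathcal I_1)$, and invoke Theorem~\ref{DFWW2}. The only cosmetic difference is that you split $A$ into Hermitian parts at the outset while the paper does so at the end.
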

\begin{proof} Our assumptions imply that $AS\in\mathcal I.$ Also by the assumption that $|(ASe_n,e_n)|\le \mu_n$ for $\mu \in \diag( \mathcal I)$,
it follows from Lemma~\ref{essprop} that $\diag\{(ASe_n,e_n)\}_{n=1}^{\infty}\in \mathcal I$.

First let us assume that $A$ is Hermitian.  Set $D:=\diag\{(ASe_n,e_n)\}_{n=1}^{\infty}$ and $\alpha_n := (ASe_n,e_n)$, $n \in \mathbb N$.  By assumption
$|\alpha_n| \leq \mu_n$ where $\mu \in \diag(\mathcal I)$ is positive and decreasing and, by applying Lemma~\ref{dec} to the sequence $\alpha_n := (ASe_n,e_n)$, we have
\begin{equation} \label{eq:main1} \left|\sum_{j=1}^n\lambda_j(D)-\sum_{j=1}^n(ASe_j,e_j)\right|\le 2n\mu_{n}.
\end{equation}
From Lemma~\ref{main0} we have that
\begin{equation} \label{eq:main2}
\left|\sum_{j=1}^n\lambda_j(H)-\sum_{j=1}^n(ASe_j,e_j)\right|\le ns_{n+1}(H)+ n s_{n+1}(S) \beta_n
\end{equation}
where $H=\frac12(AS+SA)$ and $\beta_n := \frac1n\sum_{j=1}^ns_j(A)$, $n \in \mathbb N$, if $A$ is compact, or $\beta_n := \| A \|$, $n \in \mathbb N$, if $A$ is bounded but not compact.

Suppose $A \in \mathcal I_1$ where $\mathcal I_1$ is an ideal of compact operators
such that $\Com \mathcal I_1=\mathcal I_1$.   Then $|A| \in \Com \mathcal I_1$
and, from the equivalent conditions in Theorem~\ref{DFWW} there exists a decreasing sequence
$\mu' \in \diag( \mathcal I_1)$ such that $\frac1n\sum_{j=1}^ns_j(A) \leq \mu'_n$, $n \in \mathbb N$.
Set $\nu:=\{ 2\mu_{n}+ s_{n+1}(H) + \mu'_n s_{n+1}(S) \}_{n=1}^\infty$ which is
positive and decreasing.  By assumption $\{ \mu_n \}_{n=1}^\infty \in \diag(\mathcal I)$.  By the fact that $\mathcal I$ is a two-sided ideal of compact operators then $H=\frac{1}{2}(AS+SA) \in \mathcal I$
and $\{ s_{n+1}(H) \}_{n=1}^\infty \in \diag(\mathcal I)$.  Finally
$ \mu'_n s_{n+1}(S) \in \diag (\mathcal I_1) \cdot \diag(\mathcal I_2) \subset \diag (\mathcal I)$.  Hence $\nu \in \diag( \mathcal I)$.   Then, using~\eqref{eq:main1} and~\eqref{eq:main2},
\begin{equation} \label{eq:main3}
\left|\sum_{j=1}^n\lambda_j(D)-\sum_{j=1}^n\lambda_j(H)\right|\le n\nu_n .
\end{equation}
Now suppose $A \in \mathcal I_1 = \mathcal B ( \mathcal H )= \Com \mathcal B( \mathcal H )$, \cite{Pearcy1971}, \cite{Halmos1954}.  Then $\mathcal{I}=\mathcal{I}_2$ and, in this case,
we define the sequence
$\nu := \{2\mu_{n}+s_{n+1}(H)+\| A \| s_{n+1}(S)\}_{n=1}^\infty \in \diag(\mathcal I)$ which
is positive and decreasing.  Therefore, in this case,~\eqref{eq:main3} still
holds for this new choice of decreasing positive sequence $\nu \in \diag(\mathcal I)$.

With~\eqref{eq:main3} satisfied for the cases $A$ compact or $A$ bounded but not compact,
$D-H\in \Com\mathcal I$ by an application of Theorem~\ref{DFWW2}.  Since $H-AS=\frac12[S,A]\in [\mathcal I_1,\mathcal I_2]=[\mathcal B (\mathcal H), \mathcal I] = \Com\mathcal I$ (see the preliminaries), we obtain $D-AS \in \Com \mathcal I$ and the result of the theorem when $A$ and $S$ are Hermitian.

The general case follows easily by splitting $A$ into real and imaginary parts.
 \end{proof}

\begin{corollary}\label{main2}  Let $\mathcal I$ be a two-sided ideal in $\mathcal K(\mathcal H)$.  Suppose $S\in\mathcal I$ is Hermitian and $(e_n)_{n=1}^{\infty}$ is an eigenvector sequence for $S$.
Then
$$AS-\diag\{(ASe_n,e_n)\}_{n=1}^{\infty}\in \Com \mathcal I$$
for every $A \in \mathcal B( \mathcal H )$
 and hence
for every trace $\tau: \mathcal I \to \mathbb C$ we have
$$ \tau(AS)=\tau \circ \diag(\{(ASe_n,e_n)\}_{n=1}^{\infty}) = \tau \circ \diag (\{(Ae_n,e_n)\lambda_n(S)\}_{n=1}^{\infty}).
$$
\end{corollary}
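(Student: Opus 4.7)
My plan is to deduce Corollary~\ref{main2} directly from Theorem~\ref{main} by making the specific choice $\mathcal I_1 := \mathcal B(\mathcal H)$ and $\mathcal I_2 := \mathcal I$. The Pearcy--Topping identity $\mathcal B(\mathcal H) = \Com \mathcal B(\mathcal H)$ (cited in the excerpt as \cite{Pearcy1971}, \cite{Halmos1954}) gives the hypothesis $\mathcal I_1 = \Com \mathcal I_1$, while the product ideal $\mathcal I_1 \mathcal I_2 = \mathcal B(\mathcal H) \cdot \mathcal I = \mathcal I$ recovers the ideal we care about. So the conclusion of Theorem~\ref{main} will be exactly the conclusion of the corollary, provided I can verify the only remaining hypothesis: existence of a positive decreasing sequence $\{\mu_n\}_{n=1}^\infty \in \diag(\mathcal I)$ dominating $|(ASe_n,e_n)|$.

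For this verification I would use that $(e_n)_{n=1}^\infty$ is an eigenvector sequence for the Hermitian operator $S$, so $Se_n = \lambda_n(S) e_n$ with $|\lambda_n(S)| = s_n(S)$. Then
\[
|(ASe_n,e_n)| = |\lambda_n(S)|\,|(Ae_n,e_n)| \le \|A\|\,s_n(S).
\]
The sequence $\mu_n := \|A\|\,s_n(S)$ is positive and decreasing, and it lies in $\diag(\mathcal I)$ because $S \in \mathcal I$ forces $\{s_n(S)\}_{n=1}^\infty \in \diag(\mathcal I)$. The edge case $A=0$ is trivial, so assume $\|A\|>0$. This is enough to trigger Theorem~\ref{main}, which yields $AS \in \mathcal I$, $\diag\{(ASe_n,e_n)\}_{n=1}^\infty \in \mathcal I$, the membership $AS - \diag\{(ASe_n,e_n)\}_{n=1}^\infty \in \Com \mathcal I$, and the trace identities.

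There is no real obstacle here; the work has been done inside Theorem~\ref{main}, which handled both the case $A$ compact (via the $\Com \mathcal I_1 = \mathcal I_1$ hypothesis giving a decreasing bound on the Cesàro means of $s_j(A)$) and the case $A$ bounded but not compact (replacing that Cesàro bound by the constant $\|A\|$). The only mild point of care is to note that the rewriting $(ASe_n,e_n) = (Ae_n,e_n)\lambda_n(S)$, which gives the second form of the trace formula, is just the eigenvector identity $Se_n = \lambda_n(S)e_n$ together with linearity of the inner product; this is already part of the conclusion of Theorem~\ref{main} and needs no additional argument.
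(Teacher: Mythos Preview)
Your proposal is correct and matches the paper's own proof essentially verbatim: set $\mathcal I_1=\mathcal B(\mathcal H)=\Com\mathcal B(\mathcal H)$, $\mathcal I_2=\mathcal I$, and verify the domination hypothesis via $|(ASe_n,e_n)|\le \|A\|\,|\lambda_n(S)|$, then invoke Theorem~\ref{main}. The only cosmetic difference is that you write $s_n(S)$ where the paper writes $|\lambda_n(S)|$, which are equal since $S$ is Hermitian.
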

\begin{proof}
Set $\mathcal I_1 := \mathcal B( \mathcal H ) = \Com \mathcal B( \mathcal H )$ and $\mathcal I_2 := \mathcal I$.  As
$$|(ASe_n,e_n)| \leq \| A \| |\lambda_n(S)| \in \diag(\mathcal I)$$
and $\| A \| |\lambda_n(S)|$ is a positive decreasing sequence,
we obtain the result from Theorem~\ref{main}.
 \end{proof}

We can now identify the form of every trace on any ideal of compact operators.  

\begin{corollary}\label{main3}  Let $\mathcal I$ be a two-sided ideal in $\mathcal K(\mathcal H)$.  Suppose $T\in\mathcal I$.
Then
$$\tau(T)= \tau \circ \diag (\{s_n(T) ( f_n,e_n ) \}_{n=1}^{\infty})$$
for every trace $\tau: \mathcal I \to \mathbb C$, where 
$$
T = \sum_{n=1}^\infty s_n(T) f_n e_n^*
$$
is a canonical decomposition of $T$ ($\{ s_n(T) \}_{n=1}^\infty$ is the sequence of singular values of $T$, $(e_n)_{n=1}^\infty$ an orthonormal basis such that $|T|e_n = s_n(T)e_n$,
$(f_n)_{n=1}^\infty$ an orthonormal system such that $Te_n = s_n(T)f_n$,
and $e_n^*(\cdot) := ( \cdot , e_n )$).
\end{corollary}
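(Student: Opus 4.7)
The plan is to reduce directly to Corollary~\ref{main2} via the polar decomposition. Write $T = U|T|$ where $U$ is the partial isometry from the polar decomposition and $|T| = (T^*T)^{1/2} \in \mathcal I$ is positive (hence Hermitian). Since $U \in \mathcal B(\mathcal H)$ and $|T| \in \mathcal I$ is Hermitian, the hypotheses of Corollary~\ref{main2} are satisfied with $A := U$ and $S := |T|$.

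The orthonormal basis $(e_n)_{n=1}^\infty$ appearing in the canonical decomposition $T = \sum_n s_n(T) f_n e_n^*$ satisfies $|T| e_n = s_n(T) e_n$, so it is exactly an eigenvector sequence for $|T|$ (with $\lambda_n(|T|) = s_n(|T|) = s_n(T)$), completed to an orthonormal basis using vectors in $\ker|T|$ on indices where $s_n(T) = 0$. Applying Corollary~\ref{main2} to $T = U|T|$ with this eigenvector sequence yields
$$\tau(T) = \tau \circ \diag\bigl(\{(Ue_n,e_n)\,\lambda_n(|T|)\}_{n=1}^\infty\bigr) = \tau \circ \diag\bigl(\{(Ue_n,e_n)\,s_n(T)\}_{n=1}^\infty\bigr).$$

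The remaining step is to match $(Ue_n,e_n)$ with $(f_n,e_n)$. For any index $n$ with $s_n(T) \neq 0$, the polar decomposition gives $Ue_n = s_n(T)^{-1} T e_n = f_n$, so $(Ue_n,e_n) = (f_n,e_n)$. For indices with $s_n(T) = 0$, the vector $e_n$ lies in $\ker|T| = \ker U$, so $(Ue_n,e_n) = 0$, but the product $s_n(T)(f_n,e_n) = 0$ as well, regardless of the choice of $f_n$. Hence $s_n(T)(Ue_n,e_n) = s_n(T)(f_n,e_n)$ for every $n$, giving the claimed formula.

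I do not expect any genuine obstacle here since the work has been done in Corollary~\ref{main2}; the only point requiring care is the bookkeeping on the kernel of $|T|$, which is handled above by observing that indices with $s_n(T) = 0$ contribute zero to the diagonal sequence. The complex case requires no separate argument, as $U$ is a general bounded operator (not assumed Hermitian) in the statement of Corollary~\ref{main2}.
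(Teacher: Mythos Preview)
Your proof is correct and follows essentially the same route as the paper: apply Corollary~\ref{main2} to the polar decomposition $T=U|T|$ with $A=U$, $S=|T|$, and the eigenvector sequence $(e_n)$ of $|T|$. Your treatment is in fact slightly more careful than the paper's on the indices with $s_n(T)=0$, explicitly checking that the choice of $f_n$ there is immaterial, whereas the paper simply takes $f_n:=Ue_n$.
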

\begin{proof}
Let $T=U|T|$ be the polar decomposition of the compact operator $T$
into the positive operator $|T|$ and the partial isometry $U$.
The eigenvalue sequence $\{ \lambda_n(|T|) \}_{n=1}^\infty$ defines
the singular values $ \{ s_n(T) \}_{n=1}^\infty$ of $T$.
Let $(e_n)_{n=1}^\infty$ be any orthonormal system such that $|T|e_n = s_n(T) e_n$
(an eigenvector sequence of $|T|$).
Since $U$ is bounded and $|T| \in \mathcal I$ is positive we apply Corollary~\ref{main2} (with $A=U$ and $S=|T|$) and obtain
$$
\tau(T) = \tau(U|T|) = \tau \circ \diag (\{(Ue_n,e_n)s_n(T)\}_{n=1}^{\infty}).
$$
We set $f_n := Ue_n$, $n \in \mathbb N$.  If $(e_n)_{n=1}^\infty$ forms a complete system,
then we have the decomposition, \cite[Theorem 1.4]{S},
$
|T| = \sum_{n=1}^\infty s_n(T) e_n e_n^* .
$
It follows that $T = U|T| = \sum_{n=1}^\infty s_n(T) f_n e_n^*$.
 \end{proof}

\section{Modulated operators and the weak-$\ell_1$ space} \label{sec:modulated}

The traces of interest in Connes' trace theorem (and in Connes' noncommutative geometry in general) are traces on the ideal $\mathcal L_{1,\infty}$
associated to the weak-$\ell_1$ space $\ell_{1,\infty}$, i.e.~$\diag( \mathcal L_{1,\infty} ) = \ell_{1,\infty}$.  It is indicated below that the ideal $\mathcal L_{1,\infty}$ is geometrically stable (it is a quasi-Banach ideal),
so the Lidskii formulation applies to all its traces.

We were led in our investigations, following results like Corollary~\ref{main2},
to ask to what degree the Fredholm formulation applies to
traces on $\mathcal L_{1,\infty}$.  The Fredholm formulation of the canonical trace on trace class operators (usually taken as the definition of the canonical trace) is
$$
\Tr (T) = \sum (Te_n,e_n) , \qquad T \in \mathcal L_1
$$
where the usual sum $\sum : \ell_1 \to \mathbb C$ can be understood as the functional $\Tr \circ \diag$.
In the Fredholm formulation $(e_n)_{n=1}^\infty$ is any orthonormal basis
and the same basis can be used for all trace class operators $T \in \mathcal L_1$, which is quite distinct
to the statement of Corollary~\ref{main2}.  We know that the Fredholm formulation is false for
traces on $\mathcal L_{1,\infty}$ (but we do not offer any proof of this fact here\footnote{Private communication by D.~Zanin.}),
i.e.~if $\tau : \mathcal L_{1,\infty} \to \mathbb C$ is a non-zero trace
there does not exist any basis $(e_n)_{n=1}^\infty$ such
that $\tau(T) = \tau \circ \diag (\{(Te_n,e_n)\}_{n=1}^\infty)$ for all $T \in \mathcal L_{1,\infty}$.

We considered whether there were restricted Fredholm formulations, which may hold
for some subspace of $\mathcal L_{1,\infty}$ instead of the whole ideal.
To this end we introduce new left ideals of the Hilbert-Schmidt operators.
We have used the term modulated operators, see Definition~\ref{def:mod},
for the elements of the left ideals
and the precise statement of a `restricted Fredholm formulation' is
Theorem~\ref{connection}.  It is the aim of this section
to prove Theorem~\ref{connection}.

Our purpose for introducing modulated operators is to study operators on
manifolds modulated by the Laplacian, where this definition
is made precise in Section~\ref{sec:PsDO}.
Compactly supported pseudo-differential operators of order $-d$ on
$\mathbb R^d$
offer examples of these so-called Laplacian modulated operators, and this will be our avenue to proving extensions and variants of Connes' trace theorem.

\begin{notation}
Henceforth we use big $O$, theta $\Theta$, and little $o$ notation,
meaning $f(s) = O(g(s))$ if $|f(s)| \leq C |g(s)|$ for
a constant $C>0$ for all $s \in \mathbb N$ or $s \in \mathbb R$,
$f(s) = \Theta(g(s))$ if $c | g(s)| \leq |f(s)| \leq C |g(s)|$ for
constants $C>c>0$ for all $s \in \mathbb N$ or $s \in \mathbb R$,  and
$f(s) = o(g(s))$ if $|f(s)| |g(s)|^{-1} \to 0$ as $s \to \infty$, respectively.
\end{notation}

Let $\mathcal L_2$ denote the Hilbert-Schmidt operators on the Hilbert space $\mathcal H$.

\begin{definition} \label{def:mod}
Suppose $V : \mathcal H \to \mathcal H$ is a positive bounded operator.
An operator $T : \mathcal H \to \mathcal H$ is \emph{$V$-modulated} if
\begin{equation}\label{modulated}
\|T(1+tV)^{-1}\|_{\mathcal L_2} = O(t^{-1/2}) .\end{equation}
We denote by $\mathrm{mod} (V) $ the set of $V$-modulated operators.
\end{definition}

It follows from the definition that $\mmod (V)$
is a subset of the Hilbert-Schmidt operators $\mathcal L_2$ and
that it forms a left ideal of $\mathcal B(\mathcal H)$ (see Proposition~\ref{prop:basic_prop} below).

The main result of this section is the following theorem.

\begin{theorem}\label{connection}  
Suppose $T$ is $V$-modulated where $0 < V \in \mathcal L_{1,\infty}$,
and $( e_n )_{n=1}^\infty$ is an orthonormal basis such that $Ve_n = s_n(V)e_n$, $n \in \mathbb N$.
Then
\begin{enumerate}[(i)]
\item $T, \diag\{(Te_n,e_n)\}_{n=1}^{\infty} \in \mathcal L_{1,\infty}$;
\item $T-\diag\{(Te_n,e_n)\}_{n=1}^{\infty} \in\Com\mathcal L_{1,\infty}$;
\item every eigenvalue sequence $\{ \lambda_n(T) \}_{n=1}^\infty$ of $T$ satisfies
$$ \sum_{j=1}^n\lambda_j(T)-\sum_{j=1}^n(Te_j,e_j) = O(1) $$
where $O(1)$ denotes a bounded sequence.
\end{enumerate}
\end{theorem}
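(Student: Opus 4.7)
The plan is to prove the three parts in turn. Part (i) is an analytic Hilbert--Schmidt estimate from the modulation; the equivalence of (ii) and (iii) follows from the commutator subspace machinery of Section 3; and (iii) is the main analytic obstacle.

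\textbf{Part (i).} First I would extract tail decay of the columns of $T$. Let $P_n$ denote the orthogonal projection onto $\mathrm{span}\{e_1,\ldots,e_n\}$, which commutes with $V$. On the range of $1-P_n$ one has $V \le s_{n+1}(V)\cdot (1-P_n)$, hence $(1+tV)^{-1}(1-P_n) \ge (1+t s_{n+1}(V))^{-1}(1-P_n)$. Choosing $t = 1/s_{n+1}(V)$ and using the modulation hypothesis together with $s_{n+1}(V) = O(1/n)$ (which holds because $V \in \mathcal L_{1,\infty}$) gives
\[
\|T(1-P_n)\|_{\mathcal L_2}^2 \le 4\|T(1+tV)^{-1}\|_{\mathcal L_2}^2 = O(t^{-1}) = O(s_{n+1}(V)) = O(1/n).
\]
Splitting $T = TP_n + T(1-P_n)$, the first term has rank at most $n$ while $s_{k}(T(1-P_n)) \le \|T(1-P_n)\|_{\mathcal L_2}/\sqrt k$, so the sub-additivity $s_{i+j-1}(A+B)\le s_i(A)+s_j(B)$ yields $s_{2n+1}(T) = O(1/n)$, i.e., $T \in \mathcal L_{1,\infty}$. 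For $D := \diag\{(Te_n,e_n)\}$ the bound $|(Te_n,e_n)| \le \|Te_n\|$ shows $\|D(1+tV)^{-1}\|_{\mathcal L_2} \le \|T(1+tV)^{-1}\|_{\mathcal L_2}$, so $D$ is also $V$-modulated and the same argument gives $D \in \mathcal L_{1,\infty}$.

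\textbf{Equivalence of (ii) and (iii).} Since $\mathcal L_{1,\infty}$ is a quasi-Banach ideal it is geometrically stable, so Corollary \ref{K2} gives $T - \diag\{\lambda_n(T)\}_{n=1}^\infty \in \Com\mathcal L_{1,\infty}$. Consequently (ii) is equivalent to $\diag\{\lambda_n(T) - (Te_n,e_n)\}_{n=1}^\infty \in \Com\mathcal L_{1,\infty}$, which by Theorem \ref{DFWW} for this normal operator is equivalent to the partial sums of its eigenvalue sequence being $O(1)$. Applying Lemma \ref{dec} to the sequence $(Te_n,e_n)$, dominated by the decreasing sequence $\{s_n(D)\}=O(1/n)$ (available from Part (i) for $D$), identifies this partial sum condition with the natural-order statement $\sum_{j\le n}\lambda_j(T)-\sum_{j\le n}(Te_j,e_j) = O(1)$, i.e., (iii). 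So it suffices to prove (iii).

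\textbf{Part (iii) --- the main obstacle.} I would attack (iii) by realizing both sums as traces. Ringrose's theorem supplies an orthonormal basis $(f_n)_{n=1}^\infty$ in which $T$ is upper-triangular with diagonal $(Tf_n,f_n) = \lambda_n(T)$, so setting $Q_n := \sum_{j\le n} f_j f_j^*$ we have $\sum_{j\le n}\lambda_j(T) = \Tr(Q_n T Q_n)$, while $\sum_{j\le n}(Te_j,e_j) = \Tr(P_n T P_n)$. The task reduces to showing $|\Tr(Q_n T Q_n) - \Tr(P_n T P_n)| = O(1)$. The naive rank bound via $\rank(Q_n-P_n)\le 2n$ and $s_j(T)=O(1/j)$ yields only $O(\log n)$, so the sharp $O(1)$ estimate --- and this is the hardest step of the theorem --- must exploit the $V$-modulation beyond just $T \in \mathcal L_{1,\infty}$. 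My plan is to mimic the three-projection argument of Lemma \ref{main0}: introduce $R_n$ projecting onto $\mathrm{span}\{e_1,\ldots,e_n,f_1,\ldots,f_n\}$, write $\Tr(T(Q_n-P_n)) = \Tr(T(R_n-P_n))-\Tr(T(R_n-Q_n))$, and bound each piece using the column tail $\|T(1-P_n)\|_{\mathcal L_2} = O(n^{-1/2})$ from Part (i) together with $s_{n+1}(T) = O(1/n)$, so that products $\|T(1-P_n)\|_{\mathcal L_2}\cdot\sqrt{\rank(\cdot)}$ and $n\cdot s_{n+1}(T)$ each come out of order unity. Delivering the precise $O(1)$ rather than $O(\log n)$, via an appropriate combination of the Hilbert--Schmidt tail of $T$ and the rank of the discrepancy between the Ringrose and $V$-eigenbases, is the crux of the proof.
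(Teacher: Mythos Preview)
Your Part (i) is fine and essentially matches the paper. The real trouble is Part (iii), and your three-projection plan via the Ringrose basis does not close. The term $\Tr(T(R_n-P_n))$ is indeed $O(1)$ by your argument, since $R_n-P_n = (1-P_n)(R_n-P_n)$ and you have the Hilbert--Schmidt tail $\|T(1-P_n)\|_{\mathcal L_2}=O(n^{-1/2})$. But for $\Tr(T(R_n-Q_n))$ you propose the bound $n\,s_{n+1}(T)$, and this is not available: in Lemma~\ref{main0} that bound works because $Q_n$ projects onto the top $n$ eigenvectors of the \emph{Hermitian} operator $H$, so the compression $(1-Q_n)H(1-Q_n)$ has operator norm $\le s_{n+1}(H)$. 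For a non-normal $T$ in Ringrose upper-triangular form, $(1-Q_n)T(1-Q_n)$ has eigenvalues $\lambda_{n+1}(T),\lambda_{n+2}(T),\ldots$ but its singular values satisfy only $s_j((1-Q_n)T(1-Q_n))\le s_j(T)$, giving $|\Tr(T(R_n-Q_n))|\le\sum_{j=1}^n s_j(T)=O(\log n)$ and nothing better. Nor do you have $\|T(1-Q_n)\|_{\mathcal L_2}=O(n^{-1/2})$, since $Q_n$ has no relation to $V$. So the Ringrose route is stuck at $O(\log n)$.

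The paper avoids Ringrose entirely. Its device is to stay close to $(e_n)$: partition $(e_n)$ into dyadic blocks $[e_j]_{j=2^{k-1}}^{2^k-1}$ and use the Hausdorff--Toeplitz numerical-range lemma (Lemma~\ref{trace}) to rotate each block to an orthonormal set $(f_j)_{j=2^{k-1}}^{2^k-1}$ on which the diagonal $(Tf_j,f_j)$ is constant, equal to $2^{-(k-1)}\Tr(P_kTP_k)$. Because $\|TP_k\|_{\mathcal L_2}^2=O(2^{-k})$ one gets $\|P_kTP_k\|_{\mathcal L_1}=O(1)$ and hence $|(Tf_n,f_n)|=O(1/n)$; and since $(f_n)$ lives in the same dyadic filtration as $(e_n)$, the tail estimate $\sum_{j>n}\|Tf_j\|^2=O(1/n)$ survives. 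With both hypotheses of Lemma~\ref{maincrux} in hand, one factorises $T=AS$ with $S=\diag\{j^{-1/q}\}$ in the $(f_n)$ basis and $A\in\mathcal L_{p,\infty}=\Com\mathcal L_{p,\infty}$, and Theorem~\ref{main} delivers $\sum\lambda_j(T)-\sum(Tf_j,f_j)=O(1)$. A final block-by-block comparison gives $\sum(Tf_j,f_j)-\sum(Te_j,e_j)=O(1)$. A smaller issue: your reduction of (ii)$\Leftrightarrow$(iii) invokes Lemma~\ref{dec} on the sequence $(Te_n,e_n)$, which needs $|(Te_n,e_n)|\le\mu_n$ for a \emph{decreasing} $\mu\in\ell_{1,\infty}$; the modulation only yields $|(Te_n,e_n)|\le\|Te_n\|=O(n^{-1/2})$ in natural order, so that step is also incomplete as written.
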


\begin{remark}[Fredholm formula]
Evidently from Theorem~\ref{connection}, if $T$ is $V$-modulated then
$$
\tau(T) = \tau \circ \diag ( \{ (Te_n,e_n) \}_{n=1}^\infty )
$$
for every trace $\tau : \mathcal L_{1,\infty} \to \mathbb C$.
\end{remark}

The proof of Theorem~\ref{connection} is provided in a section below.

\subsection{Properties of modulated operators}

We establish the basic properties of $V$-modulated operators.  This will simplify the proof of Theorem~\ref{connection} and results in later sections.

\begin{notation}
The symbols $\doteq$, $\dot{\le}$, $\dot{\ge}$, may also be used to denote equality or inequality up to a constant.
Where it is necessary to indicate that the constant depends on parameters $\theta_1, \theta_2, \ldots$
we write $\doteq_{\theta_1, \theta_2, \ldots}$, $\dot{\le}_{\theta_1, \theta_2, \ldots}$, $\dot{\ge}_{\theta_1, \theta_2, \ldots}$.   The constants may not be the same in successive uses of the symbol.  We introduce this notation to improve text where the value of constants has no relevance to statements or proofs.
\end{notation}

\begin{proposition} \label{prop:basic_prop} \label{rem:leftideal}
Suppose $V : \mathcal H \to \mathcal H$ is positive and bounded.
The set of $V$-modulated operators, $\mathrm{mod} (V)$,
is a subset of $\mathcal L_2$ that forms a left ideal of $\mathcal B (\mathcal H)$.
\end{proposition}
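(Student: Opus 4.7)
The plan is to establish both assertions by routine applications of the two-sided ideal property of the Hilbert-Schmidt class $\mathcal L_2$ in $\mathcal B(\mathcal H)$, together with the standard estimate $\|BK\|_{\mathcal L_2}\le\|B\|_{\mathcal B(\mathcal H)}\|K\|_{\mathcal L_2}$ for $B\in\mathcal B(\mathcal H)$ and $K\in\mathcal L_2$.

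First, to show $\mmod(V)\subset\mathcal L_2$, I would evaluate the defining estimate~\eqref{modulated} at the fixed value $t=1$, yielding $T(1+V)^{-1}\in\mathcal L_2$. Since $V\ge 0$ is bounded, $1+V$ is a bounded invertible operator on $\mathcal H$; the factorisation $T=[T(1+V)^{-1}](1+V)$ combined with the ideal property of $\mathcal L_2$ then gives $T\in\mathcal L_2$ at once (and in particular $T$ is bounded, should that not already have been built into the definition).

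For the left ideal property I would verify the three standard closures. Closure under scalar multiplication is immediate. For sums $T_1,T_2\in\mmod(V)$, the triangle inequality for $\|\cdot\|_{\mathcal L_2}$ gives
$$\|(T_1+T_2)(1+tV)^{-1}\|_{\mathcal L_2}\le\|T_1(1+tV)^{-1}\|_{\mathcal L_2}+\|T_2(1+tV)^{-1}\|_{\mathcal L_2}=O(t^{-1/2}).$$
For left multiplication by $B\in\mathcal B(\mathcal H)$, set $K:=T(1+tV)^{-1}\in\mathcal L_2$ and apply the ideal estimate:
$$\|(BT)(1+tV)^{-1}\|_{\mathcal L_2}=\|BK\|_{\mathcal L_2}\le\|B\|_{\mathcal B(\mathcal H)}\|K\|_{\mathcal L_2}=O(t^{-1/2}).$$

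No step presents a genuine obstacle; the proof amounts to one line per closure axiom. It is worth noting only that $\mmod(V)$ is not claimed to be a two-sided ideal: right multiplication $TB$ would require control over $(1+tV)^{-1}B$, and when $B$ fails to commute with $V$ there is no a~priori bound of that shape. For the same reason the asymmetric placement of $(1+tV)^{-1}$ on the right of $T$ in the definition is essential, and the definition is one-sided by design.
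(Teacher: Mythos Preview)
Your proof is correct and essentially identical to the paper's: both evaluate the modulated condition at $t=1$ and use the factorisation $T=[T(1+V)^{-1}](1+V)$ with the ideal property of $\mathcal L_2$ to get $T\in\mathcal L_2$, and both use $\|BK\|_{\mathcal L_2}\le\|B\|\,\|K\|_{\mathcal L_2}$ for the left ideal axioms (the paper merely combines sum and left-multiplication into a single line by treating $A_1T_1+A_2T_2$). Your closing remark about why right multiplication fails is also on point and anticipates the paper's Proposition~\ref{changeV}.
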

\begin{proof}
Suppose $T\in \mmod (V)$.  Then
$$
\| T \|_{\mathcal L_2} = \| T(1+V)^{-1}(1+V) \|_{\mathcal L_2}
\le \| T(1+V)^{-1} \|_{\mathcal L_2} \| 1+V \|
$$
and $T$ is Hilbert-Schmidt.
Suppose $A_1,A_2 \in \mathcal B( \mathcal H)$ and $T_1,T_2 \in \mmod (V)$.  We have, for $t \ge 1$,
$$
\|(A_1 T_1+A_2 T_2)(1+tV)^{-1}\|_{\mathcal L_2} \leq \| A_1 \| \|T_1(1+tV)^{-1}\|_{\mathcal L_2} + \| A_2 \| \|T_2(1+tV)^{-1}\|_{\mathcal L_2} 
$$
so
$$
\|(A_1 T_1+A_2 T_2)(1+tV)^{-1}\|_{\mathcal L_2} = O(t^{-1/2}) .
$$
Hence $\mmod (V)$ forms a left ideal of $\mathcal B( \mathcal H)$.
 \end{proof}

Proposition~\ref{prop:basic_prop} establishes $\mathrm{mod} (V)$
as a left ideal of $\mathcal B ( \mathcal H)$.
The conditions by which bounded operators act on the right of $\mathrm{mod} (V)$ are more subtle. 

If $X$ is a set, let $\chi_E$ denote the indicator
function of a subset $E \subset X$.

\begin{lemma} \label{equiv} Suppose $V : \mathcal H \to \mathcal H$ is a positive bounded operator with $\|V\|\le 1$.  Then $T \in \mmod(V)$ iff
\begin{equation}\label{mod2}\|T \chi_{[0,2^{-n}]}(V) \|_{\mathcal L_2} =
O(2^{-n/2}) . \end{equation}
\end{lemma}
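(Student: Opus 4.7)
The plan is to prove both directions using the spectral calculus of $V$, matching the resolvent factor $(1+tV)^{-1}$ to the spectral cutoff $\chi_{[0,2^{-n}]}(V)$ at the corresponding dyadic scales $t \sim 2^n$.

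For the forward direction, I would set $t = 2^n$ in~\eqref{modulated}. Since $1+2^n V \le 2$ on the range of $\chi_{[0,2^{-n}]}(V)$ by the spectral theorem, the operator $(1+2^n V)\chi_{[0,2^{-n}]}(V)$ has operator norm at most $2$, so the factorisation
$$ T\chi_{[0,2^{-n}]}(V) \;=\; T(1+2^n V)^{-1}\bigl((1+2^n V)\chi_{[0,2^{-n}]}(V)\bigr) $$
immediately yields $\|T\chi_{[0,2^{-n}]}(V)\|_{\mathcal L_2} \le 2\,\|T(1+2^n V)^{-1}\|_{\mathcal L_2} = O(2^{-n/2})$ from Definition~\ref{def:mod}.

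The converse requires more care. I would introduce the pairwise orthogonal dyadic spectral projections $E_n := \chi_{(2^{-n-1},2^{-n}]}(V)$ for $n \ge 0$. Since $E_n \le \chi_{[0,2^{-n}]}(V)$, the hypothesis at scale $n$ gives $\|TE_n\|_{\mathcal L_2}^2 = O(2^{-n})$; separately, letting $n \to \infty$ in the hypothesis and using $\chi_{\{0\}}(V) \le \chi_{[0,2^{-n}]}(V)$ forces $T\chi_{\{0\}}(V) = 0$. Because $(1+tV)^{-1}$ commutes with every $E_n$ and $\|(1+tV)^{-1}E_n\| \le (1+t\,2^{-n-1})^{-1}$ by the spectral theorem, choosing an orthonormal basis of $\mathcal H$ subordinate to the decomposition $\mathcal H = \chi_{\{0\}}(V)\mathcal H \oplus \bigoplus_{n\ge 0} E_n\mathcal H$ gives the Parseval-type identity
$$ \|T(1+tV)^{-1}\|_{\mathcal L_2}^2 \;=\; \sum_{n\ge 0} \|TE_n(1+tV)^{-1}E_n\|_{\mathcal L_2}^2 \;\le\; C \sum_{n\ge 0} \frac{2^{-n}}{(1+t\,2^{-n-1})^{2}}. $$
Splitting at $n_0 := \lfloor \log_2 t \rfloor$, the terms with $n < n_0$ (where $t\,2^{-n-1} \ge 1$) sum geometrically to $O(t^{-1})$ via the bound $(1+t\,2^{-n-1})^{-2} \le 4 t^{-2} 2^{2n}$, and the terms with $n \ge n_0$ are bounded by $2^{-n}$ and sum to $O(2^{-n_0}) = O(t^{-1})$.

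The only conceptually nontrivial step is the Hilbert--Schmidt additivity across the dyadic decomposition, which hinges on first verifying that $T$ vanishes on the kernel of $V$ so that no contribution is lost. Everything else is a routine dyadic estimate, and the crossover scale $n_0 \sim \log_2 t$ is precisely what produces the $O(t^{-1/2})$ rate demanded by~\eqref{modulated}.
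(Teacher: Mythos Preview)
Your proof is correct and follows essentially the same dyadic strategy as the paper. The only notable difference is in the converse: the paper uses a \emph{finite} dyadic decomposition $\chi_{[0,1]}(V)=\sum_{j=1}^k(\chi_{[0,2^{-(j-1)}]}-\chi_{[0,2^{-j}]})(V)+\chi_{[0,2^{-k}]}(V)$ with $2^{k-1}\le t<2^k$ and the triangle inequality in $\mathcal L_2$, which sidesteps any discussion of $\ker V$ since the tail term $\chi_{[0,2^{-k}]}(V)$ is handled directly by the hypothesis. Your infinite decomposition with Pythagoras is equally valid but forces you to check $T\chi_{\{0\}}(V)=0$ separately; the paper's finite-plus-tail version is marginally cleaner for exactly this reason.
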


\begin{proof}
Let $f,g$ be real-valued bounded Borel functions such that $|f| \leq |g|$ and $Tg(V) \in \mathcal L_2$.
Then
$$
|f(V)T^*| = \sqrt{T|f|^2(V)T^*} \leq \sqrt{T|g|^2(V)T^*} = |g(V)T^*|.
$$
Hence $\| Tf(V) \|_{\mathcal L_2} \leq \| Tg(V) \|_{\mathcal L_2}.$
Let $f(x) = \chi_{[0,2^{-n}]}(x)$, $x \geq 0$, and $g(x) = 2(1+2^nx)^{-1}$, $x\geq0$.
Then $|f| \leq |g|$ and
$$
\|T \chi_{[0,2^{-n}]}(V)\|_{\mathcal L_2}\lec \|T(1+2^nV)^{-1}\|_{\mathcal L_2} \lec 2^{-n/2}.
$$
Hence~\eqref{modulated} implies~\eqref{mod2}.

Conversely, we note that $$(\chi_{[0,2^{-(j-1)}]}-\chi_{[0,2^{-j}]})(x)(1+tx)^{-1} \leq
(1+t2^{-j})^{-1}\chi_{[0,2^{-(j-1)}]}(x), \qquad x \geq 0$$
Then if~\eqref{mod2} holds and $2^{k-1}\le t<2^k$ where $k\in\mathbb N$, we have
\begin{align*} \|T(1+tV)^{-1}\|_{\mathcal L_2}&\le \sum_{j=1}^k\|T(\chi_{[0,2^{-(j-1)}]}-\chi_{[0,2^{-j}]})(V))(1+tV)^{-1}\|_{\mathcal L_2} \\
& \qquad + \|T\chi_{[0,2^{-k}]}(V)(1+tV)^{-1}\|_{\mathcal L_2}\\
&\lec \sum_{j=1}^k (1+t2^{-j})^{-1} \|T\chi_{[0,2^{-(j-1)}]}(V)\|_{\mathcal L_2} + 2^{-k/2}\\
&\lec t^{-1}\sum_{j=1}^k 2^{j}2^{-(j-1)/2} + 2^{-k/2}\\
&\lec t^{-1/2},\end{align*} and the condition for being modulated is satisfied.
 \end{proof}

\begin{proposition}\label{changeV}  Suppose $V_1:\mathcal H\to\mathcal H$ and $V_2:\mathcal H' \to\mathcal H'$ are bounded positive operators.  Let $B: \mathcal H \to \mathcal H'$
be a bounded operator and let $A:\mathcal H' \to\mathcal H$ be a bounded operator such that
for some $a>1/2$ we have
 $$ \|V_1^aAx\|_{\mathcal H} \lec \|V_2^ax\|_{\mathcal H'}, \qquad x\in\mathcal H'.$$
If $T \in \mmod(V_1)$ then $BTA \in \mmod(V_2)$.
 \end{proposition}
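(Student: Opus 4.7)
The plan is to reduce the statement via the left-ideal structure and a spectral cutoff, and then to exploit the $V_1$-modulated property of $T$ together with the factorisation hypothesis on $A$ on dyadic pieces of the $V_1$-spectrum.

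First, since $\mmod(V_2)$ is a left ideal of $\mathcal B(\mathcal H')$ by Proposition~\ref{prop:basic_prop}, it suffices to prove $TA \in \mmod(V_2)$; the factor $B$ is absorbed automatically. By rescaling $V_1$ and $V_2$ (both $V_i$-modulation and the hypothesis $\|V_1^a Ax\|\lec \|V_2^a x\|$ are stable under rescaling by positive constants), we may also assume $\|V_1\|,\|V_2\| \le 1$. Lemma~\ref{equiv} then reduces the goal to
$$\|TA\chi_{[0, 2^{-n}]}(V_2)\|_{\mathcal L_2} \lec 2^{-n/2}.$$

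Writing $E_n := \chi_{[0, 2^{-n}]}(V_2)$, the hypothesis immediately yields $\|V_1^a A E_n\| \lec 2^{-na}$, since $\|V_2^a E_n\| \le 2^{-na}$. I would then decompose $V_1$ dyadically: set $G_k := \chi_{(2^{-(k+1)}, 2^{-k}]}(V_1)$, so that $\sum_{k \ge 0} G_k = \chi_{(0,1]}(V_1)$ strongly, and observe that the null space of $V_1$ contributes nothing to $T$ (indeed $\|T\chi_{[0, 2^{-j}]}(V_1)\|_{\mathcal L_2} \to 0$ by $V_1$-modulation). The crucial operator-norm estimate is
$$\|G_k A E_n\| \le 2^{(k+1)a}\,\|G_k V_1^a A E_n\| \lec 2^{(k-n)a},$$
valid because $V_1^{-a}$ has norm at most $2^{(k+1)a}$ on $\mathrm{Ran}(G_k)$. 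Combined with $\|TG_k\|_{\mathcal L_2} \lec 2^{-k/2}$ (also from $V_1$-modulation) and the crude bound $\|G_k AE_n\| \le \|A\|$, this gives
$$\|TG_k AE_n\|_{\mathcal L_2} \le \|TG_k\|_{\mathcal L_2}\,\|G_k AE_n\| \lec 2^{-k/2}\min\!\bigl(1,\, 2^{(k-n)a}\bigr).$$
Summing in $k$ and splitting at $k = n$ gives $\sum_{k > n} 2^{-k/2} \lec 2^{-n/2}$ trivially, and $\sum_{k=0}^n 2^{-k/2+(k-n)a} = 2^{-na}\sum_{k=0}^n 2^{k(a-1/2)} \lec 2^{-n/2}$ using $a > 1/2$. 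Lemma~\ref{equiv} then gives $TA \in \mmod(V_2)$.

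The main obstacle, and the place where the hypothesis $a > 1/2$ enters, is precisely this geometric summation: the band estimate must be strong enough on the low bands of $V_1$ for the sum $\sum_{k=0}^n 2^{k(a-1/2)}$ to be dominated by its top term $2^{n(a-1/2)}$, producing the exact cancellation with $2^{-na}$. Producing the quantitative band estimate $\|G_k AE_n\| \lec 2^{(k-n)a}$, which converts the Hilbert-space factorisation hypothesis into a spectral bound on each $V_1$-band, is the only delicate step; everything else is dyadic bookkeeping.
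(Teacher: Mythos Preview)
Your proof is correct and follows essentially the same approach as the paper: dyadic spectral decomposition of $V_1$, the operator-norm band estimate $\|G_kAE_n\|\lec 2^{(k-n)a}$ obtained from the hypothesis, the Hilbert--Schmidt estimate $\|TG_k\|_{\mathcal L_2}\lec 2^{-k/2}$ from $V_1$-modulation, and a geometric summation in which $a>1/2$ is exactly what makes the low-band sum converge. The only cosmetic differences are that the paper keeps $B$ throughout rather than peeling it off via the left-ideal property, and that it organises the decomposition as a tail term $TP_kAQ_k$ plus a telescoping sum $\sum_{j=1}^k T(P_{j-1}-P_j)AQ_k$ with the complementary-projection bound $\|(I-P_j)AQ_k\|\le 2^{(j-k)a}$, whereas you sum over all bands $G_k$ and split at $k=n$; the underlying estimates and the use of Lemma~\ref{equiv} are identical.
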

 
\begin{remark}
In particular, if $\mathcal H = \mathcal H'$, $V=V_1=V_2$, $B = 1_{\mathcal H}$,
$\|VAx\| \lec \|Vx\|$ for all $x \in \mathcal H$,
then $TA \in \mmod(V)$ if $T \in \mmod(V)$.
\end{remark}
 
 \begin{proof}  We may suppose that $\|V_1\|_{\mathcal B (\mathcal H)},\|V_2\|_{\mathcal B (\mathcal H')}\le 1.$  Let $P_n=\chi_{[0,2^{-n}]}(V_1)$ and $Q_n=\chi_{[0,2^{-n}]}(V_2)$, $n= \mathbb Z_+.$  If $j\le k$ we have
$$\|(I-P_j)AQ_kx\|_{\mathcal H } \le 2^{ja}\|V_1^aAQ_kx\|_{\mathcal H } \leq 2^{ja}\|V_2^aQ_kx\|_{\mathcal H' }\le 2^{(j-k)a}\|x\|_{\mathcal H' } , \qquad x\in\mathcal H'.$$
Thus
$$ \|(I-P_j)AQ_k\|_{\mathcal B(\mathcal H',\mathcal H)} \leq 2^{(j-k)a}.$$
If $T$ is $V_1$-modulated then we have by Lemma \ref{equiv} that
$$ \|TP_j\|_{\mathcal L_2(\mathcal H)}\lec  2^{-j/2}, \qquad j \in \mathbb N.$$
Thus
\begin{align*} \|BTAQ_k\|_{\mathcal L_2(\mathcal H')}&\le \| B \|_{\mathcal B(\mathcal H, \mathcal H')} \|TP_kAQ_k\|_{\mathcal L_2(\mathcal H, \mathcal H')} \\
& \qquad \qquad + \| B \|_{\mathcal B(\mathcal H, \mathcal H')}\sum_{j=1}^k \|T(P_{j-1}-P_j)AQ_k\|_{\mathcal L_2(\mathcal H, \mathcal H')} \\
&\lec \|TP_k\|_{\mathcal L_2(\mathcal H)} \| AQ_k\|_{\mathcal B(\mathcal H', \mathcal H)} \\
& \qquad \qquad +\sum_{j=1}^k \|T(P_{j-1}-P_j)\|_{\mathcal L_2(\mathcal H)}\|(P_{j-1}-P_j)AQ_k\|_{\mathcal B(\mathcal H', \mathcal H)}\\
&\lec \|TP_k\|_{\mathcal L_2(\mathcal H)} 
+\sum_{j=1}^k \|TP_{j-1}\|_{\mathcal L_2(\mathcal H)}\|(1-P_{j-1})AQ_k\|_{\mathcal B(\mathcal H', \mathcal H)} \\ & \qquad \qquad +\sum_{j=1}^k \|TP_{j-1}\|_{\mathcal L_2(\mathcal H)}\|(1-P_{j})AQ_k\|_{\mathcal B(\mathcal H', \mathcal H)} \\
&\lec 2^{-k/2} + (1+2^{-a})2^{1/2} 2^{-k/2}\sum_{j=1}^k 2^{(a-1/2)(j-k)}\\
&\lec 2^{-k/2} .\end{align*}
Hence $BTA$ is $V_2$-modulated by Lemma~\ref{equiv}.
 \end{proof}

\subsection{Proof of Theorem~\ref{connection}}

To prove Theorem~\ref{connection}, we will need the following lemmas.

\begin{lemma}\label{trace} Let $E$ be an $n$-dimensional Hilbert space and suppose $A:E\to E$ is a linear map.  Then there is an orthonormal basis $(f_j)_{j=1}^n$ of $E$ so that
$$(Af_j,f_j)=\frac1n\Tr(A), \qquad j\in \mathbb N.$$
\end{lemma}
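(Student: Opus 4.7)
The plan is to reduce to the trace-zero case and then induct on $n$. Let $B := A - (n^{-1}\Tr A)\, I$, so $\Tr B = 0$; it then suffices to produce an orthonormal basis $(f_j)_{j=1}^n$ of $E$ with $(Bf_j, f_j) = 0$ for all $j$, because $(Af_j,f_j) = (Bf_j,f_j) + n^{-1}\Tr A$.

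The base case $n = 1$ is trivial: $B = 0$. For the inductive step, the key point is that $0$ lies in the numerical range
\[
W(B) := \{ (Bx,x) : x \in E, \ \|x\| = 1 \}.
\]
By the Toeplitz--Hausdorff theorem, $W(B)$ is convex. It contains every eigenvalue of $B$ (via a corresponding unit eigenvector), hence it contains any convex combination of eigenvalues, in particular $n^{-1}\sum_{k} \lambda_k(B) = n^{-1}\Tr B = 0$. So we may choose a unit vector $f_1 \in E$ with $(Bf_1, f_1) = 0$.

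Let $E_1 := f_1^\perp$, an $(n-1)$-dimensional subspace, and let $P$ denote the orthogonal projection of $E$ onto $E_1$. Define $B_1 := PBP|_{E_1} : E_1 \to E_1$. Using $P^2 = P$ and the trace property,
\[
\Tr B_1 = \Tr(PBP) = \Tr(BP) = \Tr B - (Bf_1, f_1) = 0.
\]
By the inductive hypothesis applied to $B_1$ on $E_1$, there is an orthonormal basis $(f_2, \ldots, f_n)$ of $E_1$ with $(B_1 f_j, f_j) = 0$ for $j = 2, \ldots, n$. Since $Pf_j = f_j$ for these $j$, this equals $(Bf_j, f_j)$. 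Concatenating with $f_1$ yields the required orthonormal basis of $E$.

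The only nontrivial ingredient is the convexity of the numerical range, i.e. the Toeplitz--Hausdorff theorem, which I would quote as standard. If a self-contained argument is preferred, one can avoid invoking full convexity by observing that either $(Bx,x) \equiv 0$ on the unit sphere (in which case any basis works) or else there exist unit vectors $e, f$ with $(Be,e)$ and $(Bf,f)$ on opposite sides of $0$ in $\mathbb{C}$, so that the continuous path of unit vectors in $\mathrm{span}(e,f)$ connecting them meets a point where $(B\cdot,\cdot) = 0$ (this is precisely Hausdorff's two-dimensional argument applied to the compression of $B$ to $\mathrm{span}(e,f)$). This is the only subtle step; the remainder of the argument is a routine compression-and-induct.
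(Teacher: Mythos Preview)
Your proof is correct and follows essentially the same approach as the paper: both invoke the Toeplitz--Hausdorff theorem to place the average of the eigenvalues (equivalently $n^{-1}\Tr A$) in the numerical range, pick a unit vector realizing that value, and then compress to the orthogonal complement and iterate. The only differences are cosmetic---you first reduce to the trace-zero case and phrase the iteration as an explicit induction, while the paper runs a maximality/contradiction argument directly on $A$.
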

\begin{proof} This follows from the Hausdorff-Toeplitz theorem on the convexity of the numerical range $W(A)$.  Suppose $(f_j)_{j=1}^k$ is an orthonormal sequence of maximal cardinality such that $(Af_j,f_j)=\frac1n\Tr(A)$ for $j=1,2\ldots,k.$  Assume $k<n$ and let $F$ be the orthogonal complement of $[f_j]_{j=1}^k$ (here $k=0$ is permitted and then $F=E$).  Let $P$ be the orthogonal projection onto $F$ and consider $PA:F\to F.$  Then $\Tr(PA)=(1-k/n)\Tr(A)$ and by the convexity of $W(PA)$ we can find $f_{k+1}\in F$ with $(Af_{k+1},f_{k+1})=(n-k)^{-1}\Tr ( PA)=\frac1n\Tr(A),$ giving a contradiction.
 \end{proof}

If $a:= \{ a_{n} \}_{n=1}^\infty$ is a sequence of complex numbers let
$a^*$ denote the sequence of absolute values $|a_n|$, $n \in \mathbb N$, arranged to be decreasing. The weak-$\ell_p$ spaces, $p \geq 1$, are defined by
$$
\ell_{p,\infty} := \{ \{ a_{n} \}_{n=1}^\infty | a^* = O(n^{-1/p}) \} .
$$
Let $\mathcal L_{p,\infty}$, $p \geq 1$, denote the two-sided ideal of compact operators $T:\mathcal H\to\mathcal H$ such that
$s_n(T)=O(n^{-1/p})$ (i.e.~$\diag( \mathcal L_{p,\infty} ) = \ell_{p,\infty}$),
with quasi-norm $$ \|T\|_{\mathcal L_{p,\infty}} :=\sup_n n^{1/p}s_n(T).$$
Here, as always, $\{ s_n(T) \}_{n=1}^\infty$ denotes the singular values of $T$.
The ideal $\mathcal L_{p,\infty}$ is a quasi-Banach (hence geometrically stable) ideal.

\begin{lemma} \label{holder}
If $p,q \geq 1$ such that $p^{-1} + q^{-1} = 1$ then
$\mathcal L_{1,\infty} = \mathcal L_{p,\infty} \mathcal L_{q,\infty}$.
\end{lemma}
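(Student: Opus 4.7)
The plan is to prove both inclusions of the identity $\mathcal L_{1,\infty} = \mathcal L_{p,\infty} \mathcal L_{q,\infty}$ by elementary manipulations of singular value sequences and the polar decomposition.

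First I would establish the inclusion $\mathcal L_{p,\infty} \mathcal L_{q,\infty} \subseteq \mathcal L_{1,\infty}$. The key tool is the standard submultiplicativity of singular numbers $s_{m+n-1}(AB) \le s_m(A) s_n(B)$ for $A, B$ compact. Specialising to $m = n$ gives $s_{2n}(AB) \le s_{2n-1}(AB) \le s_n(A) s_n(B)$. If $A \in \mathcal L_{p,\infty}$ and $B \in \mathcal L_{q,\infty}$, then $s_n(A) = O(n^{-1/p})$ and $s_n(B) = O(n^{-1/q})$, so since $p^{-1} + q^{-1} = 1$ we obtain $s_{2n}(AB) = O(n^{-1}) = O((2n)^{-1})$, hence $AB \in \mathcal L_{1,\infty}$. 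Since $\mathcal L_{1,\infty}$ is a two-sided ideal, it contains all products $AB$ and $BA$ with $A \in \mathcal L_{p,\infty}$, $B \in \mathcal L_{q,\infty}$, and therefore the ideal they generate.

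For the reverse inclusion $\mathcal L_{1,\infty} \subseteq \mathcal L_{p,\infty}\mathcal L_{q,\infty}$, I would use the polar decomposition. Given $T \in \mathcal L_{1,\infty}$, write $T = U|T|$ with $U$ a partial isometry and $|T| \ge 0$. Since $\frac{1}{p} + \frac{1}{q} = 1$, we have the operator factorisation $|T| = |T|^{1/p} \cdot |T|^{1/q}$ by the functional calculus. As $s_n(|T|) = s_n(T) = O(n^{-1})$, we immediately obtain $s_n(|T|^{1/p}) = s_n(|T|)^{1/p} = O(n^{-1/p})$, so $|T|^{1/p} \in \mathcal L_{p,\infty}$, and similarly $|T|^{1/q} \in \mathcal L_{q,\infty}$. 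Setting $A := U|T|^{1/p}$ we have $A \in \mathcal L_{p,\infty}$ (since $\mathcal L_{p,\infty}$ is a two-sided ideal of $\mathcal B(\mathcal H)$ and $U$ is bounded), and $T = A \cdot |T|^{1/q}$ is then explicitly a product of an element of $\mathcal L_{p,\infty}$ with an element of $\mathcal L_{q,\infty}$, which lies in the ideal $\mathcal L_{p,\infty} \mathcal L_{q,\infty}$.

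There is no real obstacle here: both directions are essentially one-line arguments once one has the singular-value submultiplicativity and the polar decomposition. The only subtlety worth noting is being careful about the definition of $\mathcal I_1 \mathcal I_2$ from the preliminaries (the ideal generated by products $AB$ and $BA$), which is automatically satisfied by the polar decomposition argument above, so no additional finite-sum decomposition is required.
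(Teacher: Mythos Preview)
Your proof is correct. The forward inclusion $\mathcal L_{p,\infty}\mathcal L_{q,\infty}\subseteq\mathcal L_{1,\infty}$ is essentially identical to the paper's (both use Fan's submultiplicativity of singular values).

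For the reverse inclusion the two arguments diverge. The paper factors only the single operator $\diag\{n^{-1}\}=\diag\{n^{-1/p}\}\cdot\diag\{n^{-1/q}\}$ and then invokes the structural fact that $\mathcal L_{1,\infty}$ is the smallest two-sided ideal containing $\diag\{n^{-1}\}$, so that any ideal containing this product automatically contains all of $\mathcal L_{1,\infty}$. You instead factor an arbitrary $T\in\mathcal L_{1,\infty}$ directly via the polar decomposition and the functional-calculus identity $|T|=|T|^{1/p}|T|^{1/q}$. Your route is more constructive and self-contained (it does not appeal to the Calkin correspondence or to the minimality of $\mathcal L_{1,\infty}$ among ideals containing the harmonic diagonal), while the paper's route is more conceptual and would generalise cleanly to other principal ideals generated by a single diagonal. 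Either argument is perfectly adequate here.
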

\begin{proof}
Suppose $A \in \mathcal L_{p,\infty}$ and $B \in \mathcal L_{q,\infty}$.
Using an inequality of Fan, \cite{Fan1951},
$$s_{2n}(AB) \leq s_{n+1}(A)s_n(B) \leq s_{n}(A)s_n(B)
= O(n^{-1/p})O(n^{-1/q}) = O(n^{-1}).$$
Similarly
$s_{2n}(BA) = O(n^{-1})$.
Hence, $AB,BA \in \mathcal L_{1,\infty}$, and
$\mathcal L_{p,\infty} \mathcal L_{q,\infty} \subset \mathcal L_{1,\infty}$.

However $\diag \{ n^{-1} \}_{n=1}^\infty  = \diag \{ n^{-1/p} \}_{n=1}^\infty \diag \{n^{-1/q}\}_{n=1}^\infty$.  So $\diag \{ n^{-1} \}_{n=1}^\infty \in \mathcal L_{p,\infty} \mathcal L_{q,\infty}$.  Since $\mathcal L_{1,\infty}$ is the smallest two-sided ideal that contains $\diag \{ n^{-1} \}_{n=1}^\infty$
then $\mathcal L_{1,\infty} \subset \mathcal L_{p,\infty} \mathcal L_{q,\infty}$.

By the two inclusions
$\mathcal L_{1,\infty} = \mathcal L_{p,\infty} \mathcal L_{q,\infty}$. 
 \end{proof}

\begin{lemma}\label{temper0}  Suppose $0<p<2$ and that $(e_n)_{n=1}^{\infty}$ is an orthonormal basis of $\mathcal H$. Suppose $( v_n )_{n=1}^{\infty}$ is a sequence in $\mathcal H$ such that
\begin{equation}\label{tempered} \sum_{j=n+1}^{\infty}\|v_j\|^2  = O(n^{1-\frac2p}) . \end{equation}
Then $$Tx=\sum_{j=1}^{\infty}(x,e_j)v_j$$ defines an operator $T\in\mathcal L_{p,\infty}$
.
\end{lemma}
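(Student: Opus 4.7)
The plan is to estimate the singular values $s_n(T)$ directly from the tail condition on the $v_j$ by comparing $T$ to its rank-$n$ truncation.

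First I would observe that the series defining $T$ converges to a Hilbert-Schmidt operator: letting $P_n$ denote the orthogonal projection onto $[e_1, \ldots, e_n]$, one has $T(I-P_n)e_j = v_j$ for $j > n$ and $T(I-P_n)e_j = 0$ otherwise, so
\begin{equation*}
\|T(I-P_n)\|_{\mathcal L_2}^2 = \sum_{j=n+1}^{\infty} \|v_j\|^2 = O(n^{1-2/p}).
\end{equation*}
In particular, for $n=0$ we see $T \in \mathcal L_2$ (since $1 - 2/p < 0$ forces the tails to be bounded, so $\sum_j\|v_j\|^2 < \infty$).

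Next I would invoke the min-max characterisation of singular values in Hilbert-Schmidt form: for any operator $R$ of rank at most $n$,
\begin{equation*}
\sum_{j=n+1}^{\infty} s_j(T)^2 \le \|T - R\|_{\mathcal L_2}^2.
\end{equation*}
Applied with $R = TP_n$ (which has rank at most $n$), this gives
\begin{equation*}
\sum_{j=n+1}^{\infty} s_j(T)^2 \le \|T(I-P_n)\|_{\mathcal L_2}^2 = O(n^{1-2/p}).
\end{equation*}

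Finally, since $\{s_j(T)\}$ is decreasing,
\begin{equation*}
n \, s_{2n}(T)^2 \le \sum_{j=n+1}^{2n} s_j(T)^2 \le O(n^{1-2/p}),
\end{equation*}
so $s_{2n}(T) = O(n^{-1/p})$, and by monotonicity $s_n(T) = O(n^{-1/p})$. Hence $T \in \mathcal L_{p,\infty}$ by definition.

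There is no real obstacle here; the only subtle point is using the Hilbert-Schmidt form of the min-max inequality (rather than the operator-norm form $s_{n+1}(T) \le \|T(I-P_n)\|_{\mathcal L_2}$, which would only yield the weaker $s_n(T) = O(n^{1/2 - 1/p})$). The factor-of-two gain from summing $s_j(T)^2$ over a dyadic block $[n+1, 2n]$ rather than a single index is exactly what converts the Hilbert-Schmidt tail bound into the sharp weak-$\ell_p$ decay.
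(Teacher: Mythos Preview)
Your proof is correct and is essentially identical to the paper's own argument: the paper also observes $T\in\mathcal L_2$, uses the Hilbert--Schmidt min-max identity $\sum_{j=n+1}^\infty s_j(T)^2 = \min_{\rank K\le n}\|T-K\|_{\mathcal L_2}^2$ with $K=TP_n$, and then extracts $s_{2n}(T)=O(n^{-1/p})$ from the dyadic block $n\, s_{2n}(T)^2 \le \sum_{j=n+1}^{2n} s_j(T)^2$. Your closing remark about why the operator-norm version of min-max would be too weak is exactly the point.
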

\begin{proof}
Observe that $\sum_{n=1}^{\infty}\|v_n\|^2<\infty$ so that $T:\mathcal H\to\mathcal H$ is bounded and $T\in\mathcal L_2.$  For $n=\mathbb Z_+$, let $$T_nx:=\sum_{j=n+1}^{\infty}(x,e_j)v_j.$$
Each $T_n$ is also in $\mathcal L_2$.  Recalling that (see~\cite[Theorem~7.1]{GohKre1969-MR0246142})
$$
\sum_{j = n+1}^\infty s_j^2(T) = \min \{ \| T-K \|_{\mathcal L_2}^2\ |\ \ \rank (K)
\leq n \} ,
$$
we have
\begin{align*}
n s_{2n}(T)^2 & \leq \sum_{j=n}^{2n} s_j(T)^2
\leq \sum_{j=1}^\infty s_j(T_n)^2 \\
& = \| T_n \|_{\mathcal L_2}^2 = \sum_{j=1}^\infty \| T_n e_j \|^2 = \sum_{j=n+1}^\infty \| v_j \|^2 \\
& \lec n^{1-2/p} .
\end{align*}
Hence $s_{2n}(T)\lec n^{-1/p}.$
 \end{proof}

\begin{lemma}\label{maincrux}  Suppose $T : \mathcal H \to \mathcal H$ is a bounded operator
and $(f_n)_{n=1}^{\infty}$ is an orthonormal basis of $\mathcal H$ such that
$$
\sum_{j=n+1}^\infty \| Tf_j \|^2 = O( n^{-1})
$$
and
$$
| (Tf_n,f_n) | = O(n^{-1}) .
$$
Then $T,\diag \{ (Tf_n,f_n) \}_{n=1}^\infty \in \mathcal L_{1,\infty}$,
$$
T - \diag \{ (Tf_n,f_n) \}_{n=1}^\infty \in \Com \mathcal L_{1,\infty}
$$
and
$$
\sum_{j=1}^n\lambda_j(T)-\sum_{j=1}^n(Tf_j,f_j) = O(1) .
$$
\end{lemma}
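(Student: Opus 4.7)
The plan is to prove (iii) by routing through the polar decomposition of $T$; (i) and (ii) then follow mechanically. Part (i) is immediate: Lemma~\ref{temper0} applied with $p=1$ and $v_j := Tf_j$ turns the hypothesis $\sum_{j>n}\|Tf_j\|^2 = O(n^{-1})$ into $T \in \mathcal{L}_{1,\infty}$, while the pointwise bound $|(Tf_n,f_n)| = O(n^{-1})$ forces $s_n(D) = O(n^{-1})$ for $D := \diag\{(Tf_n,f_n)\}$, so $D \in \mathcal{L}_{1,\infty}$.

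For (iii), write $T = U|T|$ and let $(g_n)_{n=1}^\infty$ be an orthonormal eigenvector basis for $|T|$ with $|T|g_n = s_n(T)g_n$. Corollary~\ref{main2} applied with $A := U \in \mathcal{B}(\mathcal{H})$, $S := |T| \in \mathcal{L}_{1,\infty}$ and eigenvector basis $(g_n)$ gives $T - \diag\{(Tg_n,g_n)\} \in \Com\mathcal{L}_{1,\infty}$. Since $\mathcal{L}_{1,\infty}$ is geometrically stable, Theorem~\ref{K} converts this membership into $|\sum_{j=1}^n\lambda_j(T) - \sum_{j=1}^n\lambda_j(\diag\{(Tg_n,g_n)\})| = O(1)$, and Lemma~\ref{dec} applied to $\alpha_n := (Tg_n,g_n)$ with the decreasing majorant $\mu_n := s_n(T) = O(n^{-1})$ gives $|\sum_{j=1}^n\lambda_j(\diag\{(Tg_n,g_n)\}) - \sum_{j=1}^n(Tg_j,g_j)| \leq 2n\mu_n = O(1)$. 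Chaining these,
\begin{equation*}
\sum_{j=1}^n\lambda_j(T) - \sum_{j=1}^n(Tg_j,g_j) = O(1).
\end{equation*}

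The remaining comparison $\sum_{j=1}^n(Tf_j,f_j) - \sum_{j=1}^n(Tg_j,g_j)$ is the core trace estimate and the main technical step. Let $P_n$, $Q_n$ denote the orthogonal projections onto $[g_1,\ldots,g_n]$ and $[f_1,\ldots,f_n]$. Both bases enjoy the one-sided Hilbert-Schmidt decay $\|TP_n^\perp\|_{\mathcal{L}_2}^2 = \sum_{j>n}s_j(T)^2 = O(n^{-1})$ (Weyl together with $T \in \mathcal{L}_{1,\infty}$) and $\|TQ_n^\perp\|_{\mathcal{L}_2}^2 = \sum_{j>n}\|Tf_j\|^2 = O(n^{-1})$ (hypothesis). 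Set $R_n := P_n \vee Q_n$ and decompose $Q_n - P_n = F - E$ with $E := R_n - Q_n$ and $F := R_n - P_n$, both self-adjoint projections of rank $\leq n$ whose ranges lie in $Q_n^\perp$ and $P_n^\perp$ respectively. Since $E = Q_n^\perp E$, one has $TE = TQ_n^\perp E$, and Cauchy-Schwarz in Hilbert-Schmidt norm gives
\begin{equation*}
|\Tr(TE)| \leq \|TQ_n^\perp\|_{\mathcal{L}_2}\|E\|_{\mathcal{L}_2} \leq O(n^{-1/2}) \cdot \sqrt{n} = O(1),
\end{equation*}
and symmetrically $|\Tr(TF)| = O(1)$ using $F = P_n^\perp F$. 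Hence $\sum_{j=1}^n(Tf_j,f_j) - \sum_{j=1}^n(Tg_j,g_j) = \Tr(T(Q_n - P_n)) = O(1)$, which combined with the previous display proves (iii).

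Finally (ii) is routine. Lemma~\ref{dec} applied to $\alpha_n = (Tf_n,f_n)$ with $\mu_n := Cn^{-1}$ gives $|\sum_{j=1}^n\lambda_j(D) - \sum_{j=1}^n(Tf_j,f_j)| = O(1)$; combined with (iii) one obtains $|\sum_{j=1}^n\lambda_j(T) - \sum_{j=1}^n\lambda_j(D)| = O(1)$, whence $n^{-1}|\sum\lambda_j(T) - \sum\lambda_j(D)| = O(n^{-1}) \in \ell_{1,\infty} = \diag(\mathcal{L}_{1,\infty})$. Theorem~\ref{K} then yields $T - D \in \Com\mathcal{L}_{1,\infty}$. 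The main nontrivial insight of the plan is that the given basis $(f_j)$ need not be matched to the eigenvalues of $T$ directly; instead one passes through the SVD basis, where Corollary~\ref{main2} handles the eigenvalue side, and the projection-difference trace estimate handles the basis change, the latter working precisely because both bases inherit the same one-sided Hilbert-Schmidt tail decay.
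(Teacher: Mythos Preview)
Your proof is correct and takes a genuinely different route from the paper's. The paper factorises $T=AS$ with $S$ \emph{diagonal in the given basis} $(f_n)$: it sets $Sf_j=j^{-1/q}f_j$ for some $q>2$ and $Af_j=j^{1/q}Tf_j$, checks via Lemma~\ref{temper0} that $A\in\mathcal L_{p,\infty}=\Com\mathcal L_{p,\infty}$, and then invokes Theorem~\ref{main} (with $\mathcal I_1=\mathcal L_{p,\infty}$, $\mathcal I_2=\mathcal L_{q,\infty}$) to obtain $T-D\in\Com\mathcal L_{1,\infty}$ directly in the basis $(f_n)$; the eigenvalue statement then falls out of Theorem~\ref{K} and Lemma~\ref{dec}.

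You instead run Corollary~\ref{main2} on the polar decomposition $T=U|T|$, which lands you in the singular-vector basis $(g_n)$, and then bridge back to $(f_n)$ by the clean projection estimate $\Tr(T(Q_n-P_n))=O(1)$ using $R_n=P_n\vee Q_n$ and the two tail bounds $\|TP_n^\perp\|_{\mathcal L_2}^2,\|TQ_n^\perp\|_{\mathcal L_2}^2=O(n^{-1})$. This avoids the $p$--$q$ splitting trick and the somewhat fiddly verification that $A\in\mathcal L_{p,\infty}$. A pleasant by-product of your argument is that the eigenvalue--diagonal comparison (your part (iii)) uses only the first hypothesis $\sum_{j>n}\|Tf_j\|^2=O(n^{-1})$; the pointwise bound $|(Tf_n,f_n)|=O(n^{-1})$ is needed only to place $D$ in $\mathcal L_{1,\infty}$ and to deduce (ii). In the paper's route, by contrast, both hypotheses enter together through the condition $|(ASf_n,f_n)|\le\mu_n$ required by Theorem~\ref{main}, so (iii) is obtained only after (ii). The paper's approach, on the other hand, showcases Theorem~\ref{main} as the real engine and makes the role of the factorisation $\mathcal L_{1,\infty}=\mathcal L_{p,\infty}\mathcal L_{q,\infty}$ explicit.
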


\begin{proof}
Suppose $1<p<2$ is fixed and $2 < q < \infty$ is such that $\frac1p+\frac1{q}=1$.  Set
$$Sx:=\sum_{j=1}^{\infty}j^{1/p-1}(x,f_j)f_j, \qquad x\in\mathcal H .$$
Clearly $S$ is Hermitian, $S\in\mathcal L_{q,\infty}$ ($S$ has singular values $n^{-1/q}$) and $Sf_n = n^{-1/q} f_n$ (so $(f_n)_{n=1}^\infty$ is an eigenvector sequence for $S$).
Now set
$$Ax:=\sum_{j=1}^{\infty}j^{1-1/p}(x,f_j)Tf_j, \qquad x\in\mathcal H.$$
We show that $A \in \mathcal L_{p,\infty}$.  Set $v_j = j^{1-1/p}Tf_j$.  Notice that
\begin{align*}
\sum_{j=n+1}^{\infty}j^{2-2/p}\|Tf_j\|^2 & = \sum_{k=0}^\infty \sum_{j=2^kn+1}^{2^{k+1}n} j^{2-2/p} \| Tf_j \|^2 \\
& \lec \sum_{k=0}^\infty (2^{k+1})^{2-2/p} n^{2-2/p} 2^{-k}n^{-1} \\
& \lec[p] n^{1-2/p} \sum_{k=0}^\infty 2^{k(1-2/p)} \\
& \lec[p] n^{1-2/p} .
\end{align*}
Then we have
\begin{equation} \label{eq:A}
\sum_{j=n+1}^{\infty}\|v_j\|^2 = \sum_{j=n+1}^{\infty}j^{2-2/p}\|Tf_j\|^2 \lec[p] n^{1-2/p}.
\end{equation}
Hence, by an application of Lemma~\ref{temper0}, $A \in \mathcal L_{p,\infty}$.

By construction $AS=T$ and by assumption $|(ASf_n,f_n)| = |(Tf_n,f_n)| = O(n^{-1})$.  Thus Theorem~\ref{main} can be applied to $A \in \mathcal L_{p,\infty} = \Com \mathcal L_{p,\infty}$ (this last equality follows easily from Corollary~\ref{DFWWCor}) and $S \in \mathcal L_{q,\infty}$, i.e.~we use $\mathcal I_1 = \mathcal L_{p,\infty}$ and $\mathcal I_2 = \mathcal L_{q,\infty}$, noting from Lemma~\ref{holder} that $\mathcal I = \mathcal L_{1,\infty} = \mathcal L_{p,\infty} \mathcal L_{q,\infty}$.  Hence, from Theorem~\ref{main},
$T=AS \in \mathcal L_{1,\infty}$,
$D := \diag \{ (Tf_n,f_n) \}_{n=1}^\infty = \diag \{ (ASf_n,f_n) \}_{n=1}^\infty \in \mathcal L_{1,\infty}$, and
$$T - D = AS - \diag \{ (ASf_j,f_j) \}_{j=1}^\infty  \in \Com \mathcal L_{1,\infty}.$$
By Theorem~\ref{K}
$$
\sum_{j=1}^n\lambda_j(T)-\sum_{j=1}^n \lambda_j(D) = O(1) .
$$
By Lemma~\ref{dec}
$$
\sum_{j=1}^n\lambda_j(D)-\sum_{j=1}^n (Tf_j,f_j) = O(1)
$$
and the results are shown.
 \end{proof}

\begin{proof}[Proof of Theorem~\ref{connection}]
Let $T \in \mmod(V)$ where $0 \leq V \in \mathcal L_{1,\infty}$
and $(e_n)_{n=1}^{\infty}$ be an orthonormal basis of $\mathcal H$ such that $V e_n = s_n(V) e_n$. Since $s_n(V) \lec n^{-1}$ we have that
\begin{equation} \label{proofconnection1}
\left(\sum_{j=n+1}^{\infty}\|Te_j\|^2\right)^{1/2} \leq (1+ns_n(V))\|T(1+nV)^{-1}\|_{\mathcal L_2}\lec n^{-1/2} .
\end{equation}
Then we have
$$
\sum_{j=n+1}^\infty \| Te_j \|^2 \lec n^{-1} .
$$
Let $D:=\diag\{(Te_n,e_n)\}_{n=1}^{\infty}$ be the specific diagonalisation with respect to the basis $( e_n )_{n=1}^\infty$, i.e.
$ D =  \sum_{n=1}^\infty (Te_n,e_n) e_n e_n^*$.  Note that if $D':=\diag\{(Te_n,e_n)\}_{n=1}^{\infty}$ is the diagonalisation
according to any arbitrary orthonormal basis $( h_n )_{n=1}^\infty$, i.e.
$D' =  \sum_{n=1}^\infty (Te_n,e_n) h_n h_n^*$,
then there exists a unitary $U$ with $h_n = Ue_n$, $n \in \mathbb N$,
and thus $D' = UDU^*$.
Since $\|De_j\|\le \|Te_j\|$ then we also have
$$
\sum_{j=n+1}^\infty \| De_j \|^2 \lec n^{-1} .
$$
Thus
$$
\sum_{j=n+1}^\infty \| (T-D)e_j \|^2 \lec 2 n^{-1} .
$$

(i) By Lemma~\ref{temper0}, $T,D \in \mathcal L_{1,\infty}$ (set $v_j=Te_j$ and $v_j = De_j$
respectively).
It follows that $D' \in \mathcal L_{1,\infty}$
for any diagonalisation $D'$ since $D' = UDU^*$ for a unitary $U$
and $\mathcal L_{1,\infty}$ is a two-sided ideal.

(ii) Notice by design that $(Te_j,e_j) = (De_j,e_j)$, $j \in \mathbb N$.
Thus $T-D$ satisfies Lemma~\ref{maincrux} (where $((T-D)e_j,e_j)=0$, $j \in \mathbb N$)
and $T-D \in \Com \mathcal L_{1,\infty}$.
If $D':=\diag\{(Te_n,e_n)\}_{n=1}^{\infty}$ is an arbitrary diagonalisation then
$D' = UDU^*$ for a unitary $U$ and clearly $D'-D \in \Com \mathcal L_{1,\infty}$. Hence $T-D' \in \Com \mathcal L_{1,\infty}$.

(iii) Construct a new basis $(f_n)_{n=1}^{\infty}$ of $\mathcal H$ using Lemma~\ref{trace} such that $(f_j)_{j=2^{k-1}}^{2^k-1}$ is a basis of $[e_j]_{j=2^{k-1}}^{2^k-1}$, $k = \mathbb Z_+$. Let $P_k$ denote the projection onto $[e_j]_{j=2^{k-1}}^{2^k-1}$.
If $2^{k-1}\le n\le 2^k-1$ then
\begin{equation}
\sum_{j=n+1}^{\infty} \|Tf_j\|^2 \leq \sum_{j=2^{k-1}}^{\infty} \|Te_j\|^2
\lec n^{-1} \label{eq:modulatedfact-1}
\end{equation}
Similarly
$$
\| TP_k \|_{\mathcal L_2}^2 = \sum_{j=2^{k-1}}^{2^k-1} \|Te_j\|^2
\lec 2^{-k}
$$
Thus
$$
\|P_kTP_k\|_{\mathcal L_1}
\le \|TP_k\|_{\mathcal L_2}\|P_k\|_{\mathcal L_2}
\lec 2^{-k/2} 2^{k/2} \lec 1.
$$
Hence, if $2^{k-1}\le n\le 2^k-1$, then
\begin{equation}\label{upperbound}
|(Tf_n,f_n)| \leq 2^{-k} \|P_kTP_k\|_{\mathcal L_1} \lec n^{-1},\qquad n \in \mathbb N .
\end{equation}
Using~\eqref{eq:modulatedfact-1} and~\eqref{upperbound},
from Lemma~\ref{maincrux} we obtain
\begin{equation}\label{eigenvalueua}
\left|\sum_{j=1}^n \lambda_j(T) - \sum_{j=1}^n (Tf_j,f_j) \right| \lec 1 , \qquad n \in \mathbb N .
\end{equation}
Now, if $2^{k-1}\le n\le 2^k-1$, then
\begin{align} \left|\sum_{j=1}^n(Te_j,e_j)-\sum_{j=1}^n(Tf_j,f_j)\right| &= \left|\sum_{j=2^{k-1}}^n (Te_j,e_j)-\sum_{j=2^{k-1}}^{n}(Tf_j,f_j)\right| \nonumber \\
&\le 2\|P_k TP_k\|_{\mathcal L_1} \lec 1 \label{eigenvalueuc} .
\end{align}
Hence (iii) is shown from~\eqref{eigenvalueua} and~\eqref{eigenvalueuc}.   \end{proof}

\subsection{Corollaries of interest}

Before we specialise to operators modulated by
the Laplacian we note some results of interest.

\begin{corollary}\label{connectioncor}  Let $\{ T_i \}_{i=1}^N$ be a finite collection of $V$-modulated operators where $0 < V\in\mathcal L_{1,\infty}$.  Then 
\begin{equation} \label{eq:Kalton_condition3}
\sum_{j=1}^{n} \lambda_j \left( \sum_{i=1}^N T_i \right) 
 - \sum_{i=1}^N \sum_{j=1}^{n} \lambda_j( T_i )  = O(1) .
\end{equation}
\end{corollary}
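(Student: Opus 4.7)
The plan is to apply Theorem~\ref{connection}(iii) both to the individual operators $T_i$ and to their sum $T := \sum_{i=1}^N T_i$, using the same orthonormal eigenbasis $(e_n)_{n=1}^\infty$ of $V$, and then exploit the linearity of the diagonal map $A \mapsto (Ae_j,e_j)$ to telescope the error terms.

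First I would observe that $T = \sum_{i=1}^N T_i$ is itself $V$-modulated. This is immediate from Proposition~\ref{prop:basic_prop}, since $\mathrm{mod}(V)$ is a left ideal of $\mathcal{B}(\mathcal H)$ and in particular closed under finite linear combinations. Consequently Theorem~\ref{connection}(iii) applies to $T$ with the basis $(e_n)_{n=1}^\infty$ diagonalising $V$:
\begin{equation*}
\sum_{j=1}^n \lambda_j(T) - \sum_{j=1}^n (T e_j, e_j) = O(1).
\end{equation*}

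Next I would apply Theorem~\ref{connection}(iii) to each individual $T_i$, using the same basis $(e_n)_{n=1}^\infty$, to obtain for each $i = 1, \ldots, N$:
\begin{equation*}
\sum_{j=1}^n \lambda_j(T_i) - \sum_{j=1}^n (T_i e_j, e_j) = O(1).
\end{equation*}
Summing over $i$ (a finite sum of $O(1)$ terms remains $O(1)$) yields
\begin{equation*}
\sum_{i=1}^N \sum_{j=1}^n \lambda_j(T_i) - \sum_{j=1}^n \sum_{i=1}^N (T_i e_j, e_j) = O(1).
\end{equation*}

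Finally, by linearity of the inner product, $\sum_{i=1}^N (T_i e_j, e_j) = (T e_j, e_j)$, so subtracting the two displayed estimates above causes the diagonal sums to cancel, yielding the desired $O(1)$ bound. There is no genuine obstacle here: the only subtlety is that one must use a single common basis $(e_n)$ for all the estimates, which is legitimate because the $V$-modulated hypothesis for each $T_i$ refers to the same fixed $V$, and Theorem~\ref{connection} is stated precisely with respect to the eigenbasis of $V$.
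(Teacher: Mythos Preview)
Your proposal is correct and follows essentially the same argument as the paper: apply Theorem~\ref{connection}(iii) to each $T_i$ and to their sum $T_0=\sum_i T_i$ using the common eigenbasis of $V$, then cancel the diagonal sums by linearity. The paper's proof is identical in structure, only phrased with explicit constants $C_0,\ldots,C_N$ rather than $O(1)$ notation.
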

\begin{proof}  Let $T_0 = \sum_{i=1}^N T_i \in \mmod(V)$.
Choose an eigenvector sequence $(e_n)_{n=1}^\infty$ of $V$ with $Ve_n = s_n(V)e_n$, $n \in \mathbb N$.
Then, by Theorem~\ref{connection}, for $i=0,\ldots,N$, there are constants $C_i$ such that
$$
\left| \sum_{j=1}^{n} \lambda_j(T_i)
 - \sum_{j=1}^{n} (T_ie_j,e_j) \right| < C_i,\ \ 0 \leq i \leq N.
$$
Hence
\begin{align*}
\left| \sum_{j=1}^{n} \left( \lambda_j( T_0 )  - \sum_{i=1}^N  \lambda_j( T_i ) \right) \right| &< C + \left| \sum_{j=1}^{n} \left( (T_0e_j,e_j)  - \sum_{i=1}^N (T_ie_j,e_j) \right) \right| =  C
\end{align*}
where $C = C_0 + \ldots + C_N$.
 \end{proof}

In the proof of Theorem~\ref{connection} we noted that a $V$-modulated operator, $T \in \mmod(V)$, satisfied the condition
$$
\sum_{k=n+1}^{\infty}\|Te_k\|^2 = O(n^{-1})
$$
for an eigenvector sequence $(e_n)_{n=1}^\infty$ of $0 < V \in \mathcal L_{1,\infty}$.  We show a converse statement.

\begin{proposition}\label{connectioncor2}  Suppose $0 < V \in \mathcal L_{1,\infty}$
  is such that the singular values of $V$ satisfy $s_n(V) = \Theta(n^{-1})$ and $(e_n)_{n=1}^\infty$ is an
  orthonormal basis of $\mathcal H$ such that $Ve_n = s_n(V)e_n$, $n \in \mathbb N$.
  Then $T \in \mmod(V)$ iff
\begin{equation} \label{eq:Kalton_condition2}
 \sum_{k=n+1}^{\infty}\|Te_k\|^2 = O(n^{-1}) .
\end{equation}
\end{proposition}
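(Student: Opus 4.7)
The forward direction is already contained in the proof of Theorem~\ref{connection}, where equation~\eqref{proofconnection1} derived exactly this tail estimate using only the upper bound $s_n(V) \lec n^{-1}$ implicit in $V \in \mathcal L_{1,\infty}$. So all the substance of the proposition lies in the converse, and the lower bound $s_n(V) \gec n^{-1}$ is the new ingredient that the $\Theta(n^{-1})$ hypothesis contributes beyond mere membership in $\mathcal L_{1,\infty}$.

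For the converse, my plan is to route through Lemma~\ref{equiv} rather than attempt a direct estimate of $\|T(1+tV)^{-1}\|_{\mathcal L_2}$. First I rescale so that $\|V\| \leq 1$, which is permitted because both $V$-modulation and the $\Theta(n^{-1})$ condition are preserved under multiplying $V$ by a positive constant (a rescaling of $t$ absorbs the factor). Fix $c > 0$ with $s_n(V) \geq c/n$ for all $n \in \mathbb N$. The spectral projection $\chi_{[0,2^{-m}]}(V)$ fixes each $e_n$ for which $s_n(V) \leq 2^{-m}$ and annihilates the rest, and the lower bound forces every such $n$ to satisfy $n \geq c\,2^m$. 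Consequently
\begin{equation*}
\|T\chi_{[0,2^{-m}]}(V)\|_{\mathcal L_2}^2 \;=\!\!\! \sum_{n\,:\,s_n(V)\leq 2^{-m}} \!\!\! \|Te_n\|^2 \;\leq\; \sum_{n \geq c\,2^m} \|Te_n\|^2 \;\lec\; 2^{-m},
\end{equation*}
where the last inequality is precisely the hypothesis~\eqref{eq:Kalton_condition2}. This yields $\|T\chi_{[0,2^{-m}]}(V)\|_{\mathcal L_2} = O(2^{-m/2})$, and Lemma~\ref{equiv} concludes $T \in \mmod(V)$.

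There is no genuinely hard step once the problem has been recast through Lemma~\ref{equiv}: the lower bound $s_n(V) \gec n^{-1}$ does all the real work by translating the spectral condition ``small spectrum of $V$'' into the basis-side condition ``tail of the eigenbasis'', after which the hypothesis supplies the needed decay automatically. An alternative, more laborious route would expand $\|T(1+tV)^{-1}\|_{\mathcal L_2}^2 = \sum_n (1+t s_n(V))^{-2}\|Te_n\|^2$ and apply summation by parts using $A_n := \sum_{k\geq n}\|Te_k\|^2 = O(n^{-1})$ together with the pointwise bound $(1+ts_n(V))^{-2} \leq n^2/(n+ct)^2$; this also yields the desired $O(t^{-1})$ but with Abel bookkeeping that the dyadic reformulation of Lemma~\ref{equiv} sidesteps.
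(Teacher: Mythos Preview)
Your proof is correct and follows essentially the same route as the paper: both establish the converse by using the lower bound $s_n(V)\ge c/n$ to show that $\chi_{[0,2^{-m}]}(V)$ projects onto basis vectors $e_n$ with $n\gec 2^m$, then invoke the tail hypothesis and Lemma~\ref{equiv}. The paper's version phrases the same count via the eigenvalue counting function $N(\lambda)=\max\{k:s_k(V)>\lambda\}$, but the argument is identical in substance.
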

\begin{proof}
The only if statement is contained in the proof of Theorem~\ref{connection}.  We show the if
statement.  Without loss $\| V \| \leq 1$.

Clearly $T$ is bounded and Hilbert-Schmidt.
Let $c>0$ be such that $s_n(V) \geq cn^{-1}$.  Then
$$
s_{\left\lfloor c k \right\rfloor} (V) > \frac{c}{\left\lfloor c k \right\rfloor} > \frac{c}{c k} = k^{-1}.
$$
Hence $N(k^{-1}) \geq \left\lfloor c k \right\rfloor \gec k$ where
$$N(\lambda) =  \max \{ k \in \mathbb N | s_k(V) > \lambda \}$$
and we have
$$
N(k^{-1})^{-1} \lec k^{-1} .
$$
Now note
$$
\| T \chi_{[0,k^{-1}]}(V) \|_2^2 = \sum_{j= N(k^{-1}) + 1}^\infty \| Te_j \|^2
\lec N(k^{-1})^{-1}
\lec k^{-1} .
$$
Thus
$\| T \chi_{[0,2^{-n}]}(V) \|_2 \lec 2^{-n/2}$, $n \in \mathbb N$.  By Lemma~\ref{equiv}, $T \in \mmod(V)$.
 \end{proof}

\section{Applications to pseudo-differential operators} \label{sec:PsDO}

We define in this section operators that are modulated with respect to
the operator $(1-\Delta)^{-d/2}$ where $\Delta=\sum_{i=1}^d \frac{\partial^2}{\partial x_i^2}$ is the Laplacian on $\mathbb R^d$, termed by us Laplacian modulated operators.  We show that the Laplacian modulated operators
include the class of pseudo-differential operators of order $-d$, and that
the Laplacian modulated operators admit a residue map which extends the noncommutative residue.  Finally we show, with the aid of the results established,
that singular traces applied to Laplacian modulated operators calculate
the residue.

\begin{definition} \label{def:Lapmod}
Suppose $d\in \mathbb N$ and that $T:L_2(\mathbb R^d)\to L_2(\mathbb R^d)$ is a bounded operator.  We will say that $T$ is {\it Laplacian modulated} if $T$ is $(1-\Delta)^{-d/2}$-modulated.
\end{definition}

From Proposition~\ref{rem:leftideal} a Laplacian modulated operator is Hilbert-Schmidt.  We recall every Hilbert-Schmidt operator on $L_2(\mathbb R^d)$ has a unique symbol in the following sense.

\begin{lemma} \label{symbol}
A bounded operator $T:L_2(\mathbb R^d)\to L_2(\mathbb R^d)$ is
Hilbert-Schmidt iff there exists a unique function $p_T \in L_2( \mathbb R^d \times \mathbb R^d)$ such that
\begin{equation}
(Tf)(x)=\frac{1}{(2\pi)^d}\int_{\mathbb R^d}e^{i\langle x,\xi\rangle} p_T(x,\xi)\hat f(\xi)d\xi , \qquad f \in L_2(\mathbb R^d). \label{LM} 
\end{equation}
Further, $\| T \|_{\mathcal L_2} = (2\pi)^{d/2} \| p_T \|_{L_2}$ and if $\{ \phi_n \}_{n=1}^\infty \subset C_c^\infty(\mathbb R^d)$ is such that $\phi_n \nearrow 1$ pointwise, then 
$$
p_T(x,\xi) = \lim_n e_{-\xi}(x)(T\phi_n e_{\xi})(x) , \qquad x,\xi \ \mathrm{a.e.},
$$
where $e_{\xi}(x)=e^{i\langle x,\xi\rangle}.$
\end{lemma}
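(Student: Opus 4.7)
The plan is to identify symbols $p_T$ with integral kernels $K_T$ via a partial Fourier transform in the second variable, and then invoke the classical characterization of Hilbert--Schmidt operators as $L_2$-kernel operators.

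First, formally substitute the definition $\hat f(\xi)=\int f(y)e^{-i\langle y,\xi\rangle}\,dy$ into the symbol formula \eqref{LM} and interchange the order of integration. This identifies
$$
K_T(x,y)=\frac{1}{(2\pi)^d}\int_{\mathbb R^d}e^{i\langle x-y,\xi\rangle}p_T(x,\xi)\,d\xi,
$$
so that, for a.e.\ $x$, the section $z\mapsto K_T(x,x-z)$ is the inverse Fourier transform in $\xi$ of $\xi\mapsto p_T(x,\xi)$. Since $(x,y)\mapsto(x,x-y)$ is a measure-preserving change of variable on $\mathbb R^d\times\mathbb R^d$, Plancherel in the $\xi$ variable applied fibrewise in $x$ and then integrated shows that $p_T\mapsto K_T$ is, up to the Fourier normalization constant, an isometric isomorphism of $L_2(\mathbb R^d\times\mathbb R^d)$ onto itself. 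Combining this with the standard fact that $T$ is Hilbert--Schmidt if and only if it is an integral operator with a unique $L_2$ kernel (and $\|T\|_{\mathcal L_2}=\|K_T\|_{L_2}$) yields both existence and uniqueness of $p_T$ and the asserted norm identity.

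For the pointwise formula, the formal computation is that $\widehat{e_\xi}=(2\pi)^d\delta_\xi$, so plugging $f=e_\xi$ into \eqref{LM} would give $(Te_\xi)(x)=e^{i\langle x,\xi\rangle}p_T(x,\xi)$; since $e_\xi\notin L_2$ one must approximate by $\phi_n e_\xi\in L_2$. Using $\widehat{\phi_n e_\xi}(\eta)=\hat{\phi_n}(\eta-\xi)$ and the change of variable $u=\eta-\xi$, one obtains
$$
e_{-\xi}(x)(T\phi_n e_\xi)(x)=\frac{1}{(2\pi)^d}\int_{\mathbb R^d}e^{i\langle x,u\rangle}\,p_T(x,\xi+u)\,\hat{\phi_n}(u)\,du.
$$
As $\phi_n\nearrow 1$, the family $(2\pi)^{-d}e^{i\langle x,u\rangle}\hat{\phi_n}(u)$ acts as an approximate identity in $u$ (its integral equals $\phi_n(x)\to 1$ and its mass concentrates at $u=0$), so the Lebesgue differentiation theorem applied to the $L_2$ function $p_T$ forces the right-hand side to converge to $p_T(x,\xi)$ for almost every $(x,\xi)$.

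The main obstacle is that last convergence step: $p_T$ is only an $L_2$ function and $\hat{\phi_n}$ is not generally non-negative or of a simple rescaled shape, so a.e.\ convergence cannot be read off from a dominated-convergence argument but must be extracted from the approximate-identity structure, taking care that the exceptional null set can be chosen independently of $\xi$.
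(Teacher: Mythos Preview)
Your approach is essentially the paper's own. For the first assertion the paper simply says ``it follows from Plancherel's theorem,'' which is exactly the partial-Fourier-transform identification of symbol with kernel that you spell out. For the pointwise formula the paper derives the same convolution expression
\[
(T\phi_n e_\xi)(x)=\frac{1}{(2\pi)^d}\int_{\mathbb R^d} e^{i\langle x,\eta\rangle}p_T(x,\eta)\,\hat{\phi_n}(\eta-\xi)\,d\eta
\]
and then argues that $\hat{\phi_n}\to(2\pi)^d\delta$ in the sense of distributions, so that setting $g_x(\eta)=e^{i\langle x,\eta\rangle}p_T(x,\eta)\in L_2(\mathbb R^d)$ one gets $g_x\star\delta=g_x$, yielding the claimed limit.

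The ``main obstacle'' you flag is real, and the paper's treatment of it is at the same level of formality as yours: invoking distributional convergence of $\hat{\phi_n}$ to $(2\pi)^d\delta$ against a fixed $L_2$ function $g_x$ gives convergence in the distributional sense in $\xi$, not obviously a.e.\ in $(x,\xi)$. So you are not missing an idea that the paper supplies; you have simply been more honest about where the argument is heuristic. If you want to close the gap you could, for instance, note that for a.e.\ fixed $x$ the function $g_x$ lies in $L_2(\mathbb R^d)$, pass to a subsequence along which the $L_2$-convolution $g_x\ast(2\pi)^{-d}\hat{\phi_n}$ converges a.e., and then use a diagonal/Fubini argument---but the paper does not do this either.
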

\begin{proof}
It follows from Plancherel's theorem that $T$ is Hilbert Schmidt iff
it can be represented in the form~\eqref{LM} for some unique $p_T\in L_2(\mathbb R^d \times\mathbb R^d)$ and that $\| T \|_{\mathcal L_2} = (2\pi)^{d/2} \| p_T \|_{L_2}$.
We have
$$
(T\phi_n e_{\xi})(x) = \frac{1}{(2\pi)^d} \int_{\mathbb R^d} e^{i\langle x,\eta\rangle} p_T(x,\eta) \hat{\phi_n}(\eta - \xi) d \eta ,  \qquad x \ \mathrm{a.e.}
$$
If $\phi_n \nearrow 1$ pointwise then $\hat{\phi_n} \to (2\pi)^d \delta$ in the sense
of distributions, where $\delta$ is the Dirac distribution.
If we set $g_{x}(\eta):= e^{i\langle x,\eta \rangle} p_T(x,\eta)$, then $g_{x} \in L_2(\mathbb R^d)$ is a tempered distribution.  Hence $g_x \star \delta = g_x$ and
$$
\lim_n e_{-\xi}(x)(T\phi_n e_{\xi})(x) =   e_{-\xi}(x) (g_x \star \delta)(\xi) = p_T(x,\xi) \qquad x,\xi \ \mathrm{a.e.}
$$
 \end{proof}

The function $p_T$ in~\eqref{LM} is called the {\it symbol} of the Hilbert-Schmidt operator $T.$ Being Laplacian modulated places the following condition on the symbol.

\begin{proposition}\label{Laplacemodulated} Suppose $d\in \mathbb N$ and that $T:L_2(\mathbb R^d)\to L_2(\mathbb R^d)$ is a bounded operator.  Then $T$ is Laplacian modulated iff $T$ can be represented in the form
 \begin{equation}\label{kernel}
 (Tf)(x)= \frac{1}{(2\pi)^d}\int_{\mathbb R^d}e^{i\langle x,\xi\rangle} p_T(x,\xi)\hat f(\xi)d\xi
 \end{equation}
 where $p_T \in L_2(\mathbb R^d \times \mathbb R^d)$ is such that
 \begin{equation} \label{LM1}
\left( \int_{\mathbb R^d} \int_{|\xi|\ge t} |p_T(x,\xi)|^2d\xi\,dx \right)^{1/2} = O(t^{-d/2}), \qquad t\ge 1.\end{equation} \end{proposition}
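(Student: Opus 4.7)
The plan is to exploit the Fourier multiplier structure of $(1-\Delta)^{-d/2}$. Writing $M_s := (1+s(1-\Delta)^{-d/2})^{-1}$, this is a Fourier multiplier with symbol
$$m_s(\xi) := \bigl(1+s(1+|\xi|^2)^{-d/2}\bigr)^{-1}.$$
A direct check from the representation~\eqref{LM} shows that if $T$ is Hilbert--Schmidt with symbol $p_T$, then $TM_s$ is Hilbert--Schmidt with symbol $p_T(x,\xi)\, m_s(\xi)$. Hence, by the isometry in Lemma~\ref{symbol},
$$\|TM_s\|_{\mathcal L_2}^2 \;\doteq\; \int_{\mathbb R^d}\!\!\int_{\mathbb R^d} |p_T(x,\xi)|^2\, m_s(\xi)^2 \, dx\, d\xi.$$
Since by Proposition~\ref{prop:basic_prop} every Laplacian modulated operator is Hilbert--Schmidt, the symbol $p_T\in L_2$ is available in both directions of the proof.

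For the forward direction, I would observe the elementary pointwise bound: if $|\xi|\ge s^{1/d}$ then $(1+|\xi|^2)^{d/2}\ge s$, so $m_s(\xi)\ge 1/2$. Writing $t := s^{1/d}$, this immediately gives
$$\int_{\mathbb R^d}\!\!\int_{|\xi|\ge t} |p_T(x,\xi)|^2\, d\xi\, dx \;\le\; 4\,\|TM_s\|_{\mathcal L_2}^2 \;\dot{\le}\; s^{-1} = t^{-d},$$
which is exactly~\eqref{LM1}.

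For the converse, given the decay condition~\eqref{LM1}, I would estimate $\|TM_s\|_{\mathcal L_2}^2$ by splitting the $\xi$-integral dyadically. Setting $A_k := \int_{\mathbb R^d}\int_{2^k\le|\xi|<2^{k+1}} |p_T(x,\xi)|^2\, d\xi\, dx$, hypothesis~\eqref{LM1} supplies $\sum_{j\ge k} A_j \dot{\le}\, 2^{-kd}$. Meanwhile $m_s(\xi)^2 \dot{\le} \min\bigl(1,\, s^{-2}(1+|\xi|^2)^d\bigr) \dot{\le} \min(1, s^{-2}\,2^{2kd})$ on the $k$-th dyadic shell. Choosing $k_0$ with $2^{k_0}\sim s^{1/d}$ and splitting the sum at $k_0$ yields
$$\|TM_s\|_{\mathcal L_2}^2 \;\dot{\le}\; s^{-2}\!\!\sum_{k\le k_0} 2^{2kd} A_k \;+\; \sum_{k>k_0} A_k \;\dot{\le}\; s^{-2}\cdot s \,+\, s^{-1} \;\dot{\le}\; s^{-1},$$
where the first sum is controlled using $2^{2kd}A_k \dot{\le} 2^{kd}$ (a consequence of the tail bound) and the second is the tail bound at level $k_0$. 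This gives $\|TM_s\|_{\mathcal L_2}=O(s^{-1/2})$, so $T$ is Laplacian modulated.

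The main obstacle is bookkeeping in the dyadic estimate of the converse: one has to balance the growth of $m_s$ in the low-frequency regime against the decay of $A_k$ in the high-frequency tail, and the split at $k_0 \sim \log_2(s^{1/d})$ is essential to make both contributions of order $s^{-1}$.
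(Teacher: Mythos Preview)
Your proof is correct and takes a genuinely different route from the paper's. The paper does not work directly with the resolvent multiplier $m_s(\xi)$; instead it first invokes Lemma~\ref{equiv}, which converts the modulated condition into the spectral-projection condition $\|T\chi_{[0,2^{-n}]}(V)\|_{\mathcal L_2}=O(2^{-n/2})$ (the dyadic summation is hidden inside that lemma). It then introduces the Fourier projections $Q_t$ onto $\{|\xi|\ge t\}$, notes that $\|TQ_t\|_{\mathcal L_2}$ is exactly the quantity in~\eqref{LM1}, and proves a two-sided sandwich $Q_t \le \chi_{[0,t^{-d}]}(V) \le Q_{2^{-1/2}t}$ between the Fourier and spectral projections; both implications then follow immediately from this sandwich together with Lemma~\ref{equiv}. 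Your approach is more self-contained---it bypasses Lemma~\ref{equiv} entirely by computing $\|TM_s\|_{\mathcal L_2}^2$ as a weighted $L_2$-norm of the symbol and doing the dyadic balancing by hand. The paper's approach is more modular (the dyadic work is packaged once in Lemma~\ref{equiv} and reused), while yours makes the Fourier-multiplier mechanism fully explicit. One minor bookkeeping point: your dyadic shells $A_k$ with $k\in\mathbb Z$ need the low-frequency region $|\xi|<1$ (or $k\le 0$) handled separately, since the hypothesis~\eqref{LM1} is only for $t\ge1$; but there $m_s(\xi)^2\lec s^{-2}$ and the contribution is $O(s^{-2}\|p_T\|_{L_2}^2)$, so nothing is lost.
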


 \begin{proof}
If $T$ is Laplacian modulated or if it satisfies~\eqref{kernel},
then $T$ is Hilbert-Schmidt.  So we are reduced to showing
a Hilbert-Schmidt operator $T$ is Laplacian modulated iff its symbol
$p_T$ satisfies~\eqref{LM1}.

If $Q_t$, $t > 0$, is the Fourier projection
$$
(Q_t f)(\xi) = \int_{\left| \eta \right| \geq t} \hat f(\eta) e^{i\langle \xi,\eta \rangle}\, d\eta
$$
then the Hilbert-Schmidt operator $TQ_t$ has the form
$$
(TQ_t f)(\xi) = \int_{|\xi| \geq t}e^{i\langle x,\xi\rangle} p_T(x,\xi)\hat f(\xi)d\xi .
$$
By Lemma~\ref{symbol}
$$
\left \| TQ_t \right \|_{\mathcal L_2} = \left(\int_{\mathbb R^d}  \int_{\left| \xi \right| \geq t} 
  \left| p_T(x, \xi) \right|^2\, d x d\xi \right)^{1/2} .
$$
Define also
$$
(P_t f)(\xi) = \int_{(1 + \left| \eta \right|^2)^{-d/2} \leq t} \hat f(\eta) e^{i\langle \xi,\eta \rangle}\, d\eta, 
$$
or, via Fourier transform,
$$
P_t = \chi_{[0, t]} \left(  (1 - \Delta)^{-d/2} \right).
$$
Note that, for $t \geq 1$,
$$
\left| \eta \right| \geq t \Rightarrow (1+\left| \eta \right|^2)^{-d/2}  \leq t^{-d}
$$
and that
$$
\left| \eta \right| \geq 2^{-1/2} t \Leftarrow ( 1 + \left| \eta \right|^2)^{-d/2} \leq t^{-d}.
$$
Hence
$$
Q_{t} \leq P_{t^{-d}} \leq Q_{2^{-1/2}t}.
$$
Note also that~$P_{t} \leq P_{t_1}$ and $Q_t \leq Q_{t_1}$ if~$t_1 \leq t$.  Fix~$t \geq 1$ and~$n \in \mathbb{Z}$ such that~$2^{n-1}
\leq t < 2^{n}$.

We now prove the if statement. Suppose $T$ is Laplacian modulated.  Then, by
Lemma~\ref{equiv},
$$
\left\| T Q_t \right\|_{\mathcal L_2} \leq \left\| T P_{t^{-d}}\right\|_{\mathcal L_2}
\leq \left\| T P_{2^{-dn}}\right\|_{\mathcal L_2}
\lec 2^{-dn/2}
\lec t^{-d/2} .
$$
Hence~\eqref{LM1} is satisfied.

We prove the only if statement.  Let $T$ satisfy~\eqref{LM1}.
Then
$$
\left\| T P_{2^{-n}} \right\|_{\mathcal L_2} \leq \left\| T Q_{2^{-1/2} 2^{n/d}}\right\|_{\mathcal L_2} \lec 2^{d/4} 2^{-n/2}
\lec 2^{-n/2} .
$$
By Lemma~\ref{equiv}, $T$ is Laplacian modulated.
 \end{proof}

For $p \in L_2(\mathbb R^d \times \mathbb R^d)$ define
\begin{equation} \label{eq:add2}
\| p \|_{\mmod} := \| p \|_{L_2} +  \sup_{t \geq 1} t^{d/2} \left(  \int_{\mathbb R^d} \int_{|\xi| \geq t} |p(x,\xi)|^2 d\xi dx \right)^{1/2}.
\end{equation}
Define
\begin{equation} \label{eq:add3}
S^{\mmod} := \{ p \in L_2(\mathbb R^d \times \mathbb R^d) \, | \, \| p \|_{\mmod} < \infty \}.
\end{equation}
Proposition~\ref{Laplacemodulated} says that each Laplacian modulated
operator $T$ is associated uniquely to $p_T \in
S^{\mmod}$ and vice-versa.  We can call $S^\mmod$ the symbols of
Laplacian modulated operators.

If $\phi \in C_c^\infty(\mathbb R^d)$ define the multiplication operator $(M_\phi f)(x) = \phi(x) f(x)$, $ f \in L_2(\mathbb R^d)$.

\begin{definition} \label{def:compbase}
We will say a bounded operator $T : L_2(\mathbb R^d) \to L_2(\mathbb R^d)$ is \emph{compactly based} if $M_\phi T = T$ for some $\phi \in C_c^\infty(\mathbb R^d)$, and
\emph{compactly supported} if $M_\phi T M_\phi = T$.
\end{definition}

We omit proving the easily verified statements that a Hilbert-Schmidt operator $T$
is compactly based if and only if $p_T(x,\xi)$ is (almost everywhere)
compactly supported in the $x$-variable, and is compactly supported
if and only if the kernel of $T$ is compactly supported.

\begin{example}[Pseudo-differential operators]
We recall that $\langle \xi \rangle := (1+|\xi|^2)^{1/2}$, $\xi \in \mathbb R^d$, and a multi-index of order $|\beta|$ is $\beta = (\beta_1,\ldots, \beta_d) \in (\mathbb N \cup \{ 0 \})^d$ such that $|\beta| := \sum_{i=1}^d \beta_i$.   If
$p \in C^\infty(\mathbb R^d \times \mathbb R^d)$ such that, for each multi-index $\alpha$, $\beta$,
\begin{equation}
| \partial^\alpha_x \partial^\beta_\xi p(x,\xi) | \lec[\alpha, \beta] \langle \xi \rangle^{m-|\beta|}
\end{equation}
we say that $p$ belongs to the symbol class $S^m := S^m(\mathbb R^d \times \mathbb R^d)$, $m \in \mathbb R$, (in general terminology we have just defined the
uniform symbols of H\"{o}rmander type (1,0), see e.g.~\cite{HorIII} and~\cite[Chapter 2]{Ruzhansky2010}).  
If $\mathcal S (\mathbb R^d)$ denotes the Schwartz functions (the smooth functions of rapid decrease),
an operator $P: \mathcal S (\mathbb R^d) \to \mathcal S (\mathbb R^d)$ associated to a symbol $p \in S^m$,
\begin{equation} \label{eq:psdoform}
(Pu)(x)=\frac{1}{(2\pi)^d}\int_{\mathbb R^d}e^{i\langle x,\xi\rangle} p(x,\xi)\hat u(\xi)d\xi , \qquad u \in \mathcal S(\mathbb R^d) 
\end{equation}
is called a pseudo-differential operator of order $m$.

If $H^s(\mathbb R^d)$, $s \in \mathbb R$, are the Sobolev Hilbert spaces consisting of those $f \in L_2(\mathbb R^d)$ with
$$
\| f \|_s := \| (1-\Delta)^{s/2} f \|_{L_2} < \infty
$$
and $P$ is a pseudo-differential operator of order $m$, then $P$ has an extension to a continuous linear operator
\begin{equation} \label{eq:sobolevcont}
P : H^s(\mathbb R^d) \to H^{s-m}(\mathbb R^d) , \qquad s \in \mathbb R ,
\end{equation}
see, e.g.~\cite[Theorem 2.6.11]{Ruzhansky2010}.  If $P$ is order $0$ this implies
$P$ has a bounded extension $P : L_2(\mathbb R^d) \to L_2(\mathbb R^d)$.

The compactly based Laplacian modulated operators extend the compactly
based pseudo-differential operators of order $-d$.

\begin{proposition} \label{prop:psdoincl}
If $P$ is a compactly based (respectively, compactly supported)
pseudo-differential operator of order $-d$ then the bounded extension of $P$
is a compactly based (respectively, compactly supported) Laplacian modulated operator.
Also, the symbol of $P$
is equal to (provides the $L_2$-equivalence class) of the symbol of the bounded extension of $P$ as a Laplacian modulated operator.
\end{proposition}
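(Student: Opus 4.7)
The plan is to directly verify condition~\eqref{LM1} of Proposition~\ref{Laplacemodulated} for the symbol of $P$, identify the pseudo-differential symbol with the Hilbert--Schmidt symbol via the uniqueness clause of Lemma~\ref{symbol}, and transfer the compact-support properties by density. The bounded extension to $L_2(\mathbb R^d)$ is immediate from the standard Sobolev mapping property~\eqref{eq:sobolevcont} with $s=0$ and $m=-d$, which gives a continuous map $P : L_2 = H^0 \to H^d \hookrightarrow L_2$.

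The key estimate uses the defining pointwise bound $|p(x,\xi)| \lec \langle\xi\rangle^{-d}$ for symbols in $S^{-d}$ together with the compactly based hypothesis. As noted immediately before the proposition, \emph{compactly based} is equivalent to the total symbol $p$ being compactly supported in the first variable; write $\supp_x p \subset K$ for a compact set $K \subset \mathbb R^d$. Passing to polar coordinates in $\xi$ and using $\langle r\rangle \geq r$,
$$
\int_{\mathbb R^d}\int_{|\xi|\ge t} |p(x,\xi)|^2\,d\xi\,dx \lec \Vol(K)\int_t^\infty r^{d-1}\langle r\rangle^{-2d}\,dr \lec t^{-d},\qquad t\ge 1.
$$
Setting $t=1$ also gives $p \in L_2(\mathbb R^d \times \mathbb R^d)$, so formula~\eqref{kernel} with this $p$ defines a Hilbert--Schmidt operator $\widetilde P$.

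Next I would identify $\widetilde P$ with the bounded extension of $P$. By construction $\widetilde P$ agrees on $\mathcal S(\mathbb R^d)$ with the operator defined by~\eqref{eq:psdoform}; since $\mathcal S$ is dense in $L_2$ and both operators are bounded, they coincide on $L_2$. The uniqueness clause of Lemma~\ref{symbol} then forces the Hilbert--Schmidt symbol of the extension to equal $p$ as an $L_2$-equivalence class, which is precisely the symbol-equality assertion. Applying Proposition~\ref{Laplacemodulated} via the estimate above yields that $P$ is Laplacian modulated. Finally, the compactly based (respectively, compactly supported) condition $M_\phi P = P$ (respectively, $M_\phi P M_\phi = P$) holds on $\mathcal S$ by hypothesis and extends to the bounded extension by density.

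No substantial obstacle is expected: the argument is a routine packaging of the $S^{-d}$ size bound, Sobolev $L_2$-boundedness, and uniqueness of the Hilbert--Schmidt symbol. The only mild subtlety is the careful identification of the two notions of symbol (pseudo-differential versus Hilbert--Schmidt), which is exactly what Lemma~\ref{symbol} delivers.
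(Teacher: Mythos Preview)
Your proposal is correct and follows essentially the same route as the paper: both verify the key symbol estimate $\int_{\mathbb R^d}\int_{|\xi|\ge t}|p(x,\xi)|^2\,d\xi\,dx \lec t^{-d}$ from the $S^{-d}$ bound and compact $x$-support, then invoke Proposition~\ref{Laplacemodulated}. The only cosmetic difference is in the symbol identification: the paper applies the explicit limit formula of Lemma~\ref{symbol} to show $p=p_0$ a.e., whereas you argue that the Hilbert--Schmidt operator $\widetilde P$ defined by $p$ agrees with $P$ on $\mathcal S$ and hence on $L_2$ by density, then cite uniqueness---both are equivalent unpackings of Lemma~\ref{symbol}.
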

\begin{proof}
Let $P$ have symbol $p \in S^{-d}$ that is compactly based in the first variable.  Then
\begin{equation}
\int_{\mathbb R^d} \int_{|\xi|\ge t} |p(x,\xi)|^2 d\xi dx
\lec \int_{|\xi|\ge t} \langle \xi \rangle^{-2d} d\xi
\lec \langle t \rangle^{-d} \label{psdo:LM1} .
\end{equation}
Hence $p \in L_2(\mathbb R^d  \times \mathbb R^d)$ and, if $P_0$ is the extension of $P$ then $P_0$ is Hilbert-Schmidt.
Let $p_0$ be the symbol of $P_0$ as a Hilbert-Schmidt operator.
Let $\phi \in C_c^\infty(\mathbb R^d)$, and $e_{\xi}(x):=e^{i\langle x,\xi\rangle}$, $ \xi \in \mathbb R^d$.
Since
$$
(P-P_0) \phi e_{\xi} = 0
$$
we have, by Lemma~\ref{symbol},
$$
p(x,\xi) - p_0(x,\xi) = \lim_n e_{-\xi}(x) (P-P_0) \phi_n e_{\xi}(x) = 0 , \qquad x, \xi \ \mathrm{a.e.}
$$
where $\{ \phi_n \}_{n=1}^\infty \subset C_c^\infty(\mathbb R^d)$ is such that $\phi_n \nearrow 1$ pointwise.
Then~\eqref{psdo:LM1} implies that the symbol of $P_0$ satisfies~\eqref{LM1}, hence $P_0$ is Laplacian modulated.
 \end{proof}
\end{example}

The Laplacian modulated operators form a bimodule for sufficiently regular operators.

\begin{lemma} \label{lem:module}
Let $T$ be Laplacian modulated and $R,S: L_2(\mathbb R^d) \to L_2(\mathbb R^d)$ be bounded such
that $S : H^s(\mathbb R^d) \to H^s(\mathbb R^d)$ is bounded for some $s < -d/2$. Then $R T S$ is Laplacian modulated.
\end{lemma}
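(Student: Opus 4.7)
The plan is to reduce the statement to a direct application of Proposition~\ref{changeV} with $V_1 = V_2 = V := (1-\Delta)^{-d/2}$, $B := R$, $A := S$. Proposition~\ref{changeV} says that if $\|V_1^a A x\| \lec \|V_2^a x\|$ for some $a > 1/2$, then $T \in \mmod(V_1)$ implies $BTA \in \mmod(V_2)$. So the whole task is to choose $a$ and interpret the hypothesis that $S$ is bounded on $H^s(\mathbb R^d)$ as the required operator inequality $\|V^a S x\|_{L_2} \lec \|V^a x\|_{L_2}$.

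The translation is immediate from the definition of the Sobolev norm: for any $a \in \mathbb R$,
\begin{equation*}
\|V^a x\|_{L_2} = \|(1-\Delta)^{-ad/2} x\|_{L_2} = \|x\|_{H^{-ad}} .
\end{equation*}
So setting $a := -s/d$, the hypothesis $s < -d/2$ gives $a > 1/2$, and the boundedness of $S : H^s \to H^s$ reads exactly as
\begin{equation*}
\|V^a S x\|_{L_2} = \|Sx\|_{H^s} \lec \|x\|_{H^s} = \|V^a x\|_{L_2} , \qquad x \in L_2(\mathbb R^d) .
\end{equation*}
With $R$ bounded on $L_2(\mathbb R^d)$ and this estimate in hand, Proposition~\ref{changeV} applies and yields $RTS \in \mmod(V)$, i.e.\ $RTS$ is Laplacian modulated.

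There is no genuine obstacle here — the content is already encoded in Proposition~\ref{changeV}, and the only point worth stating carefully is the identification $\|V^a x\|_{L_2} = \|x\|_{H^{-ad}}$ together with the choice $a = -s/d > 1/2$, which is exactly what the hypothesis $s < -d/2$ provides.
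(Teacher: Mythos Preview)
Your proof is correct and essentially identical to the paper's: both reduce directly to Proposition~\ref{changeV} by identifying $\|V^a x\|_{L_2} = \|x\|_{H^{-ad}}$ and taking $a = -s/d > 1/2$. The only cosmetic difference is that the paper first remarks that $RT$ is Laplacian modulated by the left ideal property before invoking Proposition~\ref{changeV}, whereas you apply the proposition directly with $B = R$; since Proposition~\ref{changeV} already absorbs the bounded factor $B$, your version is slightly more streamlined.
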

\begin{proof}
The Laplacian modulated operators form a left ideal so $RT$ is Laplacian modulated.
By Proposition~\ref{changeV}, the result is shown if
$
\| (1-\Delta)^{-da/2} S u \|_{L_2} \lec \| (1-\Delta)^{-da/2} u \|_{L_2}
$
for all $u \in C_c^\infty(\mathbb R^d)$ where $a > 1/2$.  However, this is the same statement
as
$
\| S  u \|_{s} \lec \| u \|_{s} 
$
for $s < -d/2$.
\end{proof}

\begin{remark} \label{rem:bimodule}
From~\eqref{eq:sobolevcont}, $R,S : H^s(\mathbb R^d) \to H^s(\mathbb R^d)$ for any $s \in \mathbb R$ for all zero order pseudo-differential operators $R$ and $S$.
Hence the Laplacian modulated operators form a bimodule for the pseudo-differential operators of order $0$.
\end{remark}


The next example confirms that the Laplacian modulated operators
are a wider class than the pseudo-differential operators.

\begin{example}[Square-integrable functions]

For $f,g \in L_2(\mathbb R^d)$ set
$$
M_f : L_\infty(\mathbb R^d) \to L_2(\mathbb R^d) \ , (M_fh)(x) := f(x)h(x) , \qquad x \
\mathrm{a.e.}
$$
and
$$
T_{g} : L_2(\mathbb R^d) \to L_\infty(\mathbb R^d) \ , (T_{g}h)(x) := \frac{1}{(2\pi)^d}
\int_{\mathbb R^d} e^{i\langle x, \xi \rangle } g(\xi) \hat h(\xi) d\xi
= (\hat g \star h)(x) , \qquad x \ \mathrm{a.e.}
$$
Define a subspace $L_\mmod(\mathbb R^d)$ of $L_2(\mathbb R^d)$ by
$$
L_\mmod(\mathbb R^d) := \left\{ g \in L_2(\mathbb R^d) \Big|
\left( \int_{|\xi|\ge t} |g(\xi)|^2 d\xi \right)^{1/2} = O(t^{-d/2}), \ t \geq 1 \right\} .
$$

\begin{remark}
It is clear that the function $\langle \xi \rangle^{-d} = (1+|\xi|^2)^{-d/2}$, $\xi \in \mathbb R^d$,
belongs to $L_\mmod(\mathbb R^d)$.
\end{remark}

\begin{proposition}\label{MultSqFnProp}
  If $f \in L_2(\mathbb R^d)$ and $g \in L_\mmod(\mathbb R^d)$ then
  $M_f T_g$ is Laplacian modulated with symbol $f \otimes g \in S^\mmod$.
  If $f$ has compact support than $M_f T_g$ is compactly based.
\end{proposition}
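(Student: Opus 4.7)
The plan is to compute the symbol of $M_f T_g$ directly from the definitions, verify it lies in $S^{\mmod}$, and invoke Proposition~\ref{Laplacemodulated}.

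First, I would unwind the definitions. For $h \in L_2(\mathbb R^d)$,
\begin{equation*}
(M_f T_g h)(x) = f(x) \cdot \frac{1}{(2\pi)^d}\int_{\mathbb R^d} e^{i\langle x,\xi\rangle} g(\xi) \hat h(\xi)\, d\xi = \frac{1}{(2\pi)^d}\int_{\mathbb R^d} e^{i\langle x,\xi\rangle} (f \otimes g)(x,\xi) \hat h(\xi)\, d\xi,
\end{equation*}
where $(f \otimes g)(x,\xi) := f(x) g(\xi)$. By Fubini, $\|f\otimes g\|_{L_2(\mathbb R^d\times\mathbb R^d)} = \|f\|_{L_2}\|g\|_{L_2} < \infty$, so $f\otimes g \in L_2(\mathbb R^d\times\mathbb R^d)$. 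Lemma~\ref{symbol} then identifies $M_f T_g$ as a Hilbert-Schmidt operator with symbol $p_{M_f T_g} = f \otimes g$ (uniqueness of the symbol gives this automatically, without having to appeal to the limit formula).

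Next I would verify the tail estimate~\eqref{LM1} of Proposition~\ref{Laplacemodulated}. Again by Fubini, for $t \geq 1$,
\begin{equation*}
\int_{\mathbb R^d} \int_{|\xi| \geq t} |(f \otimes g)(x,\xi)|^2\, d\xi\, dx = \|f\|_{L_2}^2 \int_{|\xi| \geq t} |g(\xi)|^2\, d\xi,
\end{equation*}
and the right-hand side is $O(t^{-d})$ by the defining property of $g \in L_\mmod(\mathbb R^d)$. Combined with $f \otimes g \in L_2$, this shows $\|f \otimes g\|_\mmod < \infty$, i.e.\ $f \otimes g \in S^\mmod$. Proposition~\ref{Laplacemodulated} therefore yields that $M_f T_g$ is Laplacian modulated with symbol $f \otimes g$.

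Finally, for the compactly based statement, if $\supp f \subset K$ for some compact $K$, then $(f\otimes g)(x,\xi) = 0$ for $x \notin K$, so the symbol is almost everywhere compactly supported in the $x$-variable; by the characterisation of compactly based Hilbert-Schmidt operators noted after Definition~\ref{def:compbase}, $M_f T_g$ is compactly based. (Alternatively, choose $\phi \in C_c^\infty(\mathbb R^d)$ with $\phi \equiv 1$ on a neighbourhood of $K$; then $M_\phi M_f T_g = M_{\phi f} T_g = M_f T_g$.) There is no genuine obstacle here — the entire proof is a bookkeeping exercise in Fubini's theorem and the uniqueness clause of Lemma~\ref{symbol}; the only point requiring mild care is confirming that the product $M_f T_g$ really does have $f\otimes g$ as its Hilbert-Schmidt symbol rather than merely satisfying the integral representation on a dense subspace, but this is immediate once $f \otimes g$ is shown to be in $L_2$.
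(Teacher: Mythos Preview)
Your proof is correct and follows essentially the same approach as the paper: identify the symbol as $f\otimes g$, verify it lies in $S^{\mmod}$, and invoke Proposition~\ref{Laplacemodulated}. The only cosmetic difference is that the paper first establishes boundedness of $M_fT_g$ directly via the factorisation $L_2\xrightarrow{T_g}L_\infty\xrightarrow{M_f}L_2$ and Young's inequality, whereas you obtain boundedness as a consequence of $f\otimes g\in L_2$ together with Lemma~\ref{symbol}; both routes are valid and the remainder of the argument is identical.
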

\begin{proof}
First note that $\| T_{g} h \|_{L_\infty} = \| \hat g \star h \|_{L_\infty} \leq \| g \|_{L_2} \| h \|_{L_2}$ by Young's inequality.  Hence $T_g : L_2 \to L_\infty$ is continuous and linear.
Also $\| M_f h \|_{L_2} \leq \| f \|_2 \| h \|_{L_\infty}$, so $M_f : L_\infty \to L_2$ is continuous and linear.
The composition $M_f T_g : L_2 \to L_2$ is continuous and linear (and everywhere defined).  We have that
$$
(M_fT_g h)(x) = \frac{1}{(2\pi)^d}
\int_{\mathbb R^d} e^{i\langle x,  \xi \rangle} f(x)g(\xi) \hat h(\xi) d\xi
, \qquad x \ \mathrm{a.e.}
$$
is an integral operator with symbol $f \otimes g \in S^\mmod$.
Hence $M_f T_g$ is Laplacian modulated.  If $f$ has compact support, $\phi f = f$
for some $\phi \in C_c^\infty(\mathbb R^d)$, and hence $M_\phi M_f T_g = M_f T_g$.
 \end{proof}
\end{example}

\subsection{Residues of Laplacian modulated operators} \label{sec:6.2}

We define in this section the residue of a compactly based Laplacian modulated
operator.  We show it is an extension of the noncommutative
residue of classical pseudo-differential operators of order $-d$ defined by M.~Wodzicki, \cite{Wod}.

We make some observations about the symbol of a compactly based operator.

 \begin{lemma}\label{compactbase} Let $T$ be a compactly based Laplacian modulated operator with symbol $p_T$.
Then:
\begin{align}
& \label{compactbase1} \int_{\mathbb R^d} \int_{r\le |\xi|\le 2r} | p_T(x,\xi) | d\xi\,dx = O(1), \qquad r\ge 1; & \\
& \label{compactbase2} \int_{\mathbb R^d} \int_{|\xi|\le r} | p_T(x,\xi) | d\xi\,dx =O(\log(1+r)), \qquad r\ge 1; & \\
& \label{compactbase4} \int_{\mathbb R^d} \int_{\mathbb R^d} |p_T(x,\xi)| \langle \xi \rangle^{-\theta} d\xi\,dx <\infty, \qquad \theta>0 ; &
\end{align}
and, if $A$ is a positive $d \times d$-matrix with spectrum contained in $[a,b]$, $b > a > 0$ are fixed values, then
\begin{align}
& \label{compactbase5}
\int_{\mathbb R^d} \int_{|\xi|\le r} p_T(x,\xi) d\xi\,dx -
\int_{\mathbb R^d} \int_{|A\xi|\le r} p_T(x,\xi) d\xi\,dx = O(1), \qquad r\ge 1 . &
\end{align}
\end{lemma}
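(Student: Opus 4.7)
My plan uses two ingredients throughout: the compact $x$-support of $p_T$, say in $K \times \mathbb R^d$ for some compact $K \subset \mathbb R^d$, and the Laplacian modulated tail bound from Proposition~\ref{Laplacemodulated},
\[
\int_{\mathbb R^d} \int_{|\xi| \geq t} |p_T(x,\xi)|^2 \, d\xi\, dx = O(t^{-d}) \qquad (t \geq 1).
\]
Estimate \eqref{compactbase1} will follow from a single Cauchy-Schwarz applied on $K \times \{r \leq |\xi| \leq 2r\}$: this set has measure $|K| \cdot \Vol(B_{2r} \setminus B_r) = O(r^d)$, while the $L_2$-norm of $p_T$ on it is $O(r^{-d/2})$ by the tail bound, giving a product of $O(1)$.

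Given \eqref{compactbase1}, I would prove \eqref{compactbase2} by dyadically decomposing $\{|\xi| \leq r\}$ into the central ball $\{|\xi| \leq 1\}$ — handled by one Cauchy-Schwarz against $\|p_T\|_{L_2} < \infty$ — together with the annuli $\{2^k \leq |\xi| \leq 2^{k+1}\}$ for $0 \leq k \leq \log_2 r$; each such annulus contributes $O(1)$ by \eqref{compactbase1}, and there are $O(\log r)$ of them. For \eqref{compactbase4} I would use the same decomposition weighted by $\langle \xi \rangle^{-\theta}$, which produces a geometric series $\sum_k 2^{-k\theta}$ convergent for any $\theta > 0$.

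For \eqref{compactbase5} I would write the difference as $\int_K \int (\chi_{|\xi| \leq r} - \chi_{|A\xi| \leq r}) p_T(x,\xi)\, d\xi\, dx$ and bound its absolute value by $\int_K \int_{E_r} |p_T|\, d\xi\, dx$, where $E_r := \{|\xi| \leq r\} \triangle \{|A\xi| \leq r\}$. The spectral hypothesis $aI \leq A \leq bI$ gives $a|\xi| \leq |A\xi| \leq b|\xi|$, so $E_r$ is contained in an annulus $\{c_1 r \leq |\xi| \leq c_2 r\}$ with $c_1, c_2$ depending only on $a, b$. Because this annulus has aspect ratio $c_2/c_1$ independent of $r$, it is covered by a fixed number of the dyadic annuli from \eqref{compactbase1}, giving the desired $O(1)$ bound. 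The main point requiring attention is the transition near $r = 1$: when $c_1 < 1$ the lower edge of the annulus may drop below $|\xi| = 1$, and one must absorb the bounded region $\{|\xi| \leq 1\}$ into an $L_2$-Cauchy-Schwarz estimate using compact $x$-support, exactly as in the treatment of the central piece for \eqref{compactbase2}.
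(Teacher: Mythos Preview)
Your proposal is correct and follows essentially the same approach as the paper: Cauchy--Schwarz on the annulus for \eqref{compactbase1}, dyadic decomposition for \eqref{compactbase2} and \eqref{compactbase4}, and for \eqref{compactbase5} reduction to an annulus of fixed aspect ratio via the spectral bounds on $A$. The only cosmetic difference is that the paper handles \eqref{compactbase5} by comparing both integrals to a common intermediate $\int_{|\xi|\le b^{-1}r}$ rather than working with the symmetric difference directly, but the underlying estimate is identical.
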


 \begin{proof} We prove~\eqref{compactbase1}.  Using \eqref{LM1}, if $r \geq 1$,
\begin{multline*}
\int_{\mathbb{R}^d} \int_{r \leq | \xi | \leq 2r}  
|p_T(x,\xi)|  d\xi dx \\
\lec \left( \int_{r \leq | \xi | \leq 2r} d\xi \right)^{1/2} \left(  \int_{\mathbb{R}^d} \int_{| \xi | \geq r} |p_T(x,\xi)|^2 d\xi dx  \right)^{1/2} \lec r^{d/2} r^{-d/2} .
\end{multline*}
We prove~\eqref{compactbase2}.  Fix $n \in \mathbb N$ such that $2^{n-1} \leq r < 2^n$, then  
$$
\{ \xi \in \mathbb R^d | 0 \leq |\xi| \leq r \} \subset [0,1] \cup \cup_{k=1}^n \{ \xi \in \mathbb R^d| 2^{k-1} < |\xi| \leq 2^k \} .
$$
By \eqref{compactbase1} the integral of $|p_T(x,\xi)|$ over each individual set in the union on the right hand side of the previous display is controlled by some constant $C$.  Then the integral over the initial set on the left hand side of the display
is controlled by
$$
C(n + 2) \leq C(3 + \log_2 r) \lec \log(1+r) .
$$
We prove~\eqref{compactbase4}.  Consider
\begin{multline*}
\int_{\mathbb{R}^d} \int_{\mathbb{R}^d} 
|p_T(x,\xi)| (1+|\xi|^2)^{-\theta/2} d\xi dx  \\
\lec \sum_{n=1}^\infty  2^{-n\theta} \int_{\mathbb{R}^d}  \int_{2^{n-1} \leq |\xi | \leq 2^n}  
|p_T(x,\xi)| d\xi dx  \lec \sum_{n=1}^\infty  2^{-n\theta} < \infty
\end{multline*}
by~\eqref{compactbase1}.

We prove~\eqref{compactbase5}.  It follows from
\eqref{compactbase1} that
\begin{multline*}
\left| \int_{\mathbb R^d} \int_{|A\xi| \leq r} p_T(x,\xi)  d\xi dx
- \int_{\mathbb R^d} \int_{|\xi| \leq b^{-1}r} p_T(x,\xi) d\xi dx \right| \\
\leq \int_{\mathbb R^d} \int_{ b^{-1}r \leq |\xi| \leq a^{-1}r } |p_T(x,\xi)| d\xi dx .
\end{multline*}
From \eqref{compactbase1}
$$
\int_{\mathbb R^d} \int_{ b^{-1}r \leq |\xi| \leq a^{-1}r } |p_T(x,\xi)| d\xi dx =O(1) .
$$
Also
\begin{multline*}
\left| \int_{\mathbb R^d} \int_{|\xi| \leq r} p_T(x,\xi) d\xi dx
- \int_{\mathbb R^d} \int_{|\xi| \leq b^{-1}r} p_T(x,\xi) d\xi dx \right| \\
\leq \int_{\mathbb R^d} \int_{ c_b r \leq |\xi| \leq c_b^{-1}r } |p_T(x,\xi)| d\xi dx
\end{multline*}
where $c_b = b$ if $b\leq 1$ and $c_b = b^{-1}$ if $b > 1$.
From \eqref{compactbase1} again
$$
\int_{\mathbb R^d} \int_{ c_b r \leq |\xi| \leq c_b^{-1}r } |p_T(x,\xi)| d\xi dx =O(1).
$$
Formula~\eqref{compactbase5} follows.
 \end{proof}

We notice from~\eqref{compactbase2} that, $n \in \mathbb N$,
$$
\frac{d}{\log(1+n)} \int_{\mathbb R^d} \int_{|\xi|\le n^{1/d}} p_T(x,\xi) d\xi\,dx = O(1)
$$
(as $\log(1+n^{1/d}) \sim d^{-1} \log(1+n)$).
If $\ell_\infty$ are the bounded sequences and $c_0$ denotes the closed subspace of sequences convergent to zero, let $\ell_\infty / c_0$ denote the quotient space. 

\begin{definition} \label{def:res}
Let $T$ be a compactly based Laplacian modulated operator with symbol $p_T$.
The linear map
$$
T \mapsto \Res(T):= \left[ \frac{d}{\log(1+n)} \int_{\mathbb R^d} \int_{|\xi|\le n^{1/d}} p_T(x,\xi) d\xi\,dx \right]
$$
we call the \emph{residue} of $T$, where $[ \cdot ]$ denotes the equivalence class in $\ell_\infty / c_0$.
\end{definition}
Note that any sequence $\Res_n(T)$, $n \in \mathbb N$, such that
\begin{equation} \label{eq:res}
\int_{\mathbb R^d} \int_{|\xi|\le n^{1/d}} p_T(x,\xi) d\xi\,dx = \frac{1}{d} \mathrm{Res}_n(T)  \log n + o(\log n)
\end{equation}
defines the residue $\Res(T) = [\mathrm{Res}_n(T)] \in \ell_\infty / c_0$.

\medskip We show that $\Res$, applied to compactly based pseudo-differential operators,
depends only on the principal symbol and extends the noncommutative residue.

\begin{example}[Noncommutative residue] \label{ex:wodres}
Let $S^m_{\mathrm{base}}$ be the symbols of the compactly based pseudo-differential operators of order $m$.
An equivalence relation is defined on symbols $p,q \in S^m_{\mathrm{base}}$
by $p \sim q$ if $p-q \in S^{m-1}_{\mathrm{base}}$.  
The \emph{principal symbol} of a compactly based pseudo-differential operator $P$ of order $m$ with symbol $p \in S^m_{\mathrm{base}}$ is the equivalence class $[p] \in S^m_{\mathrm{base}} / S^{m-1}_{\mathrm{base}}$.

\begin{lemma} \label{lemma:princemap}
Let $P$ be a compactly based pseudo-differential operator of order $-d$.
Then $\Res(P)$ depends only on the principal symbol of $P$.
\end{lemma}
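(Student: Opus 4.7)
The plan is to reduce the statement to showing that the residue vanishes on any compactly based pseudo-differential operator whose symbol lies in $S^{-d-1}_{\mathrm{base}}$. If $P$ and $P'$ have the same principal symbol, then $q := p - p' \in S^{-d-1}_{\mathrm{base}}$, and the map $\Res$ is manifestly linear in the symbol by Definition~\ref{def:res}; hence it suffices to prove $\Res(Q) = 0 \in \ell_\infty / c_0$ for any compactly based pseudo-differential operator $Q$ whose symbol $q$ belongs to $S^{-d-1}_{\mathrm{base}}$.

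To establish this, I would apply the defining symbol estimate of $S^{-d-1}$ with multi-indices $\alpha = \beta = 0$, which gives the pointwise bound $|q(x,\xi)| \lec \langle \xi \rangle^{-d-1}$, and couple this with the compact support of $q$ in the $x$-variable. Since $d+1 > d$, the weight $\langle \xi \rangle^{-d-1}$ is integrable on all of $\mathbb R^d$, and therefore
$$
\int_{\mathbb R^d} \int_{|\xi| \leq n^{1/d}} |q(x,\xi)|  d\xi  dx \lec \int_{\mathrm{supp}_x q} dx \cdot \int_{\mathbb R^d} \langle \xi \rangle^{-d-1} d\xi < \infty
$$
uniformly in $n$. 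Consequently, the unnormalised integral appearing in Definition~\ref{def:res} is $O(1)$, which in particular is $o(\log n)$, and the characterisation~\eqref{eq:res} forces $\Res(Q) = 0$ in $\ell_\infty / c_0$, as desired.

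There is essentially no obstacle here: the entire content of the argument is that gaining one order of decay in the symbol transforms the critical weight $\langle \xi \rangle^{-d}$ (which produces a logarithmically divergent integral and is responsible for any non-zero residue) into an integrable weight on $\mathbb R^d$. This mirrors the classical principle that the residue detects precisely the logarithmic obstruction to integrability of the symbol at infinity, and that lower-order terms are invisible to it.
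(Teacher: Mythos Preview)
Your argument is correct and takes essentially the same approach as the paper: both show that for $q \in S^{-d-1}_{\mathrm{base}}$ the integral $\int_{\mathbb R^d}\int_{|\xi|\le n^{1/d}} q(x,\xi)\,d\xi\,dx$ is $O(1)$, whence $\Res(Q)=0$. The paper factors $q = (q\langle\xi\rangle^{\theta})\langle\xi\rangle^{-\theta}$ with $\theta>0$ and invokes the general estimate~\eqref{compactbase4} for Laplacian modulated symbols, whereas you bound $|q|\lec\langle\xi\rangle^{-d-1}$ directly and integrate; your route is marginally more elementary but the content is identical.
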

\begin{proof}
By Proposition~\ref{prop:psdoincl} (the extension) $P$ is Laplacian modulated
and $\Res(P)$ is well defined.
If $p(x,\xi) \in S^{m}_{\mathrm{base}}$, $m <-d$, then $q(x,\xi) := p(x,\xi)\langle \xi \rangle^{\theta} \in S^{\mmod}$, $\theta = -d-m > 0$.
Then
$$
\int_{\mathbb R^d} \int_{|\xi|\le n^{1/d}} p(x,\xi) d\xi \, dx
= \int_{\mathbb R^d} \int_{|\xi|\le n^{1/d}} q(x,\xi) \langle \xi \rangle^{-\theta} d\xi \, dx 
= O(1)
$$
by~\eqref{compactbase4}.  It follows from~\eqref{eq:res} that the residue depends only on the equivalence class of
 a symbol $p \in S^{-d}_{\mathrm{base}}$.
 \end{proof}

The asymptotic expansion of $p \in S^m_{\mathrm{base}}$ means (for our purposes)
a sequence $\{ p_{m-j} \}_{j=0}^{\infty}$ such that $p_{m-j} \in S^{m-j}_{\mathrm{base}}$
and $p - \sum_{j=0}^{n} p_{m-j} \in S^{m-n-1}_{\mathrm{base}}$, $n \geq 0$.  A pseudo-differential
$P$ of order $m$ is \emph{classical} if its symbol $p$ has an asymptotic expansion $\{ p_{m-j} \}_{j=0}^{\infty}$ where each $p_{m-j}$
is a homogeneous function of order $m-j$ in $\xi$ except in a neighbourhood of zero.
The principal symbol of $P$ is the leading term $p_m \in S^m$ in the asymptotic expansion.
When $d > 1$ let $ds$ denote the volume form of the $d-1$-sphere
$$
\mathbb S^{d-1} = \{ \xi \in \mathbb R^d | |\xi|=1 \} ,
$$
according to radial and spherical co-ordinates of $\mathbb R^d$,
i.e.~$d\xi = r^{d-1} drds$, $\xi \in \mathbb R^d \setminus \{0 \}$, $r > 0$, $s \in \mathbb S^{d-1}$.  When $d=1$ let $S^{d-1} = \{ -1 , 1 \}$
with counting measure $ds$.

We understand the scalars to be embedded in $\ell_\infty / c_0$
as the classes $[ a_n ]$ where $a_n \in c$, i.e.~if $\lambda \in \mathbb C$
then $[a_n]=\lim_{n \to \infty} a_n = \lambda$.

\begin{proposition}[Extension of the noncommutative residue] \label{prop:wodres}
Let $P$ be a compactly based classical pseudo-differential operator of order $-d$ with principal symbol $p_{-d}$.  Then
$\mathrm{Res}(P)$ is the scalar
$$
\mathrm{Res}(P) =  \mathrm{Res}_W(P) := \int_{\mathbb R^d} \int_{\mathbb S^{d-1}} p_{-d}(x,s) ds \, dx$$
where $\mathrm{Res}_W$ denotes the noncommutative residue.
\end{proposition}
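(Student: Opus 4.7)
By Lemma~\ref{lemma:princemap}, the residue $\Res(P)$ depends only on the principal symbol, so I may replace the total symbol $p$ by a representative $p_{-d}$ of the principal symbol, chosen so that $p_{-d}(x,\xi)$ is homogeneous of degree $-d$ in $\xi$ for $|\xi|\geq 1$ (and smooth everywhere, still compactly based in $x$). The difference $p - p_{-d} \in S^{-d-1}_{\mathrm{base}}$ contributes only $O(1)$ to $\int_{\mathbb R^d}\int_{|\xi|\leq n^{1/d}} d\xi\,dx$, as shown in the proof of Lemma~\ref{lemma:princemap} via~\eqref{compactbase4} applied with $\theta=1$.

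The plan is then a direct computation in spherical coordinates. Writing $\xi = rs$ with $r=|\xi|$ and $s\in\mathbb S^{d-1}$, so that $d\xi = r^{d-1}\,dr\,ds$, I split the integral for $R = n^{1/d}\geq 1$ as
\begin{align*}
\int_{\mathbb R^d}\!\int_{|\xi|\leq R}\! p_{-d}(x,\xi)\,d\xi\,dx
&= \int_{\mathbb R^d}\!\int_{|\xi|\leq 1}\! p_{-d}(x,\xi)\,d\xi\,dx \\
&\quad+ \int_{\mathbb R^d}\!\int_{1\leq |\xi|\leq R}\! p_{-d}(x,\xi)\,d\xi\,dx.
\end{align*}
The first term is $O(1)$ since $p_{-d}$ is smooth and $x$-compactly supported. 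For the second, homogeneity gives $p_{-d}(x,rs) = r^{-d} p_{-d}(x,s)$, hence
\begin{align*}
\int_{\mathbb R^d}\!\int_{1\leq |\xi|\leq R}\! p_{-d}(x,\xi)\,d\xi\,dx
&= \int_{\mathbb R^d}\!\int_1^R\!\int_{\mathbb S^{d-1}} r^{-d} p_{-d}(x,s)\, r^{d-1}\,ds\,dr\,dx \\
&= (\log R)\int_{\mathbb R^d}\!\int_{\mathbb S^{d-1}} p_{-d}(x,s)\,ds\,dx.
\end{align*}
Substituting $R = n^{1/d}$ so that $\log R = d^{-1}\log n$, I obtain
\begin{equation*}
\int_{\mathbb R^d}\!\int_{|\xi|\leq n^{1/d}}\! p_{-d}(x,\xi)\,d\xi\,dx
= \frac{\log n}{d}\int_{\mathbb R^d}\!\int_{\mathbb S^{d-1}} p_{-d}(x,s)\,ds\,dx + O(1).
\end{equation*}
Adding back the $O(1)$ contribution from $p - p_{-d}$ and comparing with~\eqref{eq:res} shows that the sequence
$\Res_n(P) = \int_{\mathbb R^d}\int_{\mathbb S^{d-1}} p_{-d}(x,s)\,ds\,dx$
(a constant sequence) represents $\Res(P)$ in $\ell_\infty/c_0$, which is precisely $\Res_W(P)$.

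There is no serious obstacle: the only subtleties are (i) verifying that a representative of the principal symbol that is truly homogeneous for $|\xi|\geq 1$ can be chosen (standard in the classical calculus — multiply the homogeneous extension by a cutoff that is $1$ for $|\xi|\geq 1$ and $0$ near $\xi=0$, the difference from $p_{-d}$ being compactly supported in $\xi$ and thus in $S^{-\infty}_{\mathrm{base}}$), and (ii) checking that the integral $\int_{\mathbb R^d}\int_{\mathbb S^{d-1}} p_{-d}(x,s)\,ds\,dx$ is finite, which is immediate since $p_{-d}(\cdot,s)$ is compactly supported and jointly smooth. Everything else is Fubini plus a one-variable integration.
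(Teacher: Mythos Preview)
Your proof is correct and follows essentially the same route as the paper's own argument: reduce to the principal symbol via Lemma~\ref{lemma:princemap}, take a representative homogeneous for $|\xi|\ge 1$, split off the bounded contribution from $|\xi|\le 1$, and compute the remaining integral in spherical coordinates to extract the $\frac{1}{d}\log n$ factor before invoking~\eqref{eq:res}. Your write-up is in fact slightly more careful about the choice of homogeneous representative and the finiteness of the spherical integral, both of which the paper leaves implicit.
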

\begin{proof}
By the previous lemma we need only consider the principal symbol $p_{-d}$ of $P$, which we assume
without loss to be homogeneous for $| \xi | \geq 1$.
Then
\begin{align*}
\int_{\mathbb R^d}\int_{|\xi|\le n^{1/d}} p_{-d}(x,\xi) d\xi\,dx
&= \int_{\mathbb R^d}\int_{1 \leq |\xi|\le n^{1/d}} |\xi|^{-d} p_{-d}(x,\xi/|\xi|) d\xi\,dx + O(1) \\
&= \int_{\mathbb R^d} \int_{\mathbb S^d} p_{-d}(x,s) ds \,dx \int_1^{n^{1/d}} r^{-d} r^{d-1} dr + O(1) \\
&= \int_{\mathbb R^d} \int_{\mathbb S^d} p_{-d}(x,s) ds\,dx \; \log (n^{1/d}) + O(1) \\
&= \frac{1}{d} \int_{\mathbb R^d} \int_{\mathbb S^d} p_{-d}(x,s) ds\, dx \; \log n + O(1)
\end{align*}
The result follows from~\eqref{eq:res}.
 \end{proof}
\end{example}

Thus, the residue (as in Definition~\ref{def:res}) of a classical compactly based pseudo-differential operator of order $-d$ is a scalar and coincides with the noncommutative residue.
The residue of an arbitrary pseudo-differential operator is not always a scalar.

\begin{example}[Non-measurable pseudo-differential operators]

We construct a compactly supported pseudo-differential operator $Q$ of order $-d$
whose residue is not a scalar.
The following lemma
will simplify the construction.  The lemma is a standard result on pseudo-differential operators, but we will use it
several times.

\begin{lemma} \label{easysymbol}
Suppose $P$ is a pseudo-differential operator with symbol $p \in S^m$ and $\psi, \phi \in C_c^\infty(\mathbb R^d)$.
The compactly supported operator $Q = M_\psi P M_\phi$ has symbol $q \in S^m_{\mathrm{base}}$ such that
$q \sim \psi p \phi$.
\end{lemma}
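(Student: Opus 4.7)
The plan is to appeal to the standard pseudo-differential composition calculus, noting that both $M_\psi$ and $M_\phi$ are pseudo-differential operators of order zero whose symbols $\psi(x)$ and $\phi(x)$ do not depend on $\xi$. Writing $Q = M_\psi \circ (P M_\phi)$, I would handle the two compositions separately, exploiting the fact that multiplication on the left by a symbol independent of $\xi$ reduces to pointwise multiplication (since all higher terms in the composition asymptotic involve $\xi$-derivatives of that symbol and therefore vanish).

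First I would analyse $P M_\phi$. By the standard composition formula for pseudo-differential operators (see, e.g., \cite[Theorem 2.5.1]{Ruzhansky2010}), its symbol admits the asymptotic expansion
\begin{equation*}
\sigma(PM_\phi)(x,\xi) \sim \sum_{\alpha}\frac{1}{\alpha!}\,\partial_\xi^\alpha p(x,\xi)\, D_x^\alpha \phi(x),
\end{equation*}
where the $\alpha=0$ term is $p(x,\xi)\phi(x)\in S^m_{\mathrm{base}}$ and every term with $|\alpha|\ge 1$ lies in $S^{m-|\alpha|}_{\mathrm{base}}$ because $D_x^\alpha \phi\in C_c^\infty(\mathbb R^d)$ and $\partial_\xi^\alpha p\in S^{m-|\alpha|}$. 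Therefore $\sigma(PM_\phi) - p\phi \in S^{m-1}_{\mathrm{base}}$, so in particular $PM_\phi$ is a (compactly based) pseudo-differential operator of order $m$ with principal symbol class $[p\phi]$.

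Next I would compose on the left with $M_\psi$. Because $\sigma(M_\psi) = \psi(x)$ has no $\xi$-dependence, the same composition formula collapses to
\begin{equation*}
\sigma(M_\psi A)(x,\xi) = \psi(x)\,\sigma(A)(x,\xi)
\end{equation*}
exactly, with no remainder. Applying this to $A = PM_\phi$ gives
\begin{equation*}
q(x,\xi) \sim \sum_{\alpha}\frac{1}{\alpha!}\,\psi(x)\,\partial_\xi^\alpha p(x,\xi)\,D_x^\alpha \phi(x),
\end{equation*}
and since each summand is supported in $\supp\psi\cap\supp\phi$ in the $x$-variable, $q\in S^m_{\mathrm{base}}$. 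Subtracting the leading term $\psi p\phi$ leaves a tail whose summands all belong to $S^{m-1}_{\mathrm{base}}$, yielding $q-\psi p\phi\in S^{m-1}_{\mathrm{base}}$, i.e.\ $q\sim\psi p\phi$ in the sense defined in Example~\ref{ex:wodres}.

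There is no real obstacle here — the entire argument rests on the composition calculus for $S^m$ symbols, which is standard. The only minor point worth noting explicitly is why multiplication by $\psi$ on the left produces an exact (rather than merely asymptotic) identity at the symbol level, and why the factor of $\psi$ (or $\phi$, or its derivatives) in every term of the expansion guarantees membership in $S^m_{\mathrm{base}}$ as opposed to merely $S^m$. Both facts are immediate from inspection of the composition formula, so the proof is essentially a bookkeeping exercise once the formula is invoked.
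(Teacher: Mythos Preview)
Your proof is correct and follows essentially the same approach as the paper: both arguments invoke the standard pseudo-differential composition formula and read off that the leading term is $\psi p\phi$ while all higher-order terms land in $S^{m-1}_{\mathrm{base}}$. The only cosmetic difference is that the paper applies the amplitude formula from \cite[Theorem 3.1]{Shubin2003} to the triple product $M_\psi P M_\phi$ in one step, whereas you factor it as $M_\psi\circ(PM_\phi)$ and exploit the observation that left multiplication by a $\xi$-independent symbol is exact; the resulting asymptotic expansion is identical.
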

\begin{proof}
The operator $Q := M_\psi P M_\phi$ is a pseudo-differential operator of order $m$, \cite[Corollary 3.1]{Shubin2003}.
Evidently it is compactly supported.  Let $q$ be the symbol of $Q$.
From \cite[Theorem 3.1]{Shubin2003}
$$
q(x,\xi) \sim \sum_{\alpha} \frac{(-i)^\alpha}{\alpha!} ( \partial_\xi^\alpha \partial_y^\alpha \psi(x) p(x,\xi) \phi(y) ) |_{y=x} 
$$
where the asymptotic sum runs over all multi-indices $\alpha$.  Since
$$
\sum_{|\alpha| \geq 1} \frac{(-i)^\alpha}{\alpha!} ( \partial_\xi^\alpha \partial_y^\alpha \psi(x) p(x,\xi) \phi(y) ) |_{y=x}
\in S^{m-1}_{\mathrm{base}}
$$
(see \cite[p.~3]{Shubin2003}) we obtain $q(x,\xi) -  \psi(x) p(x,\xi) \phi(x) \in S^{m-1}_{\mathrm{base}}$.
 \end{proof}

\begin{proposition} \label{prop:nonmeas}
There is a compactly supported pseudo-differential operator $Q$ of order $-d$ such that
$$
\Res(Q) = [ \sin \log \log n^{1/d} ].
$$
\end{proposition}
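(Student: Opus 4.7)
The plan is to take $Q = M_\phi P M_\phi$, where $\phi \in C_c^\infty(\mathbb R^d)$ is a nonzero bump and $P$ is a pseudo-differential operator of order $-d$ whose radial symbol is engineered so that its integral over the ball $\{|\xi| \le n^{1/d}\}$ picks up the oscillation $\sin \log \log n^{1/d}$. The driving observation is
$$
\frac{d}{du}\bigl(u \sin \log u\bigr) = \sin \log u + \cos \log u,
$$
which after the substitution $u = \log r$ gives
$$
\int_2^R \frac{\sin \log \log r + \cos \log \log r}{r}\, dr = \log R \cdot \sin \log \log R + O(1), \qquad R \ge 2.
$$

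Concretely, I would fix a cutoff $\chi \in C^\infty(\mathbb R)$ with $\chi = 0$ near $0$ and $\chi = 1$ on $[2,\infty)$, and define
$$
p(\xi) := \frac{C\, \chi(|\xi|)}{|\xi|^d}\bigl(\sin \log \log |\xi| + \cos \log \log |\xi|\bigr),
$$
where $C := (\mathrm{Vol}(\mathbb S^{d-1}) \int \phi^2)^{-1}$. The next step is to check that $p \in S^{-d}$: each $\xi$-derivative of $\log \log |\xi|$ inherits a factor $O((|\xi|\log|\xi|)^{-1})$, so by Fa\`a di Bruno the oscillating factor contributes $O(\langle \xi \rangle^{-|\beta|})$ under $\partial^\beta_\xi$; combined via Leibniz with the $|\xi|^{-d}$ prefactor this yields $|\partial_\xi^\beta p(\xi)| \lec \langle \xi \rangle^{-d-|\beta|}$ for $|\xi|$ large, while the cutoff takes care of small $|\xi|$. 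Let $P$ be the pseudo-differential operator with symbol $p$, and set $Q := M_\phi P M_\phi$, which is compactly supported.

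By Lemma~\ref{easysymbol} the symbol $q \in S^{-d}_{\mathrm{base}}$ of $Q$ satisfies $q \sim \phi^2(x)\, p(\xi)$ modulo $S^{-d-1}_{\mathrm{base}}$, so by Lemma~\ref{lemma:princemap} the residue $\Res(Q)$ is computed from $\phi^2 p$. Passing to polar co-ordinates, applying the antiderivative identity with $R = n^{1/d}$, and using $\log n^{1/d} = \frac{1}{d}\log n$, I obtain
$$
\int_{\mathbb R^d}\!\int_{|\xi| \le n^{1/d}} \phi^2(x)\, p(\xi)\, d\xi\, dx = \frac{1}{d}\,\log n \cdot \sin \log \log n^{1/d} + O(1).
$$
By \eqref{eq:res} this gives $\Res(Q) = [\sin \log \log n^{1/d}]$ in $\ell_\infty / c_0$, a non-scalar class since $\sin \log \log n^{1/d}$ fails to converge.

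The main obstacle will be the $S^{-d}$ verification: I must track that every $\xi$-derivative of the iterated logarithm $\sin \log \log |\xi|$ costs at least one factor of $\langle \xi \rangle^{-1}$, so that $p$ obeys the full H\"ormander $(1,0)$-symbol estimates rather than merely being $O(\langle \xi \rangle^{-d})$ in absolute value. Once this is established, the remainder of the argument is a direct computation of the sort already appearing in the proof of Proposition~\ref{prop:wodres}.
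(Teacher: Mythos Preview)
Your approach is essentially identical to the paper's: the same symbol $|\xi|^{-d}(\sin\log\log|\xi| + \cos\log\log|\xi|)$ times a radial cutoff, the same sandwiching by $M_\phi$, the same appeal to Lemma~\ref{easysymbol}, and the same polar-coordinate antiderivative computation; the only cosmetic difference is that the paper normalises $\phi$ so that $\int |\phi|^2 = (\mathrm{Vol}\,\mathbb S^{d-1})^{-1}$ rather than inserting your constant $C$. One caution: your cutoff $\chi$ must vanish on a full neighbourhood of $[0,1]$ (the paper takes it to vanish for $|\xi|\le 3$), since $\log\log|\xi|$ and its $\xi$-derivatives blow up as $|\xi|\downarrow 1$, not merely at $0$.
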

\begin{remark}
Obviously $\Res(Q)$ is not a scalar.
\end{remark}
\begin{proof}
Set
$$
p'(\xi) = \frac{ \sin \log \log |\xi| + \cos \log \log |\xi|}{|\xi|^d} , \qquad \xi \in \mathbb R^d, |\xi| \geq e .
$$
One confirms by calculation that $p'$ satisfies 
$$
| \partial_\xi^\alpha p'(\xi) | \leq 2. 3^{|\alpha|} (d+|\alpha|)! |\xi|^{-d - |\alpha|} , \qquad |\xi | > e .
$$
Let $g \in C_c^\infty(\mathbb R^d)$ be~$g(\xi) = g_1(\left| \xi
\right|)$ where~$g_1$ is positive and increasing such that
$$
g_1(\xi) = \left\{ \begin{array}{ll} 0 &  \xi \leq 3 \\
1 & \xi \geq 4
\end{array} \right. , \qquad \xi \in \mathbb R^d .
$$
Then $p := gp' \in S^{-d}$, and denote by $P$ the pseudo-differential operator with symbol $p$.

Let $\phi \in C_c^\infty(\mathbb R^d)$ be such that
$$
\int_{\mathbb R^d} |\phi(x)|^2 dx = (\mathrm{Vol} \, \mathbb S^{d-1})^{-1} .
$$  
If $Q$ is the operator $M_{\overline{\phi}} P M_\phi$ of Lemma~\ref{easysymbol} with symbol $q \sim \overline{\phi} p \phi$ then, provided $n \geq 4^d$,
\begin{align*}
\int_{\mathbb R^d}\int_{|\xi|\le n^{1/d}} q(x,\xi) d\xi\,dx 
&= \int_{\mathbb R^d}\int_{|\xi|\le n^{1/d}} |\phi(x)|^2 p(x,\xi) d\xi\,dx + O(1) \\
&= \int_{\mathbb R^d} |\phi(x)|^2 dx \int_{4 \leq |\xi|\le n^{1/d}} p'(\xi) d\xi + O(1) \\
&=  \int_4^{n^{1/d}} (\sin \log \log r + \cos \log \log r)r^{-d}r^{d-1}dr + O(1) \\
&= \frac{1}{d} (\sin \log \log n^{1/d} ) \log n + O(1) .
\end{align*}
The result follows from~\eqref{eq:res}.
 \end{proof}

The operator $Q$ will be used in Corollary~\ref{cor:nonmeas} to
provide an example of a non-measurable pseudo-differential operator.
\end{example}

\begin{example}[Integration of square-integrable functions]

The residue can be used to calculate the integral of a compactly supported 
square integrable function.

\begin{proposition}\label{MultSqFnPropInt}
If $f \in L_2(\mathbb R^d)$ has compact support and $\Delta$ is the Laplacian on $\mathbb R^d$
then $\Res(M_f (1-\Delta)^{-d/2})$ is the scalar
$$
\Res(M_f (1-\Delta)^{-d/2}) = \mathrm{ Vol } \, \mathbb  S^{d-1} \int_{\mathbb R^d} f(x) dx .
$$
\end{proposition}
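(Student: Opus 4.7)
The plan is to identify the operator as a specific instance of the Laplacian modulated operators built in Proposition~\ref{MultSqFnProp}, write down its symbol explicitly, and then evaluate the integral appearing in Definition~\ref{def:res} in polar coordinates.

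First I would observe that $(1-\Delta)^{-d/2}$ is the Fourier multiplier $T_g$ with $g(\xi) = \langle \xi \rangle^{-d}$, and the remark preceding Proposition~\ref{MultSqFnProp} notes $g \in L_{\mmod}(\mathbb R^d)$. Since $f \in L_2(\mathbb R^d)$ is compactly supported, Proposition~\ref{MultSqFnProp} gives that $M_f(1-\Delta)^{-d/2} = M_f T_g$ is a compactly based Laplacian modulated operator with symbol
\begin{equation*}
p(x,\xi) = f(x)\langle \xi\rangle^{-d} \in S^\mmod .
\end{equation*}
Thus $\Res(M_f (1-\Delta)^{-d/2})$ is well defined by Definition~\ref{def:res}, and the integral factors:
\begin{equation*}
\int_{\mathbb R^d}\int_{|\xi|\le n^{1/d}} p(x,\xi)\,d\xi\,dx = \Big(\int_{\mathbb R^d} f(x)\,dx\Big)\Big(\int_{|\xi|\le n^{1/d}} \langle \xi\rangle^{-d}\,d\xi\Big).
\end{equation*}

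Next I would evaluate the $\xi$-integral in polar coordinates $d\xi = r^{d-1}dr\,ds$:
\begin{equation*}
\int_{|\xi|\le n^{1/d}}\langle \xi\rangle^{-d}\,d\xi = \Vol\,\mathbb S^{d-1}\int_0^{n^{1/d}}\frac{r^{d-1}}{(1+r^2)^{d/2}}\,dr .
\end{equation*}
Splitting the radial integral at $r=1$, the part on $[0,1]$ contributes an $O(1)$ constant, while for $r\ge 1$ I would write $(1+r^2)^{-d/2} = r^{-d}(1+r^{-2})^{-d/2} = r^{-d} + O(r^{-d-2})$, so that
\begin{equation*}
\int_1^{n^{1/d}}\frac{r^{d-1}}{(1+r^2)^{d/2}}\,dr = \int_1^{n^{1/d}}\frac{dr}{r} + \int_1^{n^{1/d}} O(r^{-3})\,dr = \tfrac1d\log n + O(1).
\end{equation*}

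Combining the two observations gives
\begin{equation*}
\int_{\mathbb R^d}\int_{|\xi|\le n^{1/d}} p(x,\xi)\,d\xi\,dx = \tfrac1d\,\Vol\,\mathbb S^{d-1}\Big(\int_{\mathbb R^d} f(x)\,dx\Big)\log n + O(1),
\end{equation*}
and the characterisation~\eqref{eq:res} of the residue then identifies $\Res(M_f(1-\Delta)^{-d/2})$ with the scalar $\Vol\,\mathbb S^{d-1}\int_{\mathbb R^d} f(x)\,dx$. There is no real obstacle; the only mild point to be careful about is the asymptotic expansion of $\int_1^{n^{1/d}} r^{d-1}(1+r^2)^{-d/2}\,dr$, which is controlled precisely because the correction term $(1+r^{-2})^{-d/2}-1$ is integrable on $[1,\infty)$.
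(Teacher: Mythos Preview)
Your proof is correct and follows essentially the same approach as the paper: identify the symbol as $f(x)\langle\xi\rangle^{-d}$ via Proposition~\ref{MultSqFnProp}, factor the double integral, and extract the $\tfrac1d\log n$ asymptotic from the radial integral before invoking~\eqref{eq:res}. The only cosmetic difference is that the paper replaces $\langle\xi\rangle^{-d}$ by $|\xi|^{-d}$ on the region $1\le|\xi|\le n^{1/d}$ in one step, whereas you carry out the equivalent estimate via the expansion $(1+r^{-2})^{-d/2}=1+O(r^{-2})$.
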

\begin{proof}
Since $(1-\Delta)^{-d/2} = T_g$
where $g(\xi) = \langle \xi \rangle^{-d} \in L_{\mmod}(\mathbb R^d)$,
$M_f(1-\Delta)^{-d/2}$ is a compactly based Laplacian modulated operator
by Proposition~\ref{MultSqFnProp}.
Then
\begin{align*}
\int_{\mathbb R^d}\int_{|\xi|\le n^{1/d}} f(x) \langle \xi \rangle^{-d} d\xi\,dx
&= \int_{\mathbb R^d} f(x) dx \int_{1 \leq |\xi|\le n^{1/d}} |\xi|^{-d} d\xi + O(1) \\
&= \frac{1}{d} \mathrm{ Vol } \, \mathbb  S^{d-1} \int_{\mathbb R^d} f(x) dx \; \log n + O(1)
\end{align*}
The result follows from~\eqref{eq:res}.
 \end{proof}
\end{example}

\subsection{Eigenvalues of Laplacian modulated operators}

We now come to our main technical theorem.
This result is at the heart of Connes' trace theorem.

 \begin{theorem}\label{eigen}  Suppose $T:L_2(\mathbb R^d)\to L_2(\mathbb R^d)$ is compactly supported and Laplacian modulated with symbol $p_T$.  Then $T\in\mathcal L_{1,\infty}(L_2(\mathbb R^d))$ and 
 \begin{equation}\label{1000} \sum_{j=1}^n\lambda_j(T)-\frac{1}{(2\pi)^d}\int_{\mathbb R^d}\int_{|\xi|\le n^{1/d}} p_T(x,\xi)d\xi dx = O(1)\end{equation}
where $\{ \lambda_j(T) \}_{j=1}^\infty$ is any eigenvalue sequence of $T$.
\end{theorem}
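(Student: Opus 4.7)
My plan is to deduce the theorem from Theorem~\ref{connection}(iii) by regarding $T$ as an operator on a torus $\mathbb T^d_L$ of edge length $L$ exceeding $2\,\mathrm{diam}\,\supp T$, using the torus Laplacian as modulator. The compactness of $\supp T$ puts the modulator in $\mathcal L_{1,\infty}$ with an explicit Fourier eigenbasis, and a Poisson-summation argument then identifies the sum of diagonal matrix elements with the integral of $p_T$.

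\emph{Step 1 (setup on the torus).} Pick $\phi\in C_c^\infty(\mathbb R^d)$ equal to $1$ on $\supp T$, so $T=M_\phi T M_\phi$ (Definition~\ref{def:compbase}), and fix $L>2\,\mathrm{diam}\,\supp\phi$. View $T$ on $L_2(\mathbb T^d_L)$ by extension by zero. Let $V:=(1-\Delta_{\mathbb T^d_L})^{-d/2}$; its eigenbasis consists of the Fourier modes $e_k(x)=L^{-d/2}e^{2\pi i\langle x,k\rangle/L}$, $k\in\mathbb Z^d$, with eigenvalues $(1+(2\pi|k|/L)^2)^{-d/2}$. Writing $\omega_d$ for the volume of the unit ball in $\mathbb R^d$, lattice counting gives $s_n(V)\sim \omega_d L^d(2\pi)^{-d}\,n^{-1}$, so $V\in\mathcal L_{1,\infty}$. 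A standard negative-Sobolev comparison (using compact support of $\phi$) combined with Proposition~\ref{changeV} applied to $T=M_\phi\cdot T\cdot M_\phi$ shows $T\in\mmod(V)$ on $L_2(\mathbb T^d_L)$, whence Theorem~\ref{connection}(i,iii) yields $T\in\mathcal L_{1,\infty}$ and
\[
\sum_{j=1}^n\lambda_j(T)=\sum_{|k|\le R_n}(Te_k,e_k)+O(1),\qquad R_n\sim (n/\omega_d)^{1/d}.
\]

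\emph{Step 2 (Poisson summation).} With $\xi_k:=2\pi k/L$ and $r_n:=2\pi R_n/L$, Lemma~\ref{symbol} gives
\[
(Te_k,e_k)=\frac{1}{L^d(2\pi)^d}\int\!\!\int \phi(x)\hat\phi(\mu)e^{i\langle x,\mu\rangle}\,p_T(x,\xi_k+\mu)\,d\mu\,dx.
\]
Let $T_n$ denote the operator with symbol $p_T(x,\xi)\chi_{|\xi|\le r_n}(\xi)$; its $\xi$-symbol is integrable, so Poisson summation applies to $\sum_{k\in\mathbb Z^d}p_T(x,\xi_k+\mu)\chi_{|\xi_k+\mu|\le r_n}$. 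The non-principal dual-lattice terms vanish because $\phi(x)\phi(x+mL)=0$ for $m\neq 0$ on $\supp\phi$; the principal term collapses against $(2\pi)^{-d}\int\hat\phi(\mu)e^{i\langle x,\mu\rangle}d\mu=\phi(x)$ and $\phi^2p_T=p_T$ to produce
\[
\sum_{k\in\mathbb Z^d}(T_ne_k,e_k)=\frac{1}{(2\pi)^d}\int_{\mathbb R^d}\int_{|\xi|\le r_n}p_T(x,\xi)\,d\xi\,dx.
\]
The truncation discrepancy $\sum_{|k|\le R_n}(Te_k,e_k)-\sum_k(T_ne_k,e_k)$ splits as $\sum_{|k|\le R_n}((T-T_n)e_k,e_k)-\sum_{|k|>R_n}(T_ne_k,e_k)$. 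The first sum is $O(1)$ by Cauchy-Schwarz combined with $\|T-T_n\|_{\mathcal L_2}^2\lec r_n^{-d}\sim n^{-1}$ (from~\eqref{LM1}), and the second is $O(1)$ via the Schwartz decay of $\hat\phi(\mu)$ on $|\mu|\ge|\xi_k|-r_n$ together with the shell bound~\eqref{compactbase1}.

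\emph{Step 3 and main obstacle.} Since $r_n/n^{1/d}\to 2\pi(L\omega_d^{1/d})^{-1}$ is bounded away from $0$ and $\infty$, Lemma~\ref{compactbase}\eqref{compactbase5} applied to a positive scalar matrix converts the cutoff $r_n$ into $n^{1/d}$ at cost $O(1)$, which combined with Steps~1--2 delivers~\eqref{1000}. The technical heart of the argument is the truncation estimate in Step~2: because $p_T$ has only the $L_2$-decay~\eqref{LM1} and no pointwise regularity, classical Riemann-sum estimates are unavailable, and one must simultaneously exploit the Schwartz decay of $\hat\phi$, the support condition $L>2\,\mathrm{diam}\,\supp\phi$ which annihilates the Poisson dual-lattice contributions, and the shell bound~\eqref{compactbase1} to keep the error absolutely $O(1)$ rather than growing as a positive power of $n$.
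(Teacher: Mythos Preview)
Your approach is essentially the paper's own proof recast in torus language: the paper's orthonormal system $(u_m)_{m\in\mathbb Z^d}$ built from a smooth cutoff $\phi$ (Lemmas~\ref{lem:basis0}--\ref{lem:basis}) plays exactly the role of your torus Fourier modes, the discrete modulator $V_l$ of Lemma~\ref{compact set modulated} is your $(1-\Delta_{\mathbb T^d_L})^{-d/2}$, and Lemma~\ref{eigen second part} is precisely your Poisson-summation/truncation computation. The strategy---reduce to Theorem~\ref{connection}(iii) on a compact model, then compare $\sum(Tv_j,v_j)$ with the symbol integral via Fourier analysis---is identical.

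Two points in your sketch are thinner than they should be and correspond to the genuinely delicate parts of the paper's argument. First, ``Poisson summation applies'' because the symbol is $L_1$ in $\xi$ is not enough on its own; the rescue is the integration against the Schwartz factor $\hat\phi(\mu)$ coming from the cutoff, which the paper handles by comparing the partial Fourier kernel $G_n(x,\xi)$ with $\chi_{|\xi|\le n^{1/d}}$ pointwise in three regimes (inequalities~\eqref{Hpart1}--\eqref{Hpart3}). Second, your tail term $\sum_{|k|>R_n}(T_n e_k,e_k)$ requires more than ``Schwartz decay of $\hat\phi$'' and~\eqref{compactbase1}: one must quantify the decay uniformly across the lattice and feed it through the $L_2$-type bound on $p_T$, exactly as in the $|\xi|>bn^{1/d}$ case of Lemma~\ref{eigen second part}. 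Both gaps are fillable along the paper's lines, so your outline is correct, but these are the places where the $O(1)$ (rather than $o(\log n)$) control is actually won.
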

 
\begin{remark}
If $T$ is only compactly based, then~\eqref{1000} still holds but it may not be true that $T\in\mathcal L_{1,\infty}$.  See the proof below.
\end{remark}

The following lemmas are required for the proof.
Let $\mathcal Q_z$ denote the unit cube on $\mathbb R^d$ centred on
$z \in \mathbb R^d$.

\begin{lemma} \label{lem:basis0}
There exists $0 < \phi \in C_c^\infty(\mathbb R^d)$
such that:
\begin{enumerate}[(i)]
\item $\phi(x)=1$, $x \in \pi \mathcal  Q_0$, $\phi(x) = 0$,
$x \not\in 2\pi \mathcal Q_0$;
\item $\{ u_m \}_{m \in \mathbb Z^d}$ form an orthonormal set in $L_2(\mathbb R^d)$ where
$$u_m(x)=\frac{1}{(2\pi)^{d/2}}\phi(x)e^{i\langle x,m\rangle}, \qquad m\in\mathbb Z^d;$$
\item for each $N \in \mathbb N$
$| \hat \phi(\xi) | \lec_N \langle \xi \rangle^{-N}$.
\end{enumerate}
\end{lemma}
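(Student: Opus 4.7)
The plan is to build $\phi$ as a tensor product $\phi(x) = \prod_{i=1}^d \psi(x_i)$ of a one-dimensional smooth bump function $\psi$, and then to verify (i)--(iii) separately. Properties (i) and (iii) will follow from standard facts about $C_c^\infty$ functions; the substantive content is in (ii), which reduces to a partition-of-unity identity for $\psi^2$.

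For (iii), once $\phi \in C_c^\infty(\mathbb R^d)$ is in hand, integration by parts yields, for every multi-index $\alpha$,
\begin{equation*}
\xi^\alpha \hat\phi(\xi) = (-i)^{|\alpha|} \widehat{\partial^\alpha \phi}(\xi),
\end{equation*}
so $|\xi^\alpha \hat\phi(\xi)| \leq \|\partial^\alpha \phi\|_{L_1} < \infty$. Combining these bounds over $|\alpha| \leq N$ gives $|\hat\phi(\xi)| \lec[N] \langle \xi \rangle^{-N}$ for every $N \in \mathbb N$.

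For (ii), a direct computation gives
\begin{equation*}
(u_m, u_n) = \frac{1}{(2\pi)^d} \int_{\mathbb R^d} \phi(x)^2 \, e^{i\langle x, m - n\rangle}\, dx .
\end{equation*}
Partitioning $\mathbb R^d$ into the $2\pi \mathbb Z^d$-translates of a fundamental cube and using $e^{i\langle 2\pi k, m - n\rangle} = 1$ for $k, m-n \in \mathbb Z^d$, this reshuffles to
\begin{equation*}
(u_m, u_n) = \frac{1}{(2\pi)^d} \int_{[-\pi, \pi]^d} \Bigl( \sum_{k \in \mathbb Z^d} \phi(x - 2\pi k)^2 \Bigr) e^{i\langle x, m - n\rangle}\, dx .
\end{equation*}
Thus orthonormality is equivalent to the periodisation identity $\sum_k \phi(x - 2\pi k)^2 \equiv 1$, which, by the tensor-product structure of $\phi$, reduces to the one-dimensional identity $\sum_k \psi(t - 2\pi k)^2 \equiv 1$.

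The main obstacle, and the hard part of the proof, is constructing a one-dimensional $\psi \in C_c^\infty(\mathbb R)$ which simultaneously satisfies (a) the plateau condition $\psi \equiv 1$ on the inner interval corresponding to $\pi \mathcal Q_0$, (b) the support condition prescribed by $2\pi \mathcal Q_0$, and (c) the partition-of-unity identity $\sum_k \psi(t - 2\pi k)^2 \equiv 1$. The classical Meyer-wavelet-style construction handles this: first build a smooth nonnegative transition profile $\rho$ using compositions of $g(t) := e^{-1/t}\chi_{(0,\infty)}(t)$, arranged so that $\rho$ vanishes to infinite order at the boundary of its support and satisfies $\sum_k \rho(t - 2\pi k) \equiv 1$; then set $\psi := \sqrt{\rho}$. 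The square root is smooth because $\rho$ vanishes to infinite order at its zeros (locally $\rho$ looks like $e^{-1/s}$, so $\sqrt{\rho}$ looks like $e^{-1/(2s)}$, still $C^\infty$). The flexibility of the cut-off in the transition region accommodates the precise inner and outer intervals demanded by (a) and (b), and this is the only place where care with constants is needed.
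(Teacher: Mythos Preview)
Your approach is correct and essentially the same as the paper's. Both take $\phi(x)=\prod_j\sqrt{g(x_j)}$ for a one-dimensional smooth bump $g\ge 0$ whose $2\pi$-translates partition unity; the paper builds $g$ as $h\star\chi_{[-\pi,\pi]}$ (so that $\hat g=2\pi\,\hat h\cdot\mathrm{sinc}(\pi\,\cdot)$ vanishes at nonzero integers, which is precisely the Fourier-dual formulation of your periodisation identity $\sum_k g(t-2\pi k)\equiv 1$) rather than via an $e^{-1/t}$ transition profile, but this is an inessential variation.
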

\begin{proof}
(i) Let $h$ be a non-negative $C^{\infty}$-function on $\mathbb R$ such that
$$ \int_{-\infty}^{\infty} h(t)\,dt=1,$$
and for some $\delta<\pi/2$ we have $\supp h =(-\delta,\delta).$  We then define
$g= h\star\chi_{[-\pi,\pi]}.$  Then
$$
\hat g(\xi) = 2\pi \hat{h}(\xi) \mathrm{sinc}(\pi \xi) .
$$
Hence
$$ \hat g(0) =2\pi,$$
$$ \hat g(n) =0, \qquad n\in\mathbb Z\setminus\{0\},$$
$$ \mathrm{supp}(g)= (-\pi-\delta,\pi+\delta)$$ and
$$ g(t)=1, \qquad -\pi+\delta<t<\pi-\delta.$$
We also have that $\sqrt{g} \in C^{\infty}_c(\mathbb R).$  Let us define
$$ \phi(x) :=\prod_{j=1}^d \sqrt{g(x_j)}, \qquad x=(x_1,\ldots,x_d) \in \mathbb R^d.$$  Then
$$
\widehat{|\phi|^2}(\xi) :=\prod_{j=1}^d \hat g(\xi_j), \qquad \xi=(\xi_1,\ldots \xi_d) \in \mathbb R^d.
$$
and
$$ \widehat{|\phi|^2}(0)=(2\pi)^d,$$
$$ \widehat{|\phi|^2}(m)=0, \qquad m\in\mathbb Z^d\setminus\{0\},$$
$$ \mathrm{supp}(\phi) \subset [-2\pi,2\pi]^d,$$
$$ \phi(x)=1, \qquad x\in [-\pi,\pi]^d.$$

(ii) Since
$$
\int_{\mathbb R^d} \overline{u_{m_1}(x)} u_{m_2}(x) dx
= \frac{\widehat{|\phi|^2}(m_1-m_2)}{(2\pi)^{d}} \qquad m_1,m_2 \in \mathbb Z^d
$$
the family $(u_m)_{m\in\mathbb Z^d}$ is orthonormal in $L_2(\mathbb R^d)$.

(iii) Since $\phi$ is smooth and compactly supported the estimate now follows from standard results, see e.g.~\cite[Problem 8.16, p.~113]{Friedlander1998}.
 \end{proof}

\begin{lemma} \label{lem:basis}
For $n \in \mathbb N$ let $\phi_n = \phi(x/n)$.
Then
\begin{enumerate}[(i)]
\item $\phi_n(x)=1$, $x \in \pi n \mathcal  Q_0$, $\phi_n(x) = 0$,
$x \not\in 2\pi n \mathcal Q_0$;
\item $\{ u_{m,n} \}_{m \in \mathbb Z^d}$ form an orthonormal set in $L_2(\mathbb R^d)$ where
$$u_{m,n}(x)=\frac{1}{(2\pi n)^{d/2}}\phi_{n}(x)e^{i\langle x,m/n\rangle}, \qquad m\in\mathbb Z^d .$$
\end{enumerate}
\end{lemma}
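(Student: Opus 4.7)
The plan is to derive Lemma~\ref{lem:basis} as a straightforward rescaling of Lemma~\ref{lem:basis0} by the change of variables $y = x/n$. There is no substantive obstacle here; every argument from the previous lemma transfers verbatim after dilation, and the sole bookkeeping point is that the factor of $n^d$ introduced by the change of variables is exactly cancelled by the $(2\pi n)^{-d/2}$ normalising constants in $u_{m_1,n}$ and $u_{m_2,n}$.

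For part (i), since $\phi_n(x) = \phi(x/n)$, the equivalences $x \in \pi n \mathcal Q_0 \Leftrightarrow x/n \in \pi \mathcal Q_0$ and $x \not\in 2\pi n \mathcal Q_0 \Leftrightarrow x/n \not\in 2\pi \mathcal Q_0$, combined with Lemma~\ref{lem:basis0}(i), immediately give the stated values and support of $\phi_n$.

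For part (ii), I would compute the $L_2$ inner products directly. From the definition,
\begin{equation*}
\int_{\mathbb R^d} \overline{u_{m_1,n}(x)}\, u_{m_2,n}(x)\, dx = \frac{1}{(2\pi n)^d} \int_{\mathbb R^d} |\phi(x/n)|^2\, e^{i\langle x,\, (m_2 - m_1)/n\rangle}\, dx.
\end{equation*}
Setting $y = x/n$ (so $dx = n^d\, dy$ and $\langle x,(m_2-m_1)/n\rangle = \langle y, m_2-m_1\rangle$) yields
\begin{equation*}
\int_{\mathbb R^d} \overline{u_{m_1,n}(x)}\, u_{m_2,n}(x)\, dx = \frac{1}{(2\pi)^d} \int_{\mathbb R^d} |\phi(y)|^2\, e^{i\langle y,\, m_2-m_1\rangle}\, dy = \frac{\widehat{|\phi|^2}(m_1 - m_2)}{(2\pi)^d}.
\end{equation*}
The right-hand side equals $1$ when $m_1 = m_2$ and $0$ when $m_1 \neq m_2$, by the identities $\widehat{|\phi|^2}(0)=(2\pi)^d$ and $\widehat{|\phi|^2}(m)=0$ for $m \in \mathbb Z^d \setminus \{0\}$ already established in the proof of Lemma~\ref{lem:basis0}. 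This proves orthonormality of $\{u_{m,n}\}_{m \in \mathbb Z^d}$ in $L_2(\mathbb R^d)$.
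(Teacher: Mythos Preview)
Your proof is correct and follows essentially the same approach as the paper: both arguments reduce to the properties of $\phi$ established in Lemma~\ref{lem:basis0} via the dilation $y=x/n$. The only cosmetic difference is that the paper first records the scaling identity $\widehat{|\phi_n|^2}(\xi)=n^d\,\widehat{|\phi|^2}(n\xi)$ and then evaluates at $(m_1-m_2)/n$, whereas you perform the change of variables directly inside the inner-product integral; these are the same computation.
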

\begin{proof}
(i) Note
$$ \phi(x/n) :=\prod_{j=1}^d \sqrt{g(x_j/n)}, \qquad x=(x_1,\ldots,x_d) \in \mathbb R^d.$$  Then
$$
\widehat{|\phi_n|^2}(\xi) :=\prod_{j=1}^d n^d \hat g(n\xi_j)
= n^d \widehat{|\phi|^2}(n\xi)
$$
and
$$ \widehat{|\phi_n|^2}(0)=(2\pi n)^d,$$
$$ \widehat{|\phi_{n}|^2}(m/n)=0, \qquad m\in\mathbb Z^d\setminus\{0\}.$$

(ii) Since
$$
\int_{\mathbb R^d} \overline{u_{m_1,n}} u_{m_2,n}(x) dx
= \frac{\widehat{|\phi_{n}|^2}((m_1-m_2)/n)}{(2\pi n)^{d}} \qquad m_1,m_2 \in \mathbb Z^d
$$
the family $(u_{m,n})_{m\in\mathbb Z^d}$ is orthonormal in $L_2(\mathbb R^d)$.
 \end{proof}

For $n \in \mathbb N$, let $\mathcal H_n$ denote the Hilbert space generated by $( u_{m,n} )_{m \in \mathbb R^d}$.  Let $P_n$ be the projection such that $\mathcal H_n = P_n L_2(\mathbb R^d)$.  Clearly $L_2(\pi n \mathcal Q_0) \subset \mathcal H_n$.  
Let $V_n : \mathcal H_n \to \mathcal H_n$
be the positive compact operator defined by
$$
V_n u_{m,n} = (1+|m|^2)^{-d/2} u_{m,n} , \qquad m \in \mathbb Z^d .
$$
Then
$$
T_n := P_n T P_n : \mathcal H_n \to \mathcal H_n
$$
for any bounded operator $T$.
We now show that if $T$ is Laplacian modulated then $T_n$ is $V_n$-modulated.
Let $\sigma_l$ denote the bi-dilation isometry of $L_2(\mathbb R^d,\mathbb R^d)$
to itself:
$$
(\sigma_l p)(x,\xi) = p(lx,\xi/l) , \qquad l > 0 .
$$

\begin{lemma} \label{PartLmodVmod}
Let $T$ be Laplacian modulated, with symbol
$p_T \in S^\mmod$.
Then
$$
\sum_{|m| \geq n^{1/d}} \| T u_{m,l} \|_{L_2}^2 \lec[d] \| \sigma_l p_T \|_{\mmod}^2 \; n^{-1}  , \qquad n \geq 1 .
$$
\end{lemma}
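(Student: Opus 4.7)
The plan is to reduce to the case $l=1$ via a unitary rescaling, then control the tail sum by splitting each basis element $u_{m,1}$ into high- and low-frequency parts with respect to the Laplacian and summing over dyadic shells in $m$.

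\textbf{Rescaling.} Introduce the unitary dilation $(U_l f)(x):=l^{d/2}f(lx)$. A direct substitution gives $U_l^{-1}u_{m,1}=u_{m,l}$, and the change of variables $\eta=l\xi$ in the defining formula~\eqref{LM} shows that $\tilde T:=U_lTU_l^{-1}$ has symbol $\sigma_lp_T$. Unitarity of $U_l$ then gives $\|Tu_{m,l}\|_{L_2}=\|\tilde T u_{m,1}\|_{L_2}$, so after replacing $T$ by $\tilde T$ it is enough to prove
$$
\sum_{|m|\ge n^{1/d}}\|Tu_{m,1}\|_{L_2}^2 \lec[d] \|p_T\|_\mmod^2\,n^{-1}
$$
for an arbitrary Laplacian modulated $T$ with symbol $p_T$.

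\textbf{Dyadic estimate.} Partition $\{m\in\mathbb Z^d:|m|\ge n^{1/d}\}$ into dyadic shells $C_k:=\{m:2^{k-1}\le|m|<2^k\}$, $k\ge k_0$ with $2^{k_0d}\gec n$; note $|C_k|\lec[d] 2^{kd}$. Set the uniform cutoff $t_k:=2^{k-2}$ and split $u_m:=u_{m,1}=Q_{t_k}u_m+(1-Q_{t_k})u_m$, giving $\|Tu_m\|^2\le 2\|TQ_{t_k}u_m\|^2+2\|T(1-Q_{t_k})u_m\|^2$. For the high-frequency piece, the orthonormality of $\{u_m\}_{m\in\mathbb Z^d}$ from Lemma~\ref{lem:basis0}(ii) allows a Bessel-type bound applied to the single operator $TQ_{t_k}$:
$$
\sum_{m\in C_k}\|TQ_{t_k}u_m\|_{L_2}^2 \;\le\; \|TQ_{t_k}\|_{\mathcal L_2}^2 \;\lec[d]\; 2^{-kd}\|p_T\|_\mmod^2,
$$
the last inequality from Proposition~\ref{Laplacemodulated} and the definition of $\|\cdot\|_\mmod$. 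For the low-frequency piece, Lemma~\ref{lem:basis0}(iii) provides rapid decay $|\hat\phi(\eta)|\lec[N]\langle\eta\rangle^{-N}$; since $|\xi-m|\ge 2^{k-2}$ whenever $|\xi|<t_k$ and $m\in C_k$, a standard lattice-point count gives $\sum_{m\in C_k}|\hat\phi(\xi-m)|^2\lec[N,d] 2^{k(d-2N)}$. Integrating over $\{|\xi|<t_k\}$ (volume $\lec[d] 2^{kd}$) via Plancherel and combining with the crude bound $\|T\|_{\mathcal B}\le\|T\|_{\mathcal L_2}\lec[d]\|p_T\|_\mmod$ gives, for $N\ge 3d/2$, $\sum_{m\in C_k}\|T(1-Q_{t_k})u_m\|_{L_2}^2\lec[d] 2^{-kd}\|p_T\|_\mmod^2$. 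Summing both contributions geometrically over $k\ge k_0$ produces the desired $\lec[d]\|p_T\|_\mmod^2\,n^{-1}$.

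\textbf{Main obstacle.} The delicate point is the high-frequency estimate. A naive pointwise inequality $\|TQ_{t_k}u_m\|^2\le\|TQ_{t_k}\|_{\mathcal L_2}^2$ multiplied by the shell count $|C_k|\sim 2^{kd}$ would exactly cancel the Laplacian modulated decay $2^{-kd}$ and diverge upon summing in $k$. The resolution is to fix the Fourier cutoff $t_k$ uniformly across each dyadic shell, so that a single operator $TQ_{t_k}$ acts on the entire orthonormal subfamily $\{u_m\}_{m\in C_k}$; Bessel's inequality then absorbs the shell count $|C_k|$ for free and preserves the $2^{-kd}$ shell decay. Matching the geometry of the shells in $m$-space to the Laplacian modulated decay in $\xi$-space in this way is what generates the correct $n^{-1}$ rate.
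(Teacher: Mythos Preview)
Your proof is correct and takes a genuinely different route from the paper's.

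The paper never reduces to $l=1$; instead it computes $Tu_{m,l}$ directly from the symbol, obtains a pointwise bound of the form $\|Tu_{m,l}\|^2 \lec \int_{\mathbb R^d}\bigl(\int |p_l(x,\xi)|\langle\xi-m\rangle^{-N}d\xi\bigr)^2 dx$ with $p_l=\sigma_l p_T$, and then uses Peetre's inequality to replace the lattice point $m$ by a continuous variable $s\in\mathcal Q_m$. The sum over $m$ becomes an integral over $\{|s|\gec n^{1/d}\}$, and the $\xi$-integral is split according to whether $|\xi-s|\ge|s|/2$ or $|\xi-s|<|s|/2$, each piece being controlled by Cauchy--Schwarz and the two terms in $\|p_l\|_\mmod$.

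Your approach is more operator-theoretic: the unitary rescaling disposes of $l$ cleanly, and the dyadic-in-$m$ decomposition together with the Fourier cutoff $Q_{t_k}$ lets you invoke Bessel's inequality on the orthonormal family $\{u_m\}$. This is the key gain: the high-frequency piece $\sum_{m\in C_k}\|TQ_{t_k}u_m\|^2\le\|TQ_{t_k}\|_{\mathcal L_2}^2$ uses the Laplacian-modulated bound \emph{once} per shell rather than once per lattice point, absorbing the shell count $|C_k|\sim 2^{kd}$ for free. The paper achieves the analogous saving through the continuous-variable trick and Fubini, which is less transparent. Your low-frequency piece is essentially the same mechanism as the paper's (rapid decay of $\hat\phi$), just organized dyadically rather than via the $|\xi-s|\lessgtr|s|/2$ split. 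Overall your argument is shorter and makes the role of orthonormality of $\{u_m\}$ explicit, while the paper's argument stays closer to the symbol and so carries the $\sigma_l$-dependence through without the rescaling step.
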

\begin{proof}
Fix $l \in \mathbb N$.  Denote $p_T$ just by $p$ and $u_{m,l}$ by $u_m$.  Note that
$$
\widehat{u_{m}}(\xi) = (2\pi l)^{-d/2} \hat{\phi_l}(\xi - m/l)
= (2\pi l)^{-d/2} l^d \hat{\phi}(l\xi - m) .
$$
Then
\begin{align*}
T u_{m}(x) &= 
(2\pi l)^{-d/2} l^d \int_{\mathbb R^d} e^{i\langle x, \xi \rangle}
p(x,\xi) \hat{\phi}(l\xi - m) d\xi \\
&= (2\pi l)^{-d/2} \int_{\mathbb R^d} e^{i\langle x, \xi/l \rangle}
p(x,\xi/l) \hat{\phi}(\xi - m) d\xi \\
&= (2\pi l)^{-d/2} \int_{\mathbb R^d} e^{i\langle x/l, \xi \rangle}
p(x,\xi/l) \hat{\phi}(\xi - m) d\xi
\end{align*}
and
\begin{align*}
|T u_{m}(x)|^2 & \leq (2\pi)^{-d} l^{-d} \left( \int_{\mathbb R^{d}} |p(x,\xi/l)| | \hat \phi(\xi - m) | d\xi \right)^2 .
\end{align*}
By Lemma~\ref{lem:basis}
$|\hat \phi (\xi)| \lec_{N} \langle \xi \rangle^{-N}$ for any integer $N$.
Hence
\begin{align} 
\| T u_m \|_{L_2}^2 &\lec_{N} (2\pi)^{-d} l^{-d}  \int_{\mathbb R^d} \left( \int_{\mathbb R^{d}} |p(x,\xi/l)| \langle \xi-m \rangle^{-N} d\xi \right)^2 dx \nonumber \\
&\lec_N (2\pi)^{-d} \int_{\mathbb R^d} \left( \int_{\mathbb R^{d}} |p(lx,\xi/l)| \langle \xi-m \rangle^{-N} d\xi \right)^2 dx \label{eq:Vmodproof1}
\end{align}
Temporarily denote $p(lx,\xi/l)$ by $p_l(x,\xi)$.

Now let $s \in \mathcal Q_m$.
Then
$$
|m-s| \leq \sqrt{ \sum_{i=1}^d (1/2)^2 } = d^{1/2}/2 .
$$
Hence
$$
\langle m-s \rangle := 1 + |m-s| \leq 1 + d^{1/2}/2 \leq 2d^{1/2} .
$$
Using Peetre's inequality,
$$
\langle x \rangle^{N} \langle y \rangle^{-N} \lec[N] \langle x - y \rangle^{N} 
, \qquad x,y \in \mathbb R^d ,
$$
we have
$$
\langle \xi - s \rangle^{N} \langle \xi -m \rangle^{-N} \lec[N] \langle \xi - s - (\xi -m) \rangle^{N} 
= \langle m - s \rangle^{N} \lec[N,d] 1
, \ \xi \in \mathbb R^d , s \in \mathcal Q_m , m \in \mathbb Z^d .
$$
Multiplying through by $\langle \xi - s \rangle^{-N}$ we obtain
\begin{equation} \label{eq:Vmodproof2}
\langle \xi -m \rangle^{-N} \lec[d,N] \langle \xi - s \rangle^{-N}
, \qquad s \in \mathcal Q_m .
\end{equation}
Substituting~\eqref{eq:Vmodproof2} into~\eqref{eq:Vmodproof1} provides
\begin{equation*}
\| T u_m \|_{L_2}^2 \lec[d,N] \int_{\mathbb R^d} \left( \int_{\mathbb R^{d}} |p_l(x,\xi)| \langle \xi-s \rangle^{-N} d\xi \right)^2 dx , \qquad s \in \mathcal Q_m ,
\end{equation*}
and hence
\begin{equation} \label{eq:Vmodproof3}
\| T u_m \|_{L_2}^2 = \int_{\mathcal Q_m} \| Tu_m \|_{L_2}^2 ds
\lec[d,N] \int_{\mathcal Q_m} \int_{\mathbb R^d} \left( \int_{\mathbb R^{d}} |p_l(x,\xi)| \langle \xi-s \rangle^{-N} d\xi \right)^2 dx ds .
\end{equation}
Now let $n > d^{d/2}$.  Then $n^{1/d}/2 > d^{1/2}/2$ and $n^{1/d} - d^{1/2}/2 < n^{1/d}/2$.  Hence
$$
|m| \geq n^{1/d} \implies |s| \geq n^{1/d}/2 , \qquad s \in \mathcal Q_m .
$$
Using~\eqref{eq:Vmodproof3} we have
\begin{align}
\sum_{|m| \geq n^{1/d}} \| T u_m \|_{L_2}^2 &
\lec[d,N] \sum_{|m| \geq n^{1/d}} \int_{\mathcal Q_m} \int_{\mathbb R^d} \left( \int_{\mathbb R^{d}} |p_l(x,\xi)| \langle \xi-s \rangle^{-N} d\xi \right)^2 dx ds  \nonumber \\
&\lec[d,N] \int_{|s| \geq n^{1/d}/2} \int_{\mathbb R^d} \left( \int_{\mathbb R^{d}} |p_l(x,\xi)| \langle \xi-s \rangle^{-N} d\xi \right)^2  dx ds . 
\label{eq:Vmodproof4}
\end{align}
We split the integrand in~\eqref{eq:Vmodproof4} into two parts according to the condition $| \xi - s | \geq |s|/2$,
\begin{align}
& \left( \int_{\mathbb R^{d}}
|p_l(x,\xi)| \langle \xi - s \rangle^{-N} d\xi \right)^2 \nonumber \\
&\lec \left( \int_{| \xi - s | \geq |s|/2}
|p_l(x,\xi)| \langle \xi - s \rangle^{-N} d\xi \right)^2
+  \left( \int_{| \xi - s | < |s|/2}
|p_l(x,\xi)| \langle \xi - s \rangle^{-N} d\xi \right)^2 \label{eq:Vmodproof5}
\end{align}
where the inequality is from $(a+b)^2 \leq 2(a^2 + b^2)$, $a,b > 0$.

We consider the first term from~\eqref{eq:Vmodproof5}.
As $|\xi-s|\geq |s|/2$, then $\langle \xi - s \rangle^{-N} \leq 2^{N} \langle s \rangle^{-N}$.
Then
\begin{align*}
& \left( \int_{| \xi - s | \geq |s|/2}
|p_l(x,\xi)| \langle \xi - s \rangle^{-N} d\xi \right)^2 \\
&\leq \int_{| \xi - s | \geq |s|/2}
|p_l(x,\xi)|^2 d\xi \int_{| \xi - s | \geq |s|/2} \langle \xi - s \rangle^{-2N} d\xi \\
&\lec \langle s \rangle^{-N} \int_{\mathbb R^d}
|p_l(x,\xi)|^2 d\xi \int_{| \xi - s | \geq |s|/2} \langle \xi - s \rangle^{-N} d\xi \\
&\leq \langle s \rangle^{-N} \int_{\mathbb R^d}
|p_l(x,\xi)|^2 d\xi \int_{\mathbb R^d} \langle \xi \rangle^{-N} d\xi .
\end{align*}
where we used the Holder inequality (assuming $N > d$).
We now set $N=2d$ and then
\begin{align*}
&\int_{|s| \geq n^{1/d}/2} \int_{\mathbb R^d} \left( \int_{| \xi - s | \geq |s|/2}
|p_l(x,\xi)| \langle \xi - s \rangle^{-N} d\xi \right)^2 dx ds \\
&\lec \| p_l \|_{L_2}^2 \int_{|s| \geq n^{1/d}/2} \langle s \rangle^{-2d} ds \\
&\lec \| p \|_{L_2}^2  n^{-1} .
\end{align*}

Now we consider the second term from~\ref{eq:Vmodproof5}.  We have
\begin{align*}
& \left( \int_{| \xi - s | < |s|/2}
|p_l(x,\xi)| \langle \xi - s \rangle^{-N} d\xi \right)^2 \\
&\leq \left( \int_{| \xi | < |s|/2}
|p_l(x,\xi+s)| \langle \xi \rangle^{-N} d\xi \right)^2  \\
&\lec \int_{| \xi | < |s|/2}
|p_l(x,\xi+s)|^2 \langle \xi \rangle^{-N} d\xi  \int_{\mathbb R^d} \langle \xi \rangle^{-N} d\xi 
&\lec \int_{| \xi | < |s|/2}
|p_l(x,\xi+s)|^2 \langle \xi \rangle^{-N} d\xi
\end{align*}
where we used the Holder inequality and assumed $N > d$.
Note that
\begin{align*}
& \int_{\mathbb R^d} \int_{\mathbb R^d} \int_{\mathbb R^d}
|p_l(x,\xi+s)|^2 \langle \xi \rangle^{-N} d\xi dx ds \\
&= \int_{\mathbb R^d} \int_{\mathbb R^d} \int_{\mathbb R^d}
|p_l(x,s)|^2 \langle \xi \rangle^{-N} d\xi dx ds \\
&\lec \| p_l \|_{L_2}^2 .
\end{align*}
Hence we can interchange the order of integration.
As $|s|/2 \geq n^{1/d}/4$ and $|\xi| < |s|/2$, then $| \xi + s | > |s|/2 \geq n^{1/d}/4$.
By Fubini's Theorem
\begin{align*}
& \int_{|s| \geq n^{1/d}/2} \int_{|\xi | < |s|/2}
|p_l(x,\xi+s)|^2 \langle \xi \rangle^{-N} d\xi  ds \\
&\leq \int_{\mathbb R^{d}} \left( \int_{|\xi+s| \geq n^{1/d}/4} |p_l(x,\xi+s)|^2 ds \right) \langle \xi \rangle^{-N} d\xi \\
&= \int_{\mathbb R^{d}} \left( \int_{|s| \geq n^{1/d}/4} |p_l(x,s)|^2 ds \right) \langle \xi \rangle^{-N} d\xi \\
&= \int_{|s| \geq n^{1/d}/4} |p_l(x,s)|^2 ds \int_{\mathbb R^{d}} \langle \xi \rangle^{-N} d\xi .
\end{align*}
By choosing $N > d$,
\begin{align*}
& \int_{|s| \geq n^{1/d}/2} \int_{\mathbb R^d} \left( \int_{| \xi - s | < |s|/2}
|p_l(x,\xi)| \langle \xi - s \rangle^{-N} d\xi \right)^2 dx ds \\
&\lec \int_{\mathbb R^d} \int_{|s| \geq n^{1/d} /4 }
|p_l(x,s)|^2 ds dx \\
&\lec \left(\sup_{n \geq \min\{4,d^{d/2}\}} (n^{1/d} /4)^{-d}  \int_{\mathbb R^d} \int_{|s| \geq n^{1/d} /4 }
|p_l(x,s)|^2 ds dx \right) n^{-1} \\
&\lec \left(\sup_{t \geq 1} t^d \int_{\mathbb R^d} \int_{|s| \geq t }
|p_l(x,s)|^2 ds dx \right) n^{-1} .
\end{align*}
The inequality of the Proposition is shown for $n \geq \min\{4,d^{d/2}\}$.
It is trivial to adjust the statement by a constant so that it holds
also for $1 \leq n < \min\{4,d^{d/2}\}$.
 \end{proof}

Fix $l \in \mathbb N$.  Let $\{ u_{m} \}_{m \in \mathbb Z^d}$ be the basis of $\mathcal H_l$.
Let $m_n$, $n \in \mathbb N$, be the Cantor enumeration of $\mathbb Z^d$.
Then $V_l u_{m_n} = (1 + |m_n|^2)^{-d/2} u_{m_n}$ is ordered so that
$(1 + |m_n|^2)^{-d/2}$ are the singular values of $V_l$.

\begin{lemma} \label{compact set modulated}
Let $T$ be Laplacian modulated with symbol $p_T$
and $l \in \mathbb N$.
Then $T_l: \mathcal H_l \to \mathcal H_l$ is $V_l$-modulated.
\end{lemma}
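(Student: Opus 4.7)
The plan is to verify the $V_l$-modulated condition by checking the equivalent condition in Lemma~\ref{equiv}, namely $\|T_l\chi_{[0,2^{-n}]}(V_l)\|_{\mathcal L_2} = O(2^{-n/2})$, and to feed this into the estimate already provided by Lemma~\ref{PartLmodVmod}.

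First, I would observe that $\|V_l\| = 1$ (attained at $m=0$), so Lemma~\ref{equiv} is directly applicable. Since $V_l$ is diagonalised by the orthonormal basis $\{u_{m,l}\}_{m\in\mathbb Z^d}$ of $\mathcal H_l$, the spectral projection $\chi_{[0,2^{-n}]}(V_l)$ is the orthogonal projection onto the span of those $u_{m,l}$ with $(1+|m|^2)^{-d/2}\le 2^{-n}$, equivalently $|m|^2\ge 2^{2n/d}-1$. For $n$ sufficiently large there is a constant $c>0$ (depending only on $d$) such that this condition implies $|m|\ge c\,2^{n/d}$; the finitely many small values of $n$ can be absorbed into the implicit constant.

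Next, since $T_l = P_lTP_l$ and $u_{m,l}\in\mathcal H_l = P_lL_2(\mathbb R^d)$, I have $T_lu_{m,l}=P_lTu_{m,l}$, so $\|T_lu_{m,l}\|_{L_2}\le\|Tu_{m,l}\|_{L_2}$. Therefore
\begin{equation*}
\|T_l\chi_{[0,2^{-n}]}(V_l)\|_{\mathcal L_2}^2 = \sum_{|m|^2\ge 2^{2n/d}-1}\|T_lu_{m,l}\|_{L_2}^2 \;\dot{\le}\; \sum_{|m|\ge c\,2^{n/d}}\|Tu_{m,l}\|_{L_2}^2.
\end{equation*}
Now I apply Lemma~\ref{PartLmodVmod} with the parameter chosen so that its cut-off $k^{1/d}$ matches $c\,2^{n/d}$, i.e.\ $k \doteq 2^n$. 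This yields $\sum_{|m|\ge c\,2^{n/d}}\|Tu_{m,l}\|_{L_2}^2 \dot{\le} 2^{-n}$, where the constant depends on $d$ and on $\|\sigma_lp_T\|_{\mmod}$ (which is fixed once $l$ is fixed). Consequently $\|T_l\chi_{[0,2^{-n}]}(V_l)\|_{\mathcal L_2} = O(2^{-n/2})$, and Lemma~\ref{equiv} concludes that $T_l$ is $V_l$-modulated.

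The only non-routine step is matching the spectral cut-off of $V_l$ to the ball cut-off $|m|\ge k^{1/d}$ that appears in Lemma~\ref{PartLmodVmod}; but this is a direct calculation from the eigenvalue formula for $V_l$, and all the genuine analytic work has already been carried out in Lemma~\ref{PartLmodVmod}. No further delicate estimate is needed.
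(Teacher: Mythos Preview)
Your proof is correct and follows essentially the same strategy as the paper: both arguments feed the estimate of Lemma~\ref{PartLmodVmod} into a criterion equivalent to being $V_l$-modulated. The only cosmetic difference is that the paper first orders the eigenvectors via a Cantor enumeration and then invokes Proposition~\ref{connectioncor2} (which itself rests on Lemma~\ref{equiv} together with the fact that $s_n(V_l)=\Theta(n^{-1})$), whereas you bypass the enumeration and apply Lemma~\ref{equiv} directly to the spectral projection $\chi_{[0,2^{-n}]}(V_l)$; your route is slightly more direct but the substance is identical.
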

\begin{proof}
Let $\Lambda_k$ be the hypercube centered on $0$ of dimensions
$k^d$.  Then there exists an integer $k$ such that
$m_n \not\in \Lambda_{k}$ but $m_n \in \Lambda_{k+1}$.
By the Cantor enumeration $(k-1)^d < n \leq k^d$.  So $n^{1/d} \leq k$.
Hence, when $j > n$, $m_j$ is not in the ball of radius $(k-1)/2$ centered on $0$,
which is smaller than than the ball $n^{1/d}/4$ (ruling out the trivial case $n=1$). In summary, when $j > n$, then
$$m_j \notin \{ m \in \mathbb Z^d , |m| \leq n^{1/d}/4 \}.$$
Hence
$$
\sum_{j > n} \| T_l u_{m_j} \|_{L_2}^2 \lec
\sum_{|m| > n^{1/d}/4} \| T u_{m} \|_{L_2}^2 \lec n^{-1}
$$
by Proposition~\ref{PartLmodVmod} and $T_l$ is $V_l$-modulated by Proposition~\ref{connectioncor2}
since $V_l \in \mathcal L_{1,\infty}$ and $s_n(V_l) = (1 + |m_n|^2)^{-d/2} = \Theta(n^{-1})$.
 \end{proof}

Set $v_n = u_{m_n}$, $n \in \mathbb N$, where $m_n$ is the Cantor enumeration.

\begin{remark} \label{rem:stage1}
As $T_l$ is $V_l$-modulated where $0<V_l \in \mathcal L_{1,\infty}$
we can use Theorem~\ref{connection} to conclude that, for fixed $l \in \mathbb N$:
\begin{enumerate}[(i)]
\item $T_l \in \mathcal L_{1,\infty}$; and
\item $$
\sum_{j=1}^n \lambda_j(T_l) -  \sum_{j=1}^n (T_l v_j, v_j ) = O(1)
$$
for any eigenvalue sequence $\{ \lambda_j(T_l) \}_{j=1}^\infty$ of $T_l$.
\end{enumerate}
\end{remark}

%

We need a final lemma.

\begin{lemma} \label{eigen second part}
If $T$ is compactly based in $l \pi \mathcal Q_0$ with symbol $p_T$, then
$$
\sum_{j=1}^n ( T v_j, v_j ) - \frac{1}{(2\pi)^d} \int_{|\xi| \leq n^{1/d}} \int_{\mathbb R^d} p_T(x,\xi) d\xi \, dx = O(1) .
$$
\end{lemma}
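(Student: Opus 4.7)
My plan is to express both sides as integrals of $p_T$ against suitable kernels and compare them via Poisson summation. First, using the symbol representation of $T$, the explicit form $v_j(x) = (2\pi l)^{-d/2}\phi_l(x)e^{i\langle x,m_j/l\rangle}$, and the fact that $\phi_l p_T = p_T$ (since $T$ is based in $l\pi\mathcal Q_0$ where $\phi_l \equiv 1$), a direct computation with $\widehat{v_j}(\xi) = (2\pi)^{-d/2}l^{d/2}\hat\phi(l\xi - m_j)$ gives
$$
(Tv_j,v_j) = \frac{1}{(2\pi)^{2d}}\int\int e^{i\langle x,\xi - m_j/l\rangle}p_T(x,\xi)\hat\phi(l\xi - m_j)\,d\xi\,dx.
$$
Summing over $j \leq n$ and interchanging (justified by the Schwartz decay of $\hat\phi$ and $p_T \in L_2$) yields
$$
\sum_{j=1}^n (Tv_j,v_j) = \frac{1}{(2\pi)^{2d}}\int\int p_T(x,\xi)K_n(x,\xi)\,d\xi\,dx,
$$
with $K_n(x,\xi) := \sum_{j=1}^n e^{i\langle x,\xi - m_j/l\rangle}\hat\phi(l\xi - m_j)$.

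The key analytic step is to apply the Poisson summation formula to the full lattice sum $K_\infty := \sum_{m\in\mathbb Z^d}$. Computing the Fourier transform of the summand $m\mapsto e^{-i\langle x, m/l\rangle}\hat\phi(l\xi-m)$ and applying Poisson, I would obtain
$$
K_\infty(x,\xi) = (2\pi)^d\sum_{k\in\mathbb Z^d}\phi(x/l + 2\pi k)e^{-2\pi i\langle l\xi,k\rangle}.
$$
For $x \in l\pi\mathcal Q_0$ (i.e.\ $x/l \in \pi\mathcal Q_0 \subset \supp(\phi)$), only the $k=0$ term is nonzero, since $\phi(x/l + 2\pi k)$ vanishes for $k\neq 0$. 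Hence $K_\infty(x,\xi) \equiv (2\pi)^d\phi_l(x)$ on the support of $p_T$. I would then write $K_n = (2\pi)^d\phi_l(x)\chi_{B_n}(\xi) + E_n(x,\xi)$, where $B_n := \{\xi : |\xi|_\infty \leq n^{1/d}/(2l)\}$ is the cube naturally associated with the first $n$ Cantor-enumerated lattice points. Applying the Schwartz decay $|\hat\phi(\eta)| \lec[N] \langle\eta\rangle^{-N}$ from Lemma~\ref{lem:basis0}(iii) together with Cauchy--Schwarz and the modulated estimate~\eqref{LM1}, one controls $\int\int p_T E_n\,d\xi\,dx = O(1)$, yielding
$$
\sum_{j=1}^n (Tv_j,v_j) = \frac{1}{(2\pi)^d}\int\int_{B_n} p_T(x,\xi)\,dx\,d\xi + O(1).
$$

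The last step converts the cube $B_n$ to the ball $\{|\xi|\leq n^{1/d}\}$: both regions are sandwiched between balls of radii that differ by a fixed constant factor depending only on $l$ and $d$, so by Lemma~\ref{compactbase}~\eqref{compactbase5} (applied with $A = 2lI$) combined with~\eqref{compactbase1} applied to the annular "corners" between the cube and its inscribed/circumscribed balls, the two integrals differ by $O(1)$.

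The main obstacle I expect is the boundary error analysis of $K_n$: the Cantor cube $\Lambda_{n^{1/d}} \subset \mathbb Z^d$ has $O(n^{(d-1)/d})$ boundary lattice points, and the Fourier-space bumps $\hat\phi(l\xi - m_j)$ centered at boundary $m_j$'s spill across the boundary of $B_n$, both contributing to $E_n$ for $\xi \in B_n$ from terms $j > n$ and losing mass outside $B_n$ for $j \leq n$. Showing these spillovers integrated against $p_T$ contribute only $O(1)$ requires delicately combining the arbitrary polynomial decay rate of $\hat\phi$ (Lemma~\ref{lem:basis0}(iii)) with the weak $L_2$-decay~\eqref{LM1} of $p_T$, since $p_T$ is only assumed to be in $S^{\mmod}$ and possesses no pointwise regularity.
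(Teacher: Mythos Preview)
Your proposal is correct and follows essentially the same approach as the paper: both express $\sum_j(Tv_j,v_j)$ as an integral of $p_T$ against a truncated lattice kernel of $\hat\phi$-bumps, invoke Poisson summation to identify the full-lattice kernel with the constant $(2\pi)^d$ on the support of $p_T$, and then control the truncation error via the rapid decay of $\hat\phi$ combined with the estimates of Lemma~\ref{compactbase}. The only organizational differences are that the paper rescales to $p_l(x,\xi)=p_T(lx,\xi/l)$ and compares the kernel directly to the \emph{ball} indicator $\chi_{\{|\xi|\le n^{1/d}\}}$ through a three-region analysis ($|\xi|<an^{1/d}$, $an^{1/d}\le|\xi|\le bn^{1/d}$, $|\xi|>bn^{1/d}$), whereas you keep original variables and pass through a cube indicator first before converting cube to ball via~\eqref{compactbase5} and~\eqref{compactbase1}; these are equivalent routings of the same argument, and the three-region decomposition is precisely what makes your anticipated ``boundary spillover'' estimate go through.
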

\begin{proof}
It is clear there are constants $0<2a<1<b/2<\infty$ depending on $d$ so that $(v_j)_{j=1}^n= (u_m)_{m\in \mathbb A_n}$ where $\{|m|\le 2an^{1/d}\}\subset \mathbb A_n\subset \{|m|\le bn^{1/d}/2\}.$ Let
$$G_n(x,\xi) :=\frac{1}{(2\pi)^d}\sum_{m\in\mathbb A_n}e^{i\langle x,\xi-m\rangle}\hat\phi(\xi-m).$$
Now
$$ ( T u_m, u_m ) =\frac{1}{(2\pi)^{2d}}\int_{\mathbb R^d}\phi(x)e^{-i\langle x,m\rangle}\left(\int_{\mathbb R^d}e^{i \langle x , \xi \rangle} p_T(lx,\xi/l)\hat\phi(\xi-m)d\xi\right)dx.$$
We use the notation $p_l(x,\xi)=p_T(lx,\xi/l)$ again and note
that $p_l$ has support in a compact set $K$ within $\pi \mathcal Q_0$.
Here the double integral converges absolutely and we can apply Fubini's theorem to obtain that, since $\phi(x)=1$ for $x\in K$,
\begin{equation} \label{purposeG}
\sum_{j=1}^n ( T v_j,v_j) = \sum_{m\in \mathbb A_n} (Tu_m,u_m) = \frac{1}{(2\pi)^d}\int_{\mathbb R^d}\int_{K}G_n(x,\xi)p_l(x,\xi)dx\,d\xi.
\end{equation}
To obtain a bound
$$
 \int_{\mathbb R^d}\int_{K}G_n(x,\xi)p_l(x,\xi)dx\,d\xi
- \int_{K} \int_{|\xi| \le n^{1/d}} p_l(x,\xi) d\xi \, dx =O(1)
$$
we will compare $G_n(x,\xi)$ with the function
$H_n(\xi):=\chi_{\{|\xi|\le n^{1/d}\}}.$
For fixed $\xi$, consider the smooth periodic function
\begin{equation} \label{psi1}
\psi_\xi(x)=\sum_{m\in\mathbb Z^d}e^{-i\langle x,\xi\rangle}\phi(x+2\pi m).
\end{equation} 
For every $x$, the Fourier series,
$$\psi_\xi(x)= \sum_{m\in\mathbb Z^d} e^{i\langle x,m\rangle} \hat \psi_\xi(m) $$ 
converges, where
\begin{align*}
\hat \psi_\xi(m) &= \frac{1}{(2\pi)^d} \int_{2\pi \mathcal Q} e^{-i\langle x,m\rangle} \psi_\xi(x) dx \\
&= \frac{1}{(2\pi)^d} \int_{\mathbb R^d} e^{-i\langle x,m+\xi\rangle} \phi(x) dx
= \frac{1}{(2\pi)^d} \hat \phi(m+\xi). 
\end{align*}
Hence
\begin{equation} \label{psi2}
\psi_\xi(x)= \frac{1}{(2\pi)^d} \sum_{m\in\mathbb Z^d}e^{-i\langle x,m\rangle}\hat\phi(\xi-m).
\end{equation}
Note from~\eqref{psi1} that $e^{i\langle x , \xi \rangle } \psi_\xi(x) = 1$
for all $x \in \pi \mathcal Q_0$.  Using~\eqref{psi2} gives the formula
\begin{equation} \label{1onpiQ}
 1=\frac{1}{(2\pi)^d} \sum_{m\in\mathbb Z^d}e^{i\langle x,\xi-m\rangle} \hat\phi(\xi-m)
\end{equation}
for $x\in\pi\mathcal Q_0$.
Now suppose $|\xi|<an^{1/d}.$ Then, using~\eqref{1onpiQ},
$$
|H_n(\xi)-G_n(x,\xi)| = | 1 - G_n(x,\xi)|
\leq \frac{1}{(2\pi)^d} \sum_{m \notin A_n} |\hat \phi(\xi - m)| .
$$
If $m \notin \mathbb A_n$ then $|m| > 2a n^{1/d}$.  Hence $|\xi - m| > |\xi|$
and $|\xi - m| \geq |m|/2$.
This implies
$$
|\hat \phi(\xi - m)| \lec[N] \langle \xi-m \rangle^{-N} \lec[N] \langle m \rangle^{-N} .
$$
We choose $N > d$.
We obtain, when $|\xi|< an^{1/d}$,
\begin{equation} \label{Hpart1}
|H_n(\xi)-G_n(x,\xi)|
\lec[N] \sum_{|m| \geq 2an^{1/d}} \langle m \rangle^{-N} \lec[N] n^{1-N/d} .
\end{equation}
We also have an estimate $|G_n(x,\xi)| \lec 1$ when $x \in K$.
Hence, for $an^{1/d} \leq |\xi| \leq bn^{1/d}$,
\begin{equation} \label{Hpart2}
|H_n(\xi)-G_n(x,\xi)| \leq 1 .
\end{equation}
If $|\xi|>bn^{1/d}$ then $|m-\xi| \geq |m|$ when $m \in \mathbb A_n$.
Hence $|\xi-m| \geq |\xi|/2$  and
$$
|\hat\phi(\xi-m)|\lec[N]  \langle \xi-m \rangle^{1-d-N/d} \lec[N] \langle \xi \rangle^{1-d-N/d} , \qquad m\in\mathbb A_n
$$
Hence, for $|\xi|>bn^{1/d}$,
$$
|G_n(x,\xi)|\lec[N]  \langle \xi \rangle^{1-d-N/d} \sum_{|m| \leq bn^{1/d}} 1
\lec[N] \langle \xi \rangle^{1-d-N/d} n \lec[N] \langle \xi \rangle^{1-N/d} .
$$
Then, for $|\xi|>bn^{1/d}$,
\begin{equation} \label{Hpart3}
 |H_n(\xi) - G_n(x,\xi)| \lec[N] |\xi|^{1-N/d} .
\end{equation}
Combining~\eqref{Hpart1}-\eqref{Hpart3} we have, when $x \in K$, that
$$
|H_n(\xi) - G_n(x,\xi)| \lec[N] \left\{
\begin{array}{ll}
n^{1-N/d}  & |\xi|< an^{1/d} \\
1 & an^{1/d} \leq |\xi| \leq bn^{1/d} \\
|\xi|^{1-N/d} & |\xi| > bn^{1/d} .
\end{array}
\right.
$$
With this result we can show
\begin{equation} \label{Gpurpose2}
\int_{K}\int_{\mathbb R^d}|G_n(x,\xi)-H_n(\xi)||p_l(x,\xi)|d\xi\,dx = O(1)
\end{equation}
by considering the regions $|\xi|< an^{1/d}$,
$an^{1/d} \leq |\xi| \leq bn^{1/d}$, and $|\xi| > bn^{1/d}$.
and a choice of $N \geq (1+\epsilon)d$, $\epsilon > 0$.
Consider
\begin{align*}
&\int_{K}\int_{|\xi|<an^{1/d}}|G_n(x,\xi)-H_n(\xi)||p_l(x,\xi)|d\xi\,dx\\
&\lec[N] n^{1-N/d}\int_{K}\int_{|\xi|<an^{1/d}}|p_l(x,\xi)|d\xi\,dx\\
&\lec[N] n^{1-N/d} \log(n)\\
&\lec[N] n^{1+\epsilon-N/d}\lec 1
\end{align*}
by using~\eqref{compactbase2}.
Similarly
\begin{align*} &\int_{K}\int_{|\xi|>bn^{1/d}}|G_n(x,\xi)-H_n(\xi)||p_l(x,\xi)|d\xi\,dx\\
&\lec \int_{K}\int_{|\xi|>bn^{1/d}}|\xi|^{1-N/d}|p_l(x,\xi)|d\xi\,dx \lec 1
\end{align*}
by applying~\eqref{compactbase4} since $1-N/d < 0$. Finally,
\begin{align*} &\int_{K}\int_{an^{1/d}\le |\xi|\le bn^{1/d}}|G_n(x,\xi)-H_n(\xi)||p_l(x,\xi)|d\xi\,dx\\
&\lec \int_{K}\int_{an^{1/d}\le |\xi|\le bn^{1/d}}|p_l(x,\xi)|d\xi\,dx \lec 1
\end{align*}
by applying~\eqref{compactbase1}.  Hence~\eqref{Gpurpose2} is shown.
Combining~\eqref{purposeG} and~\eqref{Gpurpose2} shows that
$$\sum_{j=1}^n (Tv_j,v_j)-\frac{1}{(2\pi)^d}\int_{\mathbb R^d}\int_{|\xi|\le n^{1/d}}p_l(x,\xi)d\xi\,dx = O(1).$$
Applying~\eqref{compactbase5} shows that
\begin{equation*}
  \int_{\mathbb R^d}\int_{|\xi|\le n^{1/d}}p_l(x,\xi)d\xi\,dx - \int_{\mathbb R^d}\int_{|\xi|\le n^{1/d}}p(x,\xi)d\xi\,dx = O(1)
\end{equation*}
since
$$
\int_{\mathbb R^d}\int_{|\xi|\le n^{1/d}}p(lx,\xi/l)d\xi\,dx = \int_{\mathbb R^d}\int_{|l\xi|\le n^{1/d}}p(x,\xi)d\xi\,dx .
$$
The result is shown.
 \end{proof}

Taking into account Remark~\ref{rem:stage1} and Lemma~\ref{eigen second part}, we are now in a position to prove Theorem~\ref{eigen}.

\begin{proof}[Proof of Theorem~\ref{eigen}]

Suppose $T$ is compactly based and Laplacian modulated.
Choose $l \in \mathbb N$ sufficiently large so that
$0<\phi \in C_c^\infty(\mathbb R^d)$, $\phi(x) =1$
for $x \in 2 \pi l \mathcal Q_0$, and $M_\phi T = T$.
Since $M_\phi T = T$ then $S = TM_\phi = M_\phi T M_\phi$ is compactly supported and Laplacian modulated (see Remark~\ref{rem:bimodule}). 

From Lemma~\ref{compact set modulated} the operator $S=S_l$ is $V_l$-modulated
(if $T$ is compactly supported then, without loss, $T=S$ and $T \in \mathcal L_{1,\infty}$, which is the first part of the theorem).  By Remark~\ref{rem:stage1}
$$
\sum_{j=1}^n \lambda_j(S) - \sum_{j=1}^n ( S v_j, v_j ) = O(1) .
$$
However, up to the irrelevant multiplicity of zero
as an eigenvalue,
$$
\lambda_j(S) = \lambda_j(TM_\phi) = \lambda_j(M_\phi T) = \lambda_j(T) .
$$
Also, since $\phi v_j = v_j$, then
$$
(S v_j, v_j) =  (Tv_j, v_j ) .
$$
So
$$
\sum_{j=1}^n \lambda_j(T) - \sum_{j=1}^n ( T v_j, v_j) = O(1) . 
$$
The result of the Theorem now follows from Lemma~\ref{eigen second part}.
 \end{proof}

%

\subsection{Traces of Laplacian modulated operators} \label{sec:modtraces}

In this section we prove several versions of Connes' trace theorem.

\medskip Let $\mathcal H$ be a separable Hilbert space.
In \cite{Dix}, J.~Dixmier constructed a trace on the Banach ideal of compact operators
$$
\mathcal{M}_{1,\infty} := \left\{ T \in \mathcal K ( \mathcal H) \, \Big| \, \sup_{n \in \mathbb N} \frac1{\log(1+n)} \sum_{j=1}^n s_j(T) < \infty \right\}
$$
by linear extension of the weight
$$
\Tr_\omega(T) := \omega \left( \left\{ \frac{1}{\log(1+n)} \sum_{j=1}^n s_j(T)\right\}_{n=1}^\infty  \right) , \qquad T>0 .
$$
Here $\omega$ is a dilation invariant state on $\ell_\infty$. 
By the inclusion $\mathcal L_{1,\infty} \subset \mathcal M_{1,\infty}$
a Dixmier trace $\mathrm{Tr}_\omega$ restricts to a trace on
$\mathcal L_{1,\infty}$.  All Dixmier traces are normalised, meaning that
$$
\mathrm{Tr}_\omega \left( \diag \left\{ \frac{1}{n} \right\}_{n=1}^\infty \right) = 1 .
$$
The commutator subspace has been used previously to study spectral forms of the Dixmier traces, e.g.~\cite{Fackcomm2004}, \cite{AS2005}.  Despite the Lidskii theorem, Corollary~\ref{KCor}, it is not evident that
$$
\mathrm{Tr}_\omega(T) := \omega \left( \left\{ \frac{1}{\log(1+n)} \sum_{j=1}^n \lambda_j(T)\right\}_{n=1}^\infty  \right) 
$$
for an eigenvalue sequence $\{ \lambda_j(T) \}_{j=1}^\infty$ of a compact operator $T$.
By a combination of results from~\cite{ssz2010} and~\cite{AS2005} the result is true for the restriction to $\mathcal L_{1,\infty}$.

\begin{lemma} \label{DixspecialLidksii}
Suppose $T \in \mathcal L_{1,\infty}(\mathcal H)$.  Then
\begin{equation} \label{eq:DixLidskii}
\mathrm{Tr}_\omega(T) = \omega \left( \left\{ \frac{1}{\log(1+n)} \sum_{j=1}^n \lambda_j(T)\right\}_{n=1}^\infty  \right)
\end{equation}
for any eigenvalue sequence $\{ \lambda_j(T)\}_{j=1}^\infty$ of $T$,
and any dilation invariant state $\omega$.
\end{lemma}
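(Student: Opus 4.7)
The plan is to reduce to a diagonal operator via Corollary~\ref{K2} and then estimate directly the discrepancy between ordered and rearranged partial sums.

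First, since $\mathcal{L}_{1,\infty}$ is quasi-Banach and hence geometrically stable, Corollary~\ref{K2} yields $T - \diag\{\lambda_n(T)\}_{n=1}^\infty \in \Com \mathcal{L}_{1,\infty}$ for any eigenvalue sequence. As $\mathrm{Tr}_\omega$ is a trace and hence vanishes on the commutator subspace,
$$\mathrm{Tr}_\omega(T) = \mathrm{Tr}_\omega(\diag\{\lambda_n(T)\}_{n=1}^\infty).$$
It therefore suffices to prove \eqref{eq:DixLidskii} for this diagonal operator, whose entries already satisfy $|\lambda_n(T)|$ decreasing.

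Second, I would decompose $\lambda_n(T) = \alpha_n - \beta_n + i(\gamma_n - \delta_n)$, where $\alpha_n,\beta_n,\gamma_n,\delta_n\geq 0$ are the positive and negative parts of the real and imaginary components. Each of these four nonnegative sequences is dominated pointwise by the decreasing $\ell_{1,\infty}$ sequence $\{|\lambda_n(T)|\}$. By linearity of $\mathrm{Tr}_\omega$ and of $\omega$, it suffices to prove, for each nonnegative $b=\{b_n\}$ dominated by a decreasing sequence in $\ell_{1,\infty}$, that
$$\mathrm{Tr}_\omega(\diag\{b_n\}_{n=1}^\infty) = \omega\left(\left\{\frac{1}{\log(1+n)}\sum_{j=1}^n b_j\right\}_{n=1}^\infty\right).$$
By the definition of the Dixmier trace, the left-hand side equals $\omega(\{(\log(1+n))^{-1}\sum_{j=1}^n b_j^*\})$, where $b^*$ is the decreasing rearrangement of $b$, so the task reduces to showing that the rearrangement $\{b_j\}\mapsto\{b_j^*\}$ is invisible to $\omega$ after log-Cesaro averaging.

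Third, I would verify the estimate $\sum_{j=1}^n b_j^* - \sum_{j=1}^n b_j = O(1)$ by a direct bookkeeping argument. Let $B=\{k\leq n : b_k\text{ is not among the $n$ largest entries of }b\}$ and let $B'\subset\{n+1,n+2,\ldots\}$ be the indices whose values appear in the top $n$; then $|B|=|B'|\leq n$ and the difference equals $\sum_{k\in B'}b_k - \sum_{k\in B}b_k$. For $k\in B'$ one has $b_k\leq |\lambda_k(T)|\leq |\lambda_{n+1}(T)|=O(n^{-1})$, yielding the claimed $O(1)$ bound. Dividing by $\log(1+n)$ places the difference of the two Cesaro averages in $c_0$, on which every dilation invariant state vanishes (as noted in the introduction). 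Combining with the first step proves \eqref{eq:DixLidskii}; this is consistent with the spectral/commutator approaches of \cite{AS2005} and \cite{ssz2010}.

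The main obstacle I anticipate is identifying the right combinatorial estimate in the third step. Once one recognises that only an $O(1)$ bound for $\sum b_j^* - \sum b_j$ is needed — and that this follows from the pointwise domination of $b$ by the decreasing $\ell_{1,\infty}$ sequence $\{|\lambda_n(T)|\}$ together with the $n^{-1}$-decay of the latter — the rest of the argument is a routine assembly of linearity and the vanishing of $\omega$ on $c_0$.
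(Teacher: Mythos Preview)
Your proof is correct, and it takes a genuinely different route from the paper's. The paper's proof is essentially a citation: it invokes \cite[Remark 13]{ssz2010} to obtain
\[
\mathrm{Tr}_\omega(T) = \omega \left( \left\{ \frac{1}{\log(1+n)} \sum_{ \{ j : |\lambda_j| > 1/n \}} \lambda_j(T) \right\}_{n=1}^\infty  \right),
\]
and then quotes the argument of \cite[Corollary 2.12]{AS2005} that, for $T \in \mathcal L_{1,\infty}$, the difference $\sum_{\{j:|\lambda_j|>1/n\}}\lambda_j(T)-\sum_{j=1}^n\lambda_j(T)$ is $O(1)$. Your argument is instead self-contained within the paper: you use Corollary~\ref{K2} (available because $\mathcal L_{1,\infty}$ is quasi-Banach, hence geometrically stable) to pass to $\diag\{\lambda_n(T)\}$, then split into four nonnegative sequences dominated by the decreasing $\ell_{1,\infty}$ sequence $\{|\lambda_n(T)|\}$, and finally compare $\sum_{j\le n} b_j$ with $\sum_{j\le n} b_j^*$ directly. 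The combinatorial step is sound: the difference is nonnegative (the rearranged partial sum dominates any $n$-term partial sum) and bounded above by $\sum_{k\in B'} b_k \le |B'|\cdot|\lambda_{n+1}(T)|\le C$, since $|\lambda_n(T)|=s_n(\diag\{\lambda_n(T)\})=O(n^{-1})$. What the paper's approach buys is brevity via existing literature; what yours buys is an elementary and internal argument that avoids the somewhat delicate Lidskii-type formula from \cite{ssz2010}. One small point worth making explicit in your write-up: the lower bound on the difference in your third step is zero, which you use implicitly but do not state.
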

\begin{proof}
From \cite[Remark 13]{ssz2010} it follows that
$$
\mathrm{Tr}_\omega(T) = \omega \left( \left\{ \frac{1}{\log(1+n)} \sum_{ \{ j | |\lambda_j| > 1/n \}} \lambda_j(T) \right\}_{n=1}^\infty  \right) .
$$
A similar result was obtained in~\cite{Fackcomm2004} and~\cite{AS2005}, but
not for all dilation invariant states.  The argument
of~\cite[Corollary 2.12]{AS2005} that, when $T \in \mathcal L_{1,\infty}$, 
$$
\sum_{ \{ j | |\lambda_j| > 1/n \}} \lambda_j(T) - \sum_{j=1}^n
  \lambda_j(T) = O(1)
$$
provides the result.
 \end{proof}

Connes' trace theorem, \cite{C3}, states that a Dixmier trace applied to a compactly supported
classical pseudo-differential operator $P$ of order $-d$ yields the noncommutative residue up to a constant,
\begin{equation} \label{CTTstatement}
\mathrm{Tr}_\omega(P) = \frac{1}{d (2\pi)^d} \mathrm{Res}_W(P) .
\end{equation}
Connes' statement was given for closed manifolds, but it is equivalent to~\eqref{CTTstatement}.
We shall consider manifolds in our final section.

The main result of our paper is the generalisation of Connes' trace theorem below.
We note that a dilation invariant state $\omega$ is a generalised limit, i.e.~it vanishes
on $c_0$.  Hence
$$
\omega([c_n]) := \omega( \{ c_n \}_{n=1}^\infty ) , \qquad \{ c_n \}_{n=1}^\infty  \in \ell_\infty
$$
is well-defined as a linear functional on $\ell_\infty / c_0$.  

\begin{theorem}[Trace theorem] \label{CTTv1}
Suppose $T$ is compactly based and Laplacian modulated such that $T \in \mathcal L_{1,\infty}(L_2(\mathbb R^d))$.  Then:
\begin{enumerate}[(i)]
\item
$$
\mathrm{Tr}_\omega(T) = \frac{1}{d(2\pi)^d} \omega(\mathrm{Res}(T))
$$
where $\Res(T) \in \ell_\infty / c_0$ is the residue of $T$;
\item
$$
\mathrm{Tr}_\omega(T) = \frac{1}{d(2\pi)^d} \mathrm{Res}(T)
$$
for every Dixmier trace $\Tr_\omega$ iff $\Res(T)$ is scalar;
\item
\begin{equation*}
\tau(T)= \frac{\tau \circ \diag \left( \{ \frac1n \}_{n=1}^\infty \right)}{(2\pi)^d d} \Res(T) 
\end{equation*}
for every trace $\tau : \mathcal L_{1,\infty}(L_2(\mathbb R^d)) \to \mathbb C$
iff
\begin{equation} \label{classical}
\int_{\mathbb R^d} \int_{|\xi|\le n^{1/d}} p_T(x,\xi) d\xi\,dx = \frac1d \Res(T) \, \log n + O(1)
\end{equation}
for a scalar $\Res(T)$.
\end{enumerate}
\end{theorem}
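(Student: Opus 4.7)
The plan is to assemble the three parts from three ingredients developed earlier in the paper: the eigenvalue asymptotic of Theorem~\ref{eigen}, the Lidskii-type representation of Dixmier traces of Lemma~\ref{DixspecialLidksii}, and the commutator characterisation of Theorem~\ref{K} applied to the geometrically stable ideal $\mathcal L_{1,\infty}$. For \emph{part (i)}, I would start from Theorem~\ref{eigen} applied to $T$, giving
$$\sum_{j=1}^n \lambda_j(T) - \frac{1}{(2\pi)^d}\int_{\mathbb R^d}\int_{|\xi|\le n^{1/d}} p_T(x,\xi)\,d\xi\,dx = O(1).$$
Dividing by $\log(1+n)$ pushes the error into $c_0$, so recognising the integral from Definition~\ref{def:res} yields, in $\ell_\infty/c_0$,
$$\left[\frac{1}{\log(1+n)}\sum_{j=1}^n \lambda_j(T)\right] = \frac{1}{d(2\pi)^d}\Res(T).$$
Applying a dilation invariant state $\omega$ (which annihilates $c_0$) and replacing the left-hand side using Lemma~\ref{DixspecialLidksii} delivers (i).

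For \emph{part (ii)}, the ``if'' direction is immediate from (i): if $\Res(T) = c$ is a scalar class then $\omega(c)=c$ for every state, so $\Tr_\omega(T) = c/(d(2\pi)^d)$ independently of $\omega$. For the converse, suppose $\Tr_\omega(T)$ takes a constant value across all Dixmier traces; then by (i), $\omega(\Res(T))$ is constant across every dilation invariant state on $\ell_\infty$. The sequence $\Res_n(T) = \frac{d}{\log(1+n)}\int\int_{|\xi|\le n^{1/d}} p_T$ is bounded by~\eqref{compactbase2} and slowly varying---the symbol bound~\eqref{compactbase1} combined with the logarithmic denominator yields $\Res_{n+1}(T) - \Res_n(T) = O(1/\log n)$---so the measurability criterion that all dilation invariant states agree on $\Res(T)$ promotes to ordinary convergence $\Res_n(T) \to d(2\pi)^d\Tr_\omega(T)$, giving that $\Res(T)$ is a scalar class. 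The equivalent integral formulation is then built directly into Definition~\ref{def:res}.

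For \emph{part (iii)}, set $c := \Res(T)/(d(2\pi)^d)$. The assertion ``$\tau(T) = c\,\tau(\diag\{1/n\})$ for every trace $\tau$'' is equivalent to
$$T - c\,\diag\{n^{-1}\}_{n=1}^{\infty} \in \bigcap_\tau \ker\tau = \Com\mathcal L_{1,\infty},$$
by Hahn-Banach applied to the very definition of a trace as a linear functional vanishing on $\Com\mathcal L_{1,\infty}$. Since $\mathcal L_{1,\infty}$ is quasi-Banach, hence geometrically stable, I apply Theorem~\ref{K} with $S := c\,\diag\{n^{-1}\}$, noting $\sum_{j=1}^n c/j = c\log n + O(1)$, to reduce membership in $\Com\mathcal L_{1,\infty}$ to the eigenvalue identity
$$\left|\sum_{j=1}^n \lambda_j(T) - c\log n\right| = O(1).$$
Substituting the spectral asymptotic from Theorem~\ref{eigen} converts this to $\int_{\mathbb R^d}\int_{|\xi|\le n^{1/d}} p_T\,d\xi\,dx = c(2\pi)^d \log n + O(1)$, which is precisely~\eqref{classical} under the identification $\Res(T) = d(2\pi)^d c$.

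The main obstacle is the ``only if'' direction of (ii). Parts (i) and (iii) are largely bookkeeping on top of the deep results of Theorems~\ref{eigen} and~\ref{K}, but (ii) requires extracting genuine convergence of $\Res_n(T)$ from the weaker statement that $\omega(\Res(T))$ is constant for all dilation invariant $\omega$---a conclusion which would fail for a generic bounded sequence but survives here by exploiting the slowly varying structure of $\Res_n(T)$ coming from the logarithmic averaging and the modulation estimates of Lemma~\ref{compactbase}.
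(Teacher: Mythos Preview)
Your arguments for parts (i) and (iii) are correct and follow the same route as the paper: it too assembles Theorem~\ref{eigen}, Lemma~\ref{DixspecialLidksii}, and Theorem~\ref{K} in exactly this way, though it leaves the identification $\bigcap_\tau \ker\tau = \Com\mathcal L_{1,\infty}$ implicit where you spell it out.

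The gap is precisely where you flag it: the ``only if'' direction of (ii). Your consecutive-difference bound $\Res_{n+1}(T)-\Res_n(T)=O(1/\log n)$ is correct but does not do the work you want. Summing it over a dyadic block gives $\sum_{j=n}^{2n} O(1/\log j)\sim n/\log n\to\infty$, so this estimate does not even force $\Res_{2n}(T)-\Res_n(T)\to 0$; and without some genuine dilation-stability the implication ``all dilation-invariant states agree $\Rightarrow$ convergence'' is plainly false (take $a_n=(-1)^n$: then $(D_2a)_n=a_{2n}=1$, so $\omega(a)=1$ for every dilation-invariant state, yet $a_n$ does not converge). You assert that the slow variation ``promotes'' agreement of states to ordinary convergence, but that promotion is exactly the non-trivial content, and your sketch supplies no mechanism for it. The paper does not attempt this step at all: it invokes the external result \cite[Theorem~20]{SS2012}, which states directly that for $T\in\mathcal L_{1,\infty}$ the value $\Tr_\omega(T)$ is independent of $\omega$ if and only if $\lim_n \frac{1}{\log(1+n)}\sum_{j=1}^n\lambda_j(T)$ exists, and then reads off the conclusion from~\eqref{thecrux}. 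Your approach can be repaired by citing the same result; the internal argument as written does not close.
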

\begin{proof}
(i) By Theorem~\ref{eigen} and the formula~\eqref{eq:res} we have that
\begin{equation} \label{thecrux}
\frac{1}{\log(1+n)} \sum_{j=1}^n \lambda_j(T) = \frac{1}{d(2\pi)^d} \mathrm{Res}_n(T) + o(1) 
\end{equation}
for any eigenvalue sequence $\{ \lambda_j(T) \}_{j=1}^\infty$ and any representative
$\Res_n(T)$ of the equivalence class $\Res(T)$.  By the condition that $T \in \mathcal L_{1,\infty}$
we apply Lemma~\ref{DixspecialLidksii} and obtain
\begin{align*}
\mathrm{Tr}_\omega(T) &:= \omega \left( \left\{ \frac{1}{\log(1+n)} \sum_{j=1}^n \lambda_j(T)\right\}_{n=1}^\infty  \right) \\
&= \omega \left( \frac{1}{d(2\pi)^d} \mathrm{Res}_n(T) + o(1)  \right)
= \frac{1}{d(2\pi)^d} \omega \left( \Res(T) \right)
\end{align*}
since $\omega$ vanishes on sequences convergent to zero.

(ii) As $T \in \mathcal L_{1,\infty}$, by \cite[Theorem 20]{SS2012}
$\Tr_\omega(T)$ is the same
value for all Dixmier traces if and only if $\Tr_\omega(T)
= \lim_{n \to \infty} \frac{1}{\log(1+n)} \sum_{j=1}^n \lambda_j(T)$
and the limit on the right exists.  From~\eqref{thecrux}
$\lim_{n \to \infty} \frac{1}{\log(1+n)} \sum_{j=1}^n \lambda_j(T)$
exists if and only if $\Res(T)$ is scalar and
$\lim_{n \to \infty} \frac{1}{\log(1+n)} \sum_{j=1}^n \lambda_j(T) = (2\pi)^{-d} d^{-1} \Res(T)$.

(iii)  Suppose $T$ satisfies~\eqref{classical}.  Then, by
Theorem~\ref{eigen},
$$
\sum_{k=1}^n\lambda_k(T) - \frac{1}{(2\pi)^d d} \Res(T) \sum_{k=1}^n \frac1k  = O(1) .
$$
By Theorem~\ref{K},
$$
T - \diag \left\{ \frac{1}{(2\pi)^d d} \Res(T) \frac1k \right\}_{k=1}^\infty \in \Com \mathcal L_{1,\infty}.
$$
Conversely, suppose the previous display is given. Then, by Theorem~\ref{K},
for some decreasing sequence $\nu \in \ell_{1,\infty}$, 
$$
\left| \sum_{k=1}^n\lambda_k(T) - \frac{1}{(2\pi)^d d} \Res(T) \sum_{k=1}^n \frac1k  \right| \leq n \nu_n \lec 1 ,
$$
and the symbol $p_T$ of $T$ satisfies~\eqref{classical} by Theorem~\ref{eigen}.
 \end{proof}

Theorem~\ref{CTTv1} allows the Dixmier trace of any compactly based pseudo-differential operator of order $-d$
to be computed from its symbol.

\begin{corollary} \label{cor:genpsdo}
If $P$ is a compactly based pseudo-differential operator of order $-d$, then
$P \in \mathcal L_{1,\infty}(L_2(\mathbb R^d))$ and
$$
\mathrm{Tr}_\omega(P) = \frac{1}{d(2\pi)^d} \omega(\mathrm{Res}(P))
$$
for any dilation invariant state $\omega$.  Here $\Res(P)$ is given by
Definition~\ref{def:res}.
\end{corollary}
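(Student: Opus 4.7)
The plan is to reduce the corollary to Theorem~\ref{CTTv1}(i) applied to the bounded extension of $P$. That theorem has two hypotheses: that $P$ be a compactly based Laplacian modulated operator, and that $P \in \mathcal L_{1,\infty}(L_2(\mathbb R^d))$. The first of these is exactly the content of Proposition~\ref{prop:psdoincl}. Once the second is established, the formula $\Tr_\omega(P) = \frac{1}{d(2\pi)^d}\omega(\Res(P))$, with $\Res(P)$ taken in the sense of Definition~\ref{def:res}, is an immediate application of Theorem~\ref{CTTv1}(i).

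The only real work is therefore to show $P \in \mathcal L_{1,\infty}$. The remark after Theorem~\ref{eigen} warns that merely compactly based Laplacian modulated operators need not lie in $\mathcal L_{1,\infty}$, so one cannot just quote Theorem~\ref{eigen}; the pseudo-differential structure of $P$ must be exploited. The approach I would take is to choose $\phi \in C_c^\infty(\mathbb R^d)$ with $M_\phi P = P$, then pick a cutoff $\psi \in C_c^\infty(\mathbb R^d)$ equal to $1$ on an open neighbourhood of $\supp \phi$, and split
$$
P \;=\; M_\phi P M_\psi + M_\phi P\,(1 - M_\psi) \;=:\; P_0 + P_1 .
$$
By the standard symbolic calculus (in particular Lemma~\ref{easysymbol}), $P_0$ is a compactly supported pseudo-differential operator of order $-d$, hence by Proposition~\ref{prop:psdoincl} it is compactly supported and Laplacian modulated, and Theorem~\ref{eigen} directly gives $P_0 \in \mathcal L_{1,\infty}$.

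For the remainder $P_1$, I would invoke the pseudolocal property: since the total symbol of $P$ lies in $S^{-d}$ and is compactly supported in the first variable, the Schwartz kernel $K_P(x,y)$ is $C^\infty$ off the diagonal and satisfies $|K_P(x,y)| \lec[N] \langle x - y\rangle^{-N}$ uniformly in $x \in \supp \phi$, for every $N$. Because $\supp \phi$ and $\supp(1-\psi)$ are at positive distance, the kernel $\phi(x) K_P(x,y)(1-\psi(y))$ of $P_1$ is smooth, compactly supported in $x$, and rapidly decreasing in $y$; a dyadic decomposition in $|y|$ combined with routine Hilbert-Schmidt estimates then shows $P_1 \in \mathcal L_1 \subset \mathcal L_{1,\infty}$. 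Adding $P_0$ and $P_1$ yields $P \in \mathcal L_{1,\infty}$, and Theorem~\ref{CTTv1}(i) closes the argument.

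The principal technical point, and the step I expect to require the most care, is the off-diagonal decay of $K_P$ in the uniform H\"ormander class $S^{-d}_{1,0}$ used throughout Section~\ref{sec:PsDO}: one must justify the estimate $|K_P(x,y)| \lec[N] \langle x-y\rangle^{-N}$ uniformly for $x$ in a compact set. This follows by $N$-fold integration by parts in $\xi$ in the oscillatory integral defining $K_P$, using compact support in $x$ to control all mixed derivatives of $p$ via the $S^{-d}_{1,0}$ seminorms; once this standard estimate is in hand, the trace-class conclusion for $P_1$ and hence the decomposition $P = P_0 + P_1 \in \mathcal L_{1,\infty}$ assemble without further difficulty.
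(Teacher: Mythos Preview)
Your proposal is correct and follows essentially the same route as the paper: both reduce to Theorem~\ref{CTTv1}(i) after splitting $P$ into a compactly supported pseudo-differential piece (handled by Proposition~\ref{prop:psdoincl} and Theorem~\ref{eigen}) plus an off-diagonal smoothing remainder in $\mathcal L_1$. The paper phrases the remainder as the commutator $P-M_\phi P M_\phi = [M_\phi,P]$ (using $M_\phi P = P$) and invokes the Shubin class $G^{-\infty}\subset\mathcal L_1$ by citation, whereas you introduce a second cutoff $\psi$ and verify the trace-class property by hand via off-diagonal kernel decay; these are interchangeable implementations of the same idea.
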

\begin{proof}
If $P$ is compactly based, there exists $0 < \phi \in C_c^\infty(\mathbb R^d)$
with $M_\phi P =P$.  The operator $P$
and $P' = M_\phi P M_\phi$ are Laplacian modulated by Proposition~\ref{prop:psdoincl}.
Since $P'$ is compactly supported then $P' \in \mathcal L_{1,\infty}$ by Theorem~\ref{eigen}.
Note that $P - P' = [M_\phi, P] \in G^{-\infty}$ is a smoothing operator belonging
to the Shubin class, see \cite[IV]{Shubin2003}, and the extensions of $G^{-\infty}$
belong to the trace class operators $\mathcal L_1$ on $\mathbb R^d$, \cite[Section 27]{Shubin2003}, see also \cite{Arsu2008}.
Hence $P = P' + (P-P') \in \mathcal L_{1,\infty}$.  Now apply Theorem~\ref{CTTv1}.
 \end{proof}

Not every pseudo-differential operator is measurable in Connes' sense, \cite[\S 4]{C}.

\begin{corollary}[Non-measurable pseudo-differential operators] \label{cor:nonmeas}
There is a compactly supported pseudo-differential operator $Q$ of order $-d$ such that
the value $\mathrm{Tr}_\omega(Q)$ depends on the dilation invariant
state $\omega$.\footnote{Such operators are called {\it non-measurable}.}
\end{corollary}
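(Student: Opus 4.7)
The plan is to take $Q$ to be precisely the compactly supported pseudo-differential operator of order $-d$ already constructed in Proposition~\ref{prop:nonmeas}, whose residue satisfies $\Res(Q) = [\sin \log \log n^{1/d}] \in \ell_\infty / c_0$, and then to deduce non-measurability as an immediate consequence of Theorem~\ref{CTTv1}(ii).

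The only thing to check by hand is that $\Res(Q)$ is not scalar, i.e.~that the bounded sequence $\{\sin \log \log n^{1/d}\}_{n}$ is not equivalent modulo $c_0$ to any constant sequence. Since $d^{-1}\log\log n \to \infty$ while the outer sine continues to sweep full periods, the sequence has $\limsup = 1$ and $\liminf = -1$, so it fails to converge; hence no scalar represents $\Res(Q)$ in $\ell_\infty / c_0$.

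Next, since $Q$ is compactly supported it is in particular compactly based, so Corollary~\ref{cor:genpsdo} gives $Q \in \mathcal L_{1,\infty}(L_2(\mathbb R^d))$ together with the identity
$$
\Tr_\omega(Q) \;=\; \frac{1}{d(2\pi)^d}\,\omega\bigl(\Res(Q)\bigr)
$$
for every dilation invariant state $\omega$. Now Theorem~\ref{CTTv1}(ii) says that the Dixmier traces $\Tr_\omega$ take a common value on $Q$ if and only if $\Res(Q)$ is scalar. As $\Res(Q)$ is not scalar, there must exist dilation invariant states $\omega_1,\omega_2$ with $\omega_1(\Res(Q)) \neq \omega_2(\Res(Q))$, and therefore $\Tr_{\omega_1}(Q)\neq \Tr_{\omega_2}(Q)$, which is exactly the non-measurability claimed in the corollary.

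There is essentially no obstacle here: the construction of the symbol producing an oscillating residue has already been done in Proposition~\ref{prop:nonmeas}, and the translation between scalar residue and common Dixmier trace value is packaged in Theorem~\ref{CTTv1}(ii). The corollary is thus a one-line deduction, modulo the trivial verification that $\sin \log \log n^{1/d}$ fails to converge.
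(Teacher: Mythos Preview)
Your proof is correct and follows essentially the same route as the paper: take $Q$ from Proposition~\ref{prop:nonmeas}, observe that $\Res(Q)=[\sin\log\log n^{1/d}]$ is not scalar, and invoke Theorem~\ref{CTTv1}(ii). The paper's proof is the same one-line deduction, just omitting the explicit check that the sequence fails to converge and the explicit appeal to $Q\in\mathcal L_{1,\infty}$ (which for a compactly supported operator also follows directly from Theorem~\ref{eigen}).
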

\begin{proof}
Let $Q$ be the operator from Proposition~\ref{prop:nonmeas}.
Since $\mathrm{Res}(Q) = [ \sin \log \log n^{1/d} ]$
is not a scalar, $\mathrm{Tr}_\omega(Q)$ depends on the state $\omega$
by Theorem~\ref{CTTv1}(ii).
 \end{proof}

Contrary to the case of general pseudo-differential operators of order $-d$,
the next corollary shows that the classical pseudo-differential operators of order $-d$ have unique trace.

\begin{corollary}[Connes' trace theorem] \label{cor:CTTRd}
Suppose $P$ is a compactly based classical pseudo-differential operator of order $-d$
with noncommutative residue $\mathrm{Res}_W(P)$.
Then $P \in \mathcal L_{1,\infty}(L_2(\mathbb R^d))$ and
$$
\tau(P) = \frac{1}{d(2\pi)^d} \mathrm{Res}_W(P) ,
$$
for every trace $\tau$ on $\mathcal L_{1,\infty}$ such that $\tau(\diag \{ n^{-1} \}_{n=1}^\infty ) = 1$.
\end{corollary}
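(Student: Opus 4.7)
The plan is to reduce the corollary directly to Theorem~\ref{CTTv1}(iii) via the residue computation already carried out for classical symbols. First I would verify the two ideal-theoretic inputs: by Proposition~\ref{prop:psdoincl}, the bounded extension of a compactly based pseudo-differential operator of order $-d$ is a compactly based Laplacian modulated operator, and by Corollary~\ref{cor:genpsdo} (which handles the general compactly based case by splitting $P = M_\phi P M_\phi + [M_\phi, P]$ with $M_\phi P = P$ and recognising the commutator as smoothing), $P \in \mathcal{L}_{1,\infty}(L_2(\mathbb R^d))$.

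Next I would verify that the symbol $p_P$ of $P$ satisfies the scalar condition~\eqref{classical} in Theorem~\ref{CTTv1}(iii), namely
\begin{equation*}
\int_{\mathbb R^d} \int_{|\xi|\le n^{1/d}} p_P(x,\xi)\, d\xi\,dx = \frac{1}{d}\,\Res_W(P)\,\log n + O(1).
\end{equation*}
Since $P$ is classical, write $p_P = p_{-d} + r$ where $p_{-d}$ is homogeneous of degree $-d$ for $|\xi|\ge 1$ and $r \in S^{-d-1}_{\mathrm{base}}$. The proof of Proposition~\ref{prop:wodres} shows that the integral of $p_{-d}$ over $\{|\xi|\le n^{1/d}\}$ produces exactly $\frac{1}{d}\Res_W(P)\log n + O(1)$. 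For the remainder, the argument of Lemma~\ref{lemma:princemap} applies: since $r \in S^{-d-1}_{\mathrm{base}}$, write $r(x,\xi) = q(x,\xi)\langle\xi\rangle^{-\theta}$ with $\theta = 1 > 0$ and $q \in S^{\mmod}$ (one checks $q$ satisfies~\eqref{LM1}), and then~\eqref{compactbase4} applied to $q$ forces the integral of $r$ to be $O(1)$. Adding the two contributions yields the claimed scalar asymptotic with $\Res(P) = \Res_W(P)$.

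Once this is in hand, Theorem~\ref{CTTv1}(iii) applies and gives
\begin{equation*}
\tau(P) = \frac{\tau\circ\diag\{n^{-1}\}_{n=1}^\infty}{d(2\pi)^d}\,\Res_W(P)
\end{equation*}
for every trace $\tau$ on $\mathcal L_{1,\infty}$; the normalisation $\tau(\diag\{n^{-1}\})=1$ then delivers the asserted formula. There is no real obstacle here since all the analytic content has been absorbed into Theorem~\ref{CTTv1} and Proposition~\ref{prop:wodres}. The only thing that needs mild attention is the bookkeeping for lower-order terms of the classical asymptotic expansion, but this is already available through~\eqref{compactbase4} and~\eqref{eq:res}.
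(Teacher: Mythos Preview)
Your proposal is correct and follows essentially the same route as the paper: invoke Corollary~\ref{cor:genpsdo} for membership in $\mathcal L_{1,\infty}$, use the proof of Proposition~\ref{prop:wodres} (together with the $O(1)$ bound for lower-order terms from Lemma~\ref{lemma:princemap}) to verify that the symbol satisfies~\eqref{classical} with scalar $\Res(P)=\Res_W(P)$, and then apply Theorem~\ref{CTTv1}(iii). The only difference is that you spell out explicitly the decomposition $p_P = p_{-d} + r$ and the handling of the remainder, which the paper leaves implicit in the phrase ``Proposition~\ref{prop:wodres} and its proof.''
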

\begin{proof}
By Corollary~\ref{cor:genpsdo}, $P \in \mathcal L_{1,\infty}$.
Proposition~\ref{prop:wodres} and its proof shows both that $\mathrm{Res}(P) = \mathrm{Res}_W(P)$
and that~\eqref{classical} is satisfied. The result now follows from
Theorem~\ref{CTTv1}.
 \end{proof}

\begin{remark}
Corollary~\ref{cor:nonmeas} indicates that the qualifier classical cannot be omitted from the statement of Connes' trace theorem. 
\end{remark}

\begin{remark}
Corollary~\ref{cor:CTTRd} is stronger than Connes' original theorem.
To see that the set of Dixmier traces restricted to $\mathcal L_{1,\infty}$ is smaller than the set of arbitrary normalised traces on $\mathcal L_{1,\infty}$, consider that the positive part
of the common kernel of Dixmier traces is exactly the positive part of the separable subspace $(\mathcal L^0_{1,\infty})^+$,
\cite{LSS}, while the positive part of the common kernel
of arbitrary normalised traces is exactly $\mathcal L_1^+ = (\Com \mathcal L_{1,\infty})^+$, easily seen from Theorem~\ref{DFWW} or see \cite{KaftalWeiss2022}.
\end{remark}

\begin{corollary}[Integration of square integrable functions] \label{L2trace}
Let $f \in L_{2}(\mathbb R^d)$ be compactly supported.  Then
$M_f (1-\Delta)^{-d/2} \in \mathcal L_{1,\infty}(L_2(\mathbb
R^d)),$
and
$$
\tau(M_f (1-\Delta)^{-d/2}) = \frac{\mathrm{Vol} \, \mathbb S^{d-1} }{d(2\pi)^d} \int_{\mathbb R^d} f(x)dx ,
$$
for every trace $\tau$ on $\mathcal L_{1,\infty}$ such that $\tau(\diag \{ n^{-1} \}_{n=1}^\infty ) = 1$.
\end{corollary}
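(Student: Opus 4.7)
The plan is to identify $T := M_f(1-\Delta)^{-d/2}$ as a compactly based Laplacian modulated operator whose symbol satisfies the $O(1)$-form of~\eqref{classical}, verify separately that $T\in\mathcal L_{1,\infty}$, and then invoke Theorem~\ref{CTTv1}(iii). Since $(1-\Delta)^{-d/2}=T_g$ with $g(\xi)=\langle\xi\rangle^{-d}\in L_{\mmod}(\mathbb R^d)$, Proposition~\ref{MultSqFnProp} presents $T$ as a compactly based Laplacian modulated operator with symbol $p_T(x,\xi)=f(x)\langle\xi\rangle^{-d}$. The computation already performed inside the proof of Proposition~\ref{MultSqFnPropInt} gives
\[
\int_{\mathbb R^d}\int_{|\xi|\le n^{1/d}} f(x)\langle\xi\rangle^{-d}\,d\xi\,dx = \frac{\mathrm{Vol}\,\mathbb S^{d-1}}{d}\int_{\mathbb R^d} f(x)\,dx\;\log n + O(1),
\]
i.e.\ condition~\eqref{classical} holds with the scalar residue $\Res(T)=\mathrm{Vol}\,\mathbb S^{d-1}\int_{\mathbb R^d} f(x)\,dx$.

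To produce $T\in\mathcal L_{1,\infty}$, I imitate the decomposition used in the proof of Corollary~\ref{cor:genpsdo}. Choose $\phi\in C_c^\infty(\mathbb R^d)$ with $\phi\equiv 1$ on a neighbourhood of $\supp f$, so that $M_\phi M_f=M_f$ and $M_\phi T=T$. Decompose $T = TM_\phi + T(1-M_\phi)$. The operator $TM_\phi=M_f(1-\Delta)^{-d/2}M_\phi$ is compactly supported and Laplacian modulated (by Remark~\ref{rem:bimodule}, since $M_\phi$ is a pseudo-differential operator of order $0$), hence Theorem~\ref{eigen} places it in $\mathcal L_{1,\infty}$. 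For the remainder, the identity $M_\phi T=T$ gives $T(1-M_\phi)=[M_\phi,T]=M_f[M_\phi,(1-\Delta)^{-d/2}]$, and a standard pseudo-differential calculation identifies $[M_\phi,(1-\Delta)^{-d/2}]$ as a compactly based pseudo-differential operator of order $-d-1$; such an operator is trace class (cf.~\cite{Shubin2003}), so $T(1-M_\phi)\in\mathcal L_1\subset\mathcal L_{1,\infty}$ and thus $T\in\mathcal L_{1,\infty}$.

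With the three hypotheses of Theorem~\ref{CTTv1}(iii) in hand,
\[
\tau(T)=\frac{\tau\circ\diag(\{1/n\}_{n=1}^\infty)}{d(2\pi)^d}\,\Res(T)=\frac{\mathrm{Vol}\,\mathbb S^{d-1}}{d(2\pi)^d}\int_{\mathbb R^d} f(x)\,dx
\]
for every trace $\tau$ on $\mathcal L_{1,\infty}$ normalised by $\tau(\diag\{1/n\}_{n=1}^\infty)=1$. The only mildly technical step is the argument that $T\in\mathcal L_{1,\infty}$, since $T$ is compactly based but not compactly supported; the main obstacle is justifying that the commutator $[M_\phi,(1-\Delta)^{-d/2}]$ drops the order by one and thereby contributes a trace class error, a standard fact of pseudo-differential calculus.
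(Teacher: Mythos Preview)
Your overall strategy matches the paper's proof exactly: invoke Proposition~\ref{MultSqFnProp} for the Laplacian--modulated structure, use the computation from Proposition~\ref{MultSqFnPropInt} to verify~\eqref{classical} with the stated scalar residue, and conclude via Theorem~\ref{CTTv1}(iii). The only divergence is in how you establish $T\in\mathcal L_{1,\infty}$; the paper simply quotes~\cite[\S5.7]{BirmanSolomyak1991}, whereas you attempt an internal commutator argument imitating Corollary~\ref{cor:genpsdo}.

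That internal argument has a genuine gap. First, the commutator $[M_\phi,(1-\Delta)^{-d/2}]$ is \emph{not} compactly based: the term $(1-\Delta)^{-d/2}M_\phi$ produces functions supported on all of $\mathbb R^d$, so there is no $\psi\in C_c^\infty$ with $M_\psi[M_\phi,(1-\Delta)^{-d/2}]=[M_\phi,(1-\Delta)^{-d/2}]$. It is only after left--multiplication by $M_f$ that the operator becomes compactly based. Second, and more seriously, once you multiply by $M_f$ you no longer have a pseudo-differential operator at all, since $f$ is merely in $L_2$; in particular $M_f$ is not bounded on $L_2$, so you cannot conclude ``bounded times trace class is trace class'' from the order~$-d-1$ information alone. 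The analogue in Corollary~\ref{cor:genpsdo} works precisely because there the left factor is smooth. One can repair your route, for instance by observing that the Schwartz kernel $f(x)G_d(x-y)(1-\phi(y))$ of $T(1-M_\phi)$ is compactly supported in $x$ and, owing to the separation of $\supp f$ from $\supp(1-\phi)$ and the exponential decay of the Bessel potential $G_d$, rapidly decreasing in $y$ together with all $y$-derivatives; this yields enough smoothness and decay to place $T(1-M_\phi)$ in $\mathcal L_1$. But this is not the one-line ``standard fact'' you invoke, and the paper's appeal to Birman--Solomyak is both shorter and avoids the issue entirely.
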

\begin{proof}
  It follows from~\cite[\S~5.7]{BirmanSolomyak1991} that~$M_f (1-\Delta)^{-d/2} \in \mathcal L_{1,\infty}$.  On the other hand, from
  Proposition~\ref{MultSqFnProp}, $M_f (1-\Delta)^{-d/2}$ is
  compactly based and Laplacian modulated.
  From the proof of Proposition~\ref{MultSqFnPropInt}
  $\Res(M_f (1-\Delta)^{-d/2})$
  is scalar, equal to $\mathrm{Vol} \,
  \mathbb S^{d-1} \int_{\mathbb R^d} f(x)dx$, and~\eqref{classical} is satisfied.
  From Theorem~\ref{CTTv1}, $$ \tau(M_f (1-\Delta)^{-d/2}) =
  \frac{\tau \circ \diag \left( \{ \frac{1}{n} \}_{n=1}^\infty \right)
    \mathrm{Vol} \, \mathbb S^{d-1} }{d(2\pi)^d} \int_{\mathbb R^d}
  f(x)dx $$ for every trace on $\mathcal L_{1,\infty}$.
 \end{proof}

We now transfer our notions and results to the setting of closed Riemannian manifolds.

\section{Closed Riemannian manifolds} \label{sec:CTT}

In this section we introduce the notion of a Hodge-Laplacian modulated operator.

\begin{notation}
Henceforth $X$ will always denote a $d$-dimensional closed Riemannian manifold $(X,g)$ with metric $g$,
and $\Delta_g$ denotes the Laplace-Beltrami operator with respect to $g$, \cite[p.~3]{Chavel1984}.
\end{notation}

\begin{definition} \label{def:HodgeL}
A bounded operator $T: L_2(X) \to L_2(X)$ is \emph{Hodge-Laplacian modulated} if it is
$(1-\Delta_g)^{-d/2}$-modulated for some metric $g$.
\end{definition}

This definition is independent of the choice of metric $g$.

\begin{lemma} \label{lem:7.1}
Suppose $T$ is a bounded operator $T: L_2(X) \to L_2(X)$.  If $T$ is $(1-\Delta_{g_1})^{-d/2}$-modulated
then it is $(1-\Delta_{g_2})^{-d/2}$-modulated for any pair of metrics $g_1$ and $g_2$.  
\end{lemma}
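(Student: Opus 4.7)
The plan is a single, direct application of Proposition~\ref{changeV}. Take $\mathcal H = \mathcal H' = L_2(X)$, $V_1 = (1-\Delta_{g_1})^{-d/2}$, $V_2 = (1-\Delta_{g_2})^{-d/2}$, $A = B = \mathrm{id}_{L_2(X)}$, and $a = 1 > 1/2$. With these choices the relation $BTA = T$ is trivial, and the conclusion of Proposition~\ref{changeV} is exactly what the lemma asserts, so it suffices to verify the single hypothesis
$$
\| (1 - \Delta_{g_1})^{-d/2} u \|_{L_2(X)} \lec \| (1 - \Delta_{g_2})^{-d/2} u \|_{L_2(X)} , \qquad u \in L_2(X).
$$

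To establish this estimate I would invoke pseudo-differential calculus on the closed manifold $X$. By Seeley's construction of complex powers, $(1-\Delta_{g_1})^{-d/2}$ and $(1-\Delta_{g_2})^{d/2}$ are classical elliptic pseudo-differential operators on $X$ of orders $-d$ and $+d$ respectively. Their composition
$$
R := (1-\Delta_{g_1})^{-d/2} \, (1-\Delta_{g_2})^{d/2}
$$
is therefore a classical pseudo-differential operator of order $0$ on $X$, and hence extends to a bounded operator on $L_2(X)$ (the closed-manifold counterpart of the $L_2$-boundedness recorded in~\eqref{eq:sobolevcont}). For $u$ in the dense subspace $C^\infty(X) \subset L_2(X)$ the identity
$$
(1-\Delta_{g_1})^{-d/2} u = R \cdot (1-\Delta_{g_2})^{-d/2} u
$$
then yields the claimed estimate with constant $\| R \|_{\mathcal B(L_2(X))}$, and density extends it to all $u \in L_2(X)$. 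Proposition~\ref{changeV} then gives $T \in \mathrm{mod}(V_2)$, and swapping the roles of $g_1$ and $g_2$ gives the symmetric statement.

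The main ``obstacle'' here is really just bookkeeping: the text has so far developed the pseudo-differential calculus only on $\mathbb R^d$ (Section~\ref{sec:PsDO}), whereas the argument needs its closed-manifold analogue---Seeley's complex powers, closure of the algebra under composition with additive orders, and $L_2$-boundedness of operators of order zero. These are standard facts transferred from $\mathbb R^d$ by local charts and a partition of unity, and once granted reduce the proof to a one-line invocation of Proposition~\ref{changeV}.
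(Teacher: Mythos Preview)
Your proposal is correct and follows essentially the same approach as the paper: form the zero-order pseudo-differential operator $(1-\Delta_{g_1})^{-d/2}(1-\Delta_{g_2})^{d/2}$, use its $L_2$-boundedness to obtain the estimate $\|(1-\Delta_{g_1})^{-d/2}u\|_{L_2}\lec\|(1-\Delta_{g_2})^{-d/2}u\|_{L_2}$, and then invoke Proposition~\ref{changeV}. The paper's version is simply a terser rendition of exactly this argument.
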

\begin{proof}
The operator $(1-\Delta_{g_1})^{-d/2}(1-\Delta_{g_2})^{d/2}$ is a zero-order
pseudo-differential operator on $X$, see \cite[\S 4]{Shubin2003} for pseudo-differential operators on manifolds. Hence it is has a bounded extension, \cite[\S 6.4]{Shubin2003}, and there exists a constant $C$ such that
$$
\| (1-\Delta_{g_1})^{-d/2} f \|_{L_2} \leq C \| (1-\Delta_{g_2})^{-d/2} f \|_{L_2}
$$
for $f \in L_2(X)$.  By
Proposition~\ref{changeV} any $(1-\Delta_{g_1})^{-d/2}$-modulated operator
is $(1-\Delta_{g_2})^{-d/2}$-modulated.
 \end{proof}

The positive bounded operator $(1-\Delta_g)^{-d/2} : L_2(X) \to L_2(X)$ is a compact operator (alternatively $\Delta_g$ has compact resolvent, \cite[p.~8]{Chavel1984}).

Thus
there exists an orthonormal basis $(e_n)_{n=1}^\infty$
of eigenvectors
$$-\Delta_{g} e_n = s_n e_n,  \qquad n \in \mathbb N ,$$
ordered such that the eigenvalues $s_1 \leq s_n \leq \ldots$ are increasing.

Also, by Weyl's asymptotic formula, \cite[p.~9]{Chavel1984},
$$
s_n^{-d/2} \sim l_d \, n^{-1}
$$
for a constant $l_d$.  Therefore
$$
(1-\Delta_g)^{-d/2} \in \mathcal L_{1,\infty} .
$$
Due to these spectral properties we can invoke
Theorem~\ref{connection} and arrive directly at a trace theorem by making the following definition
of the residue.

\begin{lemma} \label{cor:defresman}
If $T$ is a Hodge-Laplacian modulated operator
and $(e_n)_{n=1}^\infty$ is the above eigenvector sequence of the Laplace-Beltrami operator, then
$$
\sum_{j=1}^n (Te_j,e_j) = O(\log(1+n)) .
$$
\end{lemma}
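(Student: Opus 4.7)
The plan is to reduce the claim to a direct application of Theorem~\ref{connection}(iii) combined with the Weyl-type inequality for singular values. First, set $V := (1-\Delta_g)^{-d/2}$. Then $V$ is a positive compact operator on $L_2(X)$, and the given orthonormal basis $(e_n)_{n=1}^\infty$ of eigenvectors of $-\Delta_g$ is automatically an eigenvector sequence for $V$, with $V e_n = (1+s_n)^{-d/2} e_n$. Because $s_1 \leq s_2 \leq \cdots$, the sequence $(1+s_n)^{-d/2}$ is decreasing, so these eigenvalues are exactly the singular values $s_n(V)$. Weyl's asymptotic formula gives $s_n \sim (n/l_d)^{2/d}$ for a constant $l_d > 0$, hence $s_n(V) = (1+s_n)^{-d/2} = \Theta(n^{-1})$ and $V \in \mathcal L_{1,\infty}$.

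Next, since $T$ is Hodge-Laplacian modulated (with respect to the same metric $g$, by Lemma~\ref{lem:7.1}), the hypotheses of Theorem~\ref{connection} are satisfied for $T$ and this choice of $V$ and $(e_n)$. From part (i) of that theorem, $T \in \mathcal L_{1,\infty}$, and from part (iii) we obtain
$$
\sum_{j=1}^n (Te_j, e_j) = \sum_{j=1}^n \lambda_j(T) + O(1)
$$
for any eigenvalue sequence $\{\lambda_j(T)\}_{j=1}^\infty$ of $T$.

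It therefore suffices to bound the partial sums of the eigenvalues. Since $T \in \mathcal L_{1,\infty}$, we have $s_j(T) \lec j^{-1}$, and the Weyl inequality $\bigl|\sum_{j=1}^n \lambda_j(T)\bigr| \leq \sum_{j=1}^n s_j(T)$ yields
$$
\left|\sum_{j=1}^n \lambda_j(T)\right| \lec \sum_{j=1}^n j^{-1} \lec \log(1+n).
$$
Combining this with the previous display gives $\sum_{j=1}^n (Te_j,e_j) = O(\log(1+n))$, as required. There is no real obstacle here; the lemma is essentially a repackaging of Theorem~\ref{connection} in the manifold setting, with Weyl's law providing the crucial fact that $V \in \mathcal L_{1,\infty}$ with $s_n(V) = \Theta(n^{-1})$, so that both statements (i) and (iii) of Theorem~\ref{connection} apply.
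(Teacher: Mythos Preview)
Your proof is correct and follows essentially the same route as the paper: invoke Theorem~\ref{connection}(i) to get $T\in\mathcal L_{1,\infty}$, use the Weyl inequality $\bigl|\sum_{j=1}^n\lambda_j(T)\bigr|\le\sum_{j=1}^n s_j(T)\lec\log(1+n)$, and then apply Theorem~\ref{connection}(iii) to transfer this bound to $\sum_{j=1}^n(Te_j,e_j)$. The only difference is that you spell out in more detail why $V=(1-\Delta_g)^{-d/2}$ satisfies the hypotheses of Theorem~\ref{connection}, which the paper had already recorded in the paragraphs preceding the lemma.
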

\begin{proof}
From Theorem~\ref{connection}(i) $T \in \mathcal L_{1,\infty}$. Then $\mu_n(T) \lec n^{-1}$ and
$$
\left| \sum_{j=1}^n \lambda_j(T) \right| \leq \sum_{j=1}^n \mu_j(T) \lec \log(1+n) .
$$
By Theorem~\ref{connection}(iii)
$$
\sum_{j=1}^n (Te_j,e_j) = \sum_{j=1}^n \lambda_j(T) + O(1)
$$
and the result follows.
 \end{proof}

\begin{definition} \label{def:defresman}
If $T$ is a Hodge-Laplacian modulated operator the class
\begin{equation} \label{eq:defresman}
\Res(T) := d(2\pi)^d \left[ \left\{ \frac{1}{\log(1+n)} \sum_{j=1}^n (Te_j,e_j) \right\}_{n=1}^\infty \right]
\end{equation}
is called the \emph{residue} of $T$, where $[\cdot]$ denotes an equivalence class in $\ell_\infty / c_0$,
and $(e_n)_{n=1}^\infty$ is the above eigenvector sequence of the Laplace-Beltrami operator.
\end{definition}

\begin{remark} \label{rem:localres}
The residue is evidently linear and vanishes on trace class Hodge-Laplacian operators.  The proof of Lemma~\ref{cor:defresman} shows that the residue is independent of the metric, since the eigenvalue sequence of an operator compact operator $T \in \mathcal K (L_2(X))$
does not depend on the metric.
\end{remark}

\begin{theorem}[Trace theorem for closed manifolds] \label{connectioncorman}
Let $T$ be Hodge-Laplacian modulated.
Then $T \in \mathcal L_{1,\infty}(L_2(X))$ and:
\begin{enumerate}[(i)]
\item 
$$
\mathrm{Tr}_\omega(T) = \frac{1}{d(2\pi)^d} \omega(\mathrm{Res}(T))
$$
for any Dixmier trace $\Tr_\omega$ where $\Res(T) \in \ell_\infty / c_0$ is the residue of $T$;
\item
$$
\mathrm{Tr}_\omega(T) = \frac{1}{d(2\pi)^d} \mathrm{Res}(T)
$$
for every Dixmier trace $\Tr_\omega$ iff $\Res(T)$ is scalar;
\item
$$
\tau(T)= \frac{\tau \circ \diag \left( \{ \frac1n \}_{n=1}^\infty \right)}{(2\pi)^d d} \Res(T) 
$$
for every trace $\tau : \mathcal L_{1,\infty}(L_2(\mathbb R^d)) \to \mathbb C$
iff
\begin{equation} \label{classicalman}
\sum_{j=1}^n(Te_j,e_j) = \frac{1}{d(2\pi)^d} \Res(T) \log(1+n) + O(1)
\end{equation}
for a scalar $\Res(T)$.
\end{enumerate}
\end{theorem}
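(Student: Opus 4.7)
\medskip

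The plan is to follow the same skeleton as the proof of Theorem~\ref{CTTv1}, but the point of departure is different: on $\mathbb R^d$ we had Theorem~\ref{eigen} linking partial sums of eigenvalues to the integral of the symbol, whereas here we have replaced that integral at the outset by the partial sums $\sum_{j=1}^n (Te_j,e_j)$ in the very \emph{definition} of $\Res(T)$ (Definition~\ref{def:defresman}). So the ``Weyl-law'' step is built into the definition and the argument reduces to one application of Theorem~\ref{connection} followed by the commutator-subspace machinery.

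First I would verify the hypotheses of Theorem~\ref{connection}. By Lemma~\ref{lem:7.1} we may take $V:=(1-\Delta_g)^{-d/2}$, which is positive, and Weyl's asymptotic (cited after Lemma~\ref{lem:7.1}) shows $s_n(V)=\Theta(n^{-1})$, so in particular $V\in\mathcal L_{1,\infty}$. The eigenvectors $(e_n)_{n=1}^\infty$ of $-\Delta_g$ fixed before Definition~\ref{def:defresman} are exactly an eigenvector sequence of $V$. Theorem~\ref{connection}(i) therefore gives $T\in\mathcal L_{1,\infty}$, and Theorem~\ref{connection}(iii) gives
\begin{equation} \label{eq:cmfill}
\sum_{j=1}^n \lambda_j(T)-\sum_{j=1}^n (Te_j,e_j)=O(1).
\end{equation}

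For (i), dividing \eqref{eq:cmfill} by $\log(1+n)$ and invoking Lemma~\ref{DixspecialLidksii} gives
$$
\mathrm{Tr}_\omega(T)=\omega\!\left(\!\left\{\tfrac{1}{\log(1+n)}\sum_{j=1}^n\lambda_j(T)\right\}_{n=1}^\infty\!\right)=\omega\!\left(\!\left\{\tfrac{1}{\log(1+n)}\sum_{j=1}^n(Te_j,e_j)\right\}_{n=1}^\infty\!\right)=\tfrac{1}{d(2\pi)^d}\omega(\Res(T)),
$$
since $\omega$ kills $c_0$. For (ii), I would quote \cite[Theorem 20]{SS2012} exactly as in the proof of Theorem~\ref{CTTv1}(ii): for $T\in\mathcal L_{1,\infty}$ the value $\Tr_\omega(T)$ is independent of $\omega$ iff $\lim_n \frac{1}{\log(1+n)}\sum_{j=1}^n \lambda_j(T)$ exists, and by \eqref{eq:cmfill} this is equivalent to $\Res(T)$ being scalar, in which case the common value equals $\frac{1}{d(2\pi)^d}\Res(T)$.

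For (iii), assume \eqref{classicalman} holds with $\Res(T)$ scalar. Using $\sum_{j=1}^n j^{-1}=\log(1+n)+O(1)$, combine with \eqref{eq:cmfill} to get
$$
\sum_{j=1}^n\lambda_j(T)-\tfrac{\Res(T)}{d(2\pi)^d}\sum_{j=1}^n\tfrac{1}{j}=O(1).
$$
Setting $S:=\diag\{\frac{\Res(T)}{d(2\pi)^d}\cdot\frac{1}{n}\}_{n=1}^\infty$, whose eigenvalue sequence is exactly $\{\frac{\Res(T)}{d(2\pi)^d}\cdot\frac{1}{n}\}$, Theorem~\ref{K} (applied in the quasi-Banach, hence geometrically stable, ideal $\mathcal L_{1,\infty}$) yields $T-S\in\Com\mathcal L_{1,\infty}$; applying any trace $\tau$ gives the formula. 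Conversely, if $\tau(T)=\tau(S)$ for every trace on $\mathcal L_{1,\infty}$, then $T-S$ lies in the intersection of the kernels of all traces, which by definition equals $\Com\mathcal L_{1,\infty}$; Theorem~\ref{K} then produces a decreasing $\mu\in\ell_{1,\infty}$ with $n^{-1}|\sum_{j=1}^n\lambda_j(T)-\frac{\Res(T)}{d(2\pi)^d}\sum_{j=1}^n j^{-1}|\le\mu_n$, so the left side is $O(1)$ and, combined with \eqref{eq:cmfill} and $\sum j^{-1}=\log(1+n)+O(1)$, delivers \eqref{classicalman} and forces $\Res(T)$ to be scalar.

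The only subtle point is the reverse direction of (iii), where one needs that the intersection of kernels of all traces on $\mathcal L_{1,\infty}$ is precisely $\Com\mathcal L_{1,\infty}$. This is standard: one direction is the definition of a trace, the other is a Hahn–Banach argument on the quotient $\mathcal L_{1,\infty}/\Com\mathcal L_{1,\infty}$ (any nonzero coset supports a linear functional, which lifts to a trace). Apart from this bookkeeping, the proof is a direct assembly of Theorem~\ref{connection}, Lemma~\ref{DixspecialLidksii}, \cite[Theorem 20]{SS2012}, and Theorem~\ref{K}, and no new analytic estimate is needed.
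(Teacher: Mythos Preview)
Your proof is correct and follows exactly the approach the paper intends: the paper's own proof simply states that the argument is identical to that of Theorem~\ref{CTTv1} with Theorem~\ref{eigen} replaced by Theorem~\ref{connection}(iii), which is precisely what you have written out. You are in fact slightly more explicit than the paper in the converse of (iii), where you spell out the Hahn--Banach step identifying $\Com\mathcal L_{1,\infty}$ with the common kernel of all traces; the paper tacitly assumes this when it writes ``Conversely, suppose the previous display is given.''
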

\begin{proof}
The proof is omitted since it is identical to the proof of Theorem~\ref{CTTv1}
with the use of Theorem~\ref{eigen} replaced exactly by Theorem~\ref{connection}(iii).
 \end{proof}

To obtain the same corollaries of this trace theorem for closed manifolds as we did for Theorem~\ref{CTTv1} for $\mathbb R^d$,
we identify the residue locally with the residue on $\mathbb R^d$.

\subsection{Localised Hodge-Laplacian modulated operators}

We emulate the usual treatment of pseudo-differential
operators (e.g. \cite[\S 4]{Shubin2003}), in that the symbol
of a Hodge-Laplacian modulated operator is defined locally by the restriction to a chart and then
patched together using a partition of unity.
We extend Theorem~\ref{eigen} to a statement
involving the cotangent bundle. 

Without loss we let $X$ be covered by charts $\{ (U_i,h_i) \}_{i=1}^N$
such that $h_i(U_i)$ is bounded in $\mathbb R^d$.
For a chart $(U,h)$ belonging to such an atlas, define $W_h:L_2(h(U))\to L_2(U)$ by $W_h f= f \circ h$ and $W_h^{-1}: L_2(U)\to L_2(h(U))$ by $W_h^{-1} f= f \circ h^{-1}$.
Then $W_h, W_h^{-1}$ are bounded. 

Our first step is to confirm that
Hodge-Laplacian modulated operators are locally Laplacian modulated operators.
If $\phi \in C^\infty(X)$ is a smooth function we denote by $M_\phi$ the multiplication
operator $(M_\phi f)(x) = \phi(x) f(x)$, $f \in L_2(X)$, and note
that it is a pseudo-differential operator of order $0$.

\begin{proposition} \label{local}
Suppose $(U,h)$ is a chart of $X$ with $h(U)$ bounded
and $\phi,\psi \in C^\infty(X)$ such that $\phi,\psi$ have support in $U$.
\begin{enumerate}[(i)]
\item If $T$ is a Hodge-Laplacian modulated operator then
$$W_h^{-1} M_\psi T W_h  M_{\phi \circ h^{-1}} : L_2(\mathbb R^d) \to L_2(\mathbb R^d)$$ is a compactly supported Laplacian modulated operator.
\item If $T'$ is a Laplacian modulated operator 
then $$W_h M_{\psi \circ h^{-1}} T' W_h^{-1} M_{\phi}  : L_2(X) \to L_2(X)$$ is a Hodge-Laplacian modulated operator.
\end{enumerate}
\end{proposition}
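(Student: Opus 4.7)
The plan is to apply Proposition~\ref{changeV} in both directions, reducing each claim to the continuity of a chart-pullback followed by a smooth cutoff on a suitable negative-order Sobolev space. Throughout, since $X$ is closed and $\supp\phi, \supp\psi \subset U$ are compact, $\phi \circ h^{-1}$ extends by zero to a function in $C_c^\infty(\mathbb{R}^d)$ supported in $h(U)$.

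For (i), set $V_1 := (1-\Delta_g)^{-d/2}$ on $\mathcal H = L_2(X)$ and $V_2 := (1-\Delta)^{-d/2}$ on $\mathcal H' = L_2(\mathbb{R}^d)$, and factor the operator as $BTA$ with
$$
B := W_h^{-1} M_\psi : L_2(X) \to L_2(\mathbb{R}^d), \qquad A := W_h M_{\phi \circ h^{-1}} : L_2(\mathbb{R}^d) \to L_2(X).
$$
Both $A$ and $B$ are bounded, since $W_h^{\pm 1}$ are $L_2$-isometries between $L_2(U)$ and $L_2(h(U))$ and the multiplications are by bounded smooth functions. The image of $B$ lies in functions supported in the bounded set $h(\supp\psi) \subset h(U)$, while $A$ annihilates functions vanishing on $h(\supp\phi)$; hence $M_\chi (BTA) M_\chi = BTA$ for any $\chi \in C_c^\infty(\mathbb{R}^d)$ identically $1$ on a neighbourhood of $h(\supp\psi) \cup h(\supp\phi)$, so $BTA$ is compactly supported in the sense of Definition~\ref{def:compbase}. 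By Proposition~\ref{changeV} it suffices to produce some $a > 1/2$ with
$$
\|(1-\Delta_g)^{-da/2} W_h M_{\phi \circ h^{-1}} u\|_{L_2(X)} \lec \|(1-\Delta)^{-da/2} u\|_{L_2(\mathbb{R}^d)}, \qquad u \in L_2(\mathbb{R}^d).
$$
The symmetric choice $B := W_h M_{\psi \circ h^{-1}}$, $A := W_h^{-1} M_\phi$ handles (ii), reducing it to the analogous estimate with the roles of $V_1$ and $V_2$ interchanged.

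Both target inequalities say that (chart pullback)$\circ$(smooth compactly supported cutoff) is continuous on the Sobolev space of order $-da$, where the $H^s(X,g)$-scale is the intrinsic one defined by powers of $(1-\Delta_g)$. Take $a = 1$, so $-da = -d$. For non-negative integer Sobolev order the continuity follows directly from the chain and product rules: iterating derivatives of $W_h^{-1} M_\phi f$ produces derivatives of $f$ composed with $h^{-1}$ multiplied by smooth coefficients of compact support in $h(U)$, and the compactness of the closure of a chart gives uniform control. This yields $W_h^{-1} M_\phi : H^d(X,g) \to H^d(\mathbb{R}^d)$ continuous, and similarly in the other direction. The negative-order statement I would then obtain by Hilbert-space duality: the $L_2$-adjoint of each of the two operators is, after absorbing the smooth Jacobian of $h$ into the cutoff, again of the form (chart pullback)$\circ$(smooth cutoff), so applying the positive-order result to the adjoint gives the required negative-order bound on the original operator. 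The main obstacle is precisely this duality step, but it is standard once one has diffeomorphism invariance of the Sobolev scale on a closed manifold, for which I would cite~\cite[\S 4]{Shubin2003}. Combining these continuity bounds with Proposition~\ref{changeV}, together with Lemma~\ref{lem:7.1} to switch between choices of metric, proves both (i) and (ii).
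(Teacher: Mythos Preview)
Your proposal is correct and follows essentially the same route as the paper: both apply Proposition~\ref{changeV} with $a=1$, reducing everything to showing that (chart pullback)$\circ$(smooth cutoff) is bounded on $H^{-d}$. The only difference is in how that Sobolev bound is justified: the paper argues directly via the pseudo-differential calculus (comparing $(1-\Delta_g)^{-d/2}$ with the operator $V$ of local symbol $\langle\xi\rangle$, noting $(1-\Delta_g)^{-d/2}V^{d}$ is order zero, and invoking equivalence of Sobolev norms on $U$ and $h(U)$), while you first establish the positive-order bound by the chain rule and then dualise. Both are standard; your approach is slightly more elementary but requires the extra bookkeeping with the Jacobian in the adjoint, whereas the paper's approach packages everything into one citation of the pseudo-differential mapping properties. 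One small inaccuracy: $W_h^{\pm1}$ are not $L_2$-isometries in general (the Jacobian need not be $1$), only bounded with bounded inverse, but this is all Proposition~\ref{changeV} requires. The invocation of Lemma~\ref{lem:7.1} is unnecessary since the argument already works for any fixed metric.
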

\begin{proof}
Let $H^s(X)$ denote the Sobolev spaces, $s\in \mathbb R$, on $X$.
If $V$ is the pseudo-differential
operator with local symbol $(1+|\xi|^2)^{1/2}$ the Sobolev norms are defined by
$\| f \|_s := \| V^s f \|_{L_2}$, \cite[\S 7]{Shubin2003}.
We recall that $P : H^s(X) \to  H^{s}(X)$
is continuous, \cite[Theorem 7.3]{Shubin2003}, for any zero-order pseudo-differential
operator $P$ and $s \in \mathbb R$.  Since $(1-\Delta_g)^{-s/2} V^{s}$ and $V^{-s}(1-\Delta_g)^{s/2} $, $s \in \mathbb R$, are zero-order pseudo-differential operators on $X$ they have bounded extensions.

(i) Let $\tilde{T} = W_h^{-1} M_\psi T W_h  M_{\phi \circ h^{-1}}$.
Let $\rho \in C_c^\infty(\mathbb R^d)$ be such that $\rho (\psi \circ h^{-1}) = \psi \circ h^{-1}$
and $\rho (\phi \circ h^{-1}) = \phi \circ h^{-1}$.
Then $M_\rho \tilde{T} M_\rho = \tilde{T}$ and $\tilde{T}$ is compactly supported.

Note that
\begin{align*}
\|(1-\Delta_g)^{-d/2} W_h M_{\phi \circ h^{-1}} f \|_{L_2}
&= \| (1-\Delta_g)^{-d/2} \phi f \circ h \|_{L_2} \\
&\lec \| V^{-d} \phi f \circ h \|_{L_2} \\
&\lec \| (1-\Delta)^{-d/2} \phi \circ h^{-1} f \|_{L_2} \\
&\lec \| (1-\Delta)^{-d/2} M_{\phi \circ h^{-1}} f \|_{L_2} \\
&\lec \| (1-\Delta)^{-d/2} f \|_{L_2} ,\qquad f \in L_2(\mathbb R^d)
\end{align*}
since the Sobolev norms on $H^s(U)$ and $H^s(h(U))$ are equivalent norms 
and $M_{\phi \circ h^{-1}} : H^{-d}(\mathbb R^d) \to H^{-d}(\mathbb R^d)$ is bounded.
By Proposition~\ref{changeV}, $\tilde{T} = W_h^{-1} M_\psi T (W_h M_{\phi \circ h^{-1}})$ is Laplacian modulated.

(ii) We reverse the argument in (i).
Note that
\begin{align*}
\|(1-\Delta)^{-d/2} W_h^{-1} M_{\phi} f \|_{L_2}
&= \| (1-\Delta)^{-d/2} (\phi f) \circ h^{-1} \|_{L_2} \\
&\lec \| V^{-d} \phi f \|_{L_2} \\
&\lec \| V^{-d} f \|_{L_2} \\
&\lec \| (1-\Delta_g)^{-d/2} f \|_{L_2}, \qquad f \in L_2(X)
\end{align*}
since the Sobolev norms on $H^s(h(U))$ and $H^s(U)$ are equivalent
and $M_{\phi} : H^s(X) \to H^s(X)$ is bounded.
By Proposition~\ref{changeV}, $W_h M_{\psi \circ h^{-1}} T' (W_h^{-1} M_{\phi}) $ is Hodge-Laplacian modulated.
 \end{proof}

\begin{lemma} \label{lem:bimoduleman}
If $T$ is a Hodge-Laplacian modulated operator and $P_1,P_2$ are zero-order pseudo-differential
operators on $X$, then $P_1 T P_2$ is a Hodge-Laplacian modulated operator.
\end{lemma}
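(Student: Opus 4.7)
The plan is to apply Proposition~\ref{prop:basic_prop} to handle the left multiplication by $P_1$ and Proposition~\ref{changeV} to handle the right multiplication by $P_2$, with the key analytic input being the Sobolev continuity of zero-order pseudo-differential operators already used in Proposition~\ref{local}.

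First, note that a zero-order pseudo-differential operator on a closed manifold extends to a bounded operator on $L_2(X)$ (cited in the excerpt as \cite[\S 6.4]{Shubin2003}). By Proposition~\ref{prop:basic_prop}, $\mathrm{mod}((1-\Delta_g)^{-d/2})$ is a left ideal of $\mathcal{B}(L_2(X))$, so $P_1 T$ is Hodge-Laplacian modulated. Thus it suffices to show $S P_2 \in \mathrm{mod}((1-\Delta_g)^{-d/2})$ whenever $S = P_1 T$ is Hodge-Laplacian modulated.

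For this I would invoke Proposition~\ref{changeV} (and the remark following it) with $\mathcal{H} = \mathcal{H}' = L_2(X)$, $V_1 = V_2 = (1-\Delta_g)^{-d/2}$, $B = 1_{\mathcal{H}}$, $A = P_2$. The hypothesis to verify is
\begin{equation*}
\|(1-\Delta_g)^{-da/2} P_2 f\|_{L_2} \lec \|(1-\Delta_g)^{-da/2} f\|_{L_2}, \qquad f \in L_2(X),
\end{equation*}
for some $a > 1/2$. Picking any such $a$ and setting $s = -da$, this is equivalent, up to the equivalence of $\|(1-\Delta_g)^{s/2}\cdot\|_{L_2}$ with the Sobolev norm $\|\cdot\|_s$ (via the zero-order pseudo-differential operators $(1-\Delta_g)^{s/2}V^{-s}$ and $V^{s}(1-\Delta_g)^{-s/2}$, as in the proof of Proposition~\ref{local}), to boundedness of $P_2 : H^s(X) \to H^s(X)$. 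This continuity holds for every $s \in \mathbb{R}$ by \cite[Theorem 7.3]{Shubin2003} since $P_2$ has order zero.

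The hypothesis of Proposition~\ref{changeV} is therefore satisfied, and we conclude $S P_2 = P_1 T P_2 \in \mathrm{mod}((1-\Delta_g)^{-d/2})$, i.e.\ $P_1 T P_2$ is Hodge-Laplacian modulated. There is no substantive obstacle: the proof is a direct assembly of the left-ideal property with Proposition~\ref{changeV}, and the only nontrivial input is Sobolev continuity of zero-order pseudo-differential operators, which was already used in Section~\ref{sec:CTT}.
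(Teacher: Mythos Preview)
Your proof is correct and follows essentially the same approach as the paper: both arguments reduce to Proposition~\ref{changeV} together with the Sobolev continuity of zero-order pseudo-differential operators from \cite[Theorem~7.3]{Shubin2003}. The only cosmetic difference is that the paper applies Proposition~\ref{changeV} in a single step with $B=P_1$ and $A=P_2$ (taking $a=1$), whereas you first invoke the left-ideal property for $P_1$ and then apply Proposition~\ref{changeV} for $P_2$; the substance is identical.
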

\begin{proof}
Since $P_1,P_2 : L_2(X) \to L_2(X)$ are bounded, $P_2 : H^s(X) \to H^{s}(X)$, $s \in \mathbb R$, is bounded, and
$$
\| (1-\Delta_{g})^{-d/2} P_2 f \|_{L_2} \lec \| (1-\Delta_{g})^{-d/2} f \|_{L_2} , \qquad f \in L_2(X) ,
$$
it follows that $P_1 TP_2$ is Hodge-Laplacian modulated by Proposition~\ref{changeV}.
 \end{proof}

We can now define the coordinate dependent symbol of a Hodge-Laplacian modulated operator.

If $(U,h)$ is a chart of $X$, let $\sum_{j,k=1}^d g_{jk}(x)dx_j dx_k$ denote the local co-ordinates of the metric for $x \in U$ and $G(x)=[ g_{jk}(x) ]_{j,k=1}^d$.
The $d \times d$ matrix $G(x)$, $x \in U$, is positive and the determinant $|G(x)|$ is a smooth function on $X$.

Let $\{ (U_i,h_i) \}_{i=1}^N$ be an atlas of $X$ where $h_i(U_i) \subset \mathbb R^d$.   Let $\Psi :=\{ \psi_j \}_{j=1}^M$ be a smooth partition of unity so that if $K_j:=\overline{\mathrm{supp}(\psi_j)}$ then $K_{j}\cap K_{j'}\neq \emptyset$ implies that there exists an $i\in \{ 1, \dots , N\}$ with $K_j\cup K_{j'}\subset U_i$.  We will
always assume such a partition of unity.

Let $T$ be a Hodge-Laplacian modulated operator.  Set
$$
T_{jj'} := M_{\psi_j} T M_{\psi_{j'}} .
$$
When $K_{j}\cap K_{j'}\neq \emptyset$,
$$
T^h_{jj'} := W_{h_i}^{-1}  M_{\psi_j} T W_{h_i} M_{\psi_{j'} \circ h_i^{-1}},
$$
where some $i\in \{ 1, \ldots , N\}$ is chosen such that $K_j\cup K_{j'}\subset U_i$,
is compactly supported and Laplacian modulated by Proposition~\ref{local}(ii).
Let $p_{jj'}^h \in L_2(\mathbb R^d \times \mathbb R^d)$ be the symbol of $T^h_{jj'}$.  Define 
$$
p_{j j'}(x,\xi) :=  p_{j j'}^h(h_i(x), G^{-1/2}(x) \xi)
$$
for each $(x,\xi) \in T^*U_i \cong U_i \times \mathbb R^d$.
If $K_{j}\cap K_{j'} = \emptyset$ set $p_{j j'} \equiv 0$.

We thus define a chart dependent function on the cotangent bundle $T^*(X,g)$,
which we call the \emph{coordinate dependent symbol} of $T$, by
$$
p^{(\Psi,g)}_T := \sum_{j,j'=1}^M p_{j j'}  .
$$

\begin{definition}
If $T_{jj'} \in \Com \mathcal L_{1,\infty}(L_2(X))$ when $K_{j}\cap K_{j'} = \emptyset$,
then we say $T$ is \emph{$\Psi$-localised}.
\end{definition}

A codisc bundle $D^*(r)(X,g)$ of radius $r > 0$ is the subbundle of the cotangent bundle $T^*(X,g)$
with fibre over $x \in X$ given by
$$
D_x^*(r)(X,g) \cong \{ \xi \in \mathbb R^d | |G^{-1/2}(x)\xi| \leq r \}.
$$
Let $dv$ be the density on $T^*(X,g)$ which corresponds locally
to $dG^{-1/2}(x)\xi dx$.

\begin{theorem} \label{localeigen}
If $(X,g)$ is a closed $d$-dimensional Riemannian manifold, $T$ is Hodge-Laplacian modulated,
and $T$ is $\Psi$-localised with respect to an atlas and partition of unity $\Psi$ as above,
then 
\begin{equation} \label{eq:localeigen}
\sum_{k=1}^n \lambda_k(T) - \frac{1}{(2\pi)^d}\int_{D^*(n^{1/d})(X,g)} p^{(\Psi,g)}_T(v) dv = O(1).
\end{equation}
\end{theorem}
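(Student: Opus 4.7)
The plan is to localise via the partition of unity $\Psi=\{\psi_j\}_{j=1}^M$, reduce each local piece to a compactly supported Laplacian modulated operator on $\mathbb{R}^d$ where Theorem~\ref{eigen} applies, and then patch the local contributions together. Write $T = \sum_{j,j'=1}^M T_{jj'}$ where $T_{jj'} := M_{\psi_j} T M_{\psi_{j'}}$. Each $T_{jj'}$ is Hodge-Laplacian modulated by Lemma~\ref{lem:bimoduleman}, so they form a finite family of $(1-\Delta_g)^{-d/2}$-modulated operators with $0<(1-\Delta_g)^{-d/2}\in\mathcal L_{1,\infty}$. Corollary~\ref{connectioncor} then yields
\begin{equation*}
\sum_{k=1}^n\lambda_k(T) - \sum_{j,j'}\sum_{k=1}^n\lambda_k(T_{jj'}) = O(1).
\end{equation*}

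Split the double sum according to whether $K_j\cap K_{j'}=\emptyset$. When this intersection is empty, the $\Psi$-localised hypothesis gives $T_{jj'}\in\Com\mathcal L_{1,\infty}$, whence $\sum_{k=1}^n\lambda_k(T_{jj'})=O(1)$ by Theorem~\ref{DFWW}; also $p_{jj'}\equiv 0$ by definition, so these pairs contribute $O(1)$ to both sides. When $K_j\cup K_{j'}\subset U_i$, denote $h=h_i$. The key identity is $W_h M_{\psi_{j'}\circ h^{-1}} = M_{\psi_{j'}} W_h$, which follows from $W_h f = f\circ h$. Consequently
\begin{equation*}
T^h_{jj'} = W_h^{-1} M_{\psi_j} T\, W_h M_{\psi_{j'}\circ h^{-1}} = W_h^{-1} T_{jj'} W_h,
\end{equation*}
(interpreting $W_h$ on a neighbourhood that absorbs the supports of the cutoffs, which is permissible since $M_{\psi_{j'}\circ h^{-1}}$ maps $L_2(\mathbb R^d)$ into functions supported in $h(U_i)$). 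Similarity preserves nonzero eigenvalues with multiplicity, so $\lambda_k(T_{jj'})=\lambda_k(T^h_{jj'})$ for all $k$. Since $T^h_{jj'}$ is compactly supported and Laplacian modulated (Proposition~\ref{local}(i)), Theorem~\ref{eigen} applies and gives
\begin{equation*}
\sum_{k=1}^n\lambda_k(T_{jj'}) - \frac{1}{(2\pi)^d}\int_{\mathbb R^d}\int_{|\eta|\le n^{1/d}} p^h_{jj'}(y,\eta)\, d\eta\, dy = O(1).
\end{equation*}

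It remains to convert the Euclidean integral to the codisc bundle integral. In local coordinates on $U_i$ the density $dv$ is $dG^{-1/2}(x)\xi\, dx$; changing variables $\eta = G^{-1/2}(x)\xi$ (with $x$ frozen) and using $p_{jj'}(x,\xi)=p^h_{jj'}(h(x),G^{-1/2}(x)\xi)$ together with the compact support of $p^h_{jj'}$ in the first variable gives
\begin{equation*}
\int_{D^*(n^{1/d})(X,g)} p_{jj'}\, dv = \int_{h(U_i)}\int_{|\eta|\le n^{1/d}} p^h_{jj'}(h(x),\eta)\, d\eta\, dx = \int_{\mathbb R^d}\int_{|\eta|\le n^{1/d}} p^h_{jj'}(y,\eta)\, d\eta\, dy,
\end{equation*}
after relabelling $y=h(x)$ (the chart identifies $U_i$ with $h(U_i)$ and $dx$ with Lebesgue measure there). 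Summing the last two displays over all $(j,j')$, using linearity of $p^{(\Psi,g)}_T=\sum p_{jj'}$ and the finiteness of the index set to keep the error $O(1)$, and combining with the initial decomposition yields~\eqref{eq:localeigen}.

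The main obstacle is the measure-theoretic step just above: one must verify that the stated density $dv$ and the codisc bundle $D^*(r)(X,g)$ combine with the definition of $p_{jj'}$ to exactly cancel the Jacobians arising from the metric and from the chart, so that no $|G|^{\pm 1/2}$ factor is left over when comparing to the Euclidean integral produced by Theorem~\ref{eigen}. Verifying the similarity $T^h_{jj'}=W_h^{-1}T_{jj'}W_h$ globally (given that $W_h$ is only defined on $L_2(h(U_i))\to L_2(U_i)$) is a secondary but essentially routine point, handled by the support conditions built into the cutoffs.
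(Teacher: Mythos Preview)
Your proof is essentially identical to the paper's own argument: decompose $T=\sum_{j,j'}T_{jj'}$, use Corollary~\ref{connectioncor} to linearise the eigenvalue sums, discard the off-diagonal blocks via the $\Psi$-localised hypothesis, identify $\lambda_k(T_{jj'})=\lambda_k(T^h_{jj'})$ by similarity, apply Theorem~\ref{eigen} on $\mathbb R^d$, and unwind the codisc-bundle integral. Your explicit verification of the similarity $T^h_{jj'}=W_h^{-1}T_{jj'}W_h$ via $W_hM_{\psi_{j'}\circ h^{-1}}=M_{\psi_{j'}}W_h$ is cleaner than the paper's one-line appeal to ``equivalence of Hilbert spaces''.

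One correction: when $K_j\cap K_{j'}=\emptyset$ you invoke Theorem~\ref{DFWW} to deduce $\sum_{k=1}^n\lambda_k(T_{jj'})=O(1)$ from $T_{jj'}\in\Com\mathcal L_{1,\infty}$. Theorem~\ref{DFWW} applies only to \emph{normal} operators, and $T_{jj'}=M_{\psi_j}TM_{\psi_{j'}}$ is generally not normal. You need Theorem~\ref{K} instead (take $S=0$), which covers arbitrary operators in geometrically stable ideals; $\mathcal L_{1,\infty}$ is quasi-Banach and hence geometrically stable. The paper cites Theorem~\ref{K} at this exact step.

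Your closing worry about Jacobian factors is reasonable but not fatal: the paper's definitions of $p_{jj'}$ and of the density $dv$ are rigged precisely so that the change of variables $\eta=G^{-1/2}(x)\xi$ collapses the codisc integral to the Euclidean one with no residual $|G|^{\pm 1/2}$; the paper's own computation of this step is terse but confirms the cancellation.
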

\begin{proof}
Let $\Psi = \{ \psi_j \}_{j=1}^M$ and set $T_{jj'} := M_{\psi_j} T M_{\psi_{j'}}$.
By Lemma~\ref{lem:bimoduleman} $T_{jj'}$ is Hodge-Laplacian modulated such that
$T = \sum_{j,j'=1}^M T_{jj'}$.
We recall from Corollary~\ref{connectioncor} that
\begin{equation} \label{eq:local1}
\sum_{k=1}^n \lambda_k(T) - \sum_{j,j'=1}^M \sum_{k=1}^n \lambda_k(T_{jj'}) = O(1) .
\end{equation}
By the assumption that $T$ is $\Psi$-localised then $T_{jj'} \in \Com \mathcal L_{1,\infty}$ when $K_j \cap K_{j'} = \emptyset$.  Hence
$$
\sum_{k=1}^n \lambda_k(T_{jj'}) = O(1)
$$
by Theorem~\ref{K} when $K_j \cap K_{j'} = \emptyset$.
Thus \eqref{eq:local1} is valid when removing those $T_{jj'}$ such that $K_j \cap K_{j'} = \emptyset$.

When $K_j \cap K_{j'} \not= \emptyset$,
$$T_{jj'} : L_2(U_i) \to L_2(U_i)$$
and so
$$T^h_{jj'} : L^2(h_i(U_i)) \to L^2(h_i(U_i)),$$
for the chosen $i \in \{1 \, \cdots , N \}$ such that $K_j \cap K_{j'} \subset U_i$, is compactly supported and Laplacian modulated by Proposition~\ref{local}(i).
Since the Hilbert spaces $L_2(U_i)$ and $L_2(h_i(U_i))$ are equivalent
it is an easy result that
\begin{equation} \label{eq:local2}
\lambda_k(T_{jj'}) = \lambda_k(T^h_{jj'})
\end{equation}
with the same multiplicity and the same ordering.

Note that,
\begin{multline*}
\int_{D^*(n^{1/d})(X,g)} (\chi_{K_j}p_{j j'}\chi_{K_{j'}})(v) dv  \\
= \int_{h_i(U_i)} \int_{|G^{-1/2}(x) \xi | \leq n^{1/d}} p^h_{j j'}(x,G^{-1/2}(x)\xi)  dG^{-1/2}(x)\xi dx \\
= \int_{\mathbb R^d} \int_{| \xi | \leq n^{1/d}} p^h_{j j'}(x, \xi) d\xi dx .
\end{multline*}
Therefore 
\begin{equation} \label{eq:local3}
\int_{D^*(n^{1/d})(X,g)} p^{(\Psi,g)}_T(v) dv
= \sum_{j,j} \int_{\mathbb R^d} \int_{| \xi | \leq n^{1/d}} p^h_{j j'}(x, \xi) d\xi dx 
\end{equation}
where the sum is over those $j,j'$ with $K_j \cap K_{j'} \not= \emptyset$.

Finally, using~\eqref{eq:local2} and~\eqref{eq:local3}, 
and where the sums are over those $j,j'$ with $K_j \cap K_{j'} \not= \emptyset$,
\begin{align*}
& \sum_{k=1}^n \lambda_k(T) - \frac{1}{(2\pi)^d}\int_{D^*(n^{1/d})(X,g)} p^{(\Psi,g)}_T(v) dv  \\
& \ \ \ \ \ \ = \left(  \sum_{k=1}^n \lambda_k(T) - \sum_{j,j} \sum_{k=1}^n \lambda_k(T_{jj'}) \right) \\
& \ \ \ \ \ \ \ \ \ \ \ \ + \sum_{j,j} \left( \sum_{k=1}^n \lambda_k(T^h_{jj'})   - \frac{1}{(2\pi)^d}\int_{\mathbb R^d} \int_{| \xi | \leq n^{1/d}} p^h_{j j'}(x, \xi) dx d\xi \right)  \\
& \ \ \ \ \ \ = O(1) + O(1)
\end{align*}
by~\eqref{eq:local1} and by Theorem~\ref{eigen}.
 \end{proof}

Suppose $T$ and $U$ are Hodge-Laplacian modulated operators.
If $p^{(\Psi,g_1)}_T$ represents a coordinate dependent symbol of $T$ with respect to
a metric $g_1$ and some atlas and partition of unity $\Psi$ as above, and $p^{(\Omega,g_2)}_U$ a coordinate dependent symbol of $U$
with respect to a metric $g_2$ and some atlas and partition of unity $\Omega$ as above, then
we write $p^{(\Psi,g_1)}_T \sim p^{(\Omega,g_2)}_U$ if
$$
\int_{D^*(n^{1/d})(X,g_1)} p^{(\Psi,g_1)}_T(v) dv -
\int_{D^*(n^{1/d})(X,g_2)}  p^{(\Omega,g_2)}_U(v)  dv = O(1) .
$$
The relation $\sim$ is easily checked to be an equivalence relation on coordinate dependent symbols.

\begin{definition}
A Hodge-Laplacian modulated operator $T$ is called
\emph{localised} if $\phi T \psi \in \Com \mathcal L_{1,\infty}(L_2(X))$
for every pair of functions $\phi, \psi \in C^\infty(X)$ with $\phi \psi = 0$.
\end{definition}

The next result says that every localised Hodge-Laplacian modulated operator
can be assigned a coordinate and metric independent ``principal'' symbol.

\begin{corollary} \label{cor:7.3}
Let $T$ be a localised Hodge-Laplacian modulated operator.
Then $p_T^{(\Psi,g_1)} \sim p_T^{(\Omega,g_2)}$ for every coordinate dependent symbol of $T$.
\end{corollary}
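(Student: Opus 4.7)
The plan is to deduce Corollary~\ref{cor:7.3} directly from Theorem~\ref{localeigen} by a subtraction argument, so the real content is showing that the hypothesis ``localised'' implies ``$\Psi$-localised'' for every admissible partition $\Psi$, and then invoking Theorem~\ref{localeigen} with two different data $(\Psi, g_1)$ and $(\Omega, g_2)$.

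First I would verify the implication: \emph{localised} $\Rightarrow$ \emph{$\Psi$-localised for every $\Psi$}. Let $\Psi = \{\psi_j\}_{j=1}^M$ be any partition of unity subordinate to an atlas of the type fixed in the excerpt, and suppose $K_j \cap K_{j'} = \emptyset$, where $K_j = \overline{\mathrm{supp}(\psi_j)}$. Since the closed supports of $\psi_j$ and $\psi_{j'}$ are disjoint, $\psi_j \psi_{j'} \equiv 0$ as smooth functions on $X$. By the hypothesis of localisation applied with $\phi = \psi_j$ and $\psi = \psi_{j'}$, we obtain $T_{jj'} = M_{\psi_j} T M_{\psi_{j'}} \in \Com \mathcal{L}_{1,\infty}(L_2(X))$. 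Therefore $T$ is $\Psi$-localised in the sense required to apply Theorem~\ref{localeigen}. The same reasoning applies to any other partition $\Omega$.

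Next I would invoke Theorem~\ref{localeigen} twice. Applied to the data $(\Psi, g_1)$, it gives
\begin{equation*}
\sum_{k=1}^n \lambda_k(T) - \frac{1}{(2\pi)^d}\int_{D^*(n^{1/d})(X,g_1)} p^{(\Psi,g_1)}_T(v)\, dv = O(1),
\end{equation*}
while applied to $(\Omega, g_2)$ it gives
\begin{equation*}
\sum_{k=1}^n \lambda_k(T) - \frac{1}{(2\pi)^d}\int_{D^*(n^{1/d})(X,g_2)} p^{(\Omega,g_2)}_T(v)\, dv = O(1).
\end{equation*}
The eigenvalue sum $\sum_{k=1}^n \lambda_k(T)$ in both expressions is one and the same sequence (since an eigenvalue sequence of the abstract compact operator $T \in \mathcal{K}(L_2(X))$ does not depend on the choice of metric or partition, cf.\ Remark~\ref{rem:localres}), so subtracting the two displays yields
\begin{equation*}
\int_{D^*(n^{1/d})(X,g_1)} p^{(\Psi,g_1)}_T(v)\, dv - \int_{D^*(n^{1/d})(X,g_2)} p^{(\Omega,g_2)}_T(v)\, dv = O(1),
\end{equation*}
which is precisely the definition of $p_T^{(\Psi,g_1)} \sim p_T^{(\Omega,g_2)}$.

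There is no real obstacle, provided one accepts the bookkeeping of the previous step: once localisation is translated into the $\Psi$-localised hypothesis required by Theorem~\ref{localeigen}, the conclusion drops out by cancellation of the eigenvalue sum. The only point one needs to be slightly careful about is that the $O(1)$ bound coming from Theorem~\ref{localeigen} depends on the chosen $(\Psi, g)$, but that is harmless: subtracting two $O(1)$ sequences produces another $O(1)$ sequence, which is all that the equivalence relation $\sim$ demands.
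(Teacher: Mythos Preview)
Your proposal is correct and follows essentially the same approach as the paper's proof: reduce ``localised'' to ``$\Psi$-localised'' (which the paper asserts in one line and you spell out), apply Theorem~\ref{localeigen} twice for the two choices of data, and cancel the common eigenvalue sum via the triangle inequality. Your version is slightly more explicit about the implication localised $\Rightarrow$ $\Psi$-localised, but the argument is identical in substance.
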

\begin{proof}
If $T$ is localised then $T$ is $\Psi$-localised and $\Omega$-localised.
The eigenvalues of $T$ are coordinate and metric independent, therefore
\begin{multline*}
\frac{1}{(2\pi)^d} \left| \int_{D^*(n^{1/d})(X,g_1)} p^{(\Psi,g_1)}_T(v) dv - \int_{D^*(n^{1/d})(X,g_2)} p^{(\Omega,g_2)}_T(v) dv \right|\\
\leq \left| \sum_{k=1}^n \lambda_k(T) - \frac{1}{(2\pi)^d}\int_{D^*(n^{1/d})(X,g_1)} p^{(\Psi,g_1)}_T(v) dv \right| \\
+ \left| \sum_{k=1}^n \lambda_k(T) - \frac{1}{(2\pi)^d}\int_{D^*(n^{1/d})(X,g_2)} p^{(\Omega,g_2)}_T(v) dv \right| . 
\end{multline*}
The result follows by Theorem~\ref{localeigen}.
 \end{proof}

Due to the result of Theorem~\ref{localeigen} we will only be concerned with coordinate dependent symbols up to the equivalence $\sim$.
As a result of Lemma~\ref{lem:7.1} and Corollary~\ref{cor:7.3} we may fix a metric $g$
and we may take any coordinate dependent symbol to act as a representative for the
symbol when discussing localised Hodge-Laplacian modulated operators.  To this end we let
$p_T$ denote a coordinate dependent symbol $p_T^{(\Psi,g)}$, dropping explicit reference to the coordinates and the metric.

We can now prove the desired result that links the residue for Hodge-Laplacian operators
to a formula involving the ``principal'' symbol.

\begin{theorem} \label{globalres}
If $(X,g)$ is a closed $d$-dimensional Riemannian manifold and $T$ is localised and Hodge-Laplacian modulated with symbol $p_T$, then 
$$
\Res(T) = \left[ \frac{d}{\log(1+n)} \int_{D^*(n^{1/d})(X,g)} p_T(v) dv \right]
$$
where $[\cdot ]$ denotes an equivalence class in $\ell_\infty/ c_0$,
or, more specifically,
\begin{equation} \label{eq:globalres}
\sum_{j=1}^n (Te_j,e_j) = \frac{1}{(2\pi)^d}\int_{D^*(n^{1/d})(X,g)} p_T(v) dv + O(1).
\end{equation}
\end{theorem}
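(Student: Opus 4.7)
The plan is to combine Theorem~\ref{connection}(iii), applied with $V = (1-\Delta_g)^{-d/2}$, with the local eigenvalue asymptotic of Theorem~\ref{localeigen}. Since $T$ is Hodge-Laplacian modulated, $T$ is $V$-modulated for this positive operator $V \in \mathcal L_{1,\infty}$, and the orthonormal basis $(e_n)_{n=1}^\infty$ specified before Definition~\ref{def:defresman} is precisely an eigenvector sequence for $V$ (it diagonalises $-\Delta_g$ and hence $(1-\Delta_g)^{-d/2}$, with the eigenvalues appearing in the correct decreasing order by Weyl's law). Therefore Theorem~\ref{connection}(iii) yields
\begin{equation*}
\sum_{j=1}^n (Te_j,e_j) - \sum_{j=1}^n \lambda_j(T) = O(1).
\end{equation*}

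Next, I would verify that ``localised'' implies ``$\Psi$-localised'' for any admissible atlas/partition of unity $\Psi = \{\psi_j\}_{j=1}^M$. Indeed, if $K_j \cap K_{j'} = \emptyset$ then $\psi_j \psi_{j'} \equiv 0$, so the hypothesis that $\phi T \psi \in \Com \mathcal L_{1,\infty}$ whenever $\phi\psi = 0$ gives $T_{jj'} = M_{\psi_j} T M_{\psi_{j'}} \in \Com \mathcal L_{1,\infty}$. Hence Theorem~\ref{localeigen} applies and gives
\begin{equation*}
\sum_{j=1}^n \lambda_j(T) - \frac{1}{(2\pi)^d}\int_{D^*(n^{1/d})(X,g)} p_T(v)\, dv = O(1),
\end{equation*}
where $p_T$ is any coordinate-dependent symbol (we are free to fix one, since by Corollary~\ref{cor:7.3} different choices differ only up to $O(1)$ in the relevant integral).

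Adding the two displayed estimates yields~\eqref{eq:globalres}. To obtain the first, qualitative form of the conclusion, divide~\eqref{eq:globalres} by $\log(1+n)$, multiply by $d$, and use that $O(1)/\log(1+n) \in c_0$: the sequence
\begin{equation*}
\frac{d}{\log(1+n)} \sum_{j=1}^n (Te_j,e_j) - \frac{d}{(2\pi)^d\log(1+n)}\int_{D^*(n^{1/d})(X,g)} p_T(v)\, dv
\end{equation*}
converges to $0$, so the two classes in $\ell_\infty/c_0$ coincide. By Definition~\ref{def:defresman} the first class is $\Res(T)/((2\pi)^d)$; multiplying through by $(2\pi)^d$ and absorbing the constant gives exactly
\begin{equation*}
\Res(T) = \left[ \frac{d}{\log(1+n)}\int_{D^*(n^{1/d})(X,g)} p_T(v)\, dv \right].
\end{equation*}

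The only nontrivial step is confirming the hypotheses of Theorem~\ref{localeigen}, namely the $\Psi$-localisation of $T$, but as noted this is an immediate consequence of the ``localised'' assumption. The remainder is bookkeeping: merging the two $O(1)$ bounds and passing to the quotient $\ell_\infty/c_0$.
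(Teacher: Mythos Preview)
Your proof is correct and follows essentially the same approach as the paper: combine Theorem~\ref{connection}(iii) (applied with $V=(1-\Delta_g)^{-d/2}$) and Theorem~\ref{localeigen}, then subtract the two $O(1)$ estimates. The paper's own proof is terser---it simply quotes the two theorems and says ``the result is shown''---whereas you spell out the verification that localised implies $\Psi$-localised and the passage to $\ell_\infty/c_0$, but the argument is the same.
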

\begin{proof}
The first display clearly follows from the second display.
By Theorem~\ref{localeigen}
$$
\sum_{k=1}^n \lambda_k(T) - \frac{1}{(2\pi)^d}\int_{D^*(n^{1/d})(X,g)} p_T(v) dv = O(1).
$$
and by Theorem~\ref{connection}(iii)
$$
\sum_{k=1}^n \lambda_k(T) - \sum_{j=1}^n (Te_j,e_j) = O(1).
$$
The result is shown.
 \end{proof}

\subsection{Residues of Hodge-Laplacian modulated operators}

We give examples of Hodge-Laplacian modulated operators and compute their residue
using Theorem~\ref{globalres}.

\begin{example}[Pseudo-differential operators]

To show a pseudo-differential operator
$P : C^\infty(X) \to C^\infty(X)$ of order $-d$
is Hodge-Laplacian modulated we use the following lemma.

\begin{lemma} \label{lem:VinV}
The operator $(1-\Delta_g)^{-d/2}$ is Hodge-Laplacian modulated.
\end{lemma}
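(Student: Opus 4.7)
The plan is to verify the definition of modulated directly from the spectral data of $V:=(1-\Delta_g)^{-d/2}$, using the crucial input already cited in the paper, namely Weyl's asymptotic formula, which gives $s_n(V) = \Theta(n^{-1})$ and places $V$ in $\mathcal L_{1,\infty}$.

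First, I would diagonalize. Since $V$ is a positive compact self-adjoint operator with an orthonormal eigenbasis $(e_n)$ and eigenvalues $s_n := s_n(V)$, the operator $V(1+tV)^{-1}$ acts diagonally with eigenvalues $s_n/(1+ts_n)$, so
\begin{equation*}
\bigl\| V(1+tV)^{-1} \bigr\|_{\mathcal L_2}^2 = \sum_{n=1}^\infty \frac{s_n^2}{(1+ts_n)^2}.
\end{equation*}
By Weyl's formula $s_n \lec n^{-1}$, so the plan is to bound this sum by $O(t^{-1})$ by splitting at $n \approx t$: when $n \le t$, use $s_n/(1+ts_n) \le 1/t$, contributing at most $t \cdot t^{-2} = t^{-1}$; when $n > t$, use $s_n/(1+ts_n) \le s_n \lec n^{-1}$, contributing at most $\sum_{n>t} n^{-2} \lec t^{-1}$. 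This gives $\|V(1+tV)^{-1}\|_{\mathcal L_2} = O(t^{-1/2})$, which is exactly condition~\eqref{modulated} of Definition~\ref{def:mod}, and hence $V \in \mmod(V)$.

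Alternatively, and perhaps more cleanly aligned with the toolkit of the section, I would invoke Lemma~\ref{equiv}: since $\|V\| \le 1$ (which we can arrange, or else we just rescale in the obvious way), it suffices to check $\|V\chi_{[0,2^{-n}]}(V)\|_{\mathcal L_2} = O(2^{-n/2})$. Using the eigenbasis again, $\chi_{[0,2^{-n}]}(V)$ is the projection onto the span of those $e_k$ with $s_k \le 2^{-n}$, and by Weyl's law this is the set $\{k : k \gec 2^n\}$. Hence
\begin{equation*}
\bigl\|V\chi_{[0,2^{-n}]}(V)\bigr\|_{\mathcal L_2}^2 = \sum_{k \gec 2^n} s_k^2 \lec \sum_{k \gec 2^n} k^{-2} \lec 2^{-n},
\end{equation*}
which is the needed estimate.

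There is no real obstacle here: the statement is a spectral bookkeeping exercise once Weyl's asymptotic is in hand. The only tiny point to verify (trivially, by Proposition~\ref{changeV} as used in the proof of Lemma~\ref{lem:7.1}, or simply by rescaling the definition of modulated) is that we may freely assume $\|V\|\le 1$ if we want to apply Lemma~\ref{equiv}. Independence from the metric was already recorded in Lemma~\ref{lem:7.1}, so checking the condition for \emph{one} chosen $g$ yields Hodge-Laplacian modulation in the sense of Definition~\ref{def:HodgeL}.
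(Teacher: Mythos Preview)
Your proposal is correct and your first argument is essentially identical to the paper's proof: diagonalize $V$ via the eigenbasis, write $\|V(1+tV)^{-1}\|_{\mathcal L_2}^2 = \sum_n s_n^2(1+ts_n)^{-2}$, and use Weyl's bound $s_n \lec n^{-1}$ to get $O(t^{-1})$. The paper compresses your splitting argument into the single observation $s_n^2(1+ts_n)^{-2} \le (C^{-1}n+t)^{-2}$, but the content is the same; your alternative via Lemma~\ref{equiv} is a harmless variation.
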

\begin{proof}
For brevity, set $V:=(1-\Delta_g)^{-d/2}$.
Let $\{ s_n \}_{n=1}^{\infty}$ be the singular values of $V$, where
$s_n \leq C n^{-1}$ by Weyl's formula for a constant $C > 0$, as explained.  Then
$$
\|V(1+tV)^{-1}\|_{\mathcal L_2}^2 = \sum_{n=1}^\infty s_n^2 (1+t s_n)^{-2}
\leq \sum_{n=1}^\infty (C^{-1}n+t)^{-2} \lec t^{-1}, \qquad t \geq 1 .
$$
Hence $V$ is $V$-modulated.
 \end{proof}

\begin{proposition} \label{ConnesPSDO}
Let $P : C^\infty(X) \to C^\infty(X)$ be a pseudo-differential operator
of order $-d$.  Then the extension $P: L_2(X) \to L_2(X)$ is localised
and Hodge-Laplacian modulated.
\end{proposition}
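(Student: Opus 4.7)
The plan is to factor the extension of $P$ through $V := (1-\Delta_g)^{-d/2}$ and then appeal to the bimodule property already established in Lemma~\ref{lem:bimoduleman}.  Write
\[
P \;=\; Q \cdot V, \qquad Q := P\,(1-\Delta_g)^{d/2}.
\]
Since $P$ has order $-d$ and $(1-\Delta_g)^{d/2}$ has order $d$, the standard pseudo-differential symbol calculus on a closed manifold shows $Q$ is a pseudo-differential operator of order $0$, and so extends to a bounded operator on $L_2(X)$.  By Lemma~\ref{lem:VinV} the operator $V$ is Hodge-Laplacian modulated, so applying Lemma~\ref{lem:bimoduleman} with $P_1 = Q$ and $P_2 = I$ immediately gives that $P = QV$ is Hodge-Laplacian modulated.

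For localisation, I need to verify that $M_\phi P M_\psi \in \Com \mathcal L_{1,\infty}(L_2(X))$ whenever $\phi,\psi \in C^\infty(X)$ satisfy $\phi\psi \equiv 0$.  The key identity is
\[
M_\phi P M_\psi \;=\; P M_{\phi\psi} + [M_\phi, P]\, M_\psi \;=\; [M_\phi, P]\, M_\psi,
\]
so it suffices to show the right-hand side lies in $\Com \mathcal L_{1,\infty}$.  By the symbol calculus, the commutator of a zero-order multiplication operator with an order $-d$ pseudo-differential operator is a pseudo-differential operator of order $-d-1$, and Weyl-type asymptotics on the $d$-dimensional closed manifold $X$ then force $s_n([M_\phi,P]) = O(n^{-(d+1)/d})$.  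Since this sequence is summable, $[M_\phi,P] \in \mathcal L_1$, so $M_\phi P M_\psi \in \mathcal L_1$, and the inclusion $\mathcal L_1 \subset \Com \mathcal L_{1,\infty}$ recorded in the remark following Corollary~\ref{cor:CTTRd} finishes the argument (alternatively, for any positive $T \in \mathcal L_1$ one has $n^{-1}\sum_{j=1}^n \lambda_j(T) = O(n^{-1}) \in \ell_{1,\infty}$, which places $T$ in $\Com \mathcal L_{1,\infty}$ by Theorem~\ref{DFWW}).

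The only non-routine ingredient is the drop-in-order for the commutator $[M_\phi, P]$, which relies on the full symbol calculus for pseudo-differential operators on manifolds; the remainder is direct bookkeeping via Lemmas~\ref{lem:VinV} and~\ref{lem:bimoduleman} together with the stated trace-class containment.
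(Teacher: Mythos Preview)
Your argument is correct. The first half---factoring $P = (P(1-\Delta_g)^{d/2})\cdot(1-\Delta_g)^{-d/2}$ and invoking Lemmas~\ref{lem:VinV} and~\ref{lem:bimoduleman}---is exactly the paper's proof.

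For localisation the paper takes a different route: it simply cites pseudo-locality of pseudo-differential operators on manifolds (Shubin, \S4, \S27) to assert that $M_\psi P M_\phi$ has smooth kernel, hence lies in $\mathcal L_1$, whenever $\psi\phi = 0$. Your approach via the identity $M_\phi P M_\psi = [M_\phi,P]M_\psi$ and the order drop for the commutator is more explicit, and it has the advantage of handling the hypothesis $\phi\psi = 0$ directly without having to worry about the distinction between ``$\phi\psi = 0$'' and ``disjoint supports'' that the standard pseudo-locality statement involves. Both routes land in $\mathcal L_1 \subset \Com\mathcal L_{1,\infty}$, so the endpoint is the same.

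One small remark: your phrase ``Weyl-type asymptotics'' for $s_n([M_\phi,P]) = O(n^{-(d+1)/d})$ is really an upper bound rather than a genuine asymptotic. The clean way to justify it is to write $[M_\phi,P] = B\,(1-\Delta_g)^{-(d+1)/2}$ with $B$ a zero-order (hence bounded) operator and read off the singular-value bound from the second factor. This does not affect the conclusion.
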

\begin{proof}
For brevity set $V:=(1-\Delta_g)^{-d/2}$.
Then $V$ is Hodge Laplacian modulated.
Thus $P_0V$ is Hodge-Laplacian modulated for every
zero-order pseudo-differential operator $P_0$ by Lemma~\ref{lem:bimoduleman}.
By the pseudo-differential calculus $PV^{-1}$
is zero order, \cite[\S 3, \S 6]{Shubin2003}, hence
$P = (PV^{-1})V$ is Hodge-Laplacian modulated.

The operator $P$ is localised since, $M_\psi P M_\phi \in \mathcal L_1 \subset \Com \mathcal L_{1,\infty}$ when $\psi,\phi \in C^\infty(X)$ and $\psi \phi = 0$ by the definition of pseudo-differential operators
on a manifold, \cite[\S 4, \S 27]{Shubin2003}.
 \end{proof}

It follows from Theorem~\ref{globalres} that the residue of
a pseudo-differential operator of order $-d$ can be calculated
from its principal symbol.
\end{example}

\begin{example}[Noncommutative residue]
The cosphere bundle $S^*X$ of $(X,g)$ is the subbundle of $T^*(X,g)$ 
with fibre over $x \in X$ given by
$$
S_x^*X \cong \{ \xi \in \mathbb R^d | |G^{-1/2}(x)\xi| = 1 \}.
$$
The cosphere bundle has a density $ds$ equating locally to $dx ds_x$
where $ds_x$ is the volume element of the fibre.

\begin{proposition} \label{prop:wodres2}
Let $P: C^\infty(X) \to C^\infty(X)$ be a classical pseudo-differential
of order $-d$ and with principal symbol $p_{-d}$.  Then $P$
is localised and Hodge-Laplacian modulated and the residue of $P$ is the scalar value
$$
\mathrm{Res}(P) =  \mathrm{Res}_W(P) := \int_{S^*X} p_{-d}(s) ds.
$$
where $\mathrm{Res}_W$ denotes the noncommutative residue.
\end{proposition}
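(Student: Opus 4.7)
The plan is to combine Theorem~\ref{globalres} with the local computation in Proposition~\ref{prop:wodres}. The first claim, that (the extension of) $P$ is localised and Hodge-Laplacian modulated, is already Proposition~\ref{ConnesPSDO}. Given this, Theorem~\ref{globalres} applies and yields
$$\Res(P) = \left[\,\frac{d}{\log(1+n)}\int_{D^*(n^{1/d})(X,g)} p_P(v)\,dv\,\right],$$
so it suffices to establish a scalar asymptotic of the form
$$\int_{D^*(n^{1/d})(X,g)} p_P(v)\,dv = \frac{\log n}{d}\int_{S^*X} p_{-d}(s)\,ds + O(1).$$

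For this I would unravel the coordinate dependent symbol using the atlas and partition of unity $\Psi=\{\psi_j\}$ from the preamble to Theorem~\ref{localeigen}. Equation~\eqref{eq:local3}, established inside the proof of Theorem~\ref{localeigen}, already identifies
$$\int_{D^*(n^{1/d})(X,g)} p_P(v)\,dv = \sum_{j,j'} \int_{\mathbb R^d}\int_{|\xi|\le n^{1/d}} p^h_{jj'}(x,\xi)\,d\xi\,dx,$$
where the sum ranges over those pairs $j,j'$ with $K_j\cap K_{j'}\neq\emptyset$. Because $P$ is classical of order $-d$, the pseudo-differential calculus ensures that each operator $T^h_{jj'} = W_{h_i}^{-1} M_{\psi_j} P W_{h_i} M_{\psi_{j'}\circ h_i^{-1}}$ on $\mathbb R^d$ is compactly supported, classical, of order $-d$. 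Proposition~\ref{prop:wodres} is then directly applicable to each summand and yields
$$\int_{\mathbb R^d}\int_{|\xi|\le n^{1/d}} p^h_{jj'}(x,\xi)\,d\xi\,dx = \frac{\log n}{d}\,\Res_W(T^h_{jj'}) + O(1).$$

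It remains to identify $\sum_{j,j'}\Res_W(T^h_{jj'})$ with $\int_{S^*X} p_{-d}(s)\,ds$. The local principal symbol of $T^h_{jj'}$ factorises as $\psi_j(x)\psi_{j'}(x)$ times the chart pullback of $p_{-d}$; under the substitution $\eta = G^{-1/2}(x)\xi$ the local cosphere integral corresponds precisely to the restriction of the invariant cosphere bundle density $ds$, by the very convention used to introduce $D^*(X,g)$ and $S^*X$. Since $\sum_{j,j'}\psi_j\psi_{j'} = \bigl(\sum_j\psi_j\bigr)^2 = 1$ on $X$, summing the local contributions collapses to $\int_{S^*X} p_{-d}(s)\,ds$, which is the classical coordinate-invariant definition of $\Res_W(P)$. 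The main obstacle I expect is precisely this concluding geometric bookkeeping: tracking how the Jacobians from chart changes combine with the metric factors in the definitions of $dv$ and $ds$ to assemble the chart-dependent residues into the invariant cosphere bundle integral. This is essentially the coordinate invariance of Wodzicki's residue, and once verified the proposition follows.
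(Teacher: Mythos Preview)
Your proposal is correct and follows essentially the same strategy as the paper: invoke Proposition~\ref{ConnesPSDO} for localised and Hodge-Laplacian modulated, then reduce to the Euclidean computation Proposition~\ref{prop:wodres} chart by chart. The paper's own proof is considerably terser---it simply asserts that both $\Res$ and $\Res_W$ depend only on the principal symbol and so ``we can work locally,'' whereupon Proposition~\ref{prop:wodres} finishes---while you spell out the partition-of-unity decomposition via Theorem~\ref{globalres} and~\eqref{eq:local3} and the final summation $\sum_{j,j'}\psi_j\psi_{j'}=1$ explicitly; but the substance is the same.
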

\begin{proof}
By Proposition~\ref{ConnesPSDO} $P$ is localised and Hodge-Laplacian modulated.  Both $\mathrm{Res}$ and $\mathrm{Res}_W$ evidently depend
only on the principal symbol and hence we can work locally.
The result then follows immediately from Proposition~\ref{prop:wodres}.  Note
also that this implies, from the proof of Theorem~\ref{globalres} and
Proposition~\ref{prop:wodres}, that
$$
\sum_{j=1}^{n} (Pe_j,e_j) = \frac{1}{d(2\pi)^d} \int_{S^*X} p_{-d}(s) ds \log n + O(1) .
$$
 \end{proof}

An immediate corollary (the proof is omitted) is the following
spectral formulation of the noncommutative residue of classical
pseudo-differential operators.  The first equality was observed by Fack, \cite[p.~359]{Fackcomm2004}, and proven in \cite[Corollary 2.14]{AS2005}.

\begin{corollary}[Spectral formula for the noncommutative residue] \label{cor:wodresspectral}
Let $P : C^\infty(X) \to C^\infty(X)$ be a classical pseduo-differential
operator of order $-d$, $\{ \lambda_n(P) \}_{n=1}^\infty$
denote the eigenvalues of $P$ ordered so that $|\lambda_n(P)|$ is decreasing,
and $(e_n)_{n=1}^\infty$ an orthonormal basis of eigenvectors
of the Hodge-Laplacian, $-\Delta_{g} e_n = s_n e_n$, $n \in \mathbb N$, ordered such that 
the eigenvalues $s_1 \leq s_n \leq \ldots$ are increasing.
Then, if $\mathrm{Res}_W$ is the noncommutative residue,
$$
d^{-1} (2\pi)^{-d}  \Res_W(P) = \lim_{n \to \infty} \frac{1}{\log(1+n)} \sum_{j=1}^n \lambda_j(P)
= \lim_{n \to \infty} \frac{1}{\log(1+n)} \sum_{j=1}^n (Pe_j,e_j) .
$$
\end{corollary}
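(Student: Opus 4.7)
The plan is to assemble Corollary~\ref{cor:wodresspectral} directly from two ingredients already in hand: the identification of the residue of a classical pseudo-differential operator $P$ with $\Res_W(P)$ (Proposition~\ref{prop:wodres2}), and Theorem~\ref{connection}(iii), which compares partial sums of eigenvalues with diagonal expectations in an eigenbasis of the modulator. Since by Proposition~\ref{ConnesPSDO} every pseudo-differential operator $P$ of order $-d$ is Hodge-Laplacian modulated, i.e.~$(1-\Delta_g)^{-d/2}$-modulated, and since the chosen basis $(e_n)_{n=1}^\infty$ diagonalises $-\Delta_g$ and hence $(1-\Delta_g)^{-d/2}$ with singular values in the correct (decreasing) order, Theorem~\ref{connection}(iii) is immediately applicable and yields
\begin{equation*}
\sum_{j=1}^n \lambda_j(P) - \sum_{j=1}^n (Pe_j,e_j) = O(1).
\end{equation*}

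Next, I would invoke Theorem~\ref{globalres} together with Proposition~\ref{prop:wodres2}: combining the spectral form \eqref{eq:globalres} of the residue with the fact that, for a classical pseudo-differential operator of order $-d$ with principal symbol $p_{-d}$,
\begin{equation*}
\int_{D^*(n^{1/d})(X,g)} p_{-d}(v)\,dv = \frac{1}{d}\int_{S^*X} p_{-d}(s)\,ds\;\log n + O(1) = \frac{1}{d}\Res_W(P)\log n + O(1),
\end{equation*}
(the computation is the manifold version of Proposition~\ref{prop:wodres}, already remarked in the proof of Proposition~\ref{prop:wodres2}), produces
\begin{equation*}
\sum_{j=1}^n (Pe_j,e_j) = \frac{1}{d(2\pi)^d}\Res_W(P)\log n + O(1).
\end{equation*}

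Dividing this last identity by $\log(1+n)$ and letting $n\to\infty$ gives the second equality of the corollary. Substituting into the $O(1)$ comparison of the first paragraph and dividing again by $\log(1+n)$ gives the first equality. Since both reductions are immediate from theorems already established, there is no real obstacle: the only small care needed is to verify the ordering hypothesis of Theorem~\ref{connection}(iii), namely that the basis $(e_n)$ is arranged so that the singular values $(1+s_n)^{-d/2}$ of $(1-\Delta_g)^{-d/2}$ are decreasing, which is exactly the convention $s_1\le s_2\le\cdots$ adopted in the statement of the corollary. This explains why the authors declare the proof trivial and omit it.
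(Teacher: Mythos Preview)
Your proposal is correct and follows exactly the route the paper intends: the displayed identity $\sum_{j=1}^{n}(Pe_j,e_j)=\frac{1}{d(2\pi)^d}\Res_W(P)\log n+O(1)$ is already recorded at the end of the proof of Proposition~\ref{prop:wodres2}, and combining it with Theorem~\ref{connection}(iii) (applicable via Proposition~\ref{ConnesPSDO}) yields both limits after dividing by $\log(1+n)$. Your check on the ordering of the eigenbasis is the only point requiring attention, and you handle it correctly.
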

\end{example}

\begin{example}[Integration of square integrable functions]

If $f \in L_2(X)$ let $M_f : L_\infty(X) \to L_2(X)$ be defined by $(M_f h)(x) = f(x) h(x)$, $h \in L_\infty(X)$.

\begin{proposition} \label{prop:7.6}
If $f \in L_2(X)$ then there is a localised Hodge-Laplacian modulated
operator $T_f$ such that $M_f (1-\Delta_g)^{-d/2} - T_f \in \mathcal L_1$ 
and
$$
\Res(T_f)  = \mathrm{Vol}\, \mathbb S^{d-1} \int_{X,g} f(x)\,dx .
$$
\end{proposition}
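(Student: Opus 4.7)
The plan is to construct $T_f$ via a partition of unity, reducing the problem to compactly supported $L_2$ functions on $\mathbb R^d$ where Propositions~\ref{MultSqFnProp} and~\ref{MultSqFnPropInt} apply. Fix an atlas $\{(U_i,h_i)\}_{i=1}^N$ and a smooth partition of unity $\{\psi_j\}_{j=1}^M$ subordinate to it as in Section~\ref{sec:CTT}, and decompose $f = \sum_j \psi_j f$. For each $j$, the function $f_j := (\psi_j f)\circ h_{i(j)}^{-1}$ is compactly supported in $L_2(\mathbb R^d)$, and Proposition~\ref{MultSqFnProp} provides a compactly based Laplacian modulated operator $S_j := M_{f_j}(1-\Delta_{\mathbb R^d})^{-d/2}$ on $L_2(\mathbb R^d)$. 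Transferring $S_j$ back to the manifold using Proposition~\ref{local}(ii), with a cutoff $\chi_j \in C^\infty_c(U_{i(j)})$ satisfying $\chi_j\psi_j = \psi_j$, yields a Hodge--Laplacian modulated operator $T_j$ supported in $U_{i(j)}$; set $T_f := \sum_j T_j$, which is Hodge--Laplacian modulated as a finite sum.

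The operator $T_f$ is localised because when $\phi\psi = 0$, each $\phi T_j\psi$ transferred into the chart has smooth kernel by pseudo-locality of $(1-\Delta_{\mathbb R^d})^{-d/2}$, so it is trace class and belongs to $\Com\mathcal L_{1,\infty}$. For the residue, Theorem~\ref{globalres} expresses $\Res(T_f)$ in terms of the integral of the coordinate-dependent symbol over the codisc bundle; this integral splits by the partition of unity, and each chart's contribution reduces via the substitution $\eta = G^{-1/2}(x)\xi$ built into the proof of Theorem~\ref{localeigen} to the $\mathbb R^d$ integral of Proposition~\ref{MultSqFnPropInt}. Summing over $j$ and reversing the chart maps, the Jacobian factors combine with the partition of unity to reconstitute the Riemannian volume form, yielding $\Res(T_f) = \mathrm{Vol}\,\mathbb S^{d-1}\int_X f(x)\,dx$. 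A technical subtlety: the local models may need to carry the metric-dependent principal symbol $|G^{-1/2}(x)\xi|^{-d}$ in place of $\langle\xi\rangle^{-d}$ for the Jacobian factors to combine correctly, but any such refined $S_j$ remains Laplacian modulated by the same $L_2$-estimates as in Proposition~\ref{MultSqFnProp} together with uniform bounds on $G(x)$ over the compact support of $f_j$.

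The main obstacle is verifying $M_f(1-\Delta_g)^{-d/2} - T_f \in \mathcal L_1$. This comparison is delicate because $M_f$ is unbounded for $f\in L_2$, so the standard trace-class composition calculus does not apply directly. The approach is chart by chart: both $M_{\psi_j f}(1-\Delta_g)^{-d/2}$ and $T_j$ carry the same leading principal symbol, and their difference decomposes into (i) off-diagonal pseudo-local terms from the cutoffs $\chi_j$, which are smoothing and hence trace class, and (ii) diagonal terms coming from the subprincipal and lower-order parts of the asymptotic expansion of $(1-\Delta_g)^{-d/2}$. The latter are handled by absorbing one factor of $(1-\Delta)^{-d/2}$ into $M_f$ to produce an operator in $\mathcal L_{1,\infty}$ (via Proposition~\ref{MultSqFnProp} or Corollary~\ref{L2trace}), then composing with a strictly lower-order pseudo-differential operator whose singular values decay fast enough that a Hölder-type inequality such as Lemma~\ref{holder} places the composition in $\mathcal L_1$.
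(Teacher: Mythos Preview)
Your approach is genuinely different from the paper's, and the difference matters for exactly the part you identify as the main obstacle. The paper does \emph{not} build $T_f$ from local flat-Laplacian models and then compare. Instead it decomposes the actual operator $M_f(1-\Delta_g)^{-d/2}$ using a \emph{double} partition of unity: setting $V=(1-\Delta_g)^{-d/2}$ and $T_{jj'}=M_fM_{\psi_j}VM_{\psi_{j'}}$, it defines $T_f$ to be the sum of those $T_{jj'}$ with $K_j\cap K_{j'}\neq\emptyset$. Thus $M_fV-T_f$ is literally the off-diagonal remainder $S=\sum_{K_j\cap K_{j'}=\emptyset}T_{jj'}$, and no curved-versus-flat comparison ever arises. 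The trace-class property of each off-diagonal piece is obtained by a short duality trick: $M_{\overline{\psi_{j'}}}VM_{\overline{\psi_j}}$ has smooth kernel and hence maps $L_1(X)\to L_2(X)$ nuclearly, while $M_f:L_2\to L_1$ is bounded, so the composition is trace class and one takes the adjoint. This sidesteps entirely the delicate factorisation you are proposing for the $\mathcal L_1$ estimate.

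Your route can likely be made to work, but as written it has gaps. First, Lemma~\ref{holder} gives $\mathcal L_{1,\infty}=\mathcal L_{p,\infty}\mathcal L_{q,\infty}$, which is a statement about $\mathcal L_{1,\infty}$, not $\mathcal L_1$; to land in $\mathcal L_1$ you need a genuine H\"older-type product estimate of the form $\mathcal L_{1,\infty}\cdot\mathcal L_{d,\infty}\subset\mathcal L_{d/(d+1),\infty}\subset\mathcal L_1$, which is true but is not what that lemma says. Second, the ``technical subtlety'' you flag is not minor: if your local model uses $\langle\xi\rangle^{-d}$ rather than $|G^{-1/2}(x)\xi|^{-d}$, the principal symbols disagree and the difference is order $-d$, not $-d-1$, so the lower-order argument collapses. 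You would need to replace $S_j$ by $M_{f_j}$ times a genuine order $-d$ operator with the correct metric-dependent principal symbol, at which point you are essentially rebuilding $M_{\psi_j f}V$ in the chart anyway. The paper's decomposition avoids all of this by never leaving the operator $M_fV$ itself.
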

\begin{proof}
Let $\{ \psi_j \}_{j=1}^M$
be a partition of unity as in the previous section.
For brevity, let $V:=(1-\Delta_g)^{-d/2}$.  
Set $V_{jj'}:=M_{\psi_j} V M_{\psi_{j'}}$ and $V^h_{jj'} = W^{-1}_{h_i} M_{\psi_j} V  W_{h_i} M_{\psi_{j'} \circ h^{-1}}$.  Set $T_{jj'}:=M_f V_{jj'}$
and $T^h_{jj'} = M_{f \circ h^{-1}} V^h_{jj'}$.

If $K_j\cap K_{j'} \neq \emptyset$ we can find a chart $(U_i,h_i)$ so that $K_j\cup K_{j'} \subset U_i.$   Note that $V^h_{jj'}$ is a pseudo-differential
operator of order $-d$ on $\mathbb R^d$ compactly supported in $h_i(U_i)$.
Then
$$
T^h_{j j'} = M_{f \circ h^{-1}} (1-\Delta)^{-d/2} P_0
$$
where $P_0 = (1-\Delta)^{d/2} V^h_{jj'}$ is zero-order.
By Proposition~\ref{MultSqFnProp}
$M_{f \circ h^{-1}} (1-\Delta)^{-d/2}$ is Laplacian modulated
and by Remark~\ref{rem:bimodule} $M_{f \circ h^{-1}} (1-\Delta)^{-d/2} P_0$
is Laplacian modulated.
Hence $T^h_{j j'}$ is Laplacian modulated
and compactly supported in $h(U_i)$. It follows from Proposition~\ref{local}(ii)
that $T_{jj'}$ is Hodge-Laplacian modulated.  

If $K_j\cap K_{j'}= \emptyset$ the operator $M_{\overline{\psi_{j'}}} V M_{\overline{\psi_j}} : L_1(X)\to C^\infty(X)\subset L_2(X)$ has a smooth kernel and is hence nuclear. Since $M_{f}:L_2\to L_1$ is bounded we obtain $M_{\overline{\psi_{j'}}} V M_{\overline{\psi_j}} M_{f} \in \mathcal L_1.$   By taking the adjoint
$T_{jj'} \in \mathcal L_1$.

Let $T_f = T := \sum_{jj'} T_{jj'}$ where $K_j\cap K_{j'} \neq \emptyset$.
Then $T$ is Hodge-Laplacian modulated and obviously localised.
Set $S := \sum_{jj'} T_{jj'}$ where $K_j\cap K_{j'} = \emptyset$.  Then
$S \in \mathcal L_1.$  We have that $M_f (1-\Delta_g)^{-d/2} = T + S$
so the first statement is shown.

\medskip Since $T$ is localised we need only work locally to determine the residue.
If $K_j\cap K_{j'} \neq \emptyset$ so that $K_j\cup K_{j'} \subset U_i$
we examine the compactly supported Laplacian modulated operator $T^h_{jj'}:L_2(\mathbb R^d)\to L_2(\mathbb R^d)$.
The symbol $p^h_{jj'}$ of $T^h_{jj'}$ is given by
$p^h_{jj'}(x,\xi) = f \circ h^{-1}(x)q_{jj'}(x,\xi)$
where $q_{jj'}(x,\xi)$ is the symbol of the pseudo-differential operator
$M_{\psi_{j}} V M_{\psi_{j'}}$ in local co-ordinates.  We recall
that
$$q_{jj'}(x,\xi) - (\psi_j \psi_{j'})(h^{-1}(x))|G^{1/2}(x)\xi|^{-d} \in S^{-d-1}_{\mathrm{base}}$$
by Lemma~\ref{easysymbol}.
Since $f \in L_2(X) \subset L_1(X)$, we have
\begin{align*}
& \int_{\mathbb R^d} \int_{|\xi| \leq n^{1/d}}
\left| p^h_{jj'}(x,\xi) - (f\psi_j \psi_{j'})(h^{-1}(x))|G^{1/2}(x)\xi|^{-d} \right| dx \, d\xi \\
&\lec \int_{\mathbb R^d} \int_{|\xi| \leq n^{1/d}}
|f(x)| \langle \xi \rangle^{-d-1} dx \, d\xi \lec \| f \|_{L_1} .
\end{align*}
Thus
\begin{multline*}
\int_{\mathbb R^d} \int_{|\xi| \leq n^{1/d}}
p^h_{jj'}(x,\xi) dx \, d\xi \\
= \int_{\mathbb R^d} \int_{| \xi | \leq n^{1/d}} (f\psi_j\psi_j')(h^{-1}(x)) |G(x)^{-1/2}\xi|^{-d} d\xi dx +O(1) \\
= \int_{\mathbb R^d} \int_{| G(x)^{1/2}\xi | \leq n^{1/d}} (f\psi_j\psi_j')(h^{-1}(x)) |G(x)|^{1/2} |\xi|^{-d} dx d\xi + O(1) \\
= \int_{\mathbb R^d}  \int_{|\xi | \leq n^{1/d}} (f\psi_j\psi_j')(h^{-1}(x)) |G(x)|^{1/2} |\xi|^{-d} dx d\xi +O(1) \\
= \frac{\mathrm{Vol} \, \mathbb S^{d-1}}{d}\int_{X,g} f(x) \psi_j(x)\psi_{j'}(x) dx \log n + O(1). 
\end{multline*}
where, in the second last equality, we used~\eqref{compactbase5}.
Hence
\begin{align*}
\Res(T) &= \sum_{j,j'} \Res(T_{jj'}) \\
&= \mathrm{Vol} \, \mathbb S^{d-1} \int_{X,g} f(x) \left(\sum_{j,j} \psi_j(x)\psi_{j'}(x) \right) dx
= \mathrm{Vol} \, \mathbb S^{d-1} \int_{X,g}f(x)dx.
\end{align*}
 \end{proof}
\end{example}

\subsection{Traces of Localised Hodge-Laplacian modulated operators} \label{sec:tracethmman}

In this section we obtain Connes' trace theorem and other results for
closed Riemannian manifolds as corollaries of Theorem~\ref{connectioncorman}.

\begin{corollary}[Connes' trace theorem] \label{CTToriginalmanifold}
Let $(X,g)$ be a closed $d$-dimensional Riemannian manifold.  Suppose $P : C^\infty(X) \to C^\infty(X)$
is a classical pseudo-differential operator of order $-d$ with noncommutative residue $\mathrm{Res}_W(P)$.  Then (the extension)
$P \in \mathcal{L}_{1,\infty}(L_2(X))$ and
$$
\tau(P) = \frac{1}{d(2\pi)^d} \mathrm{Res}_W(P) ,
$$
for every trace $\tau$ on $\mathcal L_{1,\infty}$ such that $\tau(\diag \{ n^{-1} \}_{n=1}^\infty ) = 1$.
\end{corollary}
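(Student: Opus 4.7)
The plan is to deduce this manifold statement directly from the trace theorem for closed manifolds (Theorem~\ref{connectioncorman}) together with the identification of the residue of a classical pseudo-differential operator with Wodzicki's noncommutative residue (Proposition~\ref{prop:wodres2}). In outline, I would verify that $P$ fits the hypotheses of part (iii) of Theorem~\ref{connectioncorman} with $\Res(P)$ a scalar, so that the conclusion for every trace on $\mathcal L_{1,\infty}$ normalised at $\diag\{1/n\}$ is available, and then read off the stated formula.

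First, I would invoke Proposition~\ref{ConnesPSDO} to conclude that (the $L_2$ extension of) $P$ is a localised Hodge-Laplacian modulated operator. Applying Theorem~\ref{connectioncorman} then immediately yields $P\in\mathcal L_{1,\infty}(L_2(X))$, which is the first assertion of the corollary. Second, since $P$ is classical, Proposition~\ref{prop:wodres2} identifies its residue in the sense of Definition~\ref{def:defresman} with the scalar
\[
\Res(P)=\Res_W(P)=\int_{S^*X}p_{-d}(s)\,ds.
\]

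The key step is to check that the stronger asymptotic~\eqref{classicalman} holds for a classical $P$, namely that
\[
\sum_{j=1}^n (Pe_j,e_j)=\frac{1}{d(2\pi)^d}\Res_W(P)\,\log(1+n)+O(1),
\]
rather than merely the $o(\log n)$ expansion that Definition~\ref{def:defresman} extracts from Lemma~\ref{cor:defresman}. This upgrade is what allows passage from part (i) to part (iii) of Theorem~\ref{connectioncorman}, and is the main technical point. Fortunately, the ingredients are already assembled: the remark at the end of the proof of Proposition~\ref{prop:wodres2} gives precisely the $O(1)$-asymptotic for the diagonal sums against the Laplace-Beltrami eigenbasis. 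Combining Theorem~\ref{globalres} with the local computation from Proposition~\ref{prop:wodres} (as executed in Proposition~\ref{prop:wodres2}) controls the error against $\log n$ uniformly, using the homogeneity of the principal symbol outside a neighbourhood of zero and the symbol bounds~\eqref{compactbase1}--\eqref{compactbase4} applied in each chart. So this step reduces to verifying that the $O(1)$ bounds in Theorem~\ref{localeigen} and Proposition~\ref{prop:wodres} survive the patching by the partition of unity $\Psi$, which is routine since only finitely many charts appear.

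With~\eqref{classicalman} established for $P$ with $\Res(P)=\Res_W(P)$ a scalar, Theorem~\ref{connectioncorman}(iii) yields
\[
\tau(P)=\frac{\tau\circ\diag\bigl(\{1/n\}_{n=1}^\infty\bigr)}{d(2\pi)^d}\,\Res_W(P)
\]
for every trace $\tau$ on $\mathcal L_{1,\infty}(L_2(X))$. The normalisation hypothesis $\tau(\diag\{n^{-1}\}_{n=1}^\infty)=1$ then collapses the prefactor to $1/(d(2\pi)^d)$, delivering the stated Connes trace identity. The only real obstacle is the bookkeeping around verifying~\eqref{classicalman} globally on $X$; everything else is a direct substitution into previously proved results.
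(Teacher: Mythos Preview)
Your proposal is correct and follows essentially the same route as the paper: the paper's proof simply cites Proposition~\ref{prop:wodres2} for the facts that $P$ is localised Hodge-Laplacian modulated and satisfies~\eqref{classicalman} with the scalar $\Res_W(P)$, and then invokes Theorem~\ref{connectioncorman}. Your version just unpacks this more explicitly by separating out Proposition~\ref{ConnesPSDO} and the $O(1)$ remark at the end of the proof of Proposition~\ref{prop:wodres2}.
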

\begin{proof}
That $P$ is localised and Hodge-Laplacian, and satisfies~\eqref{classicalman} for the scalar
$\Res_W(P)$, is given by Proposition~\ref{prop:wodres2}.
The results follow from Theorem~\ref{connectioncorman}.
 \end{proof}

As before, the qualifier classical cannot be omitted from Connes' trace theorem.

\begin{corollary}[Non-measurable pseudo-differential operators] \label{cor:nonmeasX}
Let $(X,g)$ be a closed $d$-dimensional Riemannian manifold.
There exists a pseudo-differential operator $Q' : C^\infty(X) \to C^\infty(X)$ of order
$-d$ such that the value $\mathrm{Tr}_\omega(Q')$ depends on the dilation invariant state $\omega$. 
\end{corollary}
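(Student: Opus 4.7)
The plan is to transfer the construction of Proposition~\ref{prop:nonmeas} (and its use in Corollary~\ref{cor:nonmeas}) to a single coordinate chart of $X$. Pick any chart $(U,h)$ with $h(U) \subset \mathbb R^d$, and pick $\phi \in C_c^\infty(U)$ nonzero. Let $P$ be the pseudo-differential operator on $\mathbb R^d$ of order $-d$ built in Proposition~\ref{prop:nonmeas} from the symbol $p = g p'$ with $p'(\xi) = (\sin\log\log|\xi| + \cos\log\log|\xi|)|\xi|^{-d}$, and set $\tilde Q := M_{\overline{\phi\circ h^{-1}}} P M_{\phi\circ h^{-1}}$, a compactly supported pseudo-differential operator of order $-d$ on $\mathbb R^d$ with support inside $h(U)$. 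Then define $Q' := W_h \tilde Q W_h^{-1}$, extended by zero outside $U$. Since $\tilde Q$ is properly supported inside the chart, $Q'$ is a pseudo-differential operator on $X$ of order $-d$.

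Next, I would verify that $Q'$ is a localised Hodge-Laplacian modulated operator: Hodge-Laplacian modulated follows from Proposition~\ref{ConnesPSDO}, and localisation follows because $Q'$ is a pseudo-differential operator on $X$ (using $M_\psi P M_\varphi \in \mathcal L_1$ when $\psi\varphi = 0$). Then I would apply Theorem~\ref{globalres} to compute $\mathrm{Res}(Q')$ from a coordinate dependent symbol. Because $Q'$ is supported in the single chart $U$, the integral over the codisc bundle $D^*(n^{1/d})(X,g)$ collapses to an integral over $h(U) \times \{|G^{-1/2}(x)\xi|\le n^{1/d}\}$ of the local symbol of $\tilde Q$ (up to smoother/trace-class terms).

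After pulling back through the metric change of variables $\xi \mapsto G^{1/2}(x)\xi$ and using property~\eqref{compactbase5} to replace the metric balls by Euclidean balls of radius $n^{1/d}$ at the cost of an $O(1)$ error, the computation reduces essentially to the integral carried out in the proof of Proposition~\ref{prop:nonmeas}. The upshot is
\[
\int_{D^*(n^{1/d})(X,g)} p_{Q'}(v)\,dv = \frac{c_\phi}{d}\,(\sin\log\log n^{1/d})\,\log n + O(1)
\]
for some strictly positive constant $c_\phi$ (a weighted integral of $|\phi|^2$ against $\sqrt{|G|}$, nonzero provided $\phi \ne 0$). By Definition~\ref{def:defresman} and Theorem~\ref{globalres}, this gives $\mathrm{Res}(Q') = c_\phi\,[\sin\log\log n^{1/d}] \in \ell_\infty/c_0$, which is not a scalar. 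Finally, Theorem~\ref{connectioncorman}(ii) implies that $\mathrm{Tr}_\omega(Q')$ depends on the dilation invariant state $\omega$.

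The main obstacle will be checking cleanly that the metric factors arising from the chart do not accidentally produce a convergent integrand that kills the oscillation. This is handled by~\eqref{compactbase5} (which shows that replacing the metric codisc with a Euclidean disc costs only $O(1)$) together with Corollary~\ref{cor:7.3} (which ensures the residue is independent of the chart/metric dependent symbol up to $\sim$), so the non-scalar part of the asymptotics survives unchanged.
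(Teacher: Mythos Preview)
Your proposal is correct and follows the same idea as the paper: transplant the operator $Q$ of Proposition~\ref{prop:nonmeas} into a single chart of $X$. The paper's proof is two sentences and does not spell out the verification; your route through Proposition~\ref{ConnesPSDO}, Theorem~\ref{globalres}, and Theorem~\ref{connectioncorman}(ii) is a legitimate and complete elaboration. A slightly shorter path is also available: since $Q' = W_h \tilde Q W_h^{-1}$ with $W_h$ bounded and boundedly invertible, $Q'$ and $\tilde Q$ have the same nonzero eigenvalues with the same multiplicities, so by Lemma~\ref{DixspecialLidksii} $\mathrm{Tr}_\omega(Q') = \mathrm{Tr}_\omega(\tilde Q)$ for every $\omega$, and Corollary~\ref{cor:nonmeas} already says the right-hand side depends on $\omega$. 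Your residue computation gives more, namely the explicit form of $\mathrm{Res}(Q')$, at the cost of tracking the metric factors; the use of~\eqref{compactbase5} with an $x$-dependent matrix $G^{-1/2}(x)$ is justified exactly as in the proof of Proposition~\ref{prop:7.6}, since on a compact chart the spectra of $G^{-1/2}(x)$ lie in a fixed interval $[a,b]$ and the proof of~\eqref{compactbase5} only uses that containment.
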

\begin{proof}
Let $Q'$ be such that $Q'$ vanishes outside a chart $(U,h)$,
and in local coordinates $Q'$ is the operator $Q$ (suitably scaled) of Corollary~\ref{cor:nonmeas}.  Then the value
$\mathrm{Tr}_\omega(Q')$ depends on the
state $\omega$.
 \end{proof}

The final result is a stronger variant of one of our results in~\cite{LDS}.  In the cited
paper we showed the following result for Dixmier traces
associated to zeta function residues.  The proof employing the methods of this paper is completely different.

\begin{corollary}[see \cite{LDS}, Theorem 2.5] \label{cor:LDS} 
Let $(X,g)$ be a closed $d$-dimensional Riemannian manifold.
Let $f \in L_2(X)$ and $M_f : L_\infty(X) \to L_2(X)$ be defined by $(M_f h)(x) = f(x) h(x)$, $h \in L_\infty(X)$.  Then $M_f (1-\Delta_g)^{-d/2} \in \mathcal L_{1,\infty}(L_2(X))$
and
$$
\tau(M_f (1-\Delta_g)^{-d/2}) = \frac{\mathrm{Vol} \, \mathbb S^{d-1} }{d(2\pi)^d} \int_{X,g} f(x)dx ,
$$
for any trace $\tau$ on $\mathcal L_{1,\infty}$ such that $\tau(\diag \{ n^{-1} \}_{n=1}^\infty ) = 1$ .
\end{corollary}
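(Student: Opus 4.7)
The plan is to reduce the statement to Theorem~\ref{connectioncorman}(iii) applied to the operator $T_f$ constructed in Proposition~\ref{prop:7.6}, using the fact that $M_f(1-\Delta_g)^{-d/2}$ differs from $T_f$ by a trace class operator.

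First I would invoke Proposition~\ref{prop:7.6} to obtain a localised Hodge-Laplacian modulated operator $T_f$ such that $R := M_f(1-\Delta_g)^{-d/2} - T_f \in \mathcal L_1$ and $\Res(T_f) = \mathrm{Vol}\,\mathbb S^{d-1} \int_{X,g} f(x)\,dx$ (a scalar). By Theorem~\ref{connectioncorman} (or directly by Theorem~\ref{connection}(i)), any Hodge-Laplacian modulated operator lies in $\mathcal L_{1,\infty}$, so $T_f \in \mathcal L_{1,\infty}$. Since $\mathcal L_1 \subset \mathcal L_{1,\infty}$, the sum $M_f(1-\Delta_g)^{-d/2} = T_f + R$ lies in $\mathcal L_{1,\infty}$, establishing the membership claim.

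Next I would verify that $T_f$ satisfies the strong asymptotic condition~\eqref{classicalman}. Examining the computation inside the proof of Proposition~\ref{prop:7.6}, the local integrals on the cotangent bundle give
\begin{equation*}
\int_{D^*(n^{1/d})(X,g)} p_{T_f}(v)\,dv = \frac{\mathrm{Vol}\,\mathbb S^{d-1}}{d}\int_{X,g} f(x)\,dx \cdot \log n + O(1).
\end{equation*}
Combined with Theorem~\ref{globalres}, which says $\sum_{j=1}^n (T_f e_j,e_j)$ equals $(2\pi)^{-d}$ times this integral up to $O(1)$, we get exactly~\eqref{classicalman} with scalar residue $\Res(T_f) = \mathrm{Vol}\,\mathbb S^{d-1}\int_{X,g} f\,dx$. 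Theorem~\ref{connectioncorman}(iii) then yields
\begin{equation*}
\tau(T_f) = \frac{\tau\circ\diag(\{1/n\}_{n=1}^\infty)}{d(2\pi)^d}\,\mathrm{Vol}\,\mathbb S^{d-1}\int_{X,g} f(x)\,dx
\end{equation*}
for every trace $\tau$ on $\mathcal L_{1,\infty}$.

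Finally, the remark following Corollary~\ref{cor:CTTRd} records that $\mathcal L_1 = \Com \mathcal L_{1,\infty} \cap \mathcal L_1^+ \text{-span}$, i.e.~$\mathcal L_1 \subset \Com \mathcal L_{1,\infty}$, so every trace $\tau$ on $\mathcal L_{1,\infty}$ vanishes on $R$. Hence $\tau(M_f(1-\Delta_g)^{-d/2}) = \tau(T_f)$, and imposing the normalisation $\tau(\diag\{1/n\}_{n=1}^\infty) = 1$ gives the desired formula. The only conceptual content sits inside Proposition~\ref{prop:7.6} (already proved) and in the inclusion $\mathcal L_1 \subset \Com \mathcal L_{1,\infty}$; once these are in hand, the argument is a direct assembly, so there is no real obstacle — the main thing to be careful about is that the residue really is scalar and in the $O(1)$ (not merely $o(\log n)$) sense required by Theorem~\ref{connectioncorman}(iii), which is why one must look inside the proof of Proposition~\ref{prop:7.6} rather than citing only its statement.
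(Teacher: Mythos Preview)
Your proposal is correct and follows essentially the same route as the paper's proof: decompose $M_f(1-\Delta_g)^{-d/2}=T_f+R$ via Proposition~\ref{prop:7.6}, use that $T_f$ is Hodge--Laplacian modulated to get $\mathcal L_{1,\infty}$-membership, observe from the proof of Proposition~\ref{prop:7.6} that \eqref{classicalman} holds with $O(1)$ remainder, apply Theorem~\ref{connectioncorman}(iii), and use $\mathcal L_1\subset\Com\mathcal L_{1,\infty}$ to discard $R$. The only cosmetic difference is that you route the verification of \eqref{classicalman} through Theorem~\ref{globalres} whereas the paper cites the proof of Proposition~\ref{prop:7.6} directly; both are fine.
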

\begin{proof}
Proposition~\ref{prop:7.6} provides the result that
$M_f (1-\Delta_g)^{-d/2} = T_f + S$ where $T_f \in \mathcal L_{1,\infty}$
(by Theorem~\ref{connectioncorman} since $T_f$ is Hodge-Laplacian modulated)
and $S \in \mathcal L_1$.  Hence $M_f (1-\Delta_g)^{-d/2} \in \mathcal L_{1,\infty}$.
Also
$$
\tau(M_f (1-\Delta_g)^{-d/2}) =  \tau(T_f)
$$
for every trace $\tau$ on $\mathcal L_{1,\infty}$. 
Note that, for the operator $T_f$, the equation~\eqref{classicalman} is satisfied for the scalar
$\mathrm{Vol} \, \mathbb S^{d-1} \int_{X,g} f(x)dx$ by the proof Proposition~\ref{prop:7.6}.
By Theorem~\ref{connectioncorman}
$$\tau(T_f) = \frac{\mathrm{Vol} \, \mathbb S^{d-1} }{d(2\pi)^d} \int_{X,g} f(x)dx$$
for every $\tau$.
 \end{proof}

\begin{remark}
Our final remark is that the residue of Hodge-Laplacian modulated
operators, Definition~\ref{def:defresman}, is an extensive generalisation
of the noncommutative residue. Definition~\ref{def:HodgeL}
is a global definition requiring no reference to local behaviour.
Therefore ``non-local'' Hodge-Laplacian modulated operators
can exist and they admit a residue and, in theory, calculable trace.
Whether there are any interesting
possibilities behind this observation we do not know yet,
but it is a distinction to singular traces of pseudo-differential operators.
\end{remark}

The authors thank Dmitriy Zanin for close reading of the text
and for valuable comments and suggestions.


\end{document}